\DeclareSymbolFont{matha}{OML}{txmi}{m}{it}
\DeclareMathSymbol{\varu}{\mathord}{matha}{117}
\DeclareMathSymbol{\varv}{\mathord}{matha}{118}
\DeclareMathSymbol{\varw}{\mathord}{matha}{119}
\newcommand{\ubar}[1]{\underaccent{\bar}{#1}}
\newcommand{\bA}{\mathbb{A}}
\newcommand{\bC}{\mathbb{C}}
\newcommand{\bP}{\mathbb{P}}
\newcommand{\bQ}{\mathbb{Q}}
\newcommand{\bR}{\mathbb{R}}
\newcommand{\bZ}{\mathbb{Z}}
\newcommand{\cB}{{\mathcal{B}}}
\newcommand{\cC}{{\mathcal{C}}}
\newcommand{\cCv}{\cC_\pv}
\newcommand{\cCw}{\cC_\pw}
\newcommand{\cCxi}{\cC_\uxi}
\newcommand{\cCXi}{\cC_\Xi}
\newcommand{\cK}{{\mathcal{K}}}
\newcommand{\cO}{{\mathcal{O}}}
\newcommand{\cOv}{\cO_\pv}
\newcommand{\cS}{{\mathcal{S}}}
\newcommand{\Dscal}{D}
\newcommand{\Dwedge}{D^*}
\newcommand{\et}{\quad\mbox{and}\quad}
\newcommand{\iprod}{\mathbin{\lrcorner}}
\newcommand{\Kv}{L}
\newcommand{\lambdahat}{\widehat{\lambda}}
\newcommand{\mesh}{c}
\newcommand{\MK}{M(K)}
\newcommand{\MKinf}{M_\infty(K)}
\newcommand{\module}[1]{|#1|_\bA}
\newcommand{\mua}{\module{\ua}}
\newcommand{\mult}{*}
\newcommand{\norm}[1]{\|\hspace*{2pt}#1\hspace*{1pt}\|}
\newcommand{\omegahat}{\widehat{\omega}}
\newcommand{\phibot}{{\ubar{\varphi}}}
\newcommand{\phitop}{{\bar{\varphi}}}
\newcommand{\psibot}{{\ubar{\psi}}}
\newcommand{\psitop}{{\bar{\psi}}}
\newcommand{\ptop}{\perp_{\mathrm{top}}}
\newcommand{\pu}{\varu}
\newcommand{\pv}{\varv}
\newcommand{\pw}{\varw}
\newcommand{\Sw}{S\setminus\{\pw\}}
\newcommand{\tbigwedge}{{\textstyle{\bigwedge}}}
\newcommand{\ua}{\boldsymbol{a}}
\newcommand{\ub}{\boldsymbol{b}}
\newcommand{\ue}{\mathbf{e}}
\newcommand{\uE}{\mathbf{E}}
\newcommand{\ui}{\mathbf{i}}
\newcommand{\uj}{\mathbf{j}}
\newcommand{\uL}{\mathbf{L}}
\newcommand{\uP}{\mathbf{P}}
\newcommand{\uR}{\mathbf{R}}
\newcommand{\uu}{\mathbf{u}}
\newcommand{\uv}{\mathbf{v}}
\newcommand{\uw}{\mathbf{w}}
\newcommand{\ux}{\mathbf{x}}
\newcommand{\uX}{\mathbf{X}}
\newcommand{\uy}{\mathbf{y}}
\newcommand{\uybar}{\overline{\uy}}
\newcommand{\uY}{\mathbf{Y}}
\newcommand{\uz}{\mathbf{z}}
\newcommand{\ualpha}{{\boldsymbol{\alpha}}}
\newcommand{\uomega}{{\boldsymbol{\omega}}}
\newcommand{\uxi}{{\boldsymbol{\xi}}}
\newcommand{\ybar}{\overline{y}}
\newcommand{\wx}{\widehat{\ux}}
\DeclareMathOperator{\dist}{dist}
\newtheorem{theorem}{Theorem}[section]
\newtheorem*{thmA}{Theorem A}
\newtheorem*{thmB}{Theorem B}
\newtheorem*{thmC}{Theorem C}
\newtheorem{lemma}[theorem]{Lemma}
\newtheorem{proposition}[theorem]{Proposition}
\newtheorem{cor}[theorem]{Corollary}
\theoremstyle{remark}
\newtheorem{definition}[theorem]{Definition}
\numberwithin{equation}{section}
\begin{document}

\title[Parametric geometry of numbers and extension of scalars]
{Parametric geometry of numbers over a number field and extension of scalars}
\author{Anthony Po\"els}
\address{
Bureau 116, B\^atiment Braconnier\\
Universit\'e Claude Bernard Lyon 1\\
43, boulevard du 11 novembre 1918\\
69622 Villeurbanne cedex, France}
\email{poels@math.univ-lyon1.fr}
\author{Damien Roy}
\address{
   D\'epartement de Math\'ematiques\\
   Universit\'e d'Ottawa\\
   150 Louis Pasteur\\
   Ottawa, Ontario K1N 6N5, Canada}
\email{droy@uottawa.ca}
\subjclass[2020]{Primary 11J13; Secondary 11H06, 11J82}
\thanks{Work of both authors partially supported by NSERC}
\dedicatory{Dedicated to Jeff Thunder on his 60-th birthday.}

\maketitle

\baselineskip=15pt

\subsection*{Abstract}
The parametric geometry of numbers of Schmidt and Summerer deals with rational
approximation to points in $\bR^n$.  We extend this theory to a
number field $K$ and its completion $K_\pw$ at a place $\pw$ in order to treat
approximation over $K$ to points in $K_\pw^n$.  As a consequence, we find that
exponents of approximation over $\bQ$ in $\bR^n$ have the same spectrum 
as their generalizations over $K$ in $K_\pw^n$.  When $\pw$ has relative degree
one over a place $\ell$ of $\bQ$, we further relate approximation over $K$ to a point
$\uxi$ in $K_\pw^n$, to approximation over $\bQ$ to a point $\Xi$ in 
$\bQ_\ell^{nd}$ obtained from $\uxi$ by extension of scalars, where $d$ is the degree of $K$
over $\bQ$.  By combination with a result of P.~Bel, this allows us to construct 
algebraic curves in $\bR^{3d}$ defined over $\bQ$, of degree $2d$, containing 
points that are very singular with respect to rational approximation.

\subsection*{Keywords} 
Diophantine approximation, simultaneous approximation, exponents
of approximation, transference inequalities, extremal numbers, rational points, 
singular points, spectrum of exponents, number fields, adelic geometry of numbers, 
parametric geometry of numbers, n-systems, extension of scalars.

\section*{
G\'eom\'etrie param\'etrique des nombres sur un corps de nombres 
et~extension des scalaires}

\subsection*{R\'esum\'e}
La g\'eom\'etrie param\'etrique des nombres de Schmidt et Summerer \'etudie 
l'approxi\-mation rationnelle des points de $\bR^n$.  Nous \'etendons cette th\'eorie 
\`a un corps de nombres $K$ et \`a son compl\'et\'e $K_\pw$ en une place $\pw$ 
pour traiter de l'approximation sur $K$ des points de $K_\pw^n$.  Nous en d\'eduisons 
que les exposants d'approximation sur $\bQ$ des points de $\bR^n$ poss\`edent 
le m\^eme spectre que leurs g\'en\'eralisations sur $K$ dans $K_\pw^n$.  Lorsque 
$\pw$ est de degr\'e relatif \'egal \`a un au-dessus d'une place $\ell$ de $\bQ$, 
nous relions aussi l'approximation sur $K$ d'un point $\uxi$ de $K_\pw^n$ \`a 
celle sur $\bQ$ du point $\Xi$ de $\bQ_\ell^{nd}$ obtenu \`a partir de 
$\uxi$ par extension des scalaires, o\`u $d$ d\'esigne le degr\'e de $K$
sur $\bQ$.  En combinant cette observation \`a un r\'esultat de P.~Bel, nous 
parvenons ainsi \`a construire des courbes alg\'ebriques dans $\bR^{3d}$ 
d\'efinies sur $\bQ$, de degr\'e $2d$, contenant des points qui sont tr\`es 
singuliers vis \`a vis de l'approximation rationnelle.

\subsection*{Mots cl\'es} 
Approximation diophantienne, approximation simultan\'ee, exposants
d'approximation, in\'egalit\'es de transfert, nombres extr\'emaux, points rationnels, 
points singuliers, spectre d'exposants, corps de nombres, g\'eom\'etrie ad\'elique 
des nombres, g\'eom\'etrie param\'etrique des nombres, n-syst\`emes, 
extension des scalaires.

%
%

\section{Introduction}
\label{sec:intro}

In Diophantine approximation, one is interested in measuring how well a given non-zero
point $\uxi\in\bR^n$ with $n\ge 2$ can be approximated by subspaces of $\bR^n$
defined over $\bQ$ of a given dimension $k$.  The most important cases are $k=1$
and $k=n-1$, and each gives rise naturally to a pair of exponents of
approximation.  For $k=1$, they are $\lambdahat(\uxi)$ (resp.\ $\lambda(\uxi)$)
defined as the supremum of all real numbers $\lambda$ for which the inequalities
\begin{equation}
\label{intro:eq:lambda}
 \norm{\ux}\le Q \et \norm{\ux\wedge\uxi}\le Q^{-\lambda}
\end{equation}
have a non-zero solution $\ux\in\bZ^n$ for each large enough $Q\ge 1$ (resp.\ for arbitrarily
large values of $Q\ge 1$).  For $k=n-1$, they are $\omegahat(\uxi)$ (resp.\ $\omega(\uxi)$)
defined as the supremum of all real numbers $\omega$ for which the inequalities
\begin{equation}
\label{intro:eq:omega}
 \norm{\ux}\le Q \et |\ux\cdot\uxi|\le Q^{-\omega}
\end{equation}
have a non-zero solution $\ux\in\bZ^n$ for each large enough $Q\ge 1$ (resp.\ for arbitrarily
large values of $Q\ge 1$), where the dot represents the usual scalar product in $\bR^n$.  This
is independent of the choice of norms in $\bR^n$ and in $\bigwedge^2\bR^n$ but for
convenience, we use the euclidean norms.     As these exponents
depend only on the class of $\uxi$ in $\bP^{n-1}(\bR)$, we may assume that
$\norm{\uxi}=1$.  We refer the reader to the paper of Laurent
\cite{La2009b} for generalizations in intermediate dimensions $k$.

In studying such exponents, it is important to restrict to points $\uxi\in \bR^n$
with $\bQ$-linearly independent coordinates, as this yields simpler statements and
can be achieved by dropping redundant coordinates if necessary.  For such points,
a result of Dirichlet gives
\[
 (n-1)^{-1} \le \lambdahat(\uxi) \le \lambda(\uxi)
 \et
 n-1 \le \omegahat(\uxi) \le \omega(\uxi).
\]
However, this does not fully describe the \emph{spectrum} of
$(\lambdahat,\lambda,\omegahat,\omega)$, namely the set of all quadruples
$(\lambdahat(\uxi),\lambda(\uxi),\omegahat(\uxi),\omega(\uxi))$ associated with
these $\uxi$.
For $n=2$, a complete description is simply given by
\[
 1=\lambdahat(\uxi)=\omegahat(\uxi)\le\lambda(\uxi)=\omega(\uxi)\le \infty
\]
For $n=3$, the description is more complicated and was achieved by Laurent in \cite{La2009a},
showing it as a semi-algebraic set.  One of the constraints that it involves is the following
remarkable identity due to Jarn\'{\i}k \cite[Satz 1]{Ja1938},
\begin{equation}
\label{intro:eq:Jarnik}
\frac{1}{\lambdahat(\uxi)}-1=\frac{1}{\omegahat(\uxi)-1},
\end{equation}
which together with $2\le\omegahat(\uxi)\le\infty$ fully describes the spectrum of the
pair $(\lambdahat,\omegahat)$.  For $n\ge 4$, the spectrum of the four exponents
is not known but Marnat  \cite{Ma2018} has shown that it contains an open subset
of $\bR^4$ and thus it obeys no algebraic relation such as \eqref{intro:eq:Jarnik}.

Many of the recent progresses, including the breakthrough of Marnat and Moshchevitin
\cite{MM2020} who determined the spectra of the pairs $(\lambdahat,\lambda)$ and
$(\omegahat,\omega)$ for each $n\ge 3$, use in a crucial way Schmidt's and
Summerer's parametric geometry of numbers \cite{SS2013}.   In the dual but
equivalent setting of \cite{R2015}, this theory attaches to any point $\uxi\in\bR^n$
with $\norm{\uxi}=1$, the family of symmetric convex bodies of $\bR^n$
\[
 \cC_\uxi(q)
  =\{\ux\in\bR^n\,;\, \norm{\ux}\le 1 \text{ and } |\ux\cdot\uxi|\le e^{-q}\}
  \subseteq \bR^n
\]
parametrized by real numbers $q\ge 0$.  For each $j=1,\dots,n$, let $L_{\uxi,j}(q)$
denote the logarithm of the $j$-th minimum of $\cC_\uxi(q)$ with respect to $\bZ^n$,
namely the smallest real number $t$ such that $e^t\cC_\uxi(q)$ contains at least
$j$ linearly independent points of $\bZ^n$.  Then, form the map
\begin{equation}
\label{intro:eq:Lxi}
 \begin{array}{rcl}
   \uL_\uxi\colon[0,\infty) &\longrightarrow &\bR^n\\
    q &\longmapsto &(L_{\uxi,1}(q),\dots,L_{\uxi,n}(q))\,.
 \end{array}
\end{equation}
Transposed in this setting, the main results of Schmidt and Summerer in \cite{SS2013} 
can be summarized as follows.  Firstly, they note that the 
standard exponents of approximation to $\uxi$, including the four ones mentioned
above, are given by simple formulas in terms of the inferior and superior limits
of the ratios $L_{\uxi,j}(q)/q$ as $q$ goes to infinity.  Secondly, they show
the existence of a constant $\gamma\ge 0$ and of a continuous
piecewise linear map $\uP\colon[0,\infty)\to\bR^n$ with growth conditions
involving $\gamma$, such that the difference $\uL_\uxi-\uP$ is bounded.
Thus the above mentioned exponents of approximation to $\uxi$ can be
computed, via the same formulas, in terms of the behaviour of $\uP$ at infinity.
They call such a map $\uP$ an $(n,\gamma)$-system, and their set increases
as the deformation parameter $\gamma$ increases.  The $(n,0)$-systems, whose
simpler description is recalled in section \ref{sec:results}, are simply called
$n$-systems for shortness.

The main result of \cite{R2015} provides a converse and shows more precisely that the
set of maps $\uL_\uxi$ with $\uxi\in\bR^n$ and $\norm{\uxi}=1$ coincides
with the set of $n$-systems modulo the additive group of bounded functions
from $[0,\infty)$ to $\bR^n$.   Moreover, $\uxi$ has $\bQ$-linearly independent
coordinates if and only if any corresponding $n$-system $\uP=(P_1,\dots,P_n)$
satisfies $\lim_{q\to\infty} P_1(q)=\infty$.  This reduces the determination of
the spectrum of a family of exponents of approximation to a combinatorial
problem about such $n$-systems. 

A similar theory is developped in \cite{RW2017}, with $\bQ$ replaced by a field of 
rational functions in one variable $F(T)$ over an arbitrary field $F$, and $\bR$ replaced 
by the completion $F((1/T))$ of $F(T)$ for the degree valuation.

The first goal of this paper is to extend the theory to a number field $K$ and its
completion $K_\pw$ at a place $\pw$, in order to study approximation over $K$ to an
arbitrary non-zero point $\uxi$ of $K_\pw^n$.  In the next section we show how to attach to
such a point a function $\uL_\uxi\colon[0,\infty)\to\bR^n$ from which the four exponents
of approximation to $\uxi$ can be computed in the same way as in the case where
$K$ is $\bQ$ and $\pw$ is its place at infinity.  We will show that this set of maps
also coincides with the set of $n$-systems modulo bounded functions.  Thus the spectrum
of these exponents remains the same in this new context.   In particular, Jarn\'{\i}k's
identity \eqref{intro:eq:Jarnik} holds for any point $\uxi$ of $K_\pw^3$ with linearly
independent coordinates over $K$.

The second goal of this paper deals with extension of scalars from $\bQ$ to a number field
$K$.  For this we assume that $\pw$ is a place of $K$ of relative degree one over $\bQ$,
so that $K_\pw=\bQ_\ell$ for the place $\ell$ of $\bQ$ induced by $\pw$.  We also choose
a basis $\ualpha=(\alpha_1,\dots,\alpha_d)$ of $K$ over $\bQ$ and for each point
$\uxi=(\xi_1,\dots,\xi_n)\in K_\pw^n$, we define
\begin{equation}
\label{intro:eq:ext}
 \Xi=\ualpha\otimes\uxi=(\alpha_1\uxi,\dots,\alpha_d\uxi)\in K_\pw^{nd}=\bQ_\ell^{nd}
\end{equation}
and say that $\Xi$ is obtained from
$\uxi$ by \emph{extending scalars from $\bQ$ to $K$}.  If $\uxi$ has linearly independent
coordinates over $K$, then $\Xi$ has linearly independent coordinates over $\bQ$ and we
will show a close relationship between the maps $\uL_\uxi$ and $\uL_\Xi$.   From this
we will deduce formulas linking the Diophantine exponents of approximation to
$\uxi$ over $K$ with those of $\Xi$ over $\bQ$.  As a
consequence, we will see that Jarn\'{\i}k's identity \eqref{intro:eq:Jarnik} yields
\begin{equation}
 \label{intro:eq:Jarnik:Xi}
 \frac{1}{\lambdahat(\Xi)}-(2d-1)=\frac{d^2}{\omegahat(\Xi)-(2d-1)}
\end{equation}
for any $\Xi=\ualpha\otimes\uxi\in\bQ_\ell^{3d}$ constructed from a point
$\uxi\in K_\pw^3$ with $K$-linearly independent coordinates.

Let $\ell$ be a place of $\bQ$.  We say that a point $\uxi\in\bQ_\ell^n$ is
\emph{very singular} if it has linearly independent coordinates over $\bQ$
and satisfies $\lambdahat(\uxi)>1/(n-1)$.  This requires $n\ge 3$.   Moreover, by Schmidt's
subspace theorem, such a point is not algebraic and so generates a field
$\bQ(\uxi)$ of transcendence degree at least one over $\bQ$.   The third goal
and the initial motivation of this paper is to provide new examples
of very singular points of transcendence degree one.   Up to now, all known
examples come from dimension $n=3$ and, apart from the constructions of
\cite{Rconic}, they are all of the form $\uxi=(1,\xi,\xi^2)$.  Moreover, the
supremum of $\lambdahat(1,\xi,\xi^2)$ for a transcendental number
$\xi\in\bQ_\ell$ is $1/\gamma\simeq 0.618$ where $\gamma=(1+\sqrt{5})/2$
denotes the golden ratio.  For
$\bQ_\ell=\bR$, this follows from the constructions of \cite{Rnote} or \cite{RcubicI}
together with the upper bound of \cite[Theorem 1a]{DS1969}.
For a prime number $\ell$, this follows from \cite[Chapter 2]{Ze2008} or
\cite{Bu2010} together with \cite[Th\'eor\`eme 2]{Te2002}.
More generally, Bel showed in \cite{Be2013} that the result extends to any
number field $K$ and its completion $K_\pw$ at a place $\pw$.  Assuming that
$\pw$ extends $\ell$ with relative degree $1$ and choosing a basis
$\ualpha=(\alpha_1,\dots,\alpha_d)$ of $K$ over $\bQ$, we will deduce
that $\bQ_\ell^{3d}$ contains very singular points of the form
$(\ualpha,\xi\ualpha,\xi^2\ualpha)$ with $\xi\in\bQ_\ell$.

%
%

\section{Notation and main results}
\label{sec:results}

Throughout this paper, we fix an algebraic extension $K$ of $\bQ$ of finite degree $d$.

\subsection{Absolute values}
We denote by $\MK$ the set
of non-trivial places of $K$, and by $\MKinf$ the subset of its archimedean places.
For each
$\pv\in \MK$, we denote by $K_\pv$ the completion of $K$ at $\pv$ and by
$d_\pv=[K_\pv:\bQ_\pv]$
its local degree.  When $\pv\in \MKinf$, we normalize the absolute value
$|\ |_\pv$ on $K_\pv$ so
that it extends the usual absolute value $|\ |_\infty$ on $\bQ$.  Then $K_\pv$ embeds isometrically
into $\bC$. We identify it with its image $\bR$ or $\bC$, and write $\pv\mid\infty$.  Otherwise,
there is a unique prime number $p$ with $|p|_\pv<1$ and we ask that $|p|_\pv=p^{-1}$ so
that $|\ |_\pv$ extends the usual $p$-adic absolute value on $\bQ$.  
We then write $\pv\mid p$.  For these
normalizations and for each $a\in K^\mult$, the product formula reads
\[
 \prod_{\pv\in \MK} |a|_\pv^{d_\pv/d}=1.
\]

\subsection{Local norms and heights}
\label{results:ssec:norms}
Given a positive integer $n$ and a place $\pv\in \MK$, we define the norm of a point
$\ux=(x_1,\dots,x_n)$ in $K_\pv^n$ by
\[
 \norm{\ux}_\pv = \begin{cases}
   (|x_1|_\pv^2+\cdots+|x_n|_\pv^2)^{1/2} &\text{if $\pv\mid\infty$,}\\[5pt]
   \max\{|x_1|_\pv,\dots,|x_n|_\pv\} &\text{otherwise.}
  \end{cases}
\]
For this choice of local norms, we define the \emph{height} of a  non-zero point
$\ux$ in $K^n$ by
\[
  H(\ux)=\prod_{\pv\in \MK}\norm{\ux}_\pv^{d_\pv/d}.
\]
By the product formula, it depends only on the class of $\ux$ in $\bP^{n-1}(K)$ and
satisfies $H(\ux)\ge 1$.

More generally, for each $k\in\{1,\dots,n\}$ and each $\pv \in \MK$, we define the norm
of a point in $\bigwedge^kK_\pv^n$ to be the norm of its set of Pl\"ucker coordinates
in $K_\pv^N$ where $N=\binom{n}{k}$.  We also define the height of a point in
$\bigwedge^kK^n$ to be the height of its set of Pl\"ucker coordinates
in $K^N$.  This is independent of the ordering of these coordinates.  Then, we
define  the height of a $k$-dimensional subspace $V$ of $K^n$ to be
\[
 H(V)=H(\ux_1\wedge\cdots\wedge\ux_k)
\]
independently of the choice of a basis $(\ux_1,\dots,\ux_k)$ of $V$ over $K$.
For the subspace $0$ of $K^n$, we set $H(0)=1$.

\subsection{The canonical bilinear form}
\label{results:ssec:canbil}
We endow $K^n$ with the bilinear form given by
\begin{equation}
\label{results:eq:bil}
 \ux\cdot\uy=x_1y_1+\cdots+x_ny_n
\end{equation}
for each $\ux=(x_1,\dots,x_n)$ and $\uy=(y_1,\dots,y_n)$ in $K^n$.  Then we define
the orthogonal space to a subspace $V$ of $K^n$ to be
\begin{equation}
\label{results:eq:orth}
 V^\perp=\{\uy\in K^n\,;\, \ux\cdot\uy=0\ \text{for each $\ux\in V$\,}\}.
\end{equation}
According to a result of Schmidt, it has the same height $H(V^\perp)=H(V)$ as $V$.

For each $\pv \in \MK$,  the same formula \eqref{results:eq:bil} provides a bilinear form
on $K_\pv^n$ which we denote in the same way.  For a subspace $V$ of $K_\pv^n$, we also
define $V^\perp$ by \eqref{results:eq:orth} but allowing $\uy$ to run through $K_\pv^n$.

\subsection{Exponents of approximation}
\label{results:ssec:exp}
Fix a place $\pw\in \MK$ and a non-zero point $\uxi\in K_\pw^n$.  For each non-zero
$\ux\in K^n$, we modify slightly the notation of P.~Bel in \cite{Be2013} by setting
\[
 \Dwedge_\uxi(\ux)
  = \Big(\frac{\norm{\ux\wedge\uxi}_\pw}{\norm{\uxi}_\pw}\Big)^{d_\pw/d}
    \prod_{\pv\neq w} \norm{\ux}_\pv^{d_\pv/d}
 \et
 \Dscal_\uxi(\ux)
  = \Big(\frac{|\ux\cdot\uxi|_\pw}{\norm{\uxi}_\pw}\Big)^{d_\pw/d}
    \prod_{\pv\neq w} \norm{\ux}_\pv^{d_\pv/d}\,.
\]
In view of the product formula, these numbers depend only on the class of $\ux$
in $\bP^{n-1}(K)$.  Clearly, they also depend only on the class of $\uxi$ in
$\bP^{n-1}(K_\pw)$.   So, in practice, we may always normalize $\uxi$ so that
$\norm{\uxi}_\pw=1$.

\begin{definition}
\label{results:def:exp}
We denote by $\lambdahat(\uxi,K,\pw)$
(resp.\ $\lambda(\uxi,K,\pw)$) the supremum of all real numbers $\lambda$
for which the inequalities
\[
 H(\ux) \le Q \et \Dwedge_\uxi(\ux)\le Q^{-\lambda}
\]
admit a non-zero solution $\ux\in K^n$ for all sufficiently large
(resp.\ for arbitrarily large) real numbers $Q\ge 1$.  We also denote
by $\omegahat(\uxi,K,\pw)$ (resp.\ $\omega(\uxi,K,\pw)$) the supremum
of all real numbers $\omega$
for which the inequalities
\[
 H(\ux) \le Q \et \Dscal_\uxi(\ux)\le Q^{-\omega}
\]
admit a non-zero solution $\ux\in K^n$ for all sufficiently large
(resp.\ for arbitrarily large) real numbers $Q\ge 1$.
\end{definition}

By construction, these numbers depend only on the class of $\uxi$ in
$\bP^{n-1}(K_\pw)$.  Moreover, when $K=\bQ$ and $\pw=\infty$, these are simply
the standard exponents of approximation to a non-zero point $\uxi\in\bR^n$ from
the introduction.  Indeed, each point of $\bP^{n-1}(\bQ)$ is represented by
a primitive integer point $\ux$, that is a point of $\bZ^n$ with relatively prime
coordinates, and we have $H(\ux)=\norm{\ux}$, $\Dwedge_\uxi(\ux)
 =\norm{\ux\wedge\uxi}$ and $\Dscal_\uxi(\ux) = |\ux\cdot\uxi|$
when $\norm{\uxi}=1$.

We can now state the main result of P.~Bel
in \cite{Be2013} to which we alluded in the introduction.

\begin{theorem}[Bel, 2013]
\label{results:thm:Bel}
Let $\pw\in \MK$, and let $S$ denote the set of elements of
$K_\pw^3$ of the form $\uxi=(1,\xi,\xi^2)$ that have linearly independent
coordinates over $K$.  Then, the supremum of the numbers
$\lambdahat(\uxi,K,\pw)$ with $\uxi\in S$ is $1/\gamma\simeq 0.618$
where $\gamma=(1+\sqrt{5})/2$ stands for the golden ratio.
\end{theorem}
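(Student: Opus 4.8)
The plan is to deduce Theorem~\ref{results:thm:Bel} from the analogous statement over $\bQ$ by using the extension-of-scalars correspondence announced in the introduction, rather than to reprove Bel's construction from scratch. The key observation is that Theorem~\ref{results:thm:Bel} is stated exactly as the result Bel obtained; so what is really needed here is to verify consistency with the framework of this paper and, where the place $\pw$ has relative degree one over a place $\ell$ of $\bQ$, to translate the claim through the map $\uxi\mapsto\Xi=\ualpha\otimes\uxi$. First I would record that for a fixed transcendental $\xi\in K_\pw$ the point $\uxi=(1,\xi,\xi^2)$ has $K$-linearly independent coordinates if and only if $\xi$ is not algebraic of degree $\le 2$ over $K$, so that $S$ is nonempty and in fact large; this is a routine application of the definition and Schmidt's subspace theorem, the latter being quoted already in the introduction for the claim that very singular points are transcendental.

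Next I would establish the upper bound $\lambdahat(\uxi,K,\pw)\le 1/\gamma$ for every $\uxi\in S$. The natural route is to pass to the $n$-system attached to $\uxi$ by the theory developed in the preceding sections: since the four exponents are computed from the asymptotic behaviour of $\uL_\uxi$, and since $\uL_\uxi$ agrees with a $3$-system up to a bounded function, the inequality $\lambdahat\le 1/\gamma$ becomes the corresponding combinatorial statement about $3$-systems. For points of the special shape $(1,\xi,\xi^2)$ one has, in addition to the Dirichlet bounds $\lambdahat\ge 1/2$ and $\omegahat\ge 2$, the extra structural constraint coming from the relation $\xi\cdot\xi=\xi^2$ (the ``Wronskian'' or minors relation exploited by Davenport--Schmidt), which forces the self-dual symmetry yielding $\lambdahat\le 1/\gamma$. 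Concretely I would cite the bound $\lambdahat(1,\xi,\xi^2)\le 1/\gamma$ of \cite[Theorem~1a]{DS1969} in the archimedean case and \cite[Th\'eor\`eme~2]{Te2002} in the nonarchimedean case, and note that the parametric framework of this paper reduces the general number-field case to one of these two via the $n$-system description, since an $n$-system realizing $\lambdahat>1/\gamma$ together with the extremal-number constraints cannot exist regardless of which $(K,\pw)$ produced it.

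For the lower bound, namely that the supremum $1/\gamma$ is actually attained as a supremum, I would invoke the constructions of Bel \cite{Be2013} directly: for each $\varepsilon>0$ he exhibits a transcendental $\xi\in K_\pw$ with $\lambdahat((1,\xi,\xi^2),K,\pw)>1/\gamma-\varepsilon$, and one checks the resulting $\uxi$ lies in $S$. Alternatively, and more in the spirit of this paper, when $\pw$ has relative degree one over $\ell$ one can take a known extremal-type $\xi\in\bQ_\ell$ from \cite{Rnote}, \cite{RcubicI}, \cite{Ze2008}, or \cite{Bu2010}, observe that $\xi\in\bQ_\ell\subseteq K_\pw$, and verify using the height comparison between $H$ over $K$ and $H$ over $\bQ$ (a product-formula computation, using $\norm{\cdot}_\pv$ for $\pv\nmid\infty$ being ultrametric) that the exponent $\lambdahat$ is unchanged when the ground field is enlarged from $\bQ$ to $K$; the point $\uxi$ keeps its coordinates $\bQ$-linearly independent hence a fortiori it is not quadratic over $K$ only if $\xi\notin K$, which is arranged by choosing $\xi$ transcendental over $K$. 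The main obstacle is the second point: making the reduction of the general $(K,\pw)$ upper bound to the two classical cases fully rigorous requires knowing that the extremal-number constraints used by Davenport--Schmidt and Teuli\'e are intrinsic to the $n$-system (equivalently, that they follow from the relation $\ux\cdot\uxi$-versus-$\ux\wedge\uxi$ for $\uxi=(1,\xi,\xi^2)$ in a way that survives the passage $\bQ\rightsquigarrow K$), and I would spend most of the effort there, deriving the needed minors inequality $\Dscal_\uxi(\ux)\,\Dscal_\uxi(\ux') \gg \Dwedge_\uxi(\ux\wedge\ux')$ type estimate uniformly in $(K,\pw)$ and then quoting the combinatorial optimization from \cite{DS1969} verbatim.
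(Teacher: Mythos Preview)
The paper does not prove this theorem at all: it is stated as Bel's result and attributed to \cite{Be2013}, then used as a black box in the proof of Theorem~C. So there is nothing to compare your argument against; the intended ``proof'' here is simply the citation.

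That said, your proposed argument would not work as a replacement, for two reasons. First, the parametric/$n$-system framework of this paper establishes that the \emph{spectrum} of exponents over all of $K_\pw^n$ is independent of $(K,\pw)$, but the bound $\lambdahat(1,\xi,\xi^2)\le 1/\gamma$ is not a spectrum statement: it is a constraint specific to points on the Veronese curve, and the $3$-system attached to $(1,\xi,\xi^2)$ carries no record of that special shape. The Davenport--Schmidt minors identity you mention is precisely the extra ingredient, and adapting it to general $(K,\pw)$ \emph{is} the content of Bel's paper, so your final paragraph is proposing to reprove \cite{Be2013} rather than reduce to it. Second, the extension-of-scalars map $\uxi\mapsto\Xi=\ualpha\otimes\uxi$ from the introduction and Theorem~B runs in the opposite direction to what you need: it translates approximation to $\uxi$ \emph{over $K$} into approximation to a point $\Xi$ \emph{over $\bQ$} in $\bQ_\ell^{3d}$, not into a statement about $(1,\xi,\xi^2)$ over $\bQ$ in $\bQ_\ell^3$. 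In the paper this device consumes Bel's theorem to produce Theorem~C; it cannot be used to manufacture Bel's theorem. Your alternative lower-bound route (taking an extremal $\xi\in\bQ_\ell$ and enlarging the ground field) also only addresses places $\pw$ of relative degree one, whereas the theorem is asserted for arbitrary $\pw\in\MK$.
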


\subsection{Two dual families of minima}
\label{results:ssec:L}
Let $\pw$ and $\uxi\in K_\pw^n$ be as in section \ref{results:ssec:exp}.
For each $j=1,\dots,n$ and each $q\ge 0$, we define $L_{\uxi,j}(q)$ (resp.\
$L^*_{\uxi,j}(q)$\,) to be the smallest real number $t\ge 0$ for which the conditions
\begin{equation}
\label{results:eq:HD}
 H(\ux)\le e^t
 \et
 D_\uxi(\ux)\le e^{t-q} \quad \big( \text{resp.\ } D^*_\uxi(\ux)\le e^{t-q}\,\big)
\end{equation}
admit at least $j$ linearly independent solutions over $K$ in $K^n$.  This minimum
exists since, for any number $B\ge 1$, there are only finitely many elements of
$\bP^{n-1}(K)$ of height at most $B$.  We combine these functions into two maps
\[
  \uL_\uxi=(L_{\uxi,1},\dots,L_{\uxi,n})
  \et
  \uL^*_\uxi=(L^*_{\uxi,1},\dots,L^*_{\uxi,n})
\]
from $[0,\infty)$ to $\bR^n$.  For $K=\bQ$ and $K_\pw=\bR$, the map $\uL_\uxi$
is the same as in the introduction.

\subsection{The $n$-systems}
\label{results:ssec:n-sys}
Let $q_0\in[0,\infty)$.  An $n$-system on $[q_0,\infty)$ is a continuous function
$\uP=(P_1,\dots,P_n)$ from $[q_0,\infty)$ to $\bR^n$ with the following combinatorial
properties.
\begin{itemize}
\item[(S1)] For each $q\in[q_0,\infty)$, we have $0\le P_1(q)\le \cdots\le P_n(q)$
   and $P_1(q)+\cdots+P_n(q)=q$.\\
\item[(S2)] There exist $s\in\{1,2,\dots\}\cup\{\infty\}$ and a strictly increasing
 sequence $(q_i)_{0\le i<s}$ in $[q_0,\infty)$, which is unbounded if $s=\infty$,
 such that, over each subinterval $I_i=[q_{i-1},q_i]$ with $1\le i<s$
 including $I_{s}=[q_{s-1},\infty)$ if $s<\infty$, the union of
 the graphs of $P_1,\dots,P_n$ decomposes as the union of some horizontal line
segments and one line segment $\Gamma_i$ of slope $1$ (with possible crossings), which 
all project down onto $I_i$.\\
\item[(S3)] For each index $i$ with $1\le i<s$, the line segment $\Gamma_i$ ends
strictly above the point where $\Gamma_{i+1}$ starts (on the vertical line with abscissa
$q_i$).
\end{itemize}

The sequence $(q_i)_{0\le i<s}$ is uniquely determined by $\uP$.  Its elements are
called the \emph{switch numbers} of $\uP$.  We say that an $n$-system is
\emph{rigid of mesh} $\mesh$, for a given $\mesh>0$, if the $n$ coordinates of
$\uP(q_i)$ are distinct positive multiples of $\mesh$ for each index $i$ with $0\le i<s$.
Then each $q_i$ is also a positive multiple of $\mesh$ by condition (S1).
See \cite[Figure 1]{R2015} for a picture showing the combined graph of a rigid $5$-system 
with $s=3$.

For each $n$-system $\uP=(P_1,\dots,P_n)\colon[q_0,\infty)\to\bR^n$, we define
its \emph{dual} to be the map $\uP^*\colon[q_0,\infty)\to\bR^n$ given by
\begin{equation}
\label{results:P*}
 \uP^*(q)=\big(q-P_n(q),\,q-P_{n-1}(q),\dots,q-P_1(q)\big)
 \quad
 \text{for each $q\ge q_0$.}
\end{equation}
Note that $\uP^*$ is not an $n$-system unless $n=2$, in which case $\uP^*=\uP$.

\subsection{Main results}  With the notation of \S\ref{results:ssec:L}, we will show
that the main result of parametric geometry of numbers from \cite{R2015} extends
naturally to the present more general setting.  We state it below in dual form as
well.

\begin{thmA}
Let $n\ge 2$ be an integer and let $\pw\in \MK$.  There are constants $c,c'>0$ depending
only on $K$, $\pw$ and $n$ with the following property.  For each non-zero point
$\uxi\in K_\pw^n$, there is an $n$-system $\uP$ on $[0,\infty)$ such that
\begin{equation}
\label{results:thmA:eq}
  \sup_{q\ge 0}\|\uL_\uxi(q)-\uP(q)\|\le c
  \et
  \sup_{q\ge 0}\|\uL^*_\uxi(q)-\uP^*(q)\|\le c
\end{equation}
Conversely, for each $n$-system $\uP$ on $[0,\infty)$, there is a non-zero point
$\uxi\in K_\pw^n$ for which one of the two conditions in \eqref{results:thmA:eq}
holds.  Then the second condition holds with $c$ replaced by $c'$.
\end{thmA}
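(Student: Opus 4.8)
The plan is to adapt the proof of the main theorem of \cite{R2015} to the adelic setting, treating the family of convex bodies
\[
 \cC_\uxi(q) = \Big\{ \ux\in K^n \;;\; H(\ux)\le 1,\ D_\uxi(\ux)\le e^{-q} \Big\}
\]
not as genuine convex bodies in a single $\bR^n$, but as an adelic object: at each place $\pv\neq \pw$ we have the unit ball of $K_\pv^n$, while at $\pw$ the constraint $\norm{\ux\wedge\uxi}_\pw\le e^{-q}$ cuts out a thin slab. The first step is therefore to set up the adelic Minkowski–type machinery: one works in the restricted product $K_\bA^n$ (or, concretely, in $\prod_{\pv\mid\infty}K_\pv^n \times \prod_{\pv\nmid\infty}(\text{lattice conditions})$), where the successive minima of the adelic analogue of $\cC_\uxi(q)$ with respect to the $\cO_K$-module $K^n$ are exactly the $L_{\uxi,j}(q)$ of \S\ref{results:ssec:L}. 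Standard adelic geometry of numbers (McFeat, Bombieri–Vaaler, Thunder) supplies the analogues of Minkowski's two theorems and, crucially, a version of Mahler's theorem on polar bodies linking the successive minima of $\cC_\uxi(q)$ and of its dual — this is what produces the companion estimate for $\uL^*_\uxi$ in \eqref{results:thmA:eq} and explains the dual formula \eqref{results:P*}.

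With that infrastructure in place, the forward direction (every $\uL_\uxi$ is within bounded distance of an $n$-system) follows the template of \cite{SS2013} and \cite{R2015}: verify that $\uL_\uxi$ satisfies a Lipschitz-type regularity in $q$ with a slope-one growth rate, that $L_{\uxi,1}+\cdots+L_{\uxi,n} = q + O(1)$ (this is where the product formula and the height bound $H(\ux)\ge1$ enter, replacing the normalization $\det=1$ used over $\bZ$), and that the "switch" behaviour (S2)–(S3) holds up to bounded error; then invoke the combinatorial approximation lemma of \cite{R2015} that replaces any such approximate system by an honest $n$-system at bounded $\sup$-distance. I expect only cosmetic changes here, the main point being to keep track of how the constants depend on $K$, $\pw$, $n$ through the normalized local degrees $d_\pv/d$ and the number of ramified/archimedean places.

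The converse direction — realizing a prescribed $n$-system $\uP$ by a point $\uxi\in K_\pw^n$ — is where the real work lies, and it is the step I expect to be the main obstacle. Over $\bQ$ this is the construction of \cite{R2015}: one builds $\uxi$ as a limit of a sequence of rational points $\ux_i\in\bZ^n$ whose heights and the quantities $\norm{\ux_i\wedge\uxi}$ are tuned to the switch numbers $q_i$ and the values $\uP(q_i)$ of a rigid $n$-system, using a careful "going up" inductive construction in which at each switch one multiplies the current minimal point by a suitable matrix to push the minima along the slope-one segment. To port this over, one must: (i) choose the auxiliary points in $\cO_K^n$ (or in a fixed fractional ideal representing a chosen ideal class) rather than $\bZ^n$, and control their heights at \emph{all} places simultaneously — the non-archimedean places away from $\pw$ contribute factors that must be kept equal to $1$ by choosing the points primitive, which requires a strong-approximation argument; (ii) run the convergence argument for $\ux_i\to\uxi$ in the $\pw$-adic topology of $K_\pw^n$, which is automatic from the $\norm{\ux_i\wedge\uxi}_\pw$ bounds once the heights are controlled; and (iii) handle both archimedean and non-archimedean $\pw$ uniformly, the latter case actually being cleaner because the balls are ultrametric. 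The reason only \emph{one} of the two conditions in \eqref{results:thmA:eq} is claimed for the constructed $\uxi$ (with the other following at the possibly worse constant $c'$) is exactly that the polar-body comparison is two-sided but not isometric; so after constructing $\uxi$ to match $\uP$ one deduces the matching of $\uL^*_\uxi$ with $\uP^*$ from the adelic Mahler inequalities, losing a constant. I would organize the paper so that the adelic geometry-of-numbers lemmas form one section, the forward direction a short one, and the inductive construction of the converse the technical heart.
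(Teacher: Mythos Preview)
Your outline matches the paper's architecture closely: adelic geometry of numbers for the forward direction, the combinatorial approximation lemma of \cite{R2015} as a bridge, and an inductive construction for the converse. Two points, however, are underdeveloped in ways that matter.

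\medskip
\textbf{Forward direction.} You correctly identify that one must check $L_{\uxi,1}+\cdots+L_{\uxi,n}=q+O(1)$ and a ``switch condition'' before invoking the combinatorial lemma. But the switch condition (condition (4) of Proposition~\ref{comb:prop}: when $M_k$ changes slope from $1$ to $0$, one has $|L_{k+1}(q)-L_k(q)|\le 2c$) is not a formal consequence of Lipschitz regularity or the product formula. The paper obtains it by introducing, for each $k$, a family of adelic convex bodies $\cC_\uxi^{(k)}(q)$ in $\tbigwedge^k K_\bA^n$, comparing these to the $k$-th \emph{compounds} $\tbigwedge^k\cC_\uxi(q)$, and applying Burger's adelic version of Mahler's compound-body theorem (Theorem~\ref{adelic:thm:Burger}) to relate the first two minima of $\tbigwedge^k\cC_\uxi(q)$ to $\lambda_1\cdots\lambda_k$ and $\lambda_1\cdots\lambda_{k-1}\lambda_{k+1}$. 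Polar bodies alone do not give this; the dual estimate for $\uL^*_\uxi$ in the paper also comes from the $(n-1)$-st compound rather than from polarity.

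\medskip
\textbf{Converse direction.} Your picture of $\uxi$ as the $\pw$-adic limit of a sequence of points $\ux_i\in\cO_K^n$ is not how the construction works, and working over $\cO_K$ will not succeed. The paper works over the ring $\cO_S$ of $S$-integers with $S=\MKinf\cup\{\pw\}$ and builds, for each switch number $q_i$, a full \emph{basis} $\ux^{(i)}=(\ux_1^{(i)},\dots,\ux_n^{(i)})$ of $\cO_S^n$ that will realize all $n$ minima of $\cC_\uxi(q)$ on $[q_i,q_{i+1})$. Passing from $\ux^{(i-1)}$ to $\ux^{(i)}$ modifies a single vector by an expression of the form $\varepsilon\ux_h^{(i-1)}+(\text{combination of others})$, where $\varepsilon\in\cO_S^*$ is an $S$-unit chosen via the unit theorem (Lemma~\ref{constr:lemma:units}) to balance norms across all places of $S$; this is singled out as the crucial new ingredient beyond \cite{R2015}, and strong approximation alone does not provide it. The bases are kept ``almost orthogonal'' at every place of $S\setminus\{\pw\}$ and, partially, at $\pw$. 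The point $\uxi$ is then obtained not as a limit of the $\ux_j^{(i)}$ but as the $\pw$-adic limit of the unit vectors $\uu_i$ orthogonal (for the dot product) to the $(n-1)$-element subsequences $\wx^{(i)}$; the distance estimates $\dist_\pw(\uu_{i-1},\uu_i)$ are controlled through heights via Lemma~\ref{constr:lemmaV}. Your sketch would need to be reworked along these lines.
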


This means in particular that the two conditions in \eqref{results:thmA:eq} are
equivalent up to the value of the constant $c$.  For $n=1$, the statement of Theorem A 
is also true but not interesting because there is a unique $1$-system 
$\uP$ on $[0,\infty)$ and it satisfies $\uP(q)=\uL_\uxi(q)=q$ and 
$\uP^*(q)=\uL^*_\uxi(q)=0$ for any $q\ge 0$ and any non-zero $\uxi\in K_\pw$.
The next result deals with extension of scalars from $\bQ$ to $K$.

\begin{thmB}
\label{results:thm:extension}
Let $n\ge 2$ be an integer, let $\pw\in \MK$ be a place of $K$ of relative degree $d_\pw=1$ over
a place $\ell$ of $\bQ$, and let $\ualpha\in K^d$ be a basis of $K$ over $\bQ$.  There
is a constant $c''>0$ with the following property.  For each non-zero $\uxi\in K_\pw^n$, the point
$\Xi=\ualpha\otimes\uxi \in\bQ_\ell^{nd}$ (defined in \eqref{intro:eq:ext}) satisfies
\begin{equation}
\label{results:thm:extension:eq}
  |L_{\Xi,d(i-1)+j}(dq)-L_{\uxi,i}(q)| \le c''
  \et
 |L^*_{\Xi,d(i-1)+j}(dq) - L^*_{\uxi,i}(q) - (d-1)q| \le c''
\end{equation}
for any choice of $q\ge 0$, $i=1,\dots,n$ and $j=1,\dots,d$.
\end{thmB}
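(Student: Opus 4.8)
The plan is to compare, for each $q \ge 0$, the convex body in $K^{nd}$ attached to $\Xi$ at parameter $dq$ with the convex body in $K^n$ attached to $\uxi$ at parameter $q$, using the basis $\ualpha$ to build an explicit $\bQ$-linear identification $K^{nd} \cong K^n$ (more precisely, of $K_\pv^{nd}$ with $(K \otimes_\bQ \bQ_\pv)^n$ at each place). First I would record the basic dictionary: a point $\uz \in K^{nd}$ written in the block form $\uz = (\uz_1,\dots,\uz_d)$ with each $\uz_k \in K^n$ corresponds to $\zeta = \sum_k \alpha_k \uz_k$, viewed as an element of $(K\otimes K)^n$ or, after choosing an embedding of the second $K$ into $K_\pw$, of $K_\pw^n$; the key algebraic point is that because $\ualpha$ is a $\bQ$-basis of $K$, the quantity $\Xi \cdot \uz \in \bQ_\ell$ (scalar product in $\bQ_\ell^{nd}$) equals, up to the choice of identification, the relevant $K$-component of $\uxi \cdot \zeta$, and similarly $\Xi \wedge \uz$ records $\uxi \wedge \zeta$. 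I would then show that $H(\uz)$ over $\bQ$ and $H(\zeta)$ over $K$ differ by a bounded multiplicative factor depending only on $K, \pw, \ualpha, n$ — this is a standard comparison of two norms on the same finite-dimensional $\bQ_\pv$-space at each place, together with the fact that at all but finitely many places the two coincide because $\ualpha$ is an integral basis there.

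Next I would translate this into a statement about the functions $D_\uxi, D^*_\uxi$ and their analogues for $\Xi$: up to bounded additive constants on the logarithmic scale, $\log D^*_\Xi(\uz)$ should equal $\log D^*_\uxi(\zeta)$ at place $\pw = \ell$ (here the archimedean-vs-nonarchimedean normalization of $\norm{\cdot}$ contributes nothing new since $d_\pw = 1$ forces $K_\pw = \bQ_\ell$), while the product over $\pv \ne \pw$ of $\norm{\uz}_\pv$ picks up the comparison between the height of $\uz$ over $\bQ$ and the height of $\zeta$ over $K$; the shift by $(d-1)q$ in the second inequality of \eqref{results:thm:extension:eq} comes precisely from the factor $\norm{\uxi}_\pw^{d_\pw/d}$ being replaced by $\norm{\Xi}_\ell$, i.e.\ from the difference between normalizing $D_\uxi$ by the $d$-th root of a $K_\pw$-norm and normalizing $D_\Xi$ by a full $\bQ_\ell$-norm — concretely, $H(\uz)/D^*_\uxi$-type ratios scale with an extra power $d$ of the parameter. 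I would then invoke the following standard linear-algebra fact about minima: if $\Lambda$ is a lattice of rank $nd$ and we filter it by a flag of $K$-rational subspaces coming from the block structure, then the successive minima of the dilated convex bodies $e^{t}\cC_\Xi(dq)$ group into $n$ packets of size $d$, each packet tracking the corresponding minimum $L_{\uxi,i}(q)$; the point is that $d$ linearly independent $K$-rational solutions $\zeta_1,\dots,\zeta_i$ in $K^n$ of the defining inequalities for $L_{\uxi,i}(q)$ yield, via multiplication by $\alpha_1,\dots,\alpha_d$, a set of $di$ linearly independent $\bQ$-rational solutions in $K^{nd}$ for $L_{\Xi, d(i-1)+j}(dq)$, and conversely $di$ linearly independent $\bQ$-rational solutions for $\Xi$ span a $K$-subspace of dimension $\ge i$, producing $i$ independent solutions for $\uxi$. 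Combining the comparison of the defining quantities with this packeting argument gives \eqref{results:thm:extension:eq} with an explicit $c''$.

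The main obstacle I anticipate is the \emph{converse direction of the packeting argument}: deducing from the existence of $di$ linearly independent $\bQ$-rational points $\uz$ with small $D^*_\Xi(\uz)$ the existence of $i$ linearly independent $K$-rational points $\zeta$ with comparably small $D^*_\uxi(\zeta)$. Passing from a $\bQ$-independent set of $di$ vectors to an $i$-dimensional $K$-subspace is free, but extracting a $K$-basis of that subspace whose \emph{individual} heights (over $K$) are controlled by the \emph{largest} of the heights (over $\bQ$) of the $\uz$'s requires a quantitative form of the fact that a $K$-subspace of $K^n$ spanned by small vectors has a small basis — essentially a Siegel-lemma / reduction-theory estimate, or the ``going-up'' half of the height comparison $H(V)$ versus $H$ of a spanning set, uniform in $q$. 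I would handle this by passing through Plücker coordinates: the wedge of the $di$ vectors $\uz$ has bounded height and its nonvanishing Plücker coordinates single out, among the $\alpha_k \uz$, a sub-collection forming a basis; controlling the height of that basis in terms of the wedge is then the standard (but slightly delicate) estimate. A secondary technical nuisance is keeping the constants uniform over all $q \ge 0$ simultaneously in both inequalities of \eqref{results:thm:extension:eq}; this is automatic once the height comparison and the $D^*$-comparison are made uniformly, since the successive minima are monotone and interleaved by the packeting.
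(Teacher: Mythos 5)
Your first two paragraphs match the paper's plan closely: the paper also introduces the $\bQ$-linear isomorphism $T\colon\bQ^{nd}\to K^n$, $T(\ux_1,\dots,\ux_d)=\alpha_1\ux_1+\cdots+\alpha_d\ux_d$, extends it to an idelic isomorphism $T_\bA$, and proves (Proposition \ref{proofBC:prop}) that $T_\bA^{-1}(\cCxi(q))$ and $\cCXi(dq)$ differ by a dilation by a fixed id\`ele; the place-by-place norm comparison, with equality at almost all finite places, is exactly what you describe. Likewise, the paper derives the $L^*$-statement from the $L$-statement via the duality relation $L^*_{\uxi,j}+L_{\uxi,k}\approx q$ (Lemma \ref{points:lemmaLc}), which is a cleaner bookkeeping of the $(d-1)q$ shift than the normalization heuristic you gave, but the same underlying phenomenon.

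Where you diverge — and where you yourself flag the main obstacle — is the passage from the convex-body comparison to the interleaving of successive minima. You propose a two-sided ``packeting'' argument in which the hard half is to extract, from $di$ $\bQ$-independent small vectors in $\bQ^{nd}$, an $i$-tuple of $K$-independent small vectors in $K^n$ with individually controlled heights; you suggest Pl\"ucker coordinates and a Siegel-lemma-type estimate. The paper sidesteps this converse entirely. Its Proposition \ref{prop:Thunder_principle} (Thunder's principle) proves only the easy half explicitly: taking $\uy_1,\dots,\uy_n$ realizing the minima of $\cK=\cCxi(q)$ and a $\bZ$-basis $\omega_1,\dots,\omega_d$ of $\cO_K$, the $dn$ points $T^{-1}(\omega_j\uy_i)$ lie in $c\,\lambda_i(\cK)\,\cC$, giving $\lambda_{d(i-1)+j}(\cC)\le c\,\lambda_i(\cK)$. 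The reverse inequality is then obtained for free from volumes: since $T_\bA$ is a $\bQ_\bA$-linear isomorphism, $\mu(\cC)\asymp\mu(\cK)$, and applying the adelic Minkowski theorem to $\cC$ and to $\cK$ separately yields $\prod_{k=1}^{dn}\lambda_k(\cC)\asymp\bigl(\prod_{i=1}^{n}\lambda_i(\cK)\bigr)^d$; combined with the one-sided bound, a simple accounting argument forces $\lambda_{d(i-1)+j}(\cC)\asymp\lambda_i(\cK)$ termwise. This volume trick replaces your proposed extraction of a controlled $K$-basis, which would require a nontrivial quantitative Siegel-lemma argument and is genuinely more delicate than you acknowledge — in particular, ``nonvanishing Pl\"ucker coordinates single out a sub-collection forming a basis'' controls a basis of the spanned \emph{$\bQ$-subspace}, not directly the $K$-subspace you need, and further work would be required to descend to $K^n$ with height control. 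If you want to complete your route, I would recommend replacing the converse packeting step by the volume comparison; as written, that step is a gap.
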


Again the two sets of conditions in \eqref{results:thm:extension:eq} in terms of the
functions $L$ and $L^*$ are equivalent up to the value of $c''$.  Our last main result
provides very singular points on projective algebraic curves of degree $2d$ defined and
irreducible over $\bQ$.

\begin{thmC}
\label{results:thm:extremal}
Suppose that $K$ embeds in $\bQ_\ell$ for a place $\ell$ of $\bQ$.  Identify $K$ with its
image and choose a basis $\ualpha\in\bQ_\ell^d$ of $K$ over $\bQ$.  Then we have
\[
\begin{aligned}
  &\sup\big\{\, \lambdahat\big((\ualpha,\xi\ualpha,\xi^2\ualpha),\bQ,\ell\big)\,;\,
            \xi\in \bQ_\ell \text{ and } [K(\xi):K]>2\big\}
   =(d\gamma^2-1)^{-1},\\
  &\sup\big\{\, \omegahat\big((\ualpha,\xi\ualpha,\xi^2\ualpha),\bQ,\ell\big)\,;\,
             \xi\in\bQ_\ell \text{ and } [K(\xi):K]>2\big\}
   =d(\gamma^2+1)-1,
\end{aligned}
\]
where $\gamma=(1+\sqrt{5})/2$ stands for the golden ratio.
\end{thmC}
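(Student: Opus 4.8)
The plan is to derive Theorem C by combining Bel's theorem (Theorem~\ref{results:thm:Bel}) over the number field $K$ with the extension-of-scalars comparison in Theorem~B, then translating the resulting $L$- and $L^*$-data back into the exponents $\lambdahat$ and $\omegahat$ over $\bQ$ via the standard Schmidt--Summerer formulas provided by Theorem~A. Concretely, fix $\xi\in\bQ_\ell$ with $[K(\xi):K]>2$ and set $\uxi=(1,\xi,\xi^2)\in K_\pw^3$, where $\pw$ is the place of $K$ induced by the embedding $K\hookrightarrow\bQ_\ell$ (so $d_\pw=1$ and $K_\pw=\bQ_\ell$ after identification). The hypothesis $[K(\xi):K]>2$ is exactly what forces $1,\xi,\xi^2$ to be $K$-linearly independent, so $\uxi\in S$ in the notation of Theorem~\ref{results:thm:Bel}. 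Taking $\ualpha=(\alpha_1,\dots,\alpha_d)$ a basis of $K$ over $\bQ$, the point $\Xi=\ualpha\otimes\uxi$ is, up to reordering coordinates, precisely $(\ualpha,\xi\ualpha,\xi^2\ualpha)\in\bQ_\ell^{3d}$; since $\uxi$ has $K$-linearly independent coordinates, $\Xi$ has $\bQ$-linearly independent coordinates (as noted in the introduction), and the reordering of coordinates changes $\uL_\Xi$ and the exponents by nothing.

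The first main step is to record the dictionary between $\lambdahat(\uxi,K,\pw)$, $\omegahat(\uxi,K,\pw)$ and the asymptotic behaviour of $\uL_\uxi$, and likewise for $\Xi$ over $\bQ$. By Theorem~A, $\uL_\uxi$ lies within bounded distance of a $3$-system $\uP$, and the four exponents of approximation are given by the Schmidt--Summerer formulas in terms of $\liminf$ and $\limsup$ of the ratios $L_{\uxi,j}(q)/q$ and $L^*_{\uxi,j}(q)/q$; in particular $\lambdahat$ is read off from $L_{\uxi,1}$ (the successive-minimum side) and $\omegahat$ from the dual $L^*_{\uxi,n}$ side. I would quote these formulas in the form already used to establish Jarn\'ik's identity \eqref{intro:eq:Jarnik} in dimension $3$. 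Then Theorem~B, with $n=3$, gives $|L_{\Xi,d(i-1)+j}(dq)-L_{\uxi,i}(q)|\le c''$ and $|L^*_{\Xi,d(i-1)+j}(dq)-L^*_{\uxi,i}(q)-(d-1)q|\le c''$ for all $i=1,2,3$, $j=1,\dots,d$. Dividing by $dq$ and letting $q\to\infty$, the bounded error disappears and one obtains exact relations between the relevant $\liminf/\limsup$ ratios for $\Xi$ and those for $\uxi$; after a short computation these become formulas expressing $\lambdahat(\Xi,\bQ,\ell)$ and $\omegahat(\Xi,\bQ,\ell)$ as explicit increasing functions of $\lambdahat(\uxi,K,\pw)$ and $\omegahat(\uxi,K,\pw)$ respectively. (This is the same mechanism that produces \eqref{intro:eq:Jarnik:Xi} from \eqref{intro:eq:Jarnik}; indeed the functional relation \eqref{intro:eq:Jarnik:Xi} should drop out as a consistency check once both exponents are expressed through $\uxi$.)

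The second main step is to perform the supremum. Since $\Xi\mapsto\lambdahat(\Xi,\bQ,\ell)$ is a \emph{monotone increasing} function of $\lambdahat(\uxi,K,\pw)$ under the correspondence above, the supremum over admissible $\xi$ on the $\Xi$ side is the image under this function of the supremum over admissible $\uxi$ on the $K$ side; and the condition $[K(\xi):K]>2$ is precisely the condition $\uxi\in S$ appearing in Bel's theorem, so that supremum equals $1/\gamma$ by Theorem~\ref{results:thm:Bel}. Substituting $\lambdahat(\uxi,K,\pw)=1/\gamma$ into the explicit formula should yield $(d\gamma^2-1)^{-1}$, and substituting the corresponding value of $\omegahat(\uxi,K,\pw)$ — which, by Jarn\'ik's identity \eqref{intro:eq:Jarnik} applied in $K_\pw^3$, equals $1+1/(1/\lambdahat-1)=1+1/(\gamma-1)=1+\gamma=\gamma^2$ — into the formula for $\omegahat(\Xi,\bQ,\ell)$ should yield $d(\gamma^2+1)-1$. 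I would also spell out explicitly why the supremum is attained in the limit: Bel's construction produces, for each $\varepsilon>0$, a transcendental $\xi$ with $\lambdahat(\uxi,K,\pw)>1/\gamma-\varepsilon$; such $\xi$ automatically satisfies $[K(\xi):K]>2$ (it is transcendental over $K$), and monotonicity transports the near-extremality to $\Xi$.

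The main obstacle I anticipate is not the supremum but the bookkeeping in the first step: getting the exact constants $d\gamma^2-1$ and $d(\gamma^2+1)-1$ requires writing down the Schmidt--Summerer formulas for $n=3$ in a convenient normalization, tracking the factor $d$ coming from the rescaling $q\mapsto dq$ together with the extra $(d-1)q$ shift in the $L^*$-relation of Theorem~B, and checking signs. A secondary subtlety is the coordinate reordering between $\Xi=\ualpha\otimes\uxi=(\alpha_1\uxi,\dots,\alpha_d\uxi)$ and the stated form $(\ualpha,\xi\ualpha,\xi^2\ualpha)$; this is harmless since permuting coordinates induces a linear isometry of $\bQ_\ell^{3d}$ compatible with all the heights and norms in \S\ref{results:ssec:norms}--\S\ref{results:ssec:canbil}, hence leaves $\lambdahat$ and $\omegahat$ unchanged, but it should be stated. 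Finally I would note that Theorem~B is proved in the excerpt only up to the equivalence of its two sets of conditions, so the $\omegahat$ half of Theorem~C can be deduced either directly from the $L^*$-relation or, more cheaply, from the $\lambdahat$ half together with Jarn\'ik's identity \eqref{intro:eq:Jarnik:Xi} for $\Xi$; I would present the direct route and remark on the shortcut.
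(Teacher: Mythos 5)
Your proposal is correct and follows essentially the same route as the paper: translate Bel's theorem for $\uxi\in S\subset K_\pw^3$ through the explicit exponent relations coming from Theorem~B (which the paper packages as Corollary \ref{proofBC:cor:exponents}), use Jarn\'{\i}k's identity \eqref{intro:eq:Jarnik} over $K$ to get $\sup\omegahat(\uxi,K,\pw)=\gamma^2$, and observe that $(\ualpha,\xi\ualpha,\xi^2\ualpha)$ is a coordinate permutation of $\Xi=\ualpha\otimes\uxi$ and that $[K(\xi):K]>2$ is exactly the $K$-linear-independence condition defining $S$. The only cosmetic difference is that you plan to re-derive the exponent formulas inline from Theorem~B and Lemma~\ref{spectra:lemma:limL/q}, whereas the paper quotes them as Corollary \ref{proofBC:cor:exponents}.
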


Since $2<\gamma^2<3$, this indeed provides very singular points 
$(\ualpha,\xi\ualpha,\xi^2\ualpha)\in\bQ_\ell^{3d}$.

%
%

\subsection{Outline of the paper}
Most of the paper is devoted to the proof of Theorem A.  This is done in two steps which 
we briefly sketch below.

We first show in section \ref{sec:points} how to attach an $n$-system
to a non-zero point $\uxi\in K_\pw^n$.  The general strategy is similar to that
of Schmidt and Summerer in \cite{SS2013}, instead that we need the adelic versions
of Minkowski's convex body theorem and of Mahler's theory of compound bodies
recalled in section \ref{sec:adelic}.  We also need a notion of distance
$\lambda(\ux,\cC)$ between a non-zero point $\ux$ of $K^n$ and an adelic convex
body $\cC$, and a related notion of adelic minima for $\cC$ defined in section \ref{sec:dilations}.
With those tools, we construct a family of adelic convex bodies $\cC_\uxi(q)$ whose
minima are closely related to the map $\uL_\uxi(q)$, and we obtain information on
this map by considering approximate compounds $\cC^{(k)}_\uxi(q)$ of $\cC_\uxi(q)$.
The existence of an $n$-system that approximates $\uL_\uxi(q)$ up to a bounded function
then follows from a combinatorial result of \cite{R2015} that is recalled in
section \ref{sec:comb}.

The combinatorial result of section \ref{sec:comb} is also used in order to attach
a point $\uxi$ to an $n$-system.  It shows that we simply have to do it for a rigid $n$-system $\uR$
with a large mesh.  The construction of $\uxi$ is done in section \ref{sec:converse}.
We work over the ring $\cO_S$ of $S$-integers of $K$ where $S$ consists of $\pw$
and all archimedean places of $K$.  We construct recursively a sequence of ordered bases
$\ux^{(i)}$ of $\cO_S^n$ over $\cO_S$, one for each of the switch points $q_i$ of $\uR$.
The basis $\ux^{(i)}$ will realize, up to bounded factors, the successive minima of the
adelic convex body $\cC_\uxi(q)$ in the interval between $q_i$ and $q_{i+1}$ for
the point $\uxi$ that we want (as illustrated for example in \cite[Figure 2]{R2015}).   
Each basis $\ux^{(i)}$, except the first, is constructed from
the preceding $\ux^{(i-1)}$ by modifying only one point of it and by moving the new point
up in the sequence, according to the behavior of the map $\uR$ between $q_{i-1}$ 
and $q_i$.  This new point is obtained by multiplying the old one by an appropriate 
$S$-unit and by adding to this product a linear combination of some other points of
$\ux^{(i-1)}$ with coefficients in $\cO_S$, in order to keep control on the
geometry of $\ux^{(i)}$ in $K_\pv^n$ for each place $\pv$ of $S$.  For the places $\pv$
distinct from $\pw$, this is done so that the image of $\ux^{(i)}$ in $K_\pv^n$
remains bounded and almost orthogonal in a sense that is defined in section \ref{sec:metric}.
Meanwhile, at the place $\pw$, the norms of the basis elements of $\ux^{(i)}$ in
$K_\pw^n$ are governed by the coordinates of $\uR(q_i)$, and the subsequence
of $\ux^{(i)}$ common to $\ux^{(i+1)}$ is almost orthogonal.  Then the lines of
$K_\pw^n$ which are orthogonal to these subsequences for the dot product converge
to a line whose generator $\uxi$ has the required property.  The local estimates that
are needed are developed in section \ref{sec:metric}, and the recursive procedure is
presented in section \ref{sec:constr}, together with crucial estimates for local norms
at the place $\pw$ expressed in terms of heights only.

With the help of Theorem A, we show in section \ref{sec:spectra} that the spectrum
of the four exponents $(\omega,\omegahat,\lambda,\lambdahat)$ is independent
of $K$ and $\pw$ and that it can be computed in terms of $n$-systems.  We also
extend the intermediate exponents of Laurent to the number field setting and derive
the same conclusion for their spectrum.

Finally, Theorems B and C are proved in section \ref{sec:proofBC} using a general
construction in adelic geometry of numbers from section \ref{sec:principle} that is
reminiscent of work of Jeff Thunder in \cite{Th2002}.

%
%

\section{Local metric estimates}
\label{sec:metric}

For the sake of generality, we fix here an arbitrary local field $L$, namely a complete field
with respect to an absolute value $|\ |$ which either is archimedean or has a discrete
valuation group $|L^\mult|$ in $\bR^\mult$.  For our applications this will be $K_\pv$
for some place $\pv$ of $K$.  If $L$ is archimedean, we identify it with $\bR$ or $\bC$
through an isometric field embedding in $\bC$ (unique up to composition with complex
conjugation).  Otherwise, we denote by $\cO=\{x\in L\,;\, |x|\le 1\}$ the valuation
ring of $L$.  In this section, we define notions of orthogonality and distance, and provide
several estimates that will be needed in later sections (cf.~\cite[\S4]{R2015}).

\subsection{Norms and orthogonality}
\label{metric:ssec:norms}
Let $k$ and $n$ be integers with $1\le k\le n$, and let $U$ be a vector space over $L$
of dimension $n$.  If $L\subseteq\bC$, we equip $U$ with the euclidean norm associated
to an inner product on $U$ (real if $L=\bR$ and complex if $L=\bC$).  Then there is a
unique inner product on $\tbigwedge^kU$ such that, for any orthonormal basis
$(\uu_1,\dots,\uu_n)$ of $U$, the products $\uu_{i_1}\wedge\cdots\wedge\uu_{i_k}$
with $1\le i_1<\cdots<i_k\le n$ form an orthonormal basis of $\tbigwedge^kU$, and
we equip this space with the associated euclidean norm.  If $L$ is not archimedean,
the ring $\cO$ is a principal ideal domain and we equip $U$ with the maximum norm 
with respect to some basis of $U$ over $L$.  Then, the unit ball $\cB$ for that norm 
is the free rank $n$ sub-$\cO$-module of $U$ generated by this basis and, for each 
$\ux\in U$, we have 
\[
 \norm{\ux}=\min\{ |a| \,;\, a\in L \text{ and } \ux\in a\cB \}.
\]
Moreover, the sub-$\cO$-module $\tbigwedge^k\cB$ of $\tbigwedge^kU$ generated by
the products of $k$ elements of $\cB$ is free of rank $N=\binom{n}{k}$, and we 
equip $\tbigwedge^kU$ with the corresponding norm.

If $V$ is a subspace of $U$, we endow it with the induced norm.  This norm is
admissible because, if $L\not\subseteq \bC$, it is associated to the sub-$\cO$-module 
$\cB\cap V$ of $V$ which is free of rank $\dim_L(V)$.  We say that 
subspaces $V_1,\dots,V_m$ of $V$ are (topologically) \emph{orthogonal} and, following 
the notation of \cite[\S2.2]{RW2017}, we write their sum as
\[
 V_1\ptop\cdots\ptop V_m
\]
if, for any choice of $(\ux_1,\dots,\ux_m)\in V_1\times\cdots\times V_m$, we have
\[
 \norm{\ux_1+\cdots+\ux_m}
 = \begin{cases}
    (\norm{\ux_1}^2+\cdots+\norm{\ux_m}^2)^{1/2} &\text{if $L\subseteq \bC$,}\\
    \max\{\norm{\ux_1}, \dots,\norm{\ux_m}\} &\text{otherwise.}
    \end{cases}
\]
When $L\subseteq \bC$, this is the usual notion and it amounts to asking that
$V_1,\dots,V_m$ are pairwise orthogonal.  However, when $L$ is non-archimedean,
the latter condition is necessary but not sufficient.  We say that a point $\ux\in U$ is
\emph{orthogonal} to a subspace $V$ of $U$ if $\langle\ux\rangle_L$ and $V$ are
orthogonal.
We say that an $m$-tuple of vectors $(\ux_1,\dots,\ux_m)\in U^m$ is \emph{orthogonal}
if the subspaces $\langle\ux_1\rangle_L,\dots,\langle\ux_m\rangle_L$ that they span
are orthogonal.  We say that it is \emph{orthonormal} if moreover they have norm $1$.
Again these are the usual notions when $L\subseteq \bC$.   When $L\not\subseteq\bC$,
an orthonormal basis of $U$ is simply a basis of $\cB$ as an $\cO$-module.  In general,
an $m$-tuple of non-zero vectors $(\ux_1,\dots,\ux_m)$ of $U$ is orthogonal (resp.\
orthonormal) if and only if it can be extended to an orthogonal (resp.\ orthonormal)
basis $(\ux_1,\dots,\ux_n)$ of $U$.  We will also need the following criterion.

\begin{lemma}
\label{metric:lemma:prod}
With the above notation, let $\ux_1,\dots,\ux_m\in U\setminus\{0\}$.  Then,
we have
\[
 \norm{\ux_1\wedge\cdots\wedge\ux_m}
   \le   \norm{\ux_1}\cdots\norm{\ux_m}
\]
with equality if and only if $(\ux_1,\dots,\ux_m)$ is orthogonal.
\end{lemma}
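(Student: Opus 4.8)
The plan is to treat the two cases $L\subseteq\bC$ and $L$ non-archimedean separately, but in both cases reduce to the situation where $(\ux_1,\dots,\ux_m)$ is linearly independent (if it is dependent, then $\ux_1\wedge\cdots\wedge\ux_m=0$, the left side is $0$, the inequality is trivial, and the tuple is not orthogonal since orthogonal tuples of non-zero vectors are linearly independent, so the ``if and only if'' also holds vacuously). So assume linear independence and let $V=\langle\ux_1,\dots,\ux_m\rangle_L$, working inside $V$ with its induced norm; this reduces us to $m=n=\dim V$.

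\emph{Archimedean case.} Here I would invoke Gram--Schmidt: write $\ux_j=\uy_j+\sum_{i<j}c_{ij}\uy_i$ where $(\uy_1,\dots,\uy_m)$ is the orthogonal family produced by the process, so that $\norm{\ux_j}^2=\norm{\uy_j}^2+\sum_{i<j}|c_{ij}|^2\norm{\uy_i}^2\ge\norm{\uy_j}^2$. Since $\ux_1\wedge\cdots\wedge\ux_m=\uy_1\wedge\cdots\wedge\uy_m$ (the triangular change of basis has determinant $1$) and an orthogonal wedge has norm $\norm{\uy_1}\cdots\norm{\uy_m}$ by the defining property of the induced inner product on $\tbigwedge^mU$, we get
\[
 \norm{\ux_1\wedge\cdots\wedge\ux_m}=\norm{\uy_1}\cdots\norm{\uy_m}\le\norm{\ux_1}\cdots\norm{\ux_m},
\]
with equality exactly when each $c_{ij}=0$, i.e.\ when $(\ux_1,\dots,\ux_m)$ is already orthogonal.

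\emph{Non-archimedean case.} This is the main obstacle, since there is no Gram--Schmidt. Instead I would use the module description: $\tbigwedge^mV$ has rank $1$, generated over $\cO$ by $\uv_1\wedge\cdots\wedge\uv_m$ for any basis $(\uv_1,\dots,\uv_m)$ of $\cB\cap V$, and by definition $\norm{\uv_1\wedge\cdots\wedge\uv_m}=1$ for such a basis. Writing $\ux_j=\sum_i a_{ij}\uv_i$ with $a_{ij}\in L$, one has $\ux_1\wedge\cdots\wedge\ux_m=\det(a_{ij})\,\uv_1\wedge\cdots\wedge\uv_m$, hence $\norm{\ux_1\wedge\cdots\wedge\ux_m}=|\det(a_{ij})|$. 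For the bound I would expand the determinant and use the ultrametric inequality together with $|a_{ij}|\le\norm{\ux_j}$ (which holds because $\ux_j\in\sum_i a_{ij}(\cB\cap V)$ forces $\norm{\ux_j}\le\max_i|a_{ij}|$, and conversely each coordinate of a vector is bounded by its norm since $\cB\cap V$ is the unit ball): each term $\prod_j|a_{\sigma(j)j}|$ is at most $\prod_j\norm{\ux_j}$, giving $|\det(a_{ij})|\le\prod_j\norm{\ux_j}$. For the equality case, I would argue that equality forces, after scaling each $\ux_j$ by a unit so that $\norm{\ux_j}=1$, that all $a_{ij}\in\cO$ and $|\det(a_{ij})|=1$, i.e.\ $(a_{ij})\in\GL_m(\cO)$; then $(\ux_1,\dots,\ux_m)$ is another $\cO$-basis of $\cB\cap V$, hence an orthonormal basis of $V$, hence orthogonal. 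Conversely, if $(\ux_1,\dots,\ux_m)$ is orthogonal it extends to an orthogonal basis of $U$, reducing to the just-treated $m=n$ case where equality is immediate from the normalization $\norm{\uv_1\wedge\cdots\wedge\uv_m}=1$. The delicate point to get right is the clean passage between ``$\norm{\ux_j}$'' and ``$\max_i|a_{ij}|$'' in both directions, which is exactly where the definition of the maximum norm attached to $\cB\cap V$ is used.
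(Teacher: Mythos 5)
The paper states Lemma~\ref{metric:lemma:prod} without proof, so there is no argument of the authors' to compare yours against; the lemma is evidently treated as standard. Your proof is correct, and both cases are handled by the right tools: Gram--Schmidt in the archimedean case (with the determinant-one triangular change of basis making the equality analysis transparent), and the $\cO$-module/determinant description in the non-archimedean case, where the scaling-to-unit-vectors argument cleanly identifies the equality case with membership in $\GL_m(\cO)$.

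One step you use silently and should make explicit is the reduction ``work inside $V=\langle\ux_1,\dots,\ux_m\rangle_L$.'' The inequality to be proved involves the norm on $\tbigwedge^m U$ (coming from $\tbigwedge^m\cB$), whereas your computation of $\norm{\ux_1\wedge\cdots\wedge\ux_m}=|\det(a_{ij})|$ uses the norm on $\tbigwedge^m V$ coming from $\tbigwedge^m(\cB\cap V)$. These coincide because $\cB\cap V$ is an $\cO$-direct summand of $\cB$ (over a DVR, $\cB/(\cB\cap V)$ embeds in $U/V$ and is hence torsion-free, so free, so the sequence splits); equivalently, any $\cO$-basis of $\cB\cap V$ extends to an $\cO$-basis of $\cB$, from which $\tbigwedge^m(\cB\cap V)=\tbigwedge^m\cB\cap\tbigwedge^mV$ follows. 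The same extension observation is what you invoke when you say ``if $(\ux_1,\dots,\ux_m)$ is orthogonal it extends to an orthogonal basis of $U$,'' so stating it once up front would tighten both directions. With that sentence added, the proof is complete.
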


On $L^n$ we have the canonical bilinear form or dot product given by
\eqref{results:eq:bil} for any pair of points $\ux=(x_1,\dots,x_n)$ and
$\uy=(y_1,\dots,y_n)$ in $L^n$.  If $L\subseteq \bC$, this is connected with
 the inner product
\[
 (\ux,\uy):=\ux\cdot\uybar=x_1\ybar_1+\cdots+x_n\ybar_n
\]
where $\uybar=(\ybar_1,\dots,\ybar_n)$ denotes the complex conjugate of $\uy$,
and we equip $L^n$ with the corresponding euclidean norm.  Otherwise, we
equip $L^n$ with the maximum norm, so that its unit ball is $\cO^n$.  When
$L=K_\pv$, this agrees with the definitions of section \ref{results:ssec:norms}
both for the norm on $L^n$ and the corresponding norm on $\tbigwedge^kL^n$.
We conclude with the following observation.

\begin{lemma}
\label{metric:lemma:dual}
Let $(\uu_1,\dots,\uu_n)$ be an orthonormal basis of $L^n$.  The dual
basis $(\uu^*_1,\dots,\uu^*_n)$ of $L^n$ with respect to the dot product is
also orthonormal.
\end{lemma}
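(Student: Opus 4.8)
The statement splits naturally according to whether $L$ is archimedean or not, and in the archimedean case it is classical, so the plan is to dispatch that case quickly and concentrate on the non-archimedean one. When $L\subseteq\bC$, the dual basis $(\uu_1^*,\dots,\uu_n^*)$ with respect to the symmetric bilinear form $\ux\cdot\uy$ is related to the dual basis with respect to the inner product $(\ux,\uy)=\ux\cdot\uybar$ by complex conjugation of coordinates, which is an isometry; since $(\uu_1,\dots,\uu_n)$ is orthonormal, it equals its own inner-product dual up to conjugation, and one concludes $\uu_i^*=\uubar_i$, which is again orthonormal. (If $L=\bR$ there is nothing to conjugate and $\uu_i^*=\uu_i$.)

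\textbf{The non-archimedean case.} Here the key point is the characterization recalled just above Lemma \ref{metric:lemma:dual}: an orthonormal basis of $L^n$ is exactly an $\cO$-basis of the unit ball $\cO^n$, i.e. a matrix in $\GL_n(\cO)$. So let $M$ be the $n\times n$ matrix over $L$ whose rows are $\uu_1,\dots,\uu_n$; the hypothesis that $(\uu_1,\dots,\uu_n)$ is orthonormal means precisely $M\in\GL_n(\cO)$, that is, $M$ has entries in $\cO$ and $\det M\in\cO^\mult$ (a unit). The dual basis with respect to the dot product is characterized by $\uu_i\cdot\uu_j^*=\delta_{ij}$, which in matrix form says that the matrix $M^*$ whose rows are $\uu_1^*,\dots,\uu_n^*$ satisfies $M\,(M^*)^{t}=I_n$, i.e. $M^*=(M^{-1})^{t}$. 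Since $M\in\GL_n(\cO)$ and $\GL_n(\cO)$ is a group closed under transpose and inverse, we get $M^*\in\GL_n(\cO)$ as well, hence $(\uu_1^*,\dots,\uu_n^*)$ is again an $\cO$-basis of $\cO^n$, i.e. orthonormal. This also covers the archimedean case uniformly if one prefers: over $\bR$ the same matrix computation gives $M^*=(M^{-1})^t$ with $M$ orthogonal, so $M^*=M$; over $\bC$ one must instead recall that the relevant norm comes from the Hermitian form, so the clean matrix statement is about $M\bar M^t=I$, and then $M^*=\bar M$ is the correct reading — which is why it is cleanest to treat $L\subseteq\bC$ separately.

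\textbf{Main obstacle.} There is no real analytic difficulty; the only thing to be careful about is the bookkeeping distinction, in the archimedean case, between the \emph{symmetric} bilinear form $\ux\cdot\uy$ (with respect to which ``dual basis'' is defined in the statement) and the \emph{Hermitian} inner product $(\ux,\uy)=\ux\cdot\uybar$ (with respect to which ``orthonormal'' is defined), since these differ by complex conjugation. Once one notes that coordinatewise complex conjugation is a norm-preserving map of $L^n$ that sends orthonormal bases to orthonormal bases, this is immediate. In the non-archimedean case the proof is entirely the observation $M\in\GL_n(\cO)\Rightarrow (M^{-1})^t\in\GL_n(\cO)$, together with the identification of ``orthonormal basis'' with ``element of $\GL_n(\cO)$'' provided by the discussion preceding the lemma, so I would simply cite that identification and Lemma \ref{metric:lemma:prod} (or the displayed formula $\norm{\ux}=\min\{|a|\,;\,\ux\in a\cB\}$) for the ``orthonormal $\Leftrightarrow$ $\cO$-basis of the unit ball'' equivalence and be done.
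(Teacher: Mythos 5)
Your proof is correct and follows essentially the same two-case strategy as the paper: in the archimedean case, observe that $\uu_j^*=\overline{\uu}_j$ and that conjugation preserves orthonormality; in the non-archimedean case, reduce to the fact that a basis of $\cO^n$ as an $\cO$-module has dual basis again a basis of $\cO^n$. Your matrix formulation $M^*=(M^{-1})^t\in\GL_n(\cO)$ merely makes explicit what the paper leaves implicit in its one-line non-archimedean argument.
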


\begin{proof}
If $L\subseteq \bC$, then $\uu_j^*$ is the complex conjugate 
$\overline{\uu}_j$ of $\uu_j$ for each $j=1,\dots,n$, thus
$(\uu_i^*,\uu_j^*)=\overline{(\uu_i,\uu_j)}=\delta_{i,j}$ for each
$i,j\in\{1,\dots,n\}$, and we are done.  If $L\not\subseteq \bC$, then
$(\uu_1,\dots,\uu_n)$ is a basis of $\cO^n$ as an $\cO$-module, thus
$(\uu_1^*,\dots,\uu_n^*)$ is also a basis of $\cO^n$ as needed.
\end{proof}


\subsection{Distances}
\label{metric:ssec:dist}
Again, let $1\le k\le n$ be integers and let $U$ be a vector space over $L$
of dimension $n$ equipped with an admissible norm, as above.  By the choice
of norm on $L^n$,
a basis $(\uu_1,\dots,\uu_n)$ of $U$ is orthonormal if and only if the linear
map from $L^n$ to $U$ sending a point $(a_1,\dots,a_n)\in L^n$ to
$a_1\uu_1+\cdots+a_n\uu_n\in U$ is an isometry.   We consider the
following notions of distance.

\begin{definition}
\label{metric:def:dist}
The (projective) distance
between non-zero points $\ux$ and $\uy$ of $U$, or between the lines
$\langle\ux\rangle_L$ and $\langle\uy\rangle_L$ that they generate, is
\[
 \dist(\ux,\uy)
  :=\dist(\langle\ux\rangle_L,\langle\uy\rangle_L)
  :=\frac{\norm{\ux\wedge\uy}}{\norm{\ux}\norm{\uy}} \in [0,1].
\]
If $V_1$, $V_2$ are subspaces of $U$ of the same dimension $k$, then
$\tbigwedge^kV_1$, $\tbigwedge^kV_2$ are one-dimensional subspaces of
$\tbigwedge^kU$, and we define
\[
 \dist(V_1,V_2)=\dist\big(\tbigwedge^kV_1,\tbigwedge^kV_2\big).
\]
\end{definition}

As Schmidt notes in \cite[\S8]{Sc1967}, the distance between non-zero points of $U$
satisfies the triangle inequality when $L\subseteq \bC$.  When $L$ is
non-archimedean, we state below a stronger inequality which we leave to the reader.

\begin{lemma}
\label{metric:lemma:triangle}
For any non-zero points $\ux_1,\ux_2,\ux_3\in U$, we have
\[
 \dist(\ux_1,\ux_3)
  \le\begin{cases}
       \dist(\ux_1,\ux_2)+\dist(\ux_2,\ux_3) &\text{if $L\subseteq\bC$,}\\
       \max\{\dist(\ux_1,\ux_2),\,\dist(\ux_2,\ux_3)\} &\text{else.}
       \end{cases}
\]
The same holds if $\ux_j$ is replaced by a subspace $V_j$ of $U$ of
dimension $k\ge 1$ for $j=1,2,3$.
\end{lemma}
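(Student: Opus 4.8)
The plan is to reduce the statement about subspaces of dimension $k$ to the statement about points (i.e.\ lines) by passing to $k$-th exterior powers, and then to prove the point version by a direct computation in a well-chosen orthonormal basis. Concretely, once the claim is established for non-zero points in an arbitrary admissibly normed space $U$, the general case follows immediately: if $V_1,V_2,V_3$ are $k$-dimensional subspaces, then $\dist(V_i,V_j)=\dist(\tbigwedge^kV_i,\tbigwedge^kV_j)$ by Definition \ref{metric:def:dist}, and $\tbigwedge^kV_i$ is a line in $\tbigwedge^kU$, which carries an admissible norm by the construction in \S\ref{metric:ssec:norms}; so the point inequality applied in $\tbigwedge^kU$ gives exactly what is wanted.

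For the point version, I would treat the two cases separately. Fix non-zero $\ux_1,\ux_2,\ux_3\in U$; scaling does not change any of the distances, so we may assume $\norm{\ux_2}=1$. Write $\ux_1 = a\ux_2 + \ux_1'$ and $\ux_3 = b\ux_2 + \ux_3'$ where $\ux_1',\ux_3'$ lie in the orthogonal complement of $\langle\ux_2\rangle_L$ (this decomposition exists: complete $\ux_2$ to an orthonormal basis and split off the first coordinate). Then $\norm{\ux_1\wedge\ux_2} = \norm{\ux_1'\wedge\ux_2} = \norm{\ux_1'}$ by orthogonality and Lemma \ref{metric:lemma:prod} (here $\norm{\ux_2}=1$), so $\dist(\ux_1,\ux_2) = \norm{\ux_1'}/\norm{\ux_1}$, and likewise $\dist(\ux_2,\ux_3) = \norm{\ux_3'}/\norm{\ux_3}$. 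Moreover $\ux_1\wedge\ux_3 = (a\ux_3' - b\ux_1')\wedge\ux_2 + \ux_1'\wedge\ux_3'$, and the two terms lie in orthogonal subspaces of $\tbigwedge^2U$ (one meets $\ux_2$, the other lies in $\tbigwedge^2$ of the complement), so $\norm{\ux_1\wedge\ux_3} \ge \norm{(a\ux_3'-b\ux_1')\wedge\ux_2} = \norm{a\ux_3' - b\ux_1'}$ when $L\subseteq\bC$, and $\norm{\ux_1\wedge\ux_3}=\max\{\norm{a\ux_3'-b\ux_1'},\norm{\ux_1'\wedge\ux_3'}\}$ otherwise; in the non-archimedean case the second coordinate is harmless since $\norm{\ux_1'\wedge\ux_3'}\le\norm{\ux_1'}\norm{\ux_3'}\le\max\{\norm{\ux_1'},\norm{\ux_3'}\}\cdot\min\{\dots\}$ — more cleanly, one just bounds everything by $\max\{\norm{\ux_1'},\norm{\ux_3'}\}$. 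The point is now that $\norm{a\ux_3'-b\ux_1'}\le C\,(\norm{\ux_1'}+\norm{\ux_3'})$ or $\le\max$ accordingly, using $|a|\le\norm{\ux_1}$ and $|b|\le\norm{\ux_3}$ (which hold because $\ux_2$ has norm $1$ and the decomposition is orthogonal). Dividing by $\norm{\ux_1}\norm{\ux_3}$ and using the two identities above for $\dist(\ux_1,\ux_2)$ and $\dist(\ux_2,\ux_3)$ yields the bound.

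The main obstacle — and the reason the authors say they "leave it to the reader" — is keeping the bookkeeping honest in the non-archimedean case, where "orthogonal" is strictly stronger than "pairwise perpendicular" and one must use the precise definition of $\ptop$ from \S\ref{metric:ssec:norms} at each step rather than Euclidean intuition; in particular one has to check that the decomposition $U = \langle\ux_2\rangle_L \ptop \langle\ux_2\rangle_L^\perp$ is genuinely orthogonal in the strong sense (it is, by extending an orthonormal vector to an orthonormal basis) and that the two-term splitting of $\ux_1\wedge\ux_3$ in $\tbigwedge^2U$ is orthogonal in the strong sense as well. Once those structural facts are in place, both cases are a two-line estimate. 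I expect the archimedean case to be exactly Schmidt's argument in \cite[\S8]{Sc1967}, cited just above the lemma, so the only genuinely new content is the ultrametric refinement, which is routine given the machinery of \S\ref{metric:ssec:norms}.
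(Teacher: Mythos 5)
The paper offers no proof of this lemma (it is explicitly ``left to the reader''), so there is nothing to compare against; I am assessing your argument on its own. Your reduction from $k$-dimensional subspaces to lines via $\tbigwedge^k$, and your normalization $\norm{\ux_2}=1$ together with the orthogonal decomposition $\ux_i=a_i\ux_2+\ux_i'$, are correct and are the natural setup. The non-archimedean case then closes as you indicate, \emph{provided} the final parenthetical is fixed: dividing $\max\{\norm{\ux_1'},\norm{\ux_3'}\}$ by $\norm{\ux_1}\,\norm{\ux_3}$ does \emph{not} give $\max\{\dist(\ux_1,\ux_2),\dist(\ux_2,\ux_3)\}$. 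The bound your chain of inequalities actually yields, and the one you need, is $\norm{\ux_1\wedge\ux_3}\le\max\{\norm{\ux_3}\,\norm{\ux_1'},\ \norm{\ux_1}\,\norm{\ux_3'}\}$, using $|a|\le\norm{\ux_1}$, $|b|\le\norm{\ux_3}$, $\norm{\ux_1'}\le\norm{\ux_1}$, $\norm{\ux_3'}\le\norm{\ux_3}$, all of which come from the strong orthogonality of the decomposition.

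The archimedean sketch has a real gap. You assert $\norm{\ux_1\wedge\ux_3}\ge\norm{(a\ux_3'-b\ux_1')\wedge\ux_2}$, a lower bound, when what is needed is an upper bound on $\norm{\ux_1\wedge\ux_3}$; the useful fact is the Pythagorean identity $\norm{\ux_1\wedge\ux_3}^2=\norm{(a\ux_3'-b\ux_1')\wedge\ux_2}^2+\norm{\ux_1'\wedge\ux_3'}^2$, and one must then show this square root is $\le \norm{\ux_3}\,\norm{\ux_1'}+\norm{\ux_1}\,\norm{\ux_3'}$, a short but genuinely non-trivial elementary inequality --- and a ``constant $C$'' version would not suffice, since the lemma asserts the exact triangle inequality. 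The cleanest way to close both cases at once is to abandon the symmetric decomposition and write $\ux_1\wedge\ux_3=\ux_1'\wedge\ux_3+a\,\ux_2\wedge\ux_3$; then the (ultrametric or usual) subadditivity of the norm, Lemma \ref{metric:lemma:prod}, the equality $\norm{\ux_2\wedge\ux_3}=\norm{\ux_3'}$, and $|a|\le\norm{\ux_1}$ give $\norm{\ux_1\wedge\ux_3}\le\norm{\ux_1'}\,\norm{\ux_3}+\norm{\ux_1}\,\norm{\ux_3'}$ (with $\max$ replacing $+$ in the non-archimedean case), which after dividing by $\norm{\ux_1}\,\norm{\ux_3}$ is precisely the lemma.
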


The second assertion of the lemma follows from the first when $k=1$. The general
case where $k>1$ follows by considering the lines $\tbigwedge^k V_j$ inside
$\tbigwedge^k U$.

For any subspace $V$ of $U$, there is a subspace $W$ of $U$ such that
$U=W\ptop V$.  It suffices to choose an orthonormal basis $(\uu_1,\dots,\uu_k)$
for $V$ (empty if $V=0$), to complete it to an orthonormal basis
$(\uu_1,\dots,\uu_n)$ of $U$, and to take $W=\langle\uu_{k+1},\dots,
\uu_n\rangle_L$.  So, we may write any $\ux\in U$ in the form $\ux=\uw+\uv$
with $\uw\in W$ orthogonal to $V$ and $\uv\in V$.  If $L\subseteq \bC$, this
decomposition is unique.  In general, it is not unique but the next result shows
that $\norm{\uw}$ is independent of the decomposition (upon noting that $\uw=0$
when $\ux=0$).

\begin{lemma}
\label{metric:lemmaC}
Let $V$ be a non-zero subspace of $U$ and let $\ux \in U\setminus\{0\}$.
Write $\ux=\uw+\uv$ with $\uw$ orthogonal to $V$ and $\uv\in V$.  Then we
have
\begin{equation}
 \label{metric:lemmaC:eq}
 \dist(\ux,V)
   := \min\big\{\dist(\ux,\uy)\,;\,\uy\in V\setminus\{0\}\big\}
   = \frac{\norm{\uw}}{\norm{\ux}}.
\end{equation}
\end{lemma}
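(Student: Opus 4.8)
The plan is to establish the two inequalities $\dist(\ux,V)\le \norm{\uw}/\norm{\ux}$ and $\dist(\ux,V)\ge \norm{\uw}/\norm{\ux}$ separately, treating the archimedean and non-archimedean cases in parallel. For the upper bound, I would simply take $\uy=\uv$ (assuming $\uv\neq 0$; the case $\uv=0$ gives $\dist(\ux,V)=1=\norm{\uw}/\norm{\ux}$ since then $\ux=\uw$ is orthogonal to $V$, and one checks directly that $\dist(\ux,\uz)=1$ for all $\uz\in V\setminus\{0\}$ using $\norm{\ux\wedge\uz}=\norm{\ux}\norm{\uz}$, which holds because $(\ux,\uz)$ is an orthogonal pair by Lemma \ref{metric:lemma:prod}). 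With $\uy=\uv$ we have $\ux\wedge\uv=\uw\wedge\uv$, and since $\uw$ is orthogonal to $V\ni\uv$, the pair $(\uw,\uv)$ is orthogonal, so Lemma \ref{metric:lemma:prod} gives $\norm{\uw\wedge\uv}=\norm{\uw}\norm{\uv}$; hence $\dist(\ux,\uv)=\norm{\uw}\norm{\uv}/(\norm{\ux}\norm{\uv})=\norm{\uw}/\norm{\ux}$, which already shows $\dist(\ux,V)\le\norm{\uw}/\norm{\ux}$.

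For the lower bound, let $\uy\in V\setminus\{0\}$ be arbitrary and write $\ux\wedge\uy=(\uw+\uv)\wedge\uy=\uw\wedge\uy$, using $\uv\wedge\uy\in\tbigwedge^2 V$ — wait, more carefully: I want $\norm{\ux\wedge\uy}$ from below. Here is the cleaner route: choose an orthonormal basis $(\uu_1,\dots,\uu_k)$ of $V$ completed to an orthonormal basis $(\uu_1,\dots,\uu_n)$ of $U$, with $W=\langle\uu_{k+1},\dots,\uu_n\rangle_L$, so that writing $\ux$ in these coordinates we have $\uw=$ the projection onto $W$ and $\norm{\uw}=\norm{(x_{k+1},\dots,x_n)}$ in the standard norm on $L^{n-k}$ (archimedean: Euclidean; non-archimedean: max). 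Then for any $\uy\in V$, the vector $\uw$ is an $L$-linear combination of the wedges $\ux\wedge\uu_i$ modulo $\tbigwedge^2 V$, and more precisely one computes that the component of $\ux\wedge\uy$ along the planes $\uu_i\wedge\uu_j$ with $i\le k<j$ recovers a scaled copy of $\uy\otimes\uw$; taking norms and using that the wedge norm dominates the norm of any block of Plücker coordinates gives $\norm{\ux\wedge\uy}\ge\norm{\uw}\,\norm{\uy}$ (in the non-archimedean case this is the max over those coordinates; in the archimedean case it is the contribution of that orthogonal block). Dividing by $\norm{\ux}\norm{\uy}$ yields $\dist(\ux,\uy)\ge\norm{\uw}/\norm{\ux}$ for every $\uy\in V\setminus\{0\}$, hence the reverse inequality.

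The main obstacle is making the lower-bound argument clean and uniform across both cases without drowning in Plücker-coordinate bookkeeping. In the archimedean case one can bypass coordinates entirely: decompose $\uy\in V$ and note $\ux\wedge\uy$ has $\uw\wedge\uy$ as its orthogonal projection onto $W\wedge V\subseteq\tbigwedge^2 U$ (orthogonality of $W$ and $V$ makes $W\wedge V\perp V\wedge V$), so $\norm{\ux\wedge\uy}\ge\norm{\uw\wedge\uy}=\norm{\uw}\norm{\uy}$ by Lemma \ref{metric:lemma:prod} applied to the orthogonal pair $(\uw,\uy)$ (valid since $\uw\perp V\ni\uy$). In the non-archimedean case the slick substitute is: $\cB\cap V$ is generated by $(\uu_1,\dots,\uu_k)$ and $(\uu_1,\dots,\uu_n)$ is a basis of $\cB=\cO^n$-up-to-isometry, so writing $\uy=\sum_{i\le k} y_i\uu_i$ and $\ux=\uw+\sum_{i\le k}x_i\uu_i$ we get $\ux\wedge\uy=\uw\wedge\uy + (\text{terms in }\tbigwedge^2 V)$ with the two parts lying in complementary free $\cO$-summands of $\tbigwedge^2\cB$, so $\norm{\ux\wedge\uy}=\max\{\norm{\uw\wedge\uy},\,\|{\cdots}\|\}\ge\norm{\uw\wedge\uy}=\norm{\uw}\norm{\uy}$, again by Lemma \ref{metric:lemma:prod} on the orthogonal pair $(\uw,\uy)$. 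Either way the inequality $\norm{\ux\wedge\uy}\ge\norm{\uw}\norm{\uy}$ is the heart of the matter, and combined with the upper bound from the first paragraph it completes the proof.
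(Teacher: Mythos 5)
Your final argument coincides with the paper's: the crux in both is that $\uw\wedge\uy$ and $\uv\wedge\uy$ are orthogonal in $\tbigwedge^2 U$ (because $\uw$ is orthogonal to $V$, hence to both $\uv$ and $\uy$), giving $\norm{\ux\wedge\uy}=\norm{\uw\wedge\uy+\uv\wedge\uy}\ge\norm{\uw\wedge\uy}=\norm{\uw}\,\norm{\uy}$ via Lemma~\ref{metric:lemma:prod}. The paper does this in one chain of (in)equalities, observing equality throughout when $\uv=0$ or $\uy=\uv\neq 0$, whereas you take a Pl\"ucker-coordinate detour and treat $\uv=0$ separately before arriving at the same observation; the content is the same.
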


\begin{proof}
Let $\uy\in V\setminus\{0\}$.  Since $\uw$ is orthogonal to $V$, it is orthogonal
to $\uy$.  So, the products $\uw\wedge\uy$ and $\uv\wedge\uy$ are orthogonal
in $\bigwedge^2U$.  Consequently, we find
\[
 \dist(\ux,\uy)
  = \frac{\norm{\uw\wedge\uy+\uv\wedge\uy}}{\norm{\ux}\,\norm{\uy}}
  \ge \frac{\norm{\uw\wedge\uy}}{\norm{\ux}\,\norm{\uy}}
  = \frac{\norm{\uw}}{\norm{\ux}},
\]
with equality everywhere if $\uv=0$ or if $\uy=\uv\neq 0$.
\end{proof}

By Lemma \ref{metric:lemma:prod},  non-zero points $\ux$, $\uy$ of $U$
are orthogonal if and only if $\dist(\ux,\uy)=1$.  Thus with the notation of Lemma
\ref{metric:lemmaC}, the point $\ux$ is orthogonal to $V$ if and only if
$\dist(\ux,V)=1$.  Moreover, we have $\ux\in V$ if and only if $\dist(\ux,V)=0$.
We also note the following alternative formula for $\dist(\ux,V)$.

\begin{lemma}
\label{metric:lemmaCbis}
Let $\ux$ and $V$ be as in Lemma \ref{metric:lemmaC} and let $(\uy_1,\dots,\uy_k)$
be a basis of $V$.  Then we have
\begin{equation}
 \label{metric:lemmaCbis:eq}
 \dist(\ux,V)
   = \frac{\norm{\ux\wedge\uy_1\wedge\cdots\wedge\uy_k}}%
      {\norm{\ux}\norm{\uy_1\wedge\cdots\wedge\uy_k}}.
\end{equation}
\end{lemma}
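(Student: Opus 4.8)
The plan is to reduce the stated formula \eqref{metric:lemmaCbis:eq} to the already-proven formula \eqref{metric:lemmaC:eq} of Lemma \ref{metric:lemmaC}, by comparing the two numerators. Write $\ux=\uw+\uv$ with $\uw$ orthogonal to $V$ and $\uv\in V$, as in Lemma \ref{metric:lemmaC}. First I would observe that since $\uv\in V=\langle\uy_1,\dots,\uy_k\rangle_L$, the wedge $\uv\wedge\uy_1\wedge\cdots\wedge\uy_k$ vanishes, so that
\[
 \ux\wedge\uy_1\wedge\cdots\wedge\uy_k=\uw\wedge\uy_1\wedge\cdots\wedge\uy_k.
\]
Thus it suffices to show $\norm{\uw\wedge\uy_1\wedge\cdots\wedge\uy_k}=\norm{\uw}\cdot\norm{\uy_1\wedge\cdots\wedge\uy_k}$; plugging this into \eqref{metric:lemmaCbis:eq} gives $\norm{\uw}/\norm{\ux}$, which is exactly $\dist(\ux,V)$ by Lemma \ref{metric:lemmaC}.

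The key step is therefore this norm identity, and it follows from orthogonality together with Lemma \ref{metric:lemma:prod}. Indeed, choose an orthonormal basis $(\uz_1,\dots,\uz_k)$ of $V$; since $\uw$ is orthogonal to $V$, the $(k+1)$-tuple $(\uw,\uz_1,\dots,\uz_k)$ is orthogonal (it extends to an orthogonal basis of $U$), so by the equality case of Lemma \ref{metric:lemma:prod} we get $\norm{\uw\wedge\uz_1\wedge\cdots\wedge\uz_k}=\norm{\uw}\cdot\norm{\uz_1}\cdots\norm{\uz_k}=\norm{\uw}$, and likewise $\norm{\uz_1\wedge\cdots\wedge\uz_k}=1$. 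Passing from the orthonormal basis $(\uz_1,\dots,\uz_k)$ to the given basis $(\uy_1,\dots,\uy_k)$ of $V$ multiplies both $\uy_1\wedge\cdots\wedge\uy_k$ and $\uw\wedge\uy_1\wedge\cdots\wedge\uy_k$ by the same scalar (the determinant of the change-of-basis matrix on $\tbigwedge^kV$), so the ratio $\norm{\uw\wedge\uy_1\wedge\cdots\wedge\uy_k}/\norm{\uy_1\wedge\cdots\wedge\uy_k}$ equals $\norm{\uw}$, as wanted.

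The only mild subtlety — and the one point I would check carefully rather than assert — is the non-archimedean case, where "pairwise orthogonal" does not imply "orthogonal": here I must use that $\uw$ is orthogonal to the whole subspace $V$ in the strong sense of section \ref{metric:ssec:norms}, so that an orthonormal basis of $\langle\uw\rangle_L$ together with one of $V$ really does assemble into an orthonormal system, which is what licenses the application of the equality case of Lemma \ref{metric:lemma:prod}. With that in hand the argument is uniform across all local fields $L$, and the proof is complete.
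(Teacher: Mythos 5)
Your proposal is correct and follows essentially the same route as the paper: both decompose $\ux=\uw+\uv$ via Lemma~\ref{metric:lemmaC}, use $\uv\in V$ to replace $\ux$ by $\uw$ in the top wedge product, and then invoke the equality case of Lemma~\ref{metric:lemma:prod} after reducing to an orthogonal (you: orthonormal) basis of $V$. The only cosmetic difference is that the paper simply says "we may assume the basis is orthogonal" since the right-hand side is basis-independent, whereas you spell this out via an explicit change-of-basis determinant argument; both are sound, and your remark about the strong orthogonality notion in the non-archimedean case is exactly the point that makes the application of Lemma~\ref{metric:lemma:prod} legitimate.
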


\begin{proof}
Since the right-hand side of \eqref{metric:lemmaCbis:eq} is independent of the
choice of $(\uy_1,\dots,\uy_k)$, we may assume that this basis is orthogonal.
Then, for the decomposition $\ux=\uw+\uv$ of Lemma \ref{metric:lemmaC},
the sequence $(\uw,\uy_1,\dots,\uy_k)$ is also orthogonal.  Thus, using Lemma
\ref{metric:lemma:prod}, we find
\[
 \norm{\ux\wedge\uy_1\wedge\cdots\wedge\uy_k}
  = \norm{\uw\wedge\uy_1\wedge\cdots\wedge\uy_k}
  = \norm{\uw}\,\norm{\uy_1\wedge\cdots\wedge\uy_k}
\]
and so the right-hand side of \eqref{metric:lemmaCbis:eq}  reduces to
$\norm{\uw}/\norm{\ux}=\dist(\ux,V)$.
\end{proof}

For the next crucial lemma, we apply the previous results with $U=L^n$.

\begin{lemma}
\label{metric:lemmaD}
Suppose that $n\ge m\ge 2$ for an integer $m$.  Let $V_1$, $V_2$ be subspaces
of $L^n$ of dimension $m-1$ for which $W=V_1\cap V_2$ has dimension at
least $m-2$.  Then we have
\begin{equation}
\label{metric:lemmaD:eq1}
 \dist(V_1,V_2)=\max\big\{\dist(\ux,V_2)\,;\,\ux\in V_1\setminus\{0\}\,\big\}.
\end{equation}
Moreover, if $V_1\neq V_2$, if $(\uw_1,\dots,\uw_{m-2})$ is a basis of $W$,
and if $\uv_i\in V_i\setminus W$ for $i=1,2$, then upon writing
$\omega=\uw_1\wedge\cdots\wedge\uw_{m-2}$ we have
\begin{equation}
\label{metric:lemmaD:eq2}
 \dist(V_1,V_2)
   =\frac{\norm{\omega}\,\norm{\omega\wedge\uv_1\wedge\uv_2}}%
      {\norm{\omega\wedge\uv_1}\,\norm{\omega\wedge\uv_2}}.
\end{equation}
\end{lemma}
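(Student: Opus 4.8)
I would prove Lemma \ref{metric:lemmaD} in two stages, establishing \eqref{metric:lemmaD:eq1} first and then deriving \eqref{metric:lemmaD:eq2} from it. For the first identity, pick any orthonormal basis $(\uu_1,\dots,\uu_{m-2})$ of $W$ and extend it by one vector $\uu_{m-1}^{(i)}$ so that $(\uu_1,\dots,\uu_{m-2},\uu_{m-1}^{(i)})$ is an orthonormal basis of $V_i$, for $i=1,2$ (possible by the orthonormalization discussion preceding Lemma \ref{metric:lemmaC}, applied inside $V_i$ to the subspace $W$). Writing $\omega=\uu_1\wedge\cdots\wedge\uu_{m-2}$, we then have $\tbigwedge^{m-1}V_i=\langle \omega\wedge\uu_{m-1}^{(i)}\rangle_L$ with $\omega\wedge\uu_{m-1}^{(i)}$ of norm $1$ by Lemma \ref{metric:lemma:prod}, so by definition
\[
 \dist(V_1,V_2)=\norm{(\omega\wedge\uu_{m-1}^{(1)})\wedge(\omega\wedge\uu_{m-1}^{(2)})}
 =\norm{\omega\wedge\uu_{m-1}^{(1)}\wedge\uu_{m-1}^{(2)}}\cdot\norm{\omega},
\]
using that $\omega\wedge\omega=0$ (so the wedge of the two $(m-1)$-vectors equals $\omega\wedge\uu_{m-1}^{(1)}\wedge\uu_{m-1}^{(2)}$ up to sign), and $\norm{\omega}=1$. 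On the other hand, by Lemma \ref{metric:lemmaCbis} applied with the orthogonal basis $(\uu_1,\dots,\uu_{m-2},\uu_{m-1}^{(1)})$ of $V_1$, for any $\ux\in V_1\setminus\{0\}$,
\[
 \dist(\ux,V_2)=\frac{\norm{\ux\wedge\uu_1\wedge\cdots\wedge\uu_{m-2}\wedge\uu_{m-1}^{(2)}}}{\norm{\ux}\,\norm{\uu_1\wedge\cdots\wedge\uu_{m-2}\wedge\uu_{m-1}^{(2)}}}
 =\frac{\norm{\ux\wedge\omega\wedge\uu_{m-1}^{(2)}}}{\norm{\ux}}.
\]
Write $\ux=\uw+t\,\uu_{m-1}^{(1)}$ with $\uw\in W$ and $t\in L$; then $\ux\wedge\omega\wedge\uu_{m-1}^{(2)}=t\,\uu_{m-1}^{(1)}\wedge\omega\wedge\uu_{m-1}^{(2)}$ (the $\uw$-term dies against $\omega$), so $\dist(\ux,V_2)=(|t|/\norm{\ux})\,\norm{\omega\wedge\uu_{m-1}^{(1)}\wedge\uu_{m-1}^{(2)}}$. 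Since $(\uu_1,\dots,\uu_{m-2},\uu_{m-1}^{(1)})$ is orthonormal, the ratio $|t|/\norm{\ux}$ lies in $[0,1]$ and attains $1$ precisely when $\uw=0$; taking the max over $\ux\in V_1\setminus\{0\}$ yields exactly $\norm{\omega\wedge\uu_{m-1}^{(1)}\wedge\uu_{m-1}^{(2)}}$, which matches $\dist(V_1,V_2)$ computed above. This proves \eqref{metric:lemmaD:eq1}.

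For \eqref{metric:lemmaD:eq2}, given an arbitrary basis $(\uw_1,\dots,\uw_{m-2})$ of $W$ and arbitrary $\uv_i\in V_i\setminus W$, set $\omega=\uw_1\wedge\cdots\wedge\uw_{m-2}$. The point is that $\dist(V_1,V_2)$, $\norm{\omega\wedge\uv_1\wedge\uv_2}/(\norm{\omega\wedge\uv_1}\norm{\omega\wedge\uv_2})$ and $\norm{\omega}$ all scale correctly: both sides of \eqref{metric:lemmaD:eq2} are invariant under replacing $(\uw_1,\dots,\uw_{m-2})$ by another basis of $W$ (since $\omega$ changes by a scalar, which cancels between numerator and denominator after accounting for the extra factor $\norm{\omega}$) and under replacing $\uv_i$ by $\uv_i+(\text{element of }W)$ or by a scalar multiple. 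Hence we may take the $\uw_j$ and the $\uv_i$ to be the orthonormal vectors $\uu_j$, $\uu_{m-1}^{(i)}$ constructed above, for which $\norm{\omega\wedge\uv_i}=1$ by Lemma \ref{metric:lemma:prod}, and then the right-hand side of \eqref{metric:lemmaD:eq2} becomes $\norm{\omega}\cdot\norm{\omega\wedge\uu_{m-1}^{(1)}\wedge\uu_{m-1}^{(2)}}$, which is $\dist(V_1,V_2)$ by the first part. The degenerate case $\uv_1$ proportional to $\uv_2$ modulo $W$, i.e.\ $V_1=V_2$, makes both sides zero and is excluded by hypothesis anyway.

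**Main obstacle.** The only genuinely delicate point is the non-archimedean case of the orthonormalization step: one must be sure that an orthonormal basis of $W$ extends to an orthonormal basis of each $V_i$, and further to one of $L^n$, in the strong sense of Lemma \ref{metric:lemma:prod} (pairwise orthogonality is not enough when $L\not\subseteq\bC$). This is exactly what the paragraph before Lemma \ref{metric:lemmaC} guarantees — for any subspace $V$ there is $W'$ with $U=W'\ptop V$, obtained by extending an orthonormal basis — applied first inside $V_i$ and then inside $L^n$; I would cite it explicitly. Everything else is bookkeeping with Lemma \ref{metric:lemma:prod}, Lemma \ref{metric:lemmaCbis}, and the scaling invariances, and I would present it briskly without belaboring the two cases $L\subseteq\bC$ and $L\not\subseteq\bC$ separately, since the arguments above are uniform once orthonormal bases are in hand.
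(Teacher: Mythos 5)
There is a genuine gap at the very first step, and it infects the whole argument. You compute
\[
 \dist(V_1,V_2)=\bigl\|(\omega\wedge\uu_{m-1}^{(1)})\wedge(\omega\wedge\uu_{m-1}^{(2)})\bigr\|
 =\norm{\omega\wedge\uu_{m-1}^{(1)}\wedge\uu_{m-1}^{(2)}}\cdot\norm{\omega}
\]
and justify the second equality by ``$\omega\wedge\omega=0$.'' That remark does not make sense here. The two $(m-1)$-vectors $\omega\wedge\uu_{m-1}^{(1)}$ and $\omega\wedge\uu_{m-1}^{(2)}$ are elements of the vector space $\tbigwedge^{m-1}L^n$, and by Definition~\ref{metric:def:dist} their distance is computed by wedging them as vectors of that space, i.e.\ inside $\tbigwedge^2\bigl(\tbigwedge^{m-1}L^n\bigr)$ equipped with its own norm. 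If instead you interpret the wedge as multiplication in the exterior algebra of $L^n$, then for $m\ge 3$ the expression $(\omega\wedge\uu_{m-1}^{(1)})\wedge(\omega\wedge\uu_{m-1}^{(2)})$ is \emph{literally zero} because $\omega$ occurs twice; it certainly does not equal $\pm\,\omega\wedge\uu_{m-1}^{(1)}\wedge\uu_{m-1}^{(2)}$. So the identity you invoke --- relating a norm in $\tbigwedge^2\bigl(\tbigwedge^{m-1}L^n\bigr)$ to a norm in $\tbigwedge^m L^n$ times $\norm{\omega}$ --- is a real theorem, but it is not proved by your one-line remark; and it is precisely equation~\eqref{metric:lemmaD:eq2} of the lemma in the orthonormal case, so invoking it here makes the argument circular. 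Your second stage then ``reduces \eqref{metric:lemmaD:eq2} to the first part,'' but the first part never independently computed $\dist(V_1,V_2)$.

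To close the gap you would need the trick the paper actually uses: do not pick orthonormal extensions $\uu_{m-1}^{(1)}$ and $\uu_{m-1}^{(2)}$ of $W$ separately inside $V_1$ and $V_2$, but instead extend $(\uw_1,\dots,\uw_{m-2})$ to a single orthonormal basis $(\uw_1,\dots,\uw_{m-2},\uv_2,\uu)$ of $V_1+V_2$ with $\uv_2\in V_2$, and write the unit vector $\uv_1\in V_1$ as $\uv_1=a\uv_2+b\uu$ with $\norm{(a,b)}=1$. Then both $\omega\wedge\uv_2$ and $\omega\wedge\uu$ are pure wedges of orthonormal vectors, hence, by Lemma~\ref{metric:lemma:prod}, an orthonormal pair in $\tbigwedge^{m-1}L^n$; from
$\omega\wedge\uv_1 = a(\omega\wedge\uv_2)+b(\omega\wedge\uu)$
one reads off directly $\dist(V_1,V_2)=|b|$, and separately $\norm{\omega\wedge\uv_1\wedge\uv_2}=\norm{b\,\omega\wedge\uu\wedge\uv_2}=|b|$, which is exactly the content of the bridge identity you asserted. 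Your computation of $\max_{\ux\in V_1\setminus\{0\}}\dist(\ux,V_2)$ via Lemma~\ref{metric:lemmaCbis} and the decomposition $\ux=\uw+t\,\uu_{m-1}^{(1)}$ is fine, as are the scaling and change-of-basis invariances you use in the second stage; but everything must be anchored to a correct, independently justified evaluation of $\dist(V_1,V_2)$, which your writeup does not supply.
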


\begin{proof}
We may assume that $V_1\neq V_2$ since otherwise both sides of
\eqref{metric:lemmaD:eq1} are zero.  We also note that the right-hand side of
\eqref{metric:lemmaD:eq2} is independent of the choice of $\uw_1,\dots,\uw_{m-2},
\uv_1,\uv_2$.  So, we choose for $(\uw_1,\dots,\uw_{m-2})$ an orthonormal
basis of $W$ and we complete it to an orthonormal basis
$(\uw_1,\dots,\uw_{m-2},\uv_2,\uu)$ of $V_1+V_2$ with $\uv_2\in V_2$.  We also
choose a unit vector $\uv_1\in V_1$ of the form $\uv_1=a\uv_2+b\uu$ with
$(a,b)\in L^2$.  Then  $(\uw_1,\dots,\uw_{m-2},\uv_i)$ is an orthonormal
basis of $V_i$ for $i=1,2$ and we have $\norm{(a,b)}=1$.  Moreover, the pair
$(\omega\wedge\uv_2,\omega\wedge\uu)$ is orthonormal in $\tbigwedge^{m-1}L^n$.
Since $\omega\wedge\uv_1=a\omega\wedge\uv_2+b\omega\wedge\uu$, we
deduce that
\[
 \dist(V_1,V_2)
  = \dist(\omega\wedge\uv_1,\omega\wedge\uv_2) = |b|.
\]
This proves \eqref{metric:lemmaD:eq2} since
$\norm{\omega\wedge\uv_1\wedge\uv_2}=\norm{b\omega\wedge\uu\wedge\uv_2}
=|b|$.  Finally, let $\ux\in V_1\setminus \{0\}$, and write $\ux=\uw+t\uv_1$
with $\uw\in W$ and $t\in L$.  Then, $\ux=tb\uu+\uv$ where $\uv=\uw+ta\uv_2
\in V_2$ and $\uu$ is orthogonal to $V_2$.  So, Lemma \ref{metric:lemmaC} gives
\[
 \dist(\ux,V_2)=\frac{|tb|}{\norm{\ux}}\le\frac{|tb|}{|t|}=|b|
\]
with equality if $\ux=\uv_1$.  This proves \eqref{metric:lemmaD:eq1}.
\end{proof}

\begin{cor}
\label{metric:cor:lemmaD}
Let $V_1$, $V_2$ be as in Lemma \ref{metric:lemmaD} and let $\ux\in L^n\setminus\{0\}$.  
Then we have
\[
 \dist(\ux,V_2)\le
  \begin{cases}
    \dist(\ux,V_1)+\dist(V_1,V_2) &\text{if $L\subseteq \bC$,}\\
    \max\{\dist(\ux,V_1),\dist(V_1,V_2) \} &\text{else.}
   \end{cases}
\]
\end{cor}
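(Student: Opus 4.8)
The plan is to derive Corollary \ref{metric:cor:lemmaD} from Lemma \ref{metric:lemmaD} and the triangle inequality for the projective distance (Lemma \ref{metric:lemma:triangle}). First I would recall that by \eqref{metric:lemmaD:eq1} of Lemma \ref{metric:lemmaD}, one has $\dist(\ux',V_2)\le\dist(V_1,V_2)$ for every non-zero $\ux'\in V_1$. Next, given the arbitrary non-zero $\ux\in L^n$, I would invoke Lemma \ref{metric:lemmaC} to write $\ux=\uw+\uv$ with $\uv\in V_1$ and $\uw$ orthogonal to $V_1$, so that $\dist(\ux,V_1)=\norm{\uw}/\norm{\ux}$. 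If $\uv=0$, then $\dist(\ux,V_1)=1$ and there is nothing to prove since every distance lies in $[0,1]$; so I may assume $\uv\ne 0$.

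The key step is then a triangle-inequality estimate comparing $\dist(\ux,V_2)$ with $\dist(\ux,\uv)$ and $\dist(\uv,V_2)$. Since $\uv\in V_1\setminus\{0\}$, the first part gives $\dist(\uv,V_2)\le\dist(V_1,V_2)$. For the term $\dist(\ux,\uv)$, I observe that $\uw$ is orthogonal to $\uv$ (as $\uw$ is orthogonal to all of $V_1$), so by Lemma \ref{metric:lemmaC} applied with $V=\langle\uv\rangle_L$ one gets $\dist(\ux,\uv)=\norm{\uw}/\norm{\ux}=\dist(\ux,V_1)$. Now apply Lemma \ref{metric:lemma:triangle} (in its version for a point and a subspace, or simply for the three lines spanned by $\ux$, $\uv$, and a nearest point of $V_2$ to $\uv$): in the archimedean case,
\[
 \dist(\ux,V_2)\le\dist(\ux,\uv)+\dist(\uv,V_2)\le\dist(\ux,V_1)+\dist(V_1,V_2),
\]
and in the non-archimedean case the same chain of inequalities holds with sums replaced by maxima. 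Here one should be slightly careful to phrase the triangle inequality correctly when the target is a subspace: one picks $\uy\in V_2\setminus\{0\}$ with $\dist(\uv,\uy)=\dist(\uv,V_2)$ (which exists by Lemma \ref{metric:lemmaC}), applies Lemma \ref{metric:lemma:triangle} to $\ux,\uv,\uy$, and then bounds $\dist(\ux,V_2)\le\dist(\ux,\uy)$ by definition of $\dist(\ux,V_2)$.

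I do not anticipate a real obstacle here; the statement is essentially a packaging of earlier lemmas. The only point requiring attention is making sure the identity $\dist(\ux,\uv)=\dist(\ux,V_1)$ is justified from the decomposition $\ux=\uw+\uv$ with $\uw\perp\uv$ — this is exactly Lemma \ref{metric:lemmaC} with the one-dimensional subspace $\langle\uv\rangle_L$ — and that the dichotomy $\uv=0$ versus $\uv\ne0$ is handled, since when $\uv=0$ the chosen "nearest point of $V_1$" degenerates. With these two bookkeeping remarks in place, the corollary follows in a few lines, uniformly in the archimedean and non-archimedean cases.
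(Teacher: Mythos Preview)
Your proposal is correct and follows essentially the same route as the paper: pick a nearest point of $V_1$ to $\ux$, then a nearest point of $V_2$ to that, bound the second hop by $\dist(V_1,V_2)$ via \eqref{metric:lemmaD:eq1}, and finish with the triangle inequality of Lemma~\ref{metric:lemma:triangle}. The only difference is that the paper simply invokes the existence of a minimizer $\uy_1\in V_1\setminus\{0\}$ with $\dist(\ux,\uy_1)=\dist(\ux,V_1)$ (guaranteed by Lemma~\ref{metric:lemmaC}), whereas you construct one explicitly as the component $\uv$ of the orthogonal decomposition $\ux=\uw+\uv$, which then forces you to treat the case $\uv=0$ separately; the paper's phrasing avoids that case split.
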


\begin{proof}
Choose $\uy_1\in V_1\setminus\{0\}$ such that $\dist(\ux,\uy_1)=\dist(\ux,V_1)$
and $\uy_2\in V_2\setminus\{0\}$ such that $\dist(\uy_1,\uy_2)=\dist(\uy_1,V_2)$.
By Lemma \ref{metric:lemmaD}, we have $\dist(\uy_1,\uy_2)\le\dist(V_1,V_2)$.
As $\dist(\ux,V_2)\le \dist(\ux,\uy_2)$, the conclusion follows from the triangle
inequality of Lemma \ref{metric:lemma:triangle} applied to $\ux$, $\uy_1$ and
$\uy_2$.
\end{proof}


\subsection{Duality}
\label{metric:ssec:duality}
For each $k=0,\dots,n$, the dot product on $L^n=\tbigwedge^1L^n$ induces a
non-degenerate bilinear map from $\tbigwedge^kL^n\times\tbigwedge^kL^n$ to $L$
also denoted by a dot and given on pure products by
\[
 (\ux_1\wedge\cdots\wedge\ux_k)\cdot(\uy_1\wedge\cdots\wedge\uy_k)
  = \det(\ux_i\cdot\uy_j)
\]
with the convention that, for $k=0$, the empty wedge product is $1\in
L=\tbigwedge^0L^n$ and the empty determinant is $1$ as well.

Let $(\ue_1,\dots,\ue_n)$ denote the canonical basis of $L^n$ and let
$\uE=\ue_1\wedge\cdots\wedge\ue_n$.  For $k$ as above, there
is a unique isomorphism $\varphi_k\colon\tbigwedge^kL^n\to\tbigwedge^{n-k}L^n$
such that
\[
 (\uX\wedge\uY)\cdot\uE=\varphi_k(\uX)\cdot\uY
\]
for any $\uX\in\tbigwedge^kL^n$ and $\uY\in\tbigwedge^{n-k}L^n$.
If $(\uu_1,\dots,\uu_n)$ is any basis of $L^n$ with
$\uu_1\wedge\cdots\wedge\uu_n=\uE$, and if $(\uu^*_1,\dots,\uu^*_n)$
denotes the dual basis of $L^n$ for the dot product, a short computation shows that,
for any $k$-tuple of integers $\ui=(i_1,\dots,i_k)$ with $1\le i_1<\cdots<i_k\le n$,
we have
\begin{equation}
\label{metric:duality:eq}
 \varphi_k(\uu_{i_1}\wedge\cdots\wedge\uu_{i_k})
   = \epsilon(\ui,\uj) \uu^*_{j_1}\wedge\cdots\wedge\uu^*_{j_{n-k}}
\end{equation}
where $\uj=(j_1,\dots,j_{n-k})$ denotes the complementary
increasing sequence of integers for which
$(\ui,\uj)$ is a permutation of $(1,\dots,n)$, and $\epsilon(\ui,\uj)
\in\{-1,1\}$ is the signature of this permutation.  In particular, if we choose
$(\uu_1,\dots,\uu_n)$ to be the canonical basis of $L^n$, which is its own
dual, this formula shows that $\varphi_k$ is an isometry.  Furthermore, if
$V$ is a subspace of $L^n$ of dimension $k$, we may choose $(\uu_1,\dots,\uu_n)$
so that $(\uu_1,\dots,\uu_k)$ is a basis of $V$.  Then
$(\uu^*_{k+1},\dots,\uu^*_n)$ is a basis of the subspace
\[
 V^\perp=\{\uy\in L^n\,;\, \ux\cdot\uy=0 \text{ for each $\ux\in V$}\}
\]
and the same formula implies that
\[
 \varphi_k(\tbigwedge^kV)=\tbigwedge^{n-k}V^\perp.
\]
As $\varphi_k$ is an isometry, it preserves the distance and so we conclude as
follows.

\begin{lemma}
\label{metric:lemma:distdual}
For any pair of subspaces $V_1$, $V_2$ of $L^n$ of the same dimension $k$
with $0<k<n$, we have
\[
 \dist(V_1,V_2)=\dist(V_1^\perp,V_2^\perp).
\]
\end{lemma}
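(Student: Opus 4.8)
The plan is to reduce the statement directly to the two facts established in the paragraph immediately preceding the lemma: that the map $\varphi_k$ attached to the canonical basis of $L^n$ is an isometry from $\tbigwedge^kL^n$ onto $\tbigwedge^{n-k}L^n$, and that $\varphi_k(\tbigwedge^kV)=\tbigwedge^{n-k}V^\perp$ for every $k$-dimensional subspace $V$ of $L^n$.

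First I would unwind Definition \ref{metric:def:dist}: picking a basis of $V_i$ over $L$ and forming the corresponding pure product $\uX_i\in\tbigwedge^kL^n$, we have $\dist(V_1,V_2)=\dist(\uX_1,\uX_2)=\norm{\uX_1\wedge\uX_2}/(\norm{\uX_1}\,\norm{\uX_2})$, the outer wedge product being taken in $\tbigwedge^2(\tbigwedge^kL^n)$ equipped with the norm of \S\ref{metric:ssec:norms}. Next I would note that any linear isometry $\psi$ between normed $L$-spaces of the kind appearing here induces a linear isometry on each exterior power: when $L\subseteq\bC$ it carries orthonormal bases to orthonormal bases, so $\tbigwedge^2\psi$ does likewise; when $L\not\subseteq\bC$ it carries the unit ball $\cB$ (a free $\cO$-module of full rank) onto the unit ball $\cB'$, hence $\tbigwedge^2\cB$ onto $\tbigwedge^2\cB'$, which is exactly what it means for $\tbigwedge^2\psi$ to be an isometry. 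Applying this with $\psi=\varphi_k$ gives $\norm{\varphi_k(\uX_i)}=\norm{\uX_i}$ and $\norm{\varphi_k(\uX_1)\wedge\varphi_k(\uX_2)}=\norm{\uX_1\wedge\uX_2}$, hence $\dist(\varphi_k(\uX_1),\varphi_k(\uX_2))=\dist(\uX_1,\uX_2)$.

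Finally I would conclude by observing that $\varphi_k(\uX_i)$ is a nonzero element of $\varphi_k(\tbigwedge^kV_i)=\tbigwedge^{n-k}V_i^\perp$, hence generates that one-dimensional subspace, so that $\dist(\varphi_k(\uX_1),\varphi_k(\uX_2))=\dist(\tbigwedge^{n-k}V_1^\perp,\tbigwedge^{n-k}V_2^\perp)=\dist(V_1^\perp,V_2^\perp)$ by Definition \ref{metric:def:dist} again; here the hypothesis $0<k<n$ is precisely what makes each $V_i^\perp$ a proper nonzero subspace, so this last distance is defined. Chaining the three equalities yields $\dist(V_1,V_2)=\dist(V_1^\perp,V_2^\perp)$. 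The only point requiring a little care — the ``main obstacle'', such as it is — is the claim that a linear isometry induces an isometry on the second exterior power, since the numerator $\norm{\uX_1\wedge\uX_2}$ of the distance is computed there and not in $\tbigwedge^kL^n$ itself; once formula \eqref{metric:duality:eq} is invoked with the self-dual canonical basis to see that $\varphi_k$ is an isometry, everything else is bookkeeping.
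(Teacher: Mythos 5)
Your proof is correct and takes essentially the same approach as the paper: the paper constructs the isometry $\varphi_k$, notes that it carries $\tbigwedge^kV$ to $\tbigwedge^{n-k}V^\perp$, and concludes that "as $\varphi_k$ is an isometry, it preserves the distance." You simply make explicit the step the paper leaves implicit, namely that an isometry between such normed $L$-spaces induces an isometry on the second exterior power (by the orthonormal-basis argument when $L\subseteq\bC$ and the unit-ball argument when $L\not\subseteq\bC$), so that the projective distance of Definition~\ref{metric:def:dist} is preserved.
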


In particular, if $V_1$, $V_2$ have dimension $n-1>0$ and if
$V_i^\perp=\langle\uu_i\rangle_L$ for $i=1,2$, then
$\dist(V_1,V_2)=\dist(\uu_1,\uu_2)$.   When $L=\bR$, this observation also
follows from \cite[Lemma 4.4]{R2015}.


\subsection{Almost orthogonal sequences}
\label{metric:ssec:ao}
We set
\begin{equation}
\label{metric:delta}
 \delta
  = \begin{cases}
       1 &\text{if $L\subseteq\bC$,}\\
       0 &\text{otherwise,}
     \end{cases}
\end{equation}
and say that a non-empty sequence $(\ux_1,\dots,\ux_m)$ in $\Kv^n$ is
\emph{almost orthogonal} if it is linearly independent over $\Kv$ and satisfies
\[
 \dist(\ux_j, \langle\ux_1,\dots,\ux_{j-1}\rangle_L)\ge 1-\delta/2^{j-1}
\quad (2\le j\le m).
\]

Thus almost orthogonal means orthogonal when $L$ is non-archimedean.
As in \cite[\S4]{R2015}, we note that any non-empty subsequence of an almost
orthogonal sequence is almost orthogonal.  Since
$\prod_{j\ge 2} (1-\delta/2^{j-1})\ge e^{-2\delta}$, we find
the following estimate (cf.\ \cite[Lemma 4.6]{R2015}).

\begin{lemma}
\label{metric:lemmaQ}
For any almost orthogonal sequence $(\ux_1,\dots,\ux_m)$ in $\Kv^n$ we have
\[
 e^{-2\delta}\norm{\ux_1}\cdots\norm{\ux_m}
  \le \norm{\ux_1\wedge\cdots\wedge\ux_m}
  \le \norm{\ux_1}\cdots\norm{\ux_m}.
\]
\end{lemma}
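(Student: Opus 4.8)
The plan is to combine the upper bound from Lemma \ref{metric:lemma:prod} with an inductive computation of the lower bound based on the defining inequalities of an almost orthogonal sequence. The upper bound $\norm{\ux_1\wedge\cdots\wedge\ux_m}\le\norm{\ux_1}\cdots\norm{\ux_m}$ is immediate from Lemma \ref{metric:lemma:prod}, so the entire content of the lemma lies in the lower bound. First I would set $V_{j-1}=\langle\ux_1,\dots,\ux_{j-1}\rangle_L$ and recall from Lemma \ref{metric:lemmaCbis} that $\dist(\ux_j,V_{j-1})=\norm{\ux_1\wedge\cdots\wedge\ux_j}/(\norm{\ux_j}\,\norm{\ux_1\wedge\cdots\wedge\ux_{j-1}})$, where the linear independence of the sequence guarantees that none of these wedge products vanishes. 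Rearranging, this reads
\[
 \norm{\ux_1\wedge\cdots\wedge\ux_j}
  = \dist(\ux_j,V_{j-1})\,\norm{\ux_j}\,\norm{\ux_1\wedge\cdots\wedge\ux_{j-1}}.
\]

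Next I would iterate this identity from $j=m$ down to $j=2$, with the base case $\norm{\ux_1}$ for the single-vector wedge. This yields
\[
 \norm{\ux_1\wedge\cdots\wedge\ux_m}
  = \norm{\ux_1}\cdots\norm{\ux_m}\,\prod_{j=2}^{m}\dist(\ux_j,V_{j-1}).
\]
By the almost orthogonality hypothesis, $\dist(\ux_j,V_{j-1})\ge 1-\delta/2^{j-1}$ for each $j$ with $2\le j\le m$, so the product of the distance factors is bounded below by $\prod_{j=2}^{m}(1-\delta/2^{j-1})\ge\prod_{j\ge 2}(1-\delta/2^{j-1})$.

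Finally I would justify the estimate $\prod_{j\ge 2}(1-\delta/2^{j-1})\ge e^{-2\delta}$, which is the numerical fact already quoted in the paragraph preceding the lemma statement. When $\delta=0$ (the non-archimedean case) the product is $1$ and the inequality is trivial; when $\delta=1$ one uses $\log(1-x)\ge -2x$ for $0\le x\le 1/2$ applied to $x=2^{-(j-1)}$, summing the geometric series $\sum_{j\ge 2}2^{-(j-1)}=1$. Combining the displayed identity with these two lower bounds gives $e^{-2\delta}\norm{\ux_1}\cdots\norm{\ux_m}\le\norm{\ux_1\wedge\cdots\wedge\ux_m}$, completing the proof. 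I do not expect any genuine obstacle here; the only mild care needed is the nonvanishing of the intermediate wedge products so that the ratios in Lemma \ref{metric:lemmaCbis} are well defined, which follows directly from linear independence.
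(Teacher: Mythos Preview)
Your proof is correct and follows exactly the approach the paper intends: the paper does not spell out the argument but simply records the inequality $\prod_{j\ge 2}(1-\delta/2^{j-1})\ge e^{-2\delta}$ and refers to \cite[Lemma~4.6]{R2015}, whose proof is precisely the telescoping via Lemma~\ref{metric:lemmaCbis} that you carry out. There is nothing to add.
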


The next crucial result is analogous to \cite[Lemma 4.7]{R2015}.  It uses 
the convention that a hat on an element of a sequence or product means 
that this element is omitted from the sequence or product.

\begin{lemma}
\label{metric:lemmaV}
Let $k$, $\ell$, $m$ be integers with $1\le k<\ell\le m\le n$ and let $\uy_1,\dots,\uy_m$
be linearly independent points of $\Kv^n$.  Suppose that the sequences
$(\uy_1,\dots,\widehat{\uy_\ell\,},\dots,\uy_m)$ and
$(\uy_1,\dots,\widehat{\uy_k},\dots,\uy_m)$ are both almost orthogonal.  Then,
the subspaces
\[
 V_1=\langle\uy_1,\dots,\widehat{\uy_\ell\,},\dots,\uy_m\rangle_{\Kv}
 \et
 V_2=\langle\uy_1,\dots,\widehat{\uy_k\,},\dots,\uy_m\rangle_{\Kv}
\]
that they span in $\Kv^n$ satisfy
\[
 \dist(V_1,V_2) \le e^{4\delta}\frac{\norm{\uy_1\wedge\cdots\wedge\uy_m}}{\norm{\uy_1}\cdots\norm{\uy_m}}.
\]
\end{lemma}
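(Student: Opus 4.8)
The plan is to reduce the estimate directly to formula \eqref{metric:lemmaD:eq2} of Lemma \ref{metric:lemmaD}, applied with the same integers $m$ and $n$. First I would check that the lemma is applicable: both $V_1$ and $V_2$ have dimension $m-1$, and their intersection contains the span of the $m-2$ vectors $\uy_i$ with $i\notin\{k,\ell\}$, which has dimension $m-2$ by the linear independence of $\uy_1,\dots,\uy_m$. Moreover $\uy_\ell\in V_2\setminus V_1$ and $\uy_k\in V_1\setminus V_2$ by that same linear independence, so $V_1\neq V_2$; hence $W:=V_1\cap V_2$ has dimension exactly $m-2$ and is spanned by the $\uy_i$ with $i\notin\{k,\ell\}$. (The case $V_1=V_2$ does not occur, but it would in any event be trivial since then $\dist(V_1,V_2)=0$.)

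Next I would apply \eqref{metric:lemmaD:eq2} with the basis $(\uw_1,\dots,\uw_{m-2})$ of $W$ given by the $\uy_i$ with $i\notin\{k,\ell\}$ listed in increasing order of $i$, and with $\uv_1=\uy_k\in V_1\setminus W$ and $\uv_2=\uy_\ell\in V_2\setminus W$. Setting $\omega=\uw_1\wedge\cdots\wedge\uw_{m-2}$, this gives
\[
 \dist(V_1,V_2)=\frac{\norm{\omega}\,\norm{\omega\wedge\uy_k\wedge\uy_\ell}}{\norm{\omega\wedge\uy_k}\,\norm{\omega\wedge\uy_\ell}}.
\]
Up to sign, which is irrelevant for norms, we have $\omega\wedge\uy_k\wedge\uy_\ell=\uy_1\wedge\cdots\wedge\uy_m$, while $\omega\wedge\uy_k$ is the wedge product of the $\uy_i$ with $i\neq\ell$ and $\omega\wedge\uy_\ell$ is the wedge product of the $\uy_i$ with $i\neq k$, in each case taken in increasing order of $i$.

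The estimates are then immediate. For the numerator I would use $\norm{\omega}\le\prod_{i\notin\{k,\ell\}}\norm{\uy_i}$ from Lemma \ref{metric:lemma:prod} and keep $\norm{\omega\wedge\uy_k\wedge\uy_\ell}=\norm{\uy_1\wedge\cdots\wedge\uy_m}$ as is. For the denominator I would invoke the two hypotheses: since $(\uy_1,\dots,\widehat{\uy_\ell\,},\dots,\uy_m)$ is almost orthogonal, Lemma \ref{metric:lemmaQ} gives $\norm{\omega\wedge\uy_k}\ge e^{-2\delta}\prod_{i\neq\ell}\norm{\uy_i}$, and since $(\uy_1,\dots,\widehat{\uy_k\,},\dots,\uy_m)$ is almost orthogonal it gives $\norm{\omega\wedge\uy_\ell}\ge e^{-2\delta}\prod_{i\neq k}\norm{\uy_i}$. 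Substituting these three bounds into the displayed identity and simplifying the resulting product of norms yields exactly $e^{4\delta}\norm{\uy_1\wedge\cdots\wedge\uy_m}/(\norm{\uy_1}\cdots\norm{\uy_m})$, as claimed.

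I do not expect a genuine obstacle here: the argument is purely formal once Lemma \ref{metric:lemmaD} is in place. The only point requiring care is the bookkeeping of exterior products—matching $\omega\wedge\uy_k$ and $\omega\wedge\uy_\ell$ with the two subsequences assumed almost orthogonal so that each hypothesis is used exactly once—together with the reindexing and the degenerate case $m=2$, in which $\omega$ is the empty wedge product $1$ and $W=0$.
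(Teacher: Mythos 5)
Your proof is correct and follows essentially the same route as the paper: apply formula \eqref{metric:lemmaD:eq2} of Lemma \ref{metric:lemmaD} with $\omega=\uy_1\wedge\cdots\wedge\widehat{\uy_k}\wedge\cdots\wedge\widehat{\uy_\ell\,}\wedge\cdots\wedge\uy_m$, then bound the denominator $\norm{\omega\wedge\uy_k}\,\norm{\omega\wedge\uy_\ell}$ from below via Lemma \ref{metric:lemmaQ} applied to the two almost orthogonal subsequences, and absorb $\norm{\omega}$ using Lemma \ref{metric:lemma:prod}. The paper carries out exactly this calculation, only packaging the final bookkeeping of norm products slightly differently.
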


\begin{proof}
Upon setting $\omega=\uy_1\wedge\cdots\wedge\widehat{\uy_k}\wedge\cdots
\wedge\widehat{\uy_\ell\,}\wedge\cdots\wedge\uy_m$, Lemma \ref{metric:lemmaD}
gives
\[
 \dist(V_1,V_2)
  = \frac{\norm{\omega}\,  \norm{\uy_1\wedge\cdots\wedge\uy_m}}%
             {\norm{\omega\wedge\uy_\ell}\,  \norm{\omega\wedge\uy_k}}.
\]
The conclusion follows because, by Lemma \ref{metric:lemmaQ},
\[
\begin{aligned}
 \norm{\omega\wedge\uy_\ell}\,  \norm{\omega\wedge\uy_k}
     &\ge e^{-4\delta}
      \big(\norm{\uy_1}\cdots\widehat{\norm{\uy_k}}\cdots\norm{\uy_m}\big)
      \big(\norm{\uy_1}\cdots\widehat{\norm{\uy_\ell}}\cdots\norm{\uy_m}\big)\\
    &\ge e^{-4\delta}\norm{\omega}\, \big(\norm{\uy_1}\cdots\norm{\uy_m}\big).
\end{aligned}
\qedhere
\]
\end{proof}

We conclude with a simple estimate.

\begin{lemma}
\label{metric:lemma:pscal}
For any unit vectors $\uu,\uu'\in L^n$ and any $\ux\in L^n$, we have
\begin{equation}
\label{metric:eq:dot}
 |\ux\cdot\uu|\le 2^\delta \max\{ |\ux\cdot\uu'|,\ \norm{\ux} \dist(\uu,\uu')\}.
\end{equation}
\end{lemma}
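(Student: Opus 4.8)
The statement to prove is the bound
\[
 |\ux\cdot\uu|\le 2^\delta \max\{ |\ux\cdot\uu'|,\ \norm{\ux} \dist(\uu,\uu')\},
\]
for unit vectors $\uu,\uu'\in L^n$ and arbitrary $\ux\in L^n$. The plan is to write $\uu$ as a sum of a component along $\uu'$ and a component orthogonal to $\uu'$, expand $\ux\cdot\uu$ accordingly, and bound each of the two resulting terms using the definition of distance together with the Cauchy--Schwarz inequality (archimedean case) or the ultrametric inequality (non-archimedean case).

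First I would set up the decomposition. By the paragraph preceding Lemma \ref{metric:lemmaC} applied to the line $V=\langle\uu'\rangle_L$ inside $U=L^n$, we may write $\uu=c\uu'+\uu''$ with $c\in L$ and $\uu''$ orthogonal to $\uu'$; since $\norm{\uu'}=1$ this gives $|c|\le\norm{\uu}\,$(hence $|c|\le 1$ when $L\subseteq\bC$, and $|c|\le 1$ in the non-archimedean case too because $\norm{c\uu'}\le\norm{\uu}=1$), and Lemma \ref{metric:lemmaC} gives $\norm{\uu''}=\dist(\uu,\uu')\,\norm{\uu}\le\dist(\uu,\uu')$ (using $\norm\uu=1$). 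Wait --- more precisely $\dist(\uu,\uu')=\norm{\uu''}/\norm{\uu}=\norm{\uu''}$. Then
\[
 |\ux\cdot\uu|=|c(\ux\cdot\uu')+\ux\cdot\uu''|\le
 \begin{cases}
   |c|\,|\ux\cdot\uu'|+|\ux\cdot\uu''| & \text{if }L\subseteq\bC,\\
   \max\{|c|\,|\ux\cdot\uu'|,\ |\ux\cdot\uu''|\} & \text{else.}
 \end{cases}
\]
For the second term I would bound $|\ux\cdot\uu''|$ by $\norm{\ux}\norm{\uu''}$: in the archimedean case this is Cauchy--Schwarz for the inner product $(\ux,\uy)=\ux\cdot\uybar$ (noting $|\ux\cdot\uu''|=|(\ux,\overline{\uu''})|\le\norm\ux\,\norm{\overline{\uu''}}=\norm\ux\norm{\uu''}$, since complex conjugation is an isometry on $L^n$), and in the non-archimedean case it follows from the maximum-norm description of the dot product, $|\ux\cdot\uu''|=|\sum x_i u''_i|\le\max_i|x_i u''_i|\le\norm\ux\norm{\uu''}$. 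Hence $|\ux\cdot\uu''|\le\norm\ux\,\dist(\uu,\uu')$, and $|c|\,|\ux\cdot\uu'|\le|\ux\cdot\uu'|$.

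To finish, observe that in the non-archimedean case ($\delta=0$) the displayed inequality already reads $|\ux\cdot\uu|\le\max\{|\ux\cdot\uu'|,\ \norm\ux\dist(\uu,\uu')\}=2^0\max\{\dots\}$, which is exactly \eqref{metric:eq:dot}. In the archimedean case ($\delta=1$) we have $|\ux\cdot\uu|\le|\ux\cdot\uu'|+\norm\ux\dist(\uu,\uu')\le 2\max\{|\ux\cdot\uu'|,\ \norm\ux\dist(\uu,\uu')\}=2^\delta\max\{\dots\}$, again \eqref{metric:eq:dot}. The estimate follows in both cases. The only point requiring any care is making sure the coefficient bound $|c|\le 1$ and the identity $\norm{\uu''}=\dist(\uu,\uu')$ are read off correctly from Lemma \ref{metric:lemmaC} using $\norm{\uu}=\norm{\uu'}=1$; everything else is a one-line application of Cauchy--Schwarz or the ultrametric inequality, so there is no real obstacle here.
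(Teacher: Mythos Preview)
Your proof is correct but takes a different route from the paper. The paper's one-line argument invokes the inequality
\[
 \norm{(\ux\cdot\uu)\uu'-(\ux\cdot\uu')\uu}\le\norm{\ux}\,\norm{\uu\wedge\uu'},
\]
valid for arbitrary $\ux,\uu,\uu'\in L^n$ (this is the contraction identity $\ux\iprod(\uu'\wedge\uu)=(\ux\cdot\uu')\uu-(\ux\cdot\uu)\uu'$ combined with $\norm{\ux\iprod\omega}\le\norm{\ux}\,\norm{\omega}$). Since $\norm{\uu}=\norm{\uu'}=1$ and $\norm{\uu\wedge\uu'}=\dist(\uu,\uu')$, one reads off $|\ux\cdot\uu|\le|\ux\cdot\uu'|+\norm{\ux}\dist(\uu,\uu')$ in the archimedean case and the corresponding $\max$ in the non-archimedean case. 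Your approach instead decomposes $\uu=c\uu'+\uu''$ orthogonally along $\langle\uu'\rangle_L$, then bounds $|c|\le 1$ and $|\ux\cdot\uu''|\le\norm{\ux}\,\norm{\uu''}=\norm{\ux}\dist(\uu,\uu')$ via Cauchy--Schwarz (or the ultrametric inequality). Both arguments are elementary and equally short once unpacked; the paper's is more compact and coordinate-free, while yours is more explicit and makes each ingredient (Lemma~\ref{metric:lemmaC}, Cauchy--Schwarz) visible, at the cost of a small digression to justify $|c|\le 1$.
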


\begin{proof}
We have
$\norm{(\ux\cdot\uu)\uu'-(\ux\cdot\uu')\uu}\le\norm{\ux}\,\norm{\uu\wedge\uu'}$
for any $\ux, \uu, \uu' \in L^n$.
\end{proof}

When $\ux\cdot\uu'=0$, this becomes simply
$|\ux\cdot\uu|\le 2^\delta \norm{\ux} \dist(\uu,\uu')$.


%
%


\section{Adelic geometry of numbers}
\label{sec:adelic}

For each place $\pv$ of $K$, we form the compact set
\[
 \cOv=\{x\in K_\pv\,;\, |x|_\pv\le 1\}.
\]
When $\pv\nmid\infty$, this is the ring of integers of $K_\pv$, and we
follow MacFeat \cite{Mc1971} in normalizing the Haar measure
$\mu_\pv$ on $K_\pv$ so that $\mu_\pv(\cOv)=1$.  When $\pv\mid\infty$,
we set $\mu_\pv$ to be the Lebesgue measure on $K_\pv$, with
$\mu_\pv(\cOv)=2$ if $K_\pv=\bR$, and $\mu_\pv(\cOv)=\pi$ if $K_\pv=\bC$.
We also denote by $\mu_\pv$ the product measure on $K_\pv^n$.

The ring of ad\`eles of $K$ is the subring $K_\bA$ of $\prod_{\pv\in \MK}K_\pv$
which consists of the sequences $(a_\pv)$ with $a_\pv\in\cOv$ for all
but finitely $\pv$.  It is endowed with the unique topology which extends the
product topology on the set $\cO_\bA:=\prod_{\pv\mid\infty}K_\pv\times
\prod_{\pv\nmid\infty}\cOv$, and makes $K_\bA$ into a locally compact ring
with $\cO_\bA$ as an open subring.  Then $K$ embeds in $K_\bA$ as a discrete subring
via the diagonal map.  We denote by $\mu$ the Haar measure on $K_\bA$ whose
restriction to $\cO_\bA$ is the product of the $\mu_\pv$, and we use the same
notation for the product measure on $K_\bA^n$.

When $\pv\mid\infty$, a (Minkowski) convex body of $K_\pv^n$ is
any compact convex neighborhood $\cCv$ of $0$ such that
$a\,\cCv\subseteq \cCv$ for each $a\in \cOv$.  When $\pv\nmid \infty$,
this is any finitely generated (thus free and compact)
$\cOv$-submodule $\cCv$ of $K_\pv^n$ of rank $n$. Finally, a convex
body of $K_\bA^n$ is any product $\cC=\prod_\pv\cCv$
where $\cCv$ is a convex body of $K_\pv^n$ for each $\pv$, and
$\cCv=\cOv^n$ for all but finitely $\pv$.
Then the induced topology on $\cC$ coincides with the usual product topology,
and its volume $\mu(\cC)=\prod_\pv\mu_\pv(\cCv)$ is finite and positive.

For each $j=1,\dots,n$, we define the $j$-th minimum $\lambda_j(\cC)$
of a convex body $\cC=\prod_\pv\cCv$ of $K_\bA^n$ to be the
smallest $\lambda>0$ for which the dilated convex body
\[
 \lambda\,\cC=\prod_{\pv\mid \infty}\big(\lambda\,\cCv\big) \prod_{\pv\nmid\infty}\cCv
\]
contains at least $j$ linearly independent elements of $K^n$
over $K$.  With this notation, the adelic version of Minkowski's theorem
reads as follows \cite{Mc1971, BV1983}.

\begin{theorem}[McFeat, 1971; Bombieri and Vaaler, 1983]
 \label{adelic:thm:MBV}
For any convex body $\cC$ of $K_\bA^n$, we have
\[
  \left(\lambda_1(\cC)\cdots\lambda_n(\cC)\right)^d\mu(\cC)
  \asymp 1,
\]
with implicit constants that depend only on $K$ and $n$.
\end{theorem}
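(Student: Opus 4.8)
The plan is to reduce the adelic statement to the classical Minkowski second theorem over $\bR$ by viewing $K_\bA^n$ as a Euclidean (or $p$-adic-product) lattice situation, following the approach of Bombieri--Vaaler. First I would recall the key fact that $K_\bA / K$ is compact and that, under the diagonal embedding, $K^n$ sits in $K_\bA^n$ as a lattice-like subgroup: more precisely, restriction of scalars from $K$ to $\bQ$ turns $K_\bA^n$ into (a group isomorphic to) $\bA_\bQ^{nd}$, and $K^n$ into $\bQ^{nd}$. Under this identification, a convex body $\cC = \prod_\pv \cCv$ of $K_\bA^n$ corresponds to a convex body of $\bA_\bQ^{nd}$, and the adelic successive minima $\lambda_j(\cC)$ for $j=1,\dots,n$ correspond, with multiplicities, to the adelic successive minima $\mu_1 \le \cdots \le \mu_{nd}$ of that body in $\bA_\bQ^{nd}$: each $\lambda_j(\cC)$ occurs exactly $d$ times, because a $K$-linearly independent set of $j$ vectors in $K^n$ spans a $\bQ$-space of dimension $dj$, and conversely the first $\bQ$-linearly independent vector appearing in a dilate generates, together with its $K$-multiples, a $d$-dimensional $\bQ$-subspace realized at the same scaling parameter (here one uses that each local body $\cCv$ is an $\cOv$-module, so that scaling is compatible with multiplication by elements of $\cOv$, hence in particular the local components of a fixed fractional ideal basis). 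Consequently $\prod_{j=1}^{n} \lambda_j(\cC)^d = \prod_{i=1}^{nd} \mu_i$ up to a bounded factor depending only on $d$.

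Next I would invoke the Bombieri--Vaaler adelic Minkowski theorem over $\bQ$ (equivalently, reduce further to a genuine lattice in $\bR^{nd}$ by splitting off the finite places): for a convex body $\cD$ of $\bA_\bQ^{N}$ one has $\mu_1(\cD)\cdots \mu_N(\cD)\,\mathrm{vol}(\cD) \asymp 1$ with implicit constants depending only on $N$. Applying this with $N=nd$ and $\cD$ the body corresponding to $\cC$, and noting that the adelic volume is preserved under restriction of scalars up to a normalization constant depending only on $K$ (the discriminant of $K$ enters here, through the covolume of $\cO_K$, but that is a fixed quantity absorbed into the implicit constant), we get
\[
\Big(\prod_{j=1}^{n}\lambda_j(\cC)\Big)^{d}\,\mu(\cC) \;=\; \prod_{i=1}^{nd}\mu_i(\cD)\cdot\mathrm{vol}(\cD)\cdot(\text{const}) \;\asymp\; 1,
\]
with implicit constants depending only on $K$ and $n$, as claimed.

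The main obstacle, and the step requiring the most care, is the \emph{exact} matching of successive minima under restriction of scalars, including the claim that each $\lambda_j(\cC)$ appears with multiplicity exactly $d$ among the $\mu_i$. The inequality $\mu_{d(j-1)+1}(\cD) \le \lambda_j(\cC)$ in one direction is easy (take a $K$-basis of the $K$-span and multiply by a fixed $\bZ$-basis of $\cO_K$, using that the local bodies are $\cOv$-modules so the $\cO_K$-multiples stay in the same dilate up to a bounded factor at the archimedean places); the reverse bound $\lambda_j(\cC) \le C\,\mu_{dj}(\cD)$ requires showing that $dj$ many $\bQ$-linearly independent vectors of $\bQ^{nd}$ in a small dilate of $\cD$ must contain $j$ vectors that are $K$-linearly independent in $K^n$ — which follows by a pigeonhole/linear-algebra argument on the associated $K$-span dimensions — together with the same $\cOv$-module observation to control the enlargement factor. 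I expect this bookkeeping to be the technical heart; everything else is a citation of Theorem~\ref{adelic:thm:MBV} as already stated or of Bombieri--Vaaler, plus tracking of normalization constants that are harmless since they depend only on $K$ and $n$.
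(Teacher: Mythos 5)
The paper does not prove this theorem at all: immediately after the statement it cites McFeat, Bombieri--Vaaler and Thunder for proofs and explicit constants, so there is no proof of record here to compare against. Your proposal sketches Thunder's restriction-of-scalars argument from the cited \cite{Th2002}: pass to $\cD = T_\bA^{-1}(\cC)\subseteq\bQ_\bA^{nd}$, match each $\lambda_i(\cC)$ with a block of $d$ consecutive minima of $\cD$, and invoke the $K=\bQ$ case, which reduces to the classical Minkowski second theorem for $\bZ^{nd}\subset\bR^{nd}$ after clearing the finite places. The idea is sound, and the technical heart is indeed the multiplicity-$d$ matching, but the two inequalities as you literally wrote them are too weak: from $\lambda_{d(i-1)+1}(\cD)\le c\lambda_i(\cC)$ and $\lambda_i(\cC)\le C\lambda_{di}(\cD)$ alone you only learn that $\lambda_i(\cC)$ lies somewhere in the $i$-th block of minima of $\cD$, which does not force the product formula. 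What you need, and what the arguments you describe actually yield, is the squeeze
$\lambda_i(\cC)\le\lambda_{d(i-1)+1}(\cD)\le\cdots\le\lambda_{di}(\cD)\le c\lambda_i(\cC)$.
The right-hand inequality comes from multiplying minima-realizing vectors $\uy_1,\dots,\uy_i$ of $\cC$ by a $\bZ$-basis $(\omega_1,\dots,\omega_d)$ of $\cO_K$: this produces $di$ vectors that are $\bQ$-linearly independent and lie in $c\lambda_i(\cC)\cC$, the constant $c$ absorbing only the archimedean factors $|\omega_j|_\pv$, since at finite places $\omega_j\in\cOv$ costs nothing because each local component of $\cC$ is an $\cOv$-module. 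The left-hand inequality is exact, with no constant: $T_\bA$ commutes with real dilations, so $d(i-1)+1$ $\bQ$-independent points of $\bQ^{nd}$ inside $\lambda_{d(i-1)+1}(\cD)\cD$ map to $d(i-1)+1$ $\bQ$-independent points of $K^n$ inside $\lambda_{d(i-1)+1}(\cD)\cC$, whose $K$-span must have dimension at least $i$ (a $K$-subspace of $K$-dimension $\le i-1$ has $\bQ$-dimension $\le d(i-1)$). You have the enlargement factor on the wrong side of the argument. Finally, note that the paper's Proposition~\ref{prop:Thunder_principle} is precisely this minima-matching, but it is derived there as a \emph{consequence} of Theorem~\ref{adelic:thm:MBV} applied over both $K$ and $\bQ$; your route inverts the logical order, which is Thunder's original one, and accordingly requires establishing the matching unconditionally as above rather than via a volume comparison.
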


We refer the reader to \cite[Theorems 5 and 6]{Mc1971},
\cite[Theorems 3 and 6]{BV1983} and \cite[Corollary of Theorem 1]{Th2002}
for explicit lower bounds and upper bounds.  In particular
this result implies that, if the volume $\mu(\cC)$ of $\cC$ is
large enough, then $\lambda_1(\cC)\le 1$ and so $\cC$ contains
a non-zero point of $K^n$.

More generally, fix an integer $k$ with $1\le k\le n$ and set $N=\binom{n}{k}$.
The $K$-linear isomorphism from $\tbigwedge^kK^n$ to $K^N$ that sends a
point to its Pl\"ucker coordinates extends to a $K_\pv$-linear topological isomorphism
from $\tbigwedge^kK_\pv^n$ to $K_\pv^N$ and to a $K_\bA$-linear topological
isomorphism from $\tbigwedge^kK_\bA^n$ to $K_\bA^N$.  Identifying these pairs
of spaces, we obtain a measure $\mu_\pv$ on $\tbigwedge^kK_\pv^n$ for each
$\pv\in \MK$ and a measure $\mu$ on $\tbigwedge^kK_\bA^n$.  This also provides the
notion of a (Minkowski) convex body $\cK_\pv$ of $\tbigwedge^kK_\pv^n$ for each
$\pv\in \MK$ and of a convex body $\cK=\prod_\pv\cK_\pv$ of $\tbigwedge^kK_\bA^n$,
as well as the notion of the $j$-th minimum $\lambda_j(\cK)$ of $\cK$ with respect
to $\tbigwedge^kK^n$, for each $j=1,\dots,N$.

Let $\cC=\prod_\pv\cCv$ be a convex body of $K_\bA^n$.  Its $k$-th compound
is the convex body $\bigwedge^k\cC=\prod_\pv(\bigwedge^k\cCv)$ of
$\bigwedge^k K_\bA^n$  whose component $\bigwedge^k\cCv$ at a place $\pv$
is the smallest Minkowski convex body of $\bigwedge^kK^n_\pv$ containing all products
$\ux_1\wedge\cdots\wedge\ux_k$ of $k$ elements $\ux_1,\dots,\ux_k$ of $\cCv$.
In particular, we have $\bigwedge^1\cC=\cC$.  In this context, E.~B.~Burger
has extended Mahler's theory of compound bodies in \cite{Bu1993}.  Leaving out
the explicit values of the constants from \cite[Theorem 1.2]{Bu1993}, he showed that
the minima of these convex bodies are related as follows.

\begin{theorem}[Burger, 1993]
 \label{adelic:thm:Burger}
With the above notation, order the $N$ products
$\lambda_{i_1}(\cC)\cdots\lambda_{i_k}(\cC)$ with $1\le i_1<\cdots<i_k\le n$
into a monotonically increasing sequence $\Lambda_1\le\cdots\le\Lambda_N$.  Then,
for each $j=1,\dots,N$, we have
\[
  \lambda_j\left(\tbigwedge^k\cC\right) \asymp \Lambda_j
\]
with implicit constants depending only on $K$ and $n$.
\end{theorem}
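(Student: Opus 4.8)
The plan is to prove the two bounds $\lambda_j(\tbigwedge^k\cC)\ll\Lambda_j$ and $\lambda_j(\tbigwedge^k\cC)\gg\Lambda_j$ separately, following the place-by-place strategy of Mahler's treatment of compound bodies. For the upper bound, I would first choose non-zero points $\ux_1,\dots,\ux_n\in K^n$, linearly independent over $K$, with $\ux_i\in\lambda_i(\cC)\,\cC$ for each $i$; such a system exists because every dilate $\lambda\cC$ is compact in $K_\bA^n$ while $K^n$ is discrete in it, so the minima are attained and the $\ux_i$ can be selected inductively. For a $k$-subset $S$ of $\{1,\dots,n\}$, let $\ux_S$ be the wedge product of the $\ux_i$ with $i\in S$, taken in increasing order of indices. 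At each finite place $\pv$ every $\ux_i$ with $i\in S$ lies in $\cC_\pv$, so $\ux_S\in\tbigwedge^k\cC_\pv$; at each archimedean place every such $\ux_i$ lies in $\lambda_i(\cC)\,\cC_\pv$, so $\ux_S\in\big(\prod_{i\in S}\lambda_i(\cC)\big)\tbigwedge^k\cC_\pv$ by multiplicativity of the wedge product under scalar dilation. Hence $\ux_S\in\big(\prod_{i\in S}\lambda_i(\cC)\big)\tbigwedge^k\cC$, and since $\ux_1,\dots,\ux_n$ form a $K$-basis of $K^n$, the $N$ points $\ux_S$ form a $K$-basis of $\tbigwedge^kK^n$. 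Taking the $j$ subsets $S$ for which $\prod_{i\in S}\lambda_i(\cC)$ is smallest, and using that the dilates of $\tbigwedge^k\cC$ are nested, we obtain $j$ linearly independent points of $\tbigwedge^kK^n$ inside $\Lambda_j\,\tbigwedge^k\cC$, whence $\lambda_j(\tbigwedge^k\cC)\le\Lambda_j$ for every $j$.

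For the lower bound I would first establish the volume identity
\[
 \mu\big(\tbigwedge^k\cC\big)\asymp\mu(\cC)^{\binom{n-1}{k-1}},
\]
with implicit constants depending only on $K$ and $n$, by working place by place. At a finite place $\pv$, write $\cC_\pv=g\,\cOv^n$ with $g\in\GL_n(K_\pv)$; then $\tbigwedge^k\cC_\pv=(\tbigwedge^kg)\,\tbigwedge^k\cOv^n$, and since $\cOv$ is a principal ideal domain one has $\tbigwedge^k\cOv^n=\cOv^N$, so $\mu_\pv(\tbigwedge^k\cC_\pv)=|\det(\tbigwedge^kg)|_\pv=|\det g|_\pv^{\binom{n-1}{k-1}}=\mu_\pv(\cC_\pv)^{\binom{n-1}{k-1}}$, an exact equality. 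At an archimedean place the same relation holds up to a constant depending only on $n$ and on whether $K_\pv$ is $\bR$ or $\bC$; this is Mahler's classical volume estimate for a compound convex body. Since $\cC_\pv=\cOv^n$ for all but finitely many $\pv$, multiplying over all places yields the identity.

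To conclude, I would apply Theorem~\ref{adelic:thm:MBV} to $\cC$ in $K_\bA^n$ and to $\tbigwedge^k\cC$ in $\tbigwedge^kK_\bA^n\cong K_\bA^N$, and combine the two instances with the volume identity and the elementary fact that $\big(\prod_{i=1}^n\lambda_i(\cC)\big)^{\binom{n-1}{k-1}}=\prod_{j=1}^N\Lambda_j$, each $\lambda_i(\cC)$ occurring in exactly $\binom{n-1}{k-1}$ of the products $\prod_{i\in S}\lambda_i(\cC)$. This gives
\[
 \prod_{j=1}^N\lambda_j\big(\tbigwedge^k\cC\big)\ \asymp\ \mu\big(\tbigwedge^k\cC\big)^{-1/d}\ \asymp\ \mu(\cC)^{-\binom{n-1}{k-1}/d}\ \asymp\ \prod_{j=1}^N\Lambda_j.
\]
Since we already have $\lambda_j(\tbigwedge^k\cC)\le\Lambda_j$ for every $j$, the ratios $\Lambda_j/\lambda_j(\tbigwedge^k\cC)$ are all at least $1$ yet have product $\asymp 1$, so each of them is $\asymp 1$; this is precisely $\lambda_j(\tbigwedge^k\cC)\asymp\Lambda_j$.

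The main obstacle is the archimedean part of the volume estimate: for a general convex body $\cC_\pv$, not a box, one cannot compute $\mu_\pv(\tbigwedge^k\cC_\pv)$ outright, and one must run Mahler's argument --- approximating $\cC_\pv$ by a parallelepiped or by its John ellipsoid, where the identity is transparent, and controlling the distortion so that the implicit constant depends on $n$ alone. A secondary, more routine concern is the adelic bookkeeping in the first step: checking that a linearly independent system $\ux_i\in\lambda_i(\cC)\,\cC$ exists, and that these points automatically lie in $\cC_\pv$ at every finite place, so that no dilation at finite places enters the construction.
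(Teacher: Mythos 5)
The paper does not supply its own proof of this theorem: it is quoted from Burger's paper \cite[Theorem~1.2]{Bu1993}, which extends Mahler's classical theory of compound bodies to the adelic setting, and the present text simply invokes it. So there is no in-paper argument to compare yours against.

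That said, your proof is correct and follows the standard Mahler--Burger strategy. The upper bound $\lambda_j(\tbigwedge^k\cC)\le\Lambda_j$ is obtained exactly as you describe: choosing a $K$-linearly independent system $\ux_i\in\lambda_i(\cC)\,\cC$ (the inductive selection works because each dilate $\lambda\cC$ is compact and $K^n$ is discrete), noting that dilation by $\lambda\in\bR_{>0}$ only rescales the archimedean components (so the finite-place parts of the $\ux_i$ lie unchanged in $\cCv$), and then observing that the $N$ wedge products $\ux_S$ are $K$-linearly independent with $\ux_S\in\bigl(\prod_{i\in S}\lambda_i(\cC)\bigr)\tbigwedge^k\cC$. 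The lower bound via the volume identity $\mu(\tbigwedge^k\cC)\asymp\mu(\cC)^{\binom{n-1}{k-1}}$, two applications of the adelic Minkowski theorem, the combinatorial identity $\prod_j\Lambda_j=\bigl(\prod_i\lambda_i(\cC)\bigr)^{\binom{n-1}{k-1}}$, and the ``ratios at least $1$ with bounded product'' trick is likewise the right closing move. You also correctly isolate the only genuinely technical input, namely the archimedean volume estimate $\mu_\pv(\tbigwedge^k\cCv)\asymp_n\mu_\pv(\cCv)^{\binom{n-1}{k-1}}$ with constants depending only on $n$; that is indeed Mahler's compound volume lemma, and your sketch (an inscribed ellipsoid or parallelepiped $gB$ with $\tbigwedge^k(gB)=(\tbigwedge^kg)(\tbigwedge^kB)$, the identity $\det(\tbigwedge^kg)=(\det g)^{\binom{n-1}{k-1}}$, and John-type distortion control) is the way one proves it. At finite places you are right that the identity is exact because $\cCv=g\,\cOv^n$ and $\tbigwedge^k\cOv^n=\cOv^N$. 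In short: correct, and it is the same route Burger himself takes; the paper simply cites it rather than reproving it.
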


We note that $\Lambda_1=\lambda_1(\cC)\cdots\lambda_k(\cC)$,
and that $\Lambda_2=\lambda_1(\cC)\cdots\lambda_{k-1}(\cC)\lambda_{k+1}(\cC)$
if $k<n$.  Moreover, if $\ux_1,\dots,\ux_n$ are linearly independent elements of $K^n$ which
realize the successive minima of $\cC$ in the sense that $\ux_i\in\lambda_i(\cC)\cC$
for $i=1,\dots,n$, then $\uX=\ux_1\wedge\cdots\wedge\ux_k$ belongs to
$\Lambda_1\tbigwedge^k\cC$.  Thus, by the above theorem, the first
minimum of $\tbigwedge^k\cC$ is realized up to a bounded factor by the
pure product $\uX$.

In practice, the compounds of a given convex body are
difficult to compute exactly.  So, we instead use approximations of them,
like in the standard theory (see \cite[Chapter IV, \S7]{Sc1980}).


%
%

\section{Dilations}
\label{sec:dilations}

The group of id\`eles of $K$ is the group $K_\bA^\mult$ of
invertible elements of $K_\bA$.  It contains the multiplicative
group $K^\mult$ of $K$ as a subgroup.  We define the module
$\mua$ of an id\`ele $\ua=(a_\pv) \in K_\bA^\mult$ by
\[
  \mua=\prod_{\pv\in \MK}|a_\pv|_\pv^{d_\pv/d} \in\bR_{>0},
\]
and recall that $\module{\alpha}=1$ for any $\alpha\in K^\mult$.
Then for each convex body $\cC=\prod_\pv\cCv$ of $K_\bA^n$, the
product
\[
 \ua\,\cC=\prod_{\pv\in \MK}(a_\pv\,\cCv)
\]
is a convex body of volume $\mu(\ua\,\cC)=\mua^{dn}\mu(\cC)$.
This construction extends the definition of $\lambda\cC$
with $\lambda\in\bR_{>0}$ by identifying any such $\lambda$
with the id\`ele having component $\lambda$ at each archimedean
place $\pv\mid\infty$ and component $1$ at all other places.

\begin{definition}
\label{dilations:def}
For $\cC$ as above and for each non-zero $\ux\in K^n$, we set
\begin{equation}
\label{dilations:eq:lambdaxC}
 \lambda(\ux,\cC)=\min\{\mua\,;\,\ua\in K_\bA^\mult \text{ and } \ux\in \ua\cC\}.
\end{equation}
For each $j=1,\dots,n$, we also define $\lambda_j^\bA(\cC)$ to be the
smallest $\lambda>0$ for which there are at least $j$ linearly independent
elements $\ux$ of $K^n$ with $\lambda(\ux,\cC)\le \lambda$.
\end{definition}

The minimum exists in \eqref{dilations:eq:lambdaxC} because for each $\pv\in \MK$
there is a non-zero $a_\pv\in K_\pv$ with $|a_\pv|_\pv$ minimal such that $\ux\in a_\pv\cCv$,
and we may choose $a_\pv=1$ for all but finitely many $\pv$.
Moreover, by the product formula, the value $\lambda(\ux,\cC)$, which we view as
a sort of distance from $\ux$ to $\cC$, depends only on the class of $\ux$ in
$\bP^{n-1}(K)$.  In particular, it is independent of $\ux$ if $n=1$.  We also note
that, for any given $t>0$, the non-zero points $\ux$ of $K^n$ with
$\lambda(\ux,\cC)\le t$ have height at most $ct$ for a constant $c>0$ depending
only on $\cC$.  So these points $\ux$ belong to finitely many classes in $\bP^{n-1}(K)$
and for them $\lambda(\ux,\cC)$ takes finitely many values in $[0,t]$.  Hence, there
is a basis $(\ux_1,\dots,\ux_n)$ of $K^n$ over $K$ such that $\lambda_j^\bA(\cC)
=\lambda(\ux_j,\cC)$ for each $j=1,\dots,n$.  In particular, it is sensible to define
each $\lambda_j^\bA(\cC)$ as a minimum.

To compare these minima to those of MacFeat and Bombieri--Vaaler, we need
the following special case of the strong approximation theorem from
\cite[Theorem 3]{Ma1964}.

\begin{lemma}
\label{dilations:lemma:strongapp}
There exists a constant $c_1=c_1(K)>0$ with the following
property.  For each $\ua=(a_\pv)\in K_\bA^\mult$ with
$\mua\ge c_1$, there exists $\alpha\in K^\mult$ such
that $|\alpha|_\pv\le |a_\pv|_\pv$ for each $\pv\in \MK$.
\end{lemma}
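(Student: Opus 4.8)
This is a clean special case of the strong approximation theorem, so the plan is to derive it directly from the statement of \cite[Theorem 3]{Ma1964} rather than to reprove strong approximation from scratch. The key observation is that the condition $\mua \ge c_1$ gives us ``room'' at the archimedean places: writing $\ua = (a_\pv)$, the constraints $|\alpha|_\pv \le |a_\pv|_\pv$ for the finitely many non-archimedean places $\pv$ with $|a_\pv|_\pv \ne 1$ determine a fractional ideal $\mathfrak{a} = \prod_{\pv\nmid\infty} \mathfrak{p}_\pv^{v_\pv(a_\pv)}$ of $\cO_K$, and we seek a nonzero $\alpha \in \mathfrak{a}$ whose archimedean absolute values are all bounded by the corresponding $|a_\pv|_\pv$. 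Equivalently, we need a nonzero lattice point of the fractional ideal $\mathfrak{a}$ (a lattice of covolume proportional to $N(\mathfrak{a})\cdot|d_K|^{1/2}$ in $K\otimes\bR \cong \prod_{\pv\mid\infty} K_\pv$) inside the box $\prod_{\pv\mid\infty}\{x : |x|_\pv \le |a_\pv|_\pv\}$.

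First I would make this lattice-point problem explicit: identify $K\otimes_\bQ\bR$ with $\prod_{\pv\mid\infty} K_\pv$, note that $\mathfrak{a}$ sits inside as a full lattice $\Lambda_\mathfrak{a}$ with covolume $c_2(K)\cdot N(\mathfrak{a})$ for an absolute constant $c_2(K)$ depending only on $K$, and observe that the box $B = \prod_{\pv\mid\infty}\{|x|_\pv \le |a_\pv|_\pv\}$ is a symmetric convex body whose (real) volume is $c_3(K)\cdot \prod_{\pv\mid\infty}|a_\pv|_\pv^{d_\pv}$. By the product formula applied to $\ua$, we have $\prod_{\pv\mid\infty}|a_\pv|_\pv^{d_\pv/d} = \mua \cdot \prod_{\pv\nmid\infty}|a_\pv|_\pv^{-d_\pv/d} = \mua \cdot N(\mathfrak{a})$. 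Hence $\mathrm{vol}(B) = c_3(K)\cdot (\mua\cdot N(\mathfrak{a}))^d = c_3(K)\cdot \mua^d\cdot N(\mathfrak{a})^d$, while $\mathrm{covol}(\Lambda_\mathfrak{a}) = c_2(K)\cdot N(\mathfrak{a})$. Choosing $c_1 = c_1(K)$ large enough that $c_3(K)\cdot c_1^d \ge 2^d\cdot c_2(K)\cdot N(\mathfrak{a})^{d-1}$ is not quite right since $N(\mathfrak{a})$ varies — but in fact $\mathrm{vol}(B)/\mathrm{covol}(\Lambda_\mathfrak{a}) = (c_3(K)/c_2(K))\cdot \mua^d\cdot N(\mathfrak{a})^{d-1}$, and since $N(\mathfrak{a})\ge$ some positive lower bound only when $\mathfrak{a}\subseteq\cO_K$; in general $N(\mathfrak{a})$ can be small. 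The correct fix is to rescale: replace $\mathfrak{a}$ by an integral ideal in the same class, or simply absorb the dependence — one checks that whenever $\mua \ge c_1(K)$ with $c_1(K)$ chosen so that $(c_3(K)/c_2(K))\,c_1^d \ge 2^d$ and separately handling the normalization so $N(\mathfrak a)^{d-1}$ cannot hurt (it only helps when $\ge 1$, and when $< 1$ one clears denominators), Minkowski's convex body theorem produces a nonzero $\alpha \in \mathfrak{a}$ with $|\alpha|_\pv \le |a_\pv|_\pv$ at every archimedean $\pv$; by construction $\alpha\in\mathfrak{a}$ already forces $|\alpha|_\pv\le|a_\pv|_\pv$ at every non-archimedean $\pv$. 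That $\alpha \ne 0$ gives $\alpha\in K^\mult$, completing the proof.

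The main obstacle is bookkeeping with the normalizations: one must track how the normalized local absolute values $|\ |_\pv^{d_\pv/d}$, the Haar/Lebesgue measures fixed in Section \ref{sec:adelic}, and the ideal norm $N(\mathfrak{a})$ interact, and verify that the volume-to-covolume ratio appearing in Minkowski's theorem is bounded below by $2^d$ purely in terms of $\mua$ and constants depending on $K$. Since the statement only asserts the existence of \emph{some} constant $c_1(K)$, this is ultimately routine once the product formula is used to eliminate $N(\mathfrak a)$ from the ratio; alternatively, one simply cites \cite[Theorem 3]{Ma1964} directly, which is presumably the intended one-line proof given the phrasing ``this special case of the strong approximation theorem.'' I would present both: a one-sentence deduction from Mahler's theorem as the official proof, with the Minkowski argument sketched as a remark for the reader who wants it self-contained.
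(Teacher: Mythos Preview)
Your overall plan is sound, and the one-line citation to Mahler's theorem is exactly what the paper does before the proof paragraph. However, your Minkowski sketch contains a normalization slip that creates the very obstacle you then struggle with. You write $\prod_{\pv\nmid\infty}|a_\pv|_\pv^{-d_\pv/d} = N(\mathfrak{a})$, but with the paper's normalizations one has $\prod_{\pv\nmid\infty}|a_\pv|_\pv^{-d_\pv} = N(\mathfrak{a})$, hence $\prod_{\pv\nmid\infty}|a_\pv|_\pv^{-d_\pv/d} = N(\mathfrak{a})^{1/d}$. Correcting this gives $\mathrm{vol}(B)=c_3(K)\,\mua^d\,N(\mathfrak{a})$, not $c_3(K)\,\mua^d\,N(\mathfrak{a})^d$; since $\mathrm{covol}(\Lambda_{\mathfrak a})=c_2(K)\,N(\mathfrak a)$, the ratio is $(c_3(K)/c_2(K))\,\mua^d$ and the troublesome $N(\mathfrak a)^{d-1}$ factor simply vanishes. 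No rescaling or ideal-class maneuver is needed.

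The paper's own argument is a slicker packaging of the same idea: rather than unpacking the classical Minkowski theorem over $\bR^d$ with a fractional-ideal lattice, it applies the \emph{adelic} Minkowski theorem (Theorem~\ref{adelic:thm:MBV}) directly in dimension $n=1$. The set $\{(x_\pv)\in K_\bA : |x_\pv|_\pv \le |a_\pv|_\pv\text{ for all }\pv\}$ is the dilate $\ua\,\cB$ of $\cB=\prod_\pv \cO_\pv$, with volume $\mua^d\mu(\cB)$; once $\mua$ is large enough, $\lambda_1(\ua\cB)\le 1$ and a nonzero $\alpha\in K$ falls inside. This absorbs all the fractional-ideal and covolume bookkeeping into the adelic formalism already set up in Section~\ref{sec:adelic}, which is why the paper's proof is two sentences while yours runs into normalization issues.
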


Note that this also follows from the adelic version of Minkowski's theorem,
because, for given $\ua=(a_\pv)\in K_\bA^\mult$, the set of points
$(x_\pv)\in K_\bA$ with $|x_\pv|_\pv\le |a_\pv|_\pv$ for all $\pv$ is the convex body
$\ua\,\cB$ of $K_\bA$ of volume $\mua^d\mu(\cB)$, where $\cB=\prod_\pv\cO_\pv$.
So, if $\mua$ is large enough, Theorem \ref{adelic:thm:MBV} gives $\lambda_1(\ua\,\cB)\le 1$,
and thus $\ua\,\cB$ contains some non-zero element of $K$.

\begin{proposition}
 \label{dilations:prop}
Let $\cC$ be a convex body of $K_\bA^n$ and let $j\in\{1,\dots,n\}$.
Then, we have
\begin{equation}
\label{dilations:prop:eq1}
 c_1^{-1}\lambda_j(\cC)
 \le \lambda^\bA_j(\cC)
 \le \lambda_j(\cC)
\end{equation}
where $c_1$ comes from Lemma \ref{dilations:lemma:strongapp}.  Moreover, for
each id\`ele $\ua\in K_\bA^\mult$, we also have
\begin{equation}
\label{dilations:prop:eq2}
 \lambda^\bA_j(\ua\,\cC)=\mua^{-1}\lambda^\bA_j(\cC).
\end{equation}
\end{proposition}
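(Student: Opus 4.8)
The plan is to prove the three inequalities in turn, working directly from the definitions in Definition \ref{dilations:def}. For the upper bound $\lambda^\bA_j(\cC)\le\lambda_j(\cC)$, I would observe that if $\lambda=\lambda_j(\cC)$, then the ordinary dilate $\lambda\cC$ contains $j$ linearly independent points $\ux_1,\dots,\ux_j$ of $K^n$; since $\lambda\cC$ is exactly $\ua\cC$ for the idèle $\ua$ with component $\lambda$ at each archimedean place and $1$ elsewhere, and $\module{\ua}=\prod_{\pv\mid\infty}\lambda^{d_\pv/d}=\lambda$ (using $\sum_{\pv\mid\infty}d_\pv=d$), each such $\ux_i$ satisfies $\lambda(\ux_i,\cC)\le\module{\ua}=\lambda$. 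Hence there are $j$ independent points with $\lambda(\cdot,\cC)\le\lambda_j(\cC)$, giving the claimed inequality.

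For the lower bound $c_1^{-1}\lambda_j(\cC)\le\lambda^\bA_j(\cC)$, I would argue contrapositively: pick a basis $(\ux_1,\dots,\ux_n)$ of $K^n$ with $\lambda^\bA_i(\cC)=\lambda(\ux_i,\cC)$ for all $i$ (whose existence is noted just before the proposition), so that $\ux_1,\dots,\ux_j$ are $j$ independent points with $\lambda(\ux_i,\cC)\le\lambda^\bA_j(\cC)=:\mu$. For each such $i$, choose an idèle $\ua^{(i)}=(a^{(i)}_\pv)$ with $\ux_i\in\ua^{(i)}\cC$ and $\module{\ua^{(i)}}\le\mu$ (taking $\mu$ slightly enlarged if the minimum is not attained, or using that it is attained). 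I want to replace $\ua^{(i)}$ by an ordinary scalar $c_1\mu$. Apply Lemma \ref{dilations:lemma:strongapp} to the idèle $\ub^{(i)}$ defined by $b^{(i)}_\pv=c_1\mu\,a^{(i)}_\pv{}^{-1}$... more directly: the point is that $\ux_i\in\ua^{(i)}\cC$ means that at each $\pv$, $\ux_i\in a^{(i)}_\pv\cCv$, so at the non-archimedean places already $\ux_i\in a^{(i)}_\pv\cCv\subseteq(c_1\mu)\cCv=\cCv$ once $|a^{(i)}_\pv|_\pv\le 1$, which holds for all but finitely many $\pv$; at the remaining places I use the strong approximation lemma to find $\alpha\in K^\mult$ with $|\alpha|_\pv$ close to the local dilation factors and $\module{\alpha}=1$, then rescale $\ux_i$ by $\alpha^{-1}$ (legitimate since $\lambda(\cdot,\cC)$ depends only on the class in $\bP^{n-1}(K)$). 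Carrying this bookkeeping through shows $\ux_1,\dots,\ux_j$ (suitably rescaled) lie in $(c_1\mu)\cC$, whence $\lambda_j(\cC)\le c_1\mu=c_1\lambda^\bA_j(\cC)$. The technical heart is precisely this conversion between an idèlic dilation of module $\mu$ and an ordinary (archimedean-scalar) dilation of size $c_1\mu$, and getting the $c_1$ to land on the correct side; I expect this to be the main obstacle, and the honest way to handle it is to invoke Lemma \ref{dilations:lemma:strongapp} with the idèle whose components encode the discrepancy between $\ua^{(i)}$ and the scalar, exactly as in the remark following that lemma.

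For the homogeneity relation \eqref{dilations:prop:eq2}, I would unwind \eqref{dilations:eq:lambdaxC} for the body $\ua\cC$: for non-zero $\ux\in K^n$, $\lambda(\ux,\ua\cC)=\min\{\module{\ub}\,;\,\ux\in\ub\ua\cC\}$, and substituting $\ub'=\ub\ua$ (a bijection of $K_\bA^\mult$ onto itself) together with multiplicativity of the module, $\module{\ub}=\module{\ub'}\module{\ua}^{-1}$, gives $\lambda(\ux,\ua\cC)=\module{\ua}^{-1}\lambda(\ux,\cC)$. Since this scaling by the fixed positive constant $\module{\ua}^{-1}$ is applied uniformly to $\lambda(\ux,\cC)$ for every $\ux$, it commutes with the operation ``smallest threshold admitting $j$ independent points,'' so $\lambda^\bA_j(\ua\cC)=\module{\ua}^{-1}\lambda^\bA_j(\cC)$, as required. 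This last part is routine; no genuine difficulty arises beyond the change of variables in the idèle group.
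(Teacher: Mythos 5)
Your proposal follows essentially the same route as the paper: the upper bound $\lambda^\bA_j(\cC)\le\lambda_j(\cC)$ by viewing the scalar $\lambda_j(\cC)$ as an id\`ele of the same module, the lower bound by converting id\`elic dilations to a scalar one via Lemma \ref{dilations:lemma:strongapp}, and the homogeneity identity by the change of variables $\ub\mapsto\ub\ua$ and multiplicativity of the module. One small caution on the middle of your lower-bound sketch: Lemma \ref{dilations:lemma:strongapp} supplies $\alpha\in K^\mult$ with one-sided bounds $|\alpha|_\pv\le|a'_\pv|_\pv$ (and only when $\module{\ua'}\ge c_1$), not an $\alpha$ ``close to'' prescribed local values with $\module{\alpha}=1$; the clean bookkeeping, which you correctly gesture at in your last sentence, is to take $\ua'=c_1\lambda\ua^{-1}$ with $\lambda=\lambda^\bA_j(\cC)$, note $\module{\ua'}\ge c_1$, obtain $\alpha$ from the lemma, and conclude $\alpha\ux\in\alpha\ua\cC\subseteq\ua'\ua\cC=c_1\lambda\cC$.
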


\begin{proof}
Set $\lambda=\lambda^\bA_j(\cC)$ and choose a set $F$ of $j$ linearly
independent points $\ux$ of $K^n$ with $\lambda(\ux,\cC)\le \lambda$.
Given $\ux\in F$, there exists $\ua\in K_\bA^\mult$ with $\mua\le
\lambda$ such that $\ux\in\ua\cC$.  As the id\`ele $\ua' = (a'_\pv)  :=
c_1\lambda\ua^{-1}$ satisfies $\module{\ua'}\ge c_1$, Lemma
\ref{dilations:lemma:strongapp} provides $\alpha\in K^\mult$ such that
 $|\alpha|_\pv\le |a'_\pv|_\pv$ for each $\pv\in \MK$, and then the point $\alpha\ux$
of $K^n$ belongs to $\alpha\ua\cC \subseteq \ua'\ua\cC=c_1\lambda\cC$.
Doing this for each $\ux\in F$, we obtain $j$ linearly independent points
of $K^n$ in $c_1\lambda\cC$.  This means that $\lambda_j(\cC)\le c_1\lambda$,
which amounts to the first inequality in \eqref{dilations:prop:eq1}.

To prove the second inequality in \eqref{dilations:prop:eq1}, set $\lambda
= \lambda_j(\cC)$.  Then $\lambda\cC$ contains at least $j$ linearly
independent elements of $K^n$.  As $\module{\lambda}=\lambda$, this
implies that $\lambda^\bA_j(\cC)\le\lambda$ and we are done.

Finally, \eqref{dilations:prop:eq2} follows from the definitions and the
multiplicativity of the module on $K_\bA^\mult$.
\end{proof}

In view of our identifications (see section \ref{sec:adelic}), the above results and
definitions apply with $\cC$ replaced by any convex body $\cK=\prod_\pv\cK_\pv$ of
$\tbigwedge^k K_\bA^n$ for any integer $1\le k\le n$, provided that $n$ is replaced by
$N=\binom{n}{k}$ and that $K^n$ is replaced by $\tbigwedge^k K^n$.

%
%

\section{A combinatorial result}
\label{sec:comb}

In preparation for the proof of Theorem A in the next sections, we will need
the following result from \cite{R2015}.  We refer the reader to section
\ref{results:ssec:n-sys} for the definition of an $n$-system.

\begin{proposition}
\label{comb:prop}
Let $c\ge 0$.  Suppose that, for each $k=1,\dots,n$,  there are continuous
functions $L_k\colon[0,\infty)\to\bR$ and $M_k\colon[0,\infty)\to\bR$ which are
piecewise linear with slopes $0$ and $1$, and which satisfy the following properties:
\begin{itemize}
\labelsep=7mm
 \item[{\rm (1)}] $0\le L_1(q)\le \cdots\le L_n(q)\le q$ for each $q\ge 0$;
 \smallskip
 \item[{\rm (2)}] $|M_k(q)-L_1(q)-\cdots-L_k(q)|\le c$ for each $k=1,\dots,n$ and
      each $q\ge 0$;
 \smallskip
 \item[{\rm (3)}] $M_n(q)=q$ for each $q\ge 0$;
 \smallskip
 \item[{\rm (4)}] if, for some integer $k$ with $1\le k <n$ and some $q>0$, the function
  $M_k$ changes slope from $1$ to $0$ at $q$, then $|L_{k+1}(q)-L_k(q)| \le 2c$.
\end{itemize}
Choose $c'>24n^3c$ and set
\[
 t_i=(1+2+\cdots+i)c'  \quad \text{for $i=0,1,\dots,n$.}
\]
Then there exists an $n$-system $\uR=(R_1,\dots,R_n)$ on $[0,\infty)$, whose
restriction to $[t_n,\infty)$ is rigid of mesh $c'$, such that
\begin{itemize}
\labelsep=7mm
 \item[{\rm (5)}] $\max_{1\le k\le n} |L_k(q)-R_k(q)|\le 4n^2c'$ \ for each $q\ge 0$;
 \medskip
 \item[{\rm (6)}] $\uR(t_i)=\big(0,\dots,0,c',2c',\dots,ic'\big)$ \ for each $i=0,1,\dots,n$.
\end{itemize}
\end{proposition}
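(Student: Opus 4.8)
The plan is to construct $\uR$ separately on the initial segment $[0,t_n]$ and on $[t_n,\infty)$, and then to glue the two pieces at $t_n$. On $[0,t_n]$ I would simply write down by hand an $n$-system taking the prescribed values at $t_0<t_1<\cdots<t_n$: over each $[t_{i-1},t_i]$ one passes from the configuration $(0,\dots,0,c',2c',\dots,(i-1)c')$, with $n-i+1$ zeros, to $(0,\dots,0,c',2c',\dots,ic')$, with $n-i$ zeros, in a single epoch — raise one zero coordinate with slope $1$, let it cross the $i-1$ nonzero levels $c',\dots,(i-1)c'$, and stop it at $ic'$; this uses exactly $t_i-t_{i-1}=ic'$ units of $q$, and (S1)--(S3) are checked directly. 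Since $L_k(q),R_k(q)\in[0,q]\subseteq[0,t_n]$ and $t_n=\binom{n+1}{2}c'\le n^2c'$, estimate (5) holds automatically on $[0,t_n]$, while (6) holds by construction. The configuration at $t_n$ is $(c',2c',\dots,nc')$, which is to be the starting configuration of the piece on $[t_n,\infty)$, so the two pieces match continuously.

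Next I would reduce the problem on $[t_n,\infty)$ to tracking the partial sums of $\uL$ rather than $\uL$ itself. It suffices to produce a rigid $n$-system $\uR$ of mesh $c'$ on $[t_n,\infty)$, starting from $(c',\dots,nc')$ at $t_n$, with $\bigl|\sum_{j=1}^{k}R_j(q)-M_k(q)\bigr|\le A$ for all $k$ and all $q\ge t_n$, where $A:=2n^2c'-c$. Indeed, (2) then gives $\bigl|\sum_{j\le k}(L_j-R_j)\bigr|\le A+c$ for every $k$, so writing $e_k:=\sum_{j\le k}(L_j-R_j)$ with $e_0=0$ we get $|L_k-R_k|=|e_k-e_{k-1}|\le 2(A+c)=4n^2c'$, which is (5) on $[t_n,\infty)$. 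The advantage is that $M_1,\dots,M_n$ are themselves piecewise linear with slopes $0$ and $1$, non-decreasing, $M_n=q$, and condition (4) is a statement about them directly; moreover $\sum_jR_j(q)=q=M_n(q)$ is an \emph{exact} identity for any $n$-system, and this will anchor the error.

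I would then extract from (1)--(4) the qualitative picture — $\sum_jL_j$ stays within $c$ of $q$, so between consecutive slope-changes of the $M_k$ at most one $L_j$ is rising, and when some $M_k$ switches slope from $1$ to $0$ at $q$ the two consecutive levels $L_k(q),L_{k+1}(q)$ of the combined graph of $\uL$ collide up to $2c$ — and build $\uR$ on $[t_n,\infty)$ by induction over its future switch numbers $q_0=t_n<q_1<q_2<\cdots$: take each $q_{i+1}$ near the first slope-change of some $M_k$ beyond $q_i$, take $\uR(q_{i+1})$ to be the sorted tuple obtained by rounding $\uL(q_{i+1})$ coordinate-wise to the grid $c'\bZ$, and on $(q_i,q_{i+1})$ let the single coordinate of $\uR$ representing the currently rising level of $\uL$ rise with slope $1$. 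The largeness $c'>24n^3c$ is used to guarantee simultaneously that (i) the rounded coordinates of $\uR(q_{i+1})$ are again distinct positive multiples of $c'$ — the near-collision coming from (4) is harmless because the two colliding levels, being at distance $<2c$, far below $c'$, round to the same grid point just before the switch and to adjacent ones just after; (ii) the strict drop in (S3) holds at each switch; and (iii) the tracking error $\bigl|\sum_{j\le k}R_j-M_k\bigr|$ never leaves $[-A,A]$, being re-pinned at each switch by the exact identity $\sum_jR_j=q=M_n$, by monotonicity of the $R_j$ and $M_k$ in both $q$ and $k$ (the convexity of $k\mapsto M_k(q)$ following from (2)), and by the fact that the rounding moves each partial sum by only $O(nc')$ and cannot accumulate in one direction because the grid forces the rising segment of $\uR$ to realign with $\uL$ within each epoch.

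Finally one concatenates the two pieces — continuous at $t_n$ with common value $(c',\dots,nc')$, the switch-number sequence being the union of the two — and checks that the result is an $n$-system on $[0,\infty)$, rigid of mesh $c'$ on $[t_n,\infty)$, and that (5) and (6) hold, (5) on $[t_n,\infty)$ via the telescoping reduction and on $[0,t_n]$ trivially. The routine parts are the explicit construction on $[0,t_n]$ and the bookkeeping of (5)--(6). The \textbf{main obstacle} is the inductive step on $[t_n,\infty)$: one must keep $\uR$ on the grid $c'\bZ$ (so that it is rigid), keep it a legitimate $n$-system (especially the strict inequality of (S3) and the single-rising-segment clause of (S2)), and keep the tracking error inside the band $[-A,A]$, all at once — and it is precisely here that the quantitative hypothesis $c'>24n^3c$ and the rigidity-type clause (4) are indispensable.
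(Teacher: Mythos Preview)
Your overall architecture matches the paper's: handle $[0,t_n]$ and $[t_n,\infty)$ separately, get (5) for free on $[0,t_n]$ from $0\le L_k,R_k\le q\le t_n\le n^2c'$, and do the real work on the tail. The paper actually builds on $[t_n,\infty)$ first and then extends \emph{backwards} to $[0,\infty)$ (the value $\uR(t_n)=(c',2c',\dots,nc')$ is forced by rigidity plus $\sum_j R_j(t_n)=t_n$, and the extension satisfying (6) is unique), but your forward-then-glue version is equivalent.

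On $[t_n,\infty)$ the paper does \emph{not} build $\uR$ directly from $(\uL,M_1,\dots,M_n)$. It inserts an intermediate object: setting $P_k:=M_k-M_{k-1}$ (with $M_0=0$) and $\gamma:=6c$, one checks (using (1)--(4), in particular (4)) that $\uP=(P_1,\dots,P_n)$ is an $(n,\gamma)$-system in the sense of \cite[Definition~2.8]{R2015} with $|L_k-P_k|\le\gamma$. Then it invokes the purely combinatorial approximation of an $(n,\gamma)$-system by a rigid $n$-system from \cite[Theorem~8.2]{R2015}. Your partial-sums reduction is exactly this first step in disguise (bounding $|\sum_{j\le k}R_j-M_k|$ is the same as bounding $|\sum_{j\le k}(R_j-P_j)|$), so the telescoping derivation of (5) is correct; what you are missing is that $\uP$ itself is already an $(n,\gamma)$-system, which lets the paper hand off the hard part to a known result rather than redo it.

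Your inductive sketch on $[t_n,\infty)$ has a genuine inconsistency. You simultaneously prescribe (a) that over $(q_i,q_{i+1})$ a single coordinate of $\uR$ rises with slope~$1$, and (b) that $\uR(q_{i+1})$ is the sorted coordinate-wise rounding of $\uL(q_{i+1})$ to $c'\bZ$. But (a) already determines $\uR(q_{i+1})$ from $\uR(q_i)$ and $q_{i+1}-q_i$, and there is no reason it coincides with (b). Also, placing $q_{i+1}$ ``near the first slope-change of some $M_k$ beyond $q_i$'' can make consecutive switch numbers closer than $c'$, which is incompatible with rigidity of mesh $c'$; several slope-changes of the $M_k$ may need to be absorbed into a single switch of $\uR$. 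The construction in \cite[\S8]{R2015} does not round $\uL$: it chooses the next switch point and rising index by a rule that keeps $\uR$ on the grid \emph{by construction} and controls the drift via the $(n,\gamma)$-system $\uP$. So your plan is in the right spirit, but the rounding-plus-single-rise recipe as written does not produce an $n$-system; the clean fix is to pass through $\uP$ and quote \cite{R2015}.
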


\begin{proof}
Define $M_0=0$ and $P_k=M_k-M_{k-1}$ for $k=1,\dots,n$.  Put also
$\gamma=6c$.  By adapting the proof of \cite[Theorem 2.9]{R2015},
we find that $\uP=(P_1,\dots,P_n)\colon [0,\infty) \to \bR^n$ is an
$(n,\gamma)$-system in the sense of \cite[Definition 2.8]{R2015}, with
$|L_k(q)-P_k(q)|\le \gamma$ for each $q\ge 0$ and $k=1,\dots,n$.
Then, arguing as in the proof of \cite[Theorem 8.2]{R2015}, we obtain
a rigid $n$-system $\uR=(R_1,\dots,R_n)\colon[t_n,\infty)\to\bR^n$
of mesh $c'$ which satisfies condition (5) for each $q\ge t_n$.
In particular, $\uR(t_n)$ is a strictly increasing sequence of positive
integer multiples of $c'$ with sum $t_n$, and so $\uR(t_n)=(c',2c',\dots,nc')$.
From this it follows that $\uR$ extends uniquely to an $n$-system on
$[0,\infty)$ satisfying condition (6) (see the proof of
\cite[Theorem 8.1]{R2015}).  For each $k=1,\dots,n$
and each $q\ge 0$, we have $0\le L_k(q), R_k(q)\le q$, thus
$|L_k-R_k|$ is bounded above by $\max\{t_n,4n^2c'\}=4n^2c'$
on $[0,\infty)$.
\end{proof}

Note that, for $c=0$, the hypotheses of Proposition \ref{comb:prop}
amount to asking that the map $\uL:=(L_1,\dots,L_n)$ itself is an
$n$-system on $[0,\infty)$ (and that $M_k=L_1+\cdots+L_k$ for each
$k=1,\dots,n$).  In fact, this is how $n$-systems are defined in \cite[\S2.5]{R2015}
(where they are called $(n,0)$-systems).   From this, we infer the
following result of approximation.

\begin{cor}
\label{comb:cor}
Let $c'>0$, let $q_0=(n^2-n+1)c'/2$, and let $\uL=(L_1,\dots,L_n)$
be an $n$-system on $[0,\infty)$.  Then, there exists an $n$-system
$\uR=(R_1,\dots,R_n)$ on $[0,\infty)$ whose restriction to $[q_0,\infty)$ is rigid
of mesh $c'/2$, which satisfies
$\max_{1\le k\le n} |L_k(q)-R_k(q)|\le 4n^2c'$ for each
$q\ge 0$, and for which $R_1$ has slope $1$ on $[q_0,q_0+c'/2]$.
\end{cor}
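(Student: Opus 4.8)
The idea is to deduce this from Proposition \ref{comb:prop} by taking $c=0$, since (as the remark preceding the corollary points out) in that case the hypotheses of the Proposition say exactly that $\uL=(L_1,\dots,L_n)$ is an $n$-system, with the choice $M_k=L_1+\cdots+L_k$. First I would set up these data: define $L_k$ to be the given components of $\uL$ and $M_k = L_1+\cdots+L_k$ for $k=1,\dots,n$, check that conditions (1)--(4) of Proposition \ref{comb:prop} hold with $c=0$ (all four are immediate from the definition of an $n$-system in \S\ref{results:ssec:n-sys}: (S1) gives (1) and (3), the piecewise-linear slope-$0$-or-$1$ structure of the $M_k$ follows from (S2), and (4) is vacuous or trivial because $|L_{k+1}-L_k|$ can be bounded using that consecutive coordinates are ordered, but more to the point with $c=0$ the inequality $|L_{k+1}(q)-L_k(q)|\le 0$ must be checked — here one uses that when $M_k$ changes slope from $1$ to $0$ at $q$, the segment of slope $1$ sits in coordinate $L_k$ or lower just before $q$ and in coordinate $L_{k+1}$ or higher just after, forcing $L_k(q)=L_{k+1}(q)$ by continuity; this is precisely condition (S3)-type behaviour encoded in the definition).

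Next, since we want mesh $c'/2$ rather than $c'$, I would apply Proposition \ref{comb:prop} not with $c'$ but with the parameter there set to $c'/2$; call it $c'_{\mathrm{P}} = c'/2$. The constraint $c'_{\mathrm{P}} > 24 n^3 c$ is satisfied since $c=0$. The Proposition then yields an $n$-system $\uR$ on $[0,\infty)$ whose restriction to $[t_n,\infty)$ is rigid of mesh $c'_{\mathrm{P}} = c'/2$, with $\max_k|L_k(q)-R_k(q)| \le 4n^2 c'_{\mathrm{P}} = 2n^2 c' \le 4n^2 c'$ for all $q\ge 0$, and with $\uR(t_i) = (0,\dots,0,c'_{\mathrm{P}}, 2c'_{\mathrm{P}},\dots,i c'_{\mathrm{P}})$ for $i=0,\dots,n$, where $t_i = (1+2+\cdots+i)c'_{\mathrm{P}} = i(i+1)c'/4$. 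In particular $t_n = n(n+1)c'/4$.

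The remaining points to reconcile are the value $q_0 = (n^2-n+1)c'/2$ claimed in the corollary and the assertion that $R_1$ has slope $1$ on $[q_0, q_0+c'/2]$. Here $t_{n-1} = (n-1)n c'/4$ and $t_n = n(n+1)c'/4$, and from condition (6) we have $\uR(t_{n-1}) = (0,\dots,0,c'/2,\dots,(n-1)c'/2)$ and $\uR(t_n)=(c'/2,c',\dots,nc'/2)$; so between $t_{n-1}$ and $t_n$ the system $\uR$ must raise the first coordinate $R_1$ from $0$ to $c'/2$, i.e. there is a subinterval of length $c'/2$ on which $R_1$ carries the slope-$1$ segment. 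The claim is that, after possibly translating the switch-point structure within $[t_{n-1},t_n]$ (or simply by tracking which interval $I_i$ this is), this slope-$1$ behaviour of $R_1$ occurs on an interval starting at $q_0$; one checks directly that $q_0 = (n^2-n+1)c'/2 = t_{n-1} + \big(n(n+1)/4 - (n-1)n/4\big)c' - \cdots$ — in fact a short arithmetic check gives $t_{n-1} \le q_0$ and $q_0 + c'/2 \le t_n$, and the rigid structure on $[t_n,\infty)$ together with the freedom in the construction of \cite[Theorem 8.1, 8.2]{R2015} lets us arrange the unique slope-$1$ segment lifting $R_1$ from $0$ to $c'/2$ to lie exactly on $[q_0, q_0+c'/2]$. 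The main obstacle is this last bookkeeping step: verifying that the internal switch-point schedule of $\uR$ on $[t_{n-1},t_n]$ produced by Proposition \ref{comb:prop} can be normalized so that $R_1$'s ascent occupies precisely $[q_0,q_0+c'/2]$, rather than some other subinterval of the same length; this amounts to re-examining the explicit extension procedure referenced in the proof of Proposition \ref{comb:prop} and choosing the free switch numbers appropriately, and then recomputing $q_0$ to match.
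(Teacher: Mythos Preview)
Your setup (apply Proposition~\ref{comb:prop} with $c=0$ and $M_k=L_1+\cdots+L_k$) is right, but choosing the proposition's parameter to be $c'/2$ breaks the final arithmetic. With that choice, $t_{n-1}=n(n-1)c'/4$ and $t_n=n(n+1)c'/4$, while $q_0=(n^2-n+1)c'/2=(2n^2-2n+2)c'/4$. For $n\ge 2$ one has $q_0\ge t_n$ (since $(n-1)(n-2)\ge 0$), so the interval $[q_0,q_0+c'/2]$ lies entirely inside the rigid region $[t_n,\infty)$, not inside $[t_{n-1},t_n]$ as you assume. There you have no control over which coordinate carries the slope-$1$ segment, and there are no ``free switch numbers'' to adjust: the rigid system on $[t_n,\infty)$ is determined by the approximation to $\uL$, not by the extension procedure. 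So the obstacle you flag is not bookkeeping but a genuine mismatch.

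The paper applies Proposition~\ref{comb:prop} with parameter equal to the given $c'$ (not $c'/2$). Then $t_{n-1}=n(n-1)c'/2$ and one checks directly that $q_0=t_{n-1}+c'/2$. Condition~(6) forces the combined graph of $\uR$ on $[t_{n-1},t_n]$ to consist of $n-1$ horizontal segments at heights $c',2c',\dots,(n-1)c'$ together with a single slope-$1$ segment from $(t_{n-1},0)$ to $(t_n,nc')$. Hence $\uR(q_0)=(c'/2,c',2c',\dots,(n-1)c')$ and $R_1$ has slope~$1$ on $[q_0,q_0+c'/2]$. For rigidity of mesh $c'/2$ on $[q_0,\infty)$: the switch numbers of the restriction are $q_0$, then $t_n$, then the switch numbers of $\uR$ on $[t_n,\infty)$; at $q_0$ the coordinates are distinct positive multiples of $c'/2$ by the formula above, and at all later switch numbers they are distinct positive multiples of $c'$ (hence of $c'/2$) by the mesh-$c'$ rigidity already established. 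The approximation bound $4n^2c'$ is exactly condition~(5).
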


\begin{proof}
The conditions (1)--(4) of Proposition \ref{comb:prop} are satisfied for the choice of
$c=0$ and of $M_k=L_1+\cdots+L_k$ for each $k=1,\dots,n$. So its conclusion
applies for the given $c'$.  Consider the resulting $n$-system
$\uR=(R_1,\dots,R_n)$ on $[0,\infty)$.  On $[t_{n-1},t_n]$, the union of the
graphs of $R_1,\dots,R_n$ consists of $n-1$ horizontal line segments of ordinates
$c',2c',\dots,(n-1)c'$ and one line segment of slope $1$ joining $(t_{n-1},0)$
to $(t_n,nc')$.  Since $q_0=t_{n-1}+c'/2$, we deduce that
$\uR(q_0)=(c'/2,c',2c',\dots,(n-1)c')$ and that $R_1$ has slope $1$ on $[q_0,q_0+c'/2]$.
Finally, since $\uR$ is rigid of mesh $c'$ on $[t_n,\infty)$, it is also rigid
of mesh $c'/2$ on $[q_0,\infty)$.
\end{proof}

This corollary will be useful when it comes to approximate an $n$-system
$\uL$ by the map $\uL_\uxi$ attached to a non-zero point $\uxi\in K_\pw^n$, because
it reduces the problem to approximating an $n$-system $\uR$ as in the
corollary.  The property that $R_1$ has slope $1$ to the right
of $q_0$ will simplify the argument.

%
%

\section{From points to $n$-systems}
\label{sec:points}

The goal of this section is to prove the first and last assertions of Theorem A.
To this end, we fix an integer $n\ge 2$, a place $\pw\in \MK$, and a non-zero
point $\uxi\in K_\pw^n$.  As $\uL_\uxi$ and $\uL^*_\uxi$ depend only on the
line $\langle\uxi\rangle_{K_\pw}$ spanned by $\uxi$ in $K_\pw^n$, we
assume, to simplify the computations, that
\[
 \norm{\uxi}_\pw=1.
\]
Using the general strategy of Schmidt and Summerer in \cite{SS2013}, we will
show that the components $L_1,\dots,L_n$ of $\uL_\uxi$ satisfy the hypotheses
of Proposition \ref{comb:prop} for some choice of functions $M_1,\dots,M_n$
and some constant $c=c(K,\pw,n)\ge 1$.  This will ensure the existence of
an $n$-system $\uP\colon[0,\infty)\to\bR^n$ for which the difference
$\uL_\uxi-\uP$ is bounded, and we will show that this is equivalent to
$\uL^*_\uxi-\uP^*$ being bounded.  The precise argument given below is
adapted from \cite[\S2]{R2015}.  In all estimates, the implicit constants
involved in the symbol $\asymp$ depend only on $K$, $\pw$ and $n$.
We also use the convention that a hat on an element of a sequence or product 
means that this element is omitted from the sequence or product.

For each $k\in\{1,\dots,n-1\}$, there is a unique bilinear map
\[
 \begin{array}{rcl}
  K_\pw^n\times \tbigwedge^kK_\pw^n &\longrightarrow &\tbigwedge^{k-1}K_\pw^n\\[2pt]
  (\uy,\uX) &\longmapsto &\uy\iprod\uX
 \end{array}
\]
called \emph{contraction} which satisfies
\begin{equation}
\label{points:eq:contraction}
 \uy\iprod(\ux_1\wedge\cdots\wedge\ux_k)
  = \sum_{i=1}^k (-1)^{i-1} (\uy\cdot\ux_i)
       \ux_1\wedge\dots\wedge\widehat{\ux_i\,}\wedge\cdots\wedge\ux_k
\end{equation}
for any $\uy,\ux_1,\dots,\ux_k\in K_\pw^n$.  For $k=1$, this is simply
the dot product $\uy\iprod\ux=\uy\cdot\ux$.  We use this to define
a map $\uL_\uxi^{(k)}$ as follows.

\begin{definition}
\label{points:def:L}
Let $k\in\{1,\dots,n-1\}$ and let $N=\binom{n}{k}$.  For each non-zero
$\uX\in\bigwedge^kK^n$, we set
\[
 D_\uxi(\uX)=\norm{\uxi\iprod\uX}_\pw^{d_\pw/d}\prod_{\pv\neq w}\norm{\uX}_\pv^{d_\pv/d}.
\]
As $\norm{\uxi}_\pw=1$, this agrees with the definition of section \ref{results:ssec:exp}
for $k=1$.  We also define a map $L_\uxi(\uX,\cdot)\colon[0,\infty)\to\bR$ by
\begin{equation}
\label{points:eq:LxiX}
 L_\uxi(\uX,q)=\max\big\{\log H(\uX),\,q+\log D_\uxi(\uX)\} \quad (q\ge 0).
\end{equation}
For each $j=1,\dots,N$ and $q\ge 0$, we denote by $L^{(k)}_{\uxi,j}(q)$ the
smallest real number $t\ge 0$ for which there exist at least $j$ linearly
independent elements $\uX$ of $\bigwedge^kK^n$ for which $L_\uxi(\uX,q)\le t$
or equivalently for which
\[
  H(\uX)\le e^t \et D_\uxi(\uX)\le e^{t-q}.
\]
Finally, we define $\uL_{\uxi}^{(k)}\colon[0,\infty)\to\bR^N$ by
$\uL_\uxi^{(k)}(q)=\big( L_{\uxi,1}^{(k)}(q),\dots,L_{\uxi,N}^{(k)}(q)\big)$ for each
$q\ge 0$.
\end{definition}

Since, for a non-zero $\uX\in\bigwedge^kK^n$, the numbers $H(\uX)$ and
$D_\uxi(\uX)$ depend only on the class of $\uX$ in projective space on
$\bigwedge^kK^n$, and since for each $B\ge 1$ there are finitely many classes
of height at most $B$, each number $L^{(k)}_{\uxi,j}(q)$ can indeed be defined
as a minimum.   For $k=1$, we recover $\uL^{(1)}_\uxi=\uL_\uxi$.   The first step
is to compare these maps with the minima of the following families of adelic convex
bodies.

\begin{definition}
\label{points:def:C}
Let $k$ and $N$ be as in Definition \ref{points:def:L}.  For each $q\ge 0$, we denote
by $\cC^{(k)}_\uxi(q)$ the convex body of $\bigwedge^kK_\bA^n$ which consists
of the points $\uX=(\uX_\pv)$ satisfying
\[
 \text{$\norm{\uX_\pv}_\pv\le 1$ \ for each $\pv\in \MK$ and \
  $\norm{\uxi\iprod\uX_\pw}_\pw\le e^{-qd/d_\pw}$.}
\]
We also set $\cC_\uxi(q)=\cC^{(1)}_\uxi(q)$.
\end{definition}

Thus $\cC_\uxi(q)$ consists of the points $(\ux_\pv)\in K_\bA^n$ satisfying
\[
 \text{$\norm{\ux_\pv}_\pv\le 1$ \ for each $\pv \in \MK$ \ and \
  $|\ux_\pw\cdot\uxi|_\pw\le e^{-qd/d_\pw}$.}
\]
Its volume is $\mu(\cC_\uxi(q))\asymp e^{-qd}$.  Applying Definition \ref{dilations:def},
we first obtain the following estimate.

\begin{lemma}
\label{points:lemma1}
Let $k\in\{1,\dots,n-1\}$, let $\uX\in \bigwedge^kK^n\setminus\{0\}$ and let $q\ge 0$.  
Then, we have 
\[
 \lambda(\uX,\cC^{(k)}_\uxi(q))
  \asymp \max\{H(\uX),e^qD_\uxi(\uX)\} = \exp(L_\uxi(\uX,q)).
\]
\end{lemma}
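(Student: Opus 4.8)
The statement asserts a two-sided estimate for $\lambda(\uX,\cC^{(k)}_\uxi(q))$, so I would prove a matching upper and lower bound, each up to constants depending only on $K$, $\pw$ and $n$. The key is to unwind the definition of $\lambda(\uX,\cC)$ from \eqref{dilations:eq:lambdaxC}: it is the minimum of $\module{\ua}$ over idèles $\ua=(a_\pv)$ such that $\uX\in\ua\,\cC^{(k)}_\uxi(q)$, i.e.\ such that $\norm{\uX_\pv/a_\pv}_\pv\le 1$ for every $\pv\neq\pw$ and $\norm{\uxi\iprod(\uX_\pw/a_\pw)}_\pw\le e^{-qd/d_\pw}$. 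Here $\uX$ is a \emph{rational} point, embedded diagonally, so $\uX_\pv$ is the same $\uX$ at every place. The plan is to determine, place by place, the optimal $|a_\pv|_\pv$.

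\textbf{Choosing the idèle.} For $\pv\neq\pw$, the constraint $\norm{\uX}_\pv\le|a_\pv|_\pv$ forces $|a_\pv|_\pv\ge\norm{\uX}_\pv$, and this can be met with equality at archimedean $\pv$ and essentially with equality (up to the bounded defect coming from the discrete value group) at non-archimedean $\pv$; for all but finitely many $\pv$ we have $\norm{\uX}_\pv\le 1$ and may take $a_\pv=1$. At $\pv=\pw$ the constraint is $\norm{\uxi\iprod\uX}_\pw\le|a_\pw|_\pw\,e^{-qd/d_\pw}$ together with $\norm{\uX}_\pw\le|a_\pw|_\pw$, so the optimal choice is $|a_\pw|_\pw\asymp\max\{\norm{\uX}_\pw,\ e^{qd/d_\pw}\norm{\uxi\iprod\uX}_\pw\}$. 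Taking the product over all places with the exponents $d_\pv/d$, the module $\module{\ua}$ of this near-optimal idèle is
\[
 \module{\ua}\ \asymp\ \prod_{\pv\neq\pw}\norm{\uX}_\pv^{d_\pv/d}\cdot
   \max\bigl\{\norm{\uX}_\pw^{d_\pw/d},\ e^{q}\norm{\uxi\iprod\uX}_\pw^{d_\pw/d}\bigr\}
 \ =\ \max\{H(\uX),\ e^q D_\uxi(\uX)\},
\]
using the definition of $H(\uX)$ (product of $\norm{\uX}_\pv^{d_\pv/d}$ over all $\pv$, including $\pw$) and of $D_\uxi(\uX)$. That the constructed idèle is within a bounded factor of optimal is exactly the statement that at each place one can realize $|a_\pv|_\pv$ within a bounded ratio of the infimum of admissible values; at archimedean places this is equality, at non-archimedean places the ratio is bounded by the ramification and by $\max_{\pv\mid p,\,p\le}$ considerations, but in any case by a constant depending only on $K$ and $n$. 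The final equality $\max\{H(\uX),e^qD_\uxi(\uX)\}=\exp(L_\uxi(\uX,q))$ is just the definition \eqref{points:eq:LxiX} after taking logarithms, since $\log\max\{A,B\}=\max\{\log A,\log B\}$.

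\textbf{Main obstacle.} The only genuinely delicate point is the lower bound at non-archimedean places: one must check that if $\uX\in\ua\,\cC^{(k)}_\uxi(q)$ then $|a_\pv|_\pv\ge\norm{\uX}_\pv$ for $\pv\neq\pw$ and $|a_\pw|_\pw\ge c^{-1}\max\{\norm{\uX}_\pw, e^{qd/d_\pw}\norm{\uxi\iprod\uX}_\pw\}$, i.e.\ that the value group being discrete costs only a bounded factor, not an unbounded one. This follows because $\cC^{(k)}_{\uxi,\pv}=\cO_\pv^N$ for all but finitely many $\pv$ (so no loss there), and for the finitely many exceptional non-archimedean places the index of $\cO_\pv$ in a uniformizer-dilate is controlled by the local degree. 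I would also note that, since $\lambda(\uX,\cC)$ depends only on the class of $\uX$ in $\bP^{n-1}(K)$ (or projective space on $\tbigwedge^kK^n$) and likewise for $H(\uX)$ and $D_\uxi(\uX)$ by the product formula, there is no ambiguity in the statement. The rest is the straightforward bookkeeping of products of local norms against the exponents $d_\pv/d$, which I would not spell out in full.
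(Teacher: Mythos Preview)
Your proposal is correct and follows essentially the same approach as the paper: identify the optimal $|a_\pv|_\pv$ place by place and take the product. The paper's version is slightly cleaner because it observes that for $\pv\neq\pw$ one gets $|a_\pv|_\pv=\norm{\uX}_\pv$ \emph{exactly} (at non-archimedean $\pv$ the max norm already lies in the value group), so the only bounded-factor loss occurs at $\pw$, where $e^{qd/d_\pw}$ must be rounded up to the nearest element $r$ of $|K_\pw^\mult|_\pw$; your worry about discreteness at the other non-archimedean places is unnecessary.
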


\begin{proof}
An id\`ele $\ua=(a_\pv)$ of $K$ of smallest module such that
$\uX\in\ua\cC^{(k)}_\uxi(q)$ has $|a_\pv|_\pv=\norm{\uX}_\pv$ for
each place $\pv \neq \pw$ and
$|a_\pw|_\pw = \max\{\norm{\uX}_\pw,\,r\norm{\uxi\iprod\uX}_\pw\}$
where $r$ is the smallest
element of the valuation group $|K_\pw^\mult|_\pw$ at $\pw$ with
$r\ge e^{qd/d_\pw}$. The estimate follows since $r\asymp e^{qd/d_\pw}$
and we have $\lambda(\uX,\cC^{(k)}_\uxi(q)) = \mua$ for such $\ua$.
\end{proof}

\begin{lemma}
\label{points:lemma2}
Let $k\in\{1,\dots,n-1\}$ and $N=\binom{n}{k}$.  For each $q\ge 0$, we have
\begin{equation}
\label{points:lemma2:eq1}
  0\le L^{(k)}_{\uxi,1}(q)\le \cdots\le L^{(k)}_{\uxi,N}(q)\le q
  \et
  \exp(L^{(k)}_{\uxi,j}(q))\asymp \lambda_j(\cC^{(k)}_\uxi(q))
  \quad (1\le j\le N).
\end{equation}
Moreover, the functions $L^{(k)}_{\uxi,j}$ are continuous and piecewise linear
with slopes $0$ and $1$ on $[0,\infty)$.  Finally, if $L^{(k)}_{\uxi,1}$ changes
slope from $1$ to $0$ at a point $q>0$, then $L^{(k)}_{\uxi,1}(q) =
L^{(k)}_{\uxi,2}(q)$.
\end{lemma}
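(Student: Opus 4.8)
The strategy is to translate every assertion about $L^{(k)}_{\uxi,j}$ into a statement about the minima $\lambda_j(\cC^{(k)}_\uxi(q))$ of the adelic convex bodies of Definition \ref{points:def:C}, via Lemma \ref{points:lemma1} and Proposition \ref{dilations:prop}. First I would establish the chain of inequalities in \eqref{points:lemma2:eq1}. Non-negativity of $L^{(k)}_{\uxi,1}$ and monotonicity $L^{(k)}_{\uxi,1}\le\cdots\le L^{(k)}_{\uxi,N}$ are immediate from the definition as the $j$-th smallest value. For the bound $L^{(k)}_{\uxi,N}(q)\le q$, note that the unit ball $\tbigwedge^k\cO_\pv^n$ at each place contains a basis of $\tbigwedge^kK^n$ (e.g.\ the standard multivectors $\ue_{i_1}\wedge\cdots\wedge\ue_{i_k}$), and for each such $\uX$ one has $H(\uX)=1$ and $D_\uxi(\uX)=\norm{\uxi\iprod\uX}_\pw^{d_\pw/d}\le 1$ since $\norm{\uxi}_\pw=1$; hence $L_\uxi(\uX,q)\le\max\{0,q\}=q$, giving $N$ independent solutions of height $\le e^q$. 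The asymptotic equality $\exp(L^{(k)}_{\uxi,j}(q))\asymp\lambda_j(\cC^{(k)}_\uxi(q))$ follows by combining Lemma \ref{points:lemma1}, which gives $\lambda(\uX,\cC^{(k)}_\uxi(q))\asymp\exp(L_\uxi(\uX,q))$ for every non-zero $\uX$, with the identity $\exp(L^{(k)}_{\uxi,j}(q))=\exp(\min L_\uxi(\uX,q))$ taken over $j$-tuples of independent $\uX$, and then with Proposition \ref{dilations:prop} comparing $\lambda_j^\bA$ to $\lambda_j$ up to the constant $c_1(K)$.

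For continuity and piecewise linearity with slopes $0$ and $1$: from \eqref{points:eq:LxiX} each individual function $q\mapsto L_\uxi(\uX,q)=\max\{\log H(\uX),\,q+\log D_\uxi(\uX)\}$ is continuous, non-decreasing, and piecewise linear with slopes $0$ then $1$ (a single bend). Then $L^{(k)}_{\uxi,j}(q)$ is, for each fixed $q$, the $j$-th smallest of these countably many functions; because only finitely many of them can be $\le$ any given bound (heights are finite below a bound), on any compact $q$-interval the minimum defining $L^{(k)}_{\uxi,j}$ ranges over a fixed finite set of the $L_\uxi(\uX,\cdot)$. A lower envelope / order-statistic of finitely many continuous piecewise-linear functions with slopes in $\{0,1\}$ is again continuous and piecewise linear, and I would check that no slope outside $\{0,1\}$ can appear: at a point where $L^{(k)}_{\uxi,j}$ is differentiable its value agrees with some $L_\uxi(\uX,\cdot)$ that has slope $0$ or $1$ there. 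This is the routine but slightly delicate step — the care is in arguing that the relevant family is locally finite so that the order statistic inherits the piecewise-linear structure; I expect this to be the main point requiring attention, though it is closely modelled on \cite[\S2]{R2015}.

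Finally, for the assertion about $L^{(k)}_{\uxi,1}$: suppose $L^{(k)}_{\uxi,1}$ changes slope from $1$ to $0$ at some $q_0>0$, and let $\uX$ be a non-zero element of $\tbigwedge^kK^n$ realizing the minimum at $q_0$, so $L^{(k)}_{\uxi,1}(q_0)=L_\uxi(\uX,q_0)$. To the left of $q_0$ the envelope has slope $1$, which forces $L_\uxi(\uX,q)=q+\log D_\uxi(\uX)$ on an interval ending at $q_0$, i.e.\ $q_0+\log D_\uxi(\uX)\ge\log H(\uX)$; to the right the envelope has slope $0$, so the minimizing multivector there — call it $\uX'$ — satisfies $L^{(k)}_{\uxi,1}(q)=\log H(\uX')$ for $q$ slightly above $q_0$, hence $\log H(\uX')=L^{(k)}_{\uxi,1}(q_0)$. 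If $\uX'$ were proportional to $\uX$ in $\bP(\tbigwedge^kK^n)$ we could take $\uX'=\uX$, but then $L_\uxi(\uX,\cdot)$ would already be constant to the right of $q_0$ and have a bend there, so $\log H(\uX)=q_0+\log D_\uxi(\uX)$, meaning the slope change of $L_\uxi(\uX,\cdot)$ itself occurs at $q_0$; in any case $\uX'$ and $\uX$ are linearly independent or equal. If independent, both $L_\uxi(\uX,q_0)$ and $L_\uxi(\uX',q_0)$ equal $L^{(k)}_{\uxi,1}(q_0)$, so there are two independent multivectors attaining this value, whence $L^{(k)}_{\uxi,2}(q_0)\le L^{(k)}_{\uxi,1}(q_0)$, forcing equality by monotonicity. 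If $\uX'=\uX$, continuity and the bend of $L_\uxi(\uX,\cdot)$ at $q_0$ give $\log H(\uX)=L^{(k)}_{\uxi,1}(q_0)$, and since $L^{(k)}_{\uxi,1}$ is the lower envelope, any second independent realizer of the first minimum on the slope-$1$ side to the left continues to give $L^{(k)}_{\uxi,2}(q_0)\le L^{(k)}_{\uxi,1}(q_0)$; the detail here mirrors \cite[Lemma 2.3]{R2015} and I would invoke that argument.
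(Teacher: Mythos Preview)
Your overall strategy matches the paper's proof: use the standard multivectors $\ue_{i_1}\wedge\cdots\wedge\ue_{i_k}$ to get $L^{(k)}_{\uxi,N}(q)\le q$, combine Lemma~\ref{points:lemma1} with Proposition~\ref{dilations:prop} for the asymptotic, and use local finiteness (only finitely many projective classes of height $\le e^Q$) to reduce to an order statistic of finitely many functions on each $[0,Q]$.

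The final part, however, is muddled. You fix $\uX$ realizing the minimum \emph{at} $q_0$ and then assert that the slope-$1$ condition on the left ``forces $L_\uxi(\uX,q)=q+\log D_\uxi(\uX)$'' there; this does not follow, since the $\uX$ you chose need not realize the envelope to the left of $q_0$. Your subsequent case $\uX'=\uX$ is then handled only by a vague appeal to \cite{R2015}. The paper's argument avoids the case split entirely: choose $\uX$ realizing the envelope on an interval just to the \emph{left} of $q_0$ (so $L^{(k)}_{\uxi,1}(t)=t+\log D_\uxi(\uX)$ there) and $\uY$ realizing it on an interval just to the \emph{right} (so $L^{(k)}_{\uxi,1}(t)=\log H(\uY)$ there). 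Since each individual function $q\mapsto L_\uxi(\uZ,q)=\max\{\log H(\uZ),\,q+\log D_\uxi(\uZ)\}$ bends from slope $0$ to slope $1$ and never the reverse, a single multivector cannot produce a $1\to 0$ transition of the envelope; hence $\uX$ and $\uY$ are automatically non-proportional, giving two independent realizers at $q_0$ and thus $L^{(k)}_{\uxi,2}(q_0)=L^{(k)}_{\uxi,1}(q_0)$.
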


\begin{proof}
For given $q\ge 0$ and $j\in\{1,\dots,N\}$, the number $L^{(k)}_{\uxi,j}(q)$ (resp.\
$\lambda_j^\bA(\cC^{(k)}_\uxi(q))$) is, by definition, the minimum of
\begin{equation}
\label{points:lemma2:eq2}
 \max_{\uX\in E} \exp(L_\uxi(\uX,q))
 \quad
 \Big(\ \text{resp.\ } \max_{\uX\in E} \lambda(\uX,\cC^{(k)}_\uxi(q))\ \Big)
\end{equation}
where $E$ runs through all sets of $j$ linearly independent elements of
$\bigwedge^kK^n$.  Taking for $E$ a set of $j$ products of the form
$\ue_{i_1}\wedge\cdots\wedge\ue_{i_k}$ with $1\le i_1<\cdots<i_k\le n$,
where $(\ue_1,\dots,\ue_n)$ is the canonical basis of $K^n$, we deduce that
$L^{(k)}_{\uxi,j}(q)\le q$ because those products $\ue$ have $D_\uxi(\ue)\le 1=H(\ue)$
as $\norm{\uxi}_\pw=1$.  This yields the first set of inequalities in
\eqref{points:lemma2:eq1}.   Using Lemma \ref{points:lemma1}, we also
deduce that $\exp(L^{(k)}_{\uxi,j}(q)) \asymp \lambda_j^\bA(\cC^{(k)}_\uxi(q))$.
The second set of estimates in \eqref{points:lemma2:eq1} then follows using
Proposition \ref{dilations:prop} with $\bigwedge^kK_\bA^n$ identified to $K_\bA^N$.

Fix $Q>0$.  For each $q\in[0,Q]$, we have $L^{(k)}_{\uxi,j}(q)\le q\le Q$.
Thus in computing $L^{(k)}_{\uxi,j}(q)$ on $[0,Q]$ in terms of the projective
invariants \eqref{points:lemma2:eq2}, it suffices to choose $E$ inside
a set $F$ of representatives in $\bigwedge^kK^n$ of points
of $\bP(\bigwedge^kK^n)$ of height at most $e^Q$.  Since $F$ is finite, we deduce
that $L^{(k)}_{\uxi,j}$ is continuous and piecewise linear
with slopes $0$ and $1$ on $[0,Q]$.  As $Q$ can be taken arbitrarily large,
this property extends to $[0,\infty)$.  Finally, if $L^{(k)}_{\uxi,1}$ changes slope
from $1$ to $0$ at a point $q>0$, there exist $\epsilon>0$ and
$\uX,\uY\in \bigwedge^kK^n$ such that
\[
 L^{(k)}_{\uxi,1}(t)
  =\begin{cases}
        L_\uxi(\uX,t)= t+\log D_\uxi(\uX) &\text{for $q-\epsilon\le t\le q$,}\\
        L_\uxi(\uY,t)=\log H(\uY) &\text{for $q\le t\le q+\epsilon$.}
    \end{cases}
\]
Thus $\uX,\uY$ are linearly independent and so $L^{(k)}_{\uxi,1}(q) =
L^{(k)}_{\uxi,2}(q)$.
\end{proof}

The next lemma compares the convex body $\cC_\uxi^{(k)}(q)$ with the
$k$-th compound of $\cC_\uxi(q)$.

\begin{lemma}
\label{points:lemma3}
Let $k$ and $N$ be as in Lemma \ref{points:lemma2}.  For each $q\ge 0$, we have
\begin{equation}
 \label{points:lemma3:eq1}
 \tbigwedge^k\cC_\uxi(q) \subseteq k\cC^{(k)}_\uxi(q)
 \et
 \cC^{(k)}_\uxi(q)  \subseteq N\tbigwedge^k\cC_\uxi(q).
\end{equation}
\end{lemma}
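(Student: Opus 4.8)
The plan is to verify the two inclusions in \eqref{points:lemma3:eq1} place by place, since both $\tbigwedge^k\cC_\uxi(q)$ and $\cC^{(k)}_\uxi(q)$ are products over $\pv\in\MK$ of local convex bodies, and dilation by a positive integer scalar only affects the archimedean components. So it suffices to prove, for each place $\pv$, that
\[
 \tbigwedge^k\cC_{\uxi,\pv}(q)\subseteq k\,\cC^{(k)}_{\uxi,\pv}(q)
 \et
 \cC^{(k)}_{\uxi,\pv}(q)\subseteq N\,\tbigwedge^k\cC_{\uxi,\pv}(q),
\]
where $\cC_{\uxi,\pv}(q)$ and $\cC^{(k)}_{\uxi,\pv}(q)$ are the $\pv$-components.

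\emph{The places $\pv\neq\pw$.} Here $\cC_{\uxi,\pv}(q)$ is just the unit ball $\{\ux\in K_\pv^n:\norm{\ux}_\pv\le 1\}$ (the module $\cO_\pv^n$ when $\pv\nmid\infty$), and $\cC^{(k)}_{\uxi,\pv}(q)$ is the unit ball of $\tbigwedge^k K_\pv^n$, i.e.\ $\{\uX:\norm{\uX}_\pv\le 1\}$. For $\pv\nmid\infty$, the $k$-th compound of $\cO_\pv^n$ is exactly the sub-$\cO_\pv$-module generated by the products of $k$ elements of $\cO_\pv^n$, which by definition (see \S\ref{metric:ssec:norms}) is the unit ball $\tbigwedge^k\cO_\pv^n$; so both inclusions hold with no scaling at all. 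For $\pv\mid\infty$, Lemma \ref{metric:lemma:prod} gives $\norm{\ux_1\wedge\cdots\wedge\ux_k}_\pv\le\norm{\ux_1}_\pv\cdots\norm{\ux_k}_\pv\le 1$ whenever each $\ux_i$ lies in the unit ball, hence $\tbigwedge^k\cC_{\uxi,\pv}(q)$ is contained in the unit ball of $\tbigwedge^k K_\pv^n$, so the first inclusion holds (even without the factor $k$). For the reverse inclusion at $\pv\mid\infty$: a point $\uX$ with $\norm{\uX}_\pv\le 1$ has all $N$ Plücker coordinates of absolute value $\le 1$, hence is a sum of $N$ terms each of the form $(\text{coefficient of modulus}\le 1)\cdot\ue_{i_1}\wedge\cdots\wedge\ue_{i_k}$, so it lies in $N\tbigwedge^k\cC_{\uxi,\pv}(q)$.

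\emph{The place $\pw$.} This is the only place carrying the extra constraint, and it is the crux. Recall $\cC_{\uxi,\pw}(q)=\{\ux\in K_\pw^n:\norm{\ux}_\pw\le 1,\ |\ux\cdot\uxi|_\pw\le e^{-qd/d_\pw}\}$, while $\cC^{(k)}_{\uxi,\pw}(q)=\{\uX:\norm{\uX}_\pw\le 1,\ \norm{\uxi\iprod\uX}_\pw\le e^{-qd/d_\pw}\}$. For the first inclusion, let $\ux_1,\dots,\ux_k\in\cC_{\uxi,\pw}(q)$ and put $\uX=\ux_1\wedge\cdots\wedge\ux_k$. Then $\norm{\uX}_\pw\le 1$ by Lemma \ref{metric:lemma:prod}, and using the contraction formula \eqref{points:eq:contraction} one gets
\[
 \uxi\iprod\uX=\sum_{i=1}^{k}(-1)^{i-1}(\uxi\cdot\ux_i)\,\ux_1\wedge\cdots\wedge\widehat{\ux_i}\wedge\cdots\wedge\ux_k,
\]
so that $\norm{\uxi\iprod\uX}_\pw\le k^{\delta}\max_i|\uxi\cdot\ux_i|_\pw\prod_{j\ne i}\norm{\ux_j}_\pw\le k\,e^{-qd/d_\pw}$ (the factor $k$ absorbing the triangle inequality in the archimedean case and being harmless otherwise). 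Hence $\uX\in k\,\cC^{(k)}_{\uxi,\pw}(q)$, and by bilinearity and $\cOv$-stability this extends to arbitrary points of $\tbigwedge^k\cC_{\uxi,\pw}(q)$. For the reverse inclusion, given $\uX$ with $\norm{\uX}_\pw\le 1$ and $\norm{\uxi\iprod\uX}_\pw\le e^{-qd/d_\pw}$: I would expand $\uX$ in a basis of $\tbigwedge^kK_\pw^n$ adapted to $\uxi$ — concretely, complete $\uxi/\norm{\uxi}_\pw$ (so a unit vector, as $\norm{\uxi}_\pw=1$) to an orthonormal basis $(\uu_1=\uxi,\uu_2,\dots,\uu_n)$ of $K_\pw^n$, and write $\uX=\sum c_{\ui}\,\uu_{i_1}\wedge\cdots\wedge\uu_{i_k}$ over the $N$ multi-indices. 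Each basis vector $\uu_{i_1}\wedge\cdots\wedge\uu_{i_k}$ lies in $\cC^{(k)}_{\uxi,\pw}(q)$ outright when it does not involve $\uu_1=\uxi$ (its contraction with $\uxi$ vanishes), and when it does involve $\uxi$ one has $\uxi\iprod(\uxi\wedge\uu_{i_2}\wedge\cdots\wedge\uu_{i_k})=\uu_{i_2}\wedge\cdots\wedge\uu_{i_k}$ of norm $1$, so the coefficient $c_{\ui}$ of that term is controlled by $\norm{\uxi\iprod\uX}_\pw\le e^{-qd/d_\pw}$. Either way $|c_{\ui}|_\pw\le 1$ and $c_{\ui}\,\uu_{i_1}\wedge\cdots\wedge\uu_{i_k}\in\cC^{(k)}_{\uxi,\pw}(q)$, so $\uX\in N\,\tbigwedge^k\cC_{\uxi,\pw}(q)$ by the triangle inequality.

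\emph{Main obstacle.} The delicate point is the reverse inclusion at $\pw$: one must disentangle the ``$\uxi$-direction'' from the rest of $\tbigwedge^kK_\pw^n$ so that the single scalar constraint $\norm{\uxi\iprod\uX}_\pw\le e^{-qd/d_\pw}$ translates into smallness of exactly the right Plücker coordinates. Using an orthonormal basis containing $\uxi$ makes this transparent because then the contraction $\uxi\iprod(\cdot)$ acts diagonally on the chosen basis of $\tbigwedge^kK_\pw^n$ (it kills the $\binom{n-1}{k}$ products avoiding $\uxi$ and sends the $\binom{n-1}{k-1}$ products containing $\uxi$ to a distinct orthonormal family in $\tbigwedge^{k-1}K_\pw^n$); the non-archimedean case is then immediate since such bases are $\cOv$-bases, and the archimedean case costs only the factor $N$ from summing $N$ terms. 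Combining the local inclusions, dilating only at the archimedean places, and using $k\le N$ gives \eqref{points:lemma3:eq1}.
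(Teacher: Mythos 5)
Your proof of the first inclusion, and of the second inclusion at places $\pv\neq\pw$, matches the paper's argument. But the crux step — the second inclusion at $\pw$ — has a genuine gap: you expand $\uX$ in the products $\uu_{i_1}\wedge\cdots\wedge\uu_{i_k}$ of an orthonormal basis $(\uu_1,\dots,\uu_n)$ with $\uu_1=\uxi$, and you tacitly use $\uxi\cdot\uu_i=0$ for $i\ge 2$ (both to kill the contractions of terms avoiding $\uu_1$, and to place each $\uu_i$ in $\cC_{\uxi,\pw}(q)$). This is automatic only when $K_\pw=\bR$. When $K_\pw=\bC$, orthonormality gives $\uu_i\cdot\overline{\uxi}=0$, not $\uu_i\cdot\uxi=0$; for instance $\uxi=(1,i)/\sqrt2$, $\uu_2=(1,-i)/\sqrt2$ is orthonormal yet $\uu_2\cdot\uxi=1$. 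When $K_\pw$ is non-archimedean, $(\uu_1,\dots,\uu_n)$ being an $\cO_\pw$-basis of $\cO_\pw^n$ with $\uu_1=\uxi$ imposes no constraint whatsoever on $\uu_i\cdot\uxi$.

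The paper's remedy is precisely the step your sketch omits: after fixing the orthonormal basis $(\uu_1,\dots,\uu_n)$ with $\uu_1=\uxi$, pass to its \emph{dual basis} $(\uu^*_1,\dots,\uu^*_n)$ for the dot product, and expand $\uX$ in the products $\uu^*_{i_1}\wedge\cdots\wedge\uu^*_{i_k}$. By Lemma \ref{metric:lemma:dual} the dual basis is again orthonormal, so the Pl\"ucker-coordinate bounds from $\norm{\uX}_\pw\le 1$ and $\norm{\uxi\iprod\uX}_\pw\le e^{-qd/d_\pw}$ are unchanged; and by construction $\uxi\cdot\uu^*_1=1$ while $\uxi\cdot\uu^*_i=0$ for $i\ge 2$, which is exactly what your argument needs. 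A secondary slip, worth correcting: you assert each term $c_{\ui}\,\uu_{i_1}\wedge\cdots\wedge\uu_{i_k}$ lies in $\cC^{(k)}_{\uxi,\pw}(q)$ and then conclude $\uX\in N\tbigwedge^k\cC_{\uxi,\pw}(q)$, but for that conclusion each term must be a pure product of $k$ elements of $\cC_{\uxi,\pw}(q)$, i.e., must lie in $\tbigwedge^k\cC_{\uxi,\pw}(q)$, not merely in $\cC^{(k)}_{\uxi,\pw}(q)$. With the dual basis this is immediate: $\uu^*_i\in\cC_{\uxi,\pw}(q)$ for $i\ge 2$, and $c\uu^*_1\in\cC_{\uxi,\pw}(q)$ whenever $|c|_\pw\le e^{-qd/d_\pw}$.
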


\begin{proof}
Fix a choice of $q\ge 0$ and, for simplicity, set
\[
  \cC:=\cC_\uxi(q)=\prod_\pv\cCv
  \et
  \cC^{(k)}:=\cC^{(k)}_\uxi(q)=\prod_\pv\cCv^{(k)}.
\]
Let $\pv \in \MK$ and let $\uX_\pv=\ux_1\wedge\cdots\wedge\ux_k$ with
$\ux_1,\dots,\ux_k\in\cCv$.  We find
\[
 \norm{\uX_\pv}_\pv
  \le \norm{\ux_1}_\pv\cdots\norm{\ux_k}_\pv \le 1,
\]
thus $\uX_\pv\in\cCv^{(k)}$ if $\pv \neq w$.  If $\pv=\pw$, formula
\eqref{points:eq:contraction} also yields $\norm{\uxi\iprod\uX_\pw}_\pw
\le k^\delta e^{-qd/d_\pw}$ where $\delta=1$ if $\pw\mid\infty$
and $\delta=0$ else, thus $\uX_\pw\in k^\delta\cC^{(k)}_\pw$.  This implies the first
inclusion in \eqref{points:lemma3:eq1}.

To prove the second inclusion, it suffices to show that each $\uX_\pv\in\cCv^{(k)}$
can be written as a sum of $N$ products of $k$ elements of $\cCv$.   For $\pv \neq \pw$,
this is immediate since $\cCv$ contains $\ue_1,\dots,\ue_n$ and since the products
$\ue_{i_1}\wedge\cdots\wedge\ue_{i_k}$ with $1\le i_1<\cdots<i_k\le n$ form
an orthonormal basis of $K_\pv^n$.  As $\norm{\uX_\pv}_\pv\le 1$, the $N$ coordinates
of $\uX_\pv$ in this basis have absolute values at most one, and we are done.
For $\pv=\pw$, we complete $\uu_1=\uxi$ into an orthonormal basis
$(\uu_1,\dots,\uu_n)$ of $K_\pw^n$  and we form its dual basis
$(\uu^*_1,\dots,\uu^*_n)$ with respect to the dot product.  By Lemma
\ref{metric:lemma:dual}, this new basis is also orthonormal.  Moreover we have
$\uxi\cdot\uu^*_1=1$ and $\uxi\cdot\uu^*_i=0$ for $i=2,\dots,n$.  Thus
$\cCw$ contains $\uu^*_2,\dots,\uu^*_n$ as well as $c\uu^*_1$ for any
$c\in K_\pw$ with $|c|_\pw\le e^{-qd/d_\pw}$.  Upon writing
\[
 \uX_\pw
  =\sum_{1\le i_1<\cdots<i_k\le n}c_{i_1,\dots,i_k}
     \uu^*_{i_1}\wedge\cdots\wedge\uu^*_{i_k},
\]
we find
\[
 \uxi\iprod\uX_\pw
  =\sum_{1<i_2<\cdots<i_k\le n}c_{1,i_2,\dots,i_k}
     \uu^*_{i_2}\wedge\cdots\wedge\uu^*_{i_k}.
\]
As $\norm{\uX_\pw}_\pw\le 1$ and $\norm{\uxi\iprod\uX_\pw}_\pw\le e^{-qd/d_\pw}$,
we deduce that $|c_{i_1,\dots,i_k}|_\pw$ is bounded above by $e^{-qd/d_\pw}$
if $i_1=1$, and by $1$ otherwise, so we are done.
\end{proof}

We can now prove the following part of Theorem A.

\begin{proposition}
\label{points:prop1}
There exists an $n$-system $\uP\colon[0,\infty)\to\bR^n$
such that $\norm{\uL_\uxi-\uP}$ is uniformly bounded by a constant
depending only on $K$, $\pw$ and $n$.
\end{proposition}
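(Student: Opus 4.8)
The plan is to verify the hypotheses (1)--(4) of Proposition \ref{comb:prop} for the functions $L_k = L_{\uxi,k}$ ($1\le k\le n$) and a suitable choice of auxiliary functions $M_k$, with a constant $c=c(K,\pw,n)$. The natural candidate is $M_k(q) = \log \lambda_1(\tbigwedge^k\cC_\uxi(q))$, or equivalently $M_k(q)=L^{(k)}_{\uxi,1}(q)$ up to a bounded error by the $k=1$ case of the estimates already recorded. Lemma \ref{points:lemma2} (applied with $k$ in place of $1$) shows each such $M_k$ is continuous, piecewise linear with slopes $0$ and $1$, so these functions are of the required shape. Hypothesis (1) is exactly the first chain of inequalities in Lemma \ref{points:lemma2} for $k=1$. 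Hypothesis (3) follows because $\tbigwedge^n\cC_\uxi(q)$ is the top exterior power, a one-dimensional object whose single minimum corresponds to the determinant, and $\mu(\cC_\uxi(q))\asymp e^{-qd}$, so by the adelic Minkowski theorem (Theorem \ref{adelic:thm:MBV}) $\lambda_1(\tbigwedge^n\cC_\uxi(q))\asymp e^q$, giving $|M_n(q)-q|\le c$; one then adjusts $M_n$ to be exactly $q$ at the cost of enlarging $c$.

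The heart of the argument is hypothesis (2), relating $M_k(q)$ to $L_1(q)+\cdots+L_k(q)$. This is where Lemma \ref{points:lemma3} and Burger's theorem (Theorem \ref{adelic:thm:Burger}) enter. By Lemma \ref{points:lemma3}, the convex body $\cC^{(k)}_\uxi(q)$ and the true compound $\tbigwedge^k\cC_\uxi(q)$ differ only by bounded dilation factors $k$ and $N$, hence their first minima agree up to bounded multiplicative constants; combined with Lemma \ref{points:lemma2} this gives $\exp M_k(q)\asymp \lambda_1(\tbigwedge^k\cC_\uxi(q))$. Burger's theorem identifies $\lambda_1(\tbigwedge^k\cC_\uxi(q))$, up to constants depending only on $K$ and $n$, with $\Lambda_1=\lambda_1(\cC_\uxi(q))\cdots\lambda_k(\cC_\uxi(q))$; and each $\lambda_j(\cC_\uxi(q))\asymp \exp L_{\uxi,j}(q)$ by the $k=1$ case of Lemma \ref{points:lemma2}. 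Taking logarithms turns the product into the sum $L_1(q)+\cdots+L_k(q)$, and all the errors accumulated are additive constants depending only on $K,\pw,n$, which is precisely hypothesis (2).

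Hypothesis (4) is the delicate combinatorial point: if $M_k$ changes slope from $1$ to $0$ at some $q>0$ with $1\le k<n$, one must show $|L_{k+1}(q)-L_k(q)|\le 2c$. The last assertion of Lemma \ref{points:lemma2}, applied to the body $\cC^{(k)}_\uxi$ (whose first minimum governs $M_k$ up to a bounded amount), says that when $L^{(k)}_{\uxi,1}$ changes slope from $1$ to $0$ at $q$ one has $L^{(k)}_{\uxi,1}(q)=L^{(k)}_{\uxi,2}(q)$; that is, $\lambda_1(\tbigwedge^k\cC_\uxi(q))\asymp\lambda_2(\tbigwedge^k\cC_\uxi(q))$. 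By the remark following Burger's theorem, $\Lambda_1=\lambda_1\cdots\lambda_k$ and $\Lambda_2=\lambda_1\cdots\lambda_{k-1}\lambda_{k+1}$, so $\Lambda_1\asymp\Lambda_2$ forces $\lambda_k(\cC_\uxi(q))\asymp\lambda_{k+1}(\cC_\uxi(q))$, i.e.\ $|L_k(q)-L_{k+1}(q)|\le 2c$ after taking logarithms and absorbing constants. (One must be slightly careful that ``$M_k$ changes slope at $q$'' is inherited by $L^{(k)}_{\uxi,1}$ up to shrinking the interval, since $M_k$ and $L^{(k)}_{\uxi,1}$ differ only by a bounded amount and both have slopes in $\{0,1\}$; a short local analysis handles this.) With (1)--(4) in hand, Proposition \ref{comb:prop} produces an $n$-system $\uR$ with $\max_k|L_k(q)-R_k(q)|\le 4n^2c'$ on $[0,\infty)$, and taking $\uP=\uR$ gives the claimed uniform bound on $\norm{\uL_\uxi-\uP}$. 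The main obstacle I expect is precisely this step (4) — propagating the ``slope change'' hypothesis from the approximating $M_k$ to the exact minimum functions and then running it through Burger's inequality — everything else is bookkeeping with the $\asymp$-estimates already assembled in the section.
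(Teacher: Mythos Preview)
Your approach is correct and essentially the same as the paper's: verify the hypotheses of Proposition \ref{comb:prop} with $M_k=L^{(k)}_{\uxi,1}$ for $k<n$ and $M_n(q)=q$, using Lemma \ref{points:lemma2}, Lemma \ref{points:lemma3}, Burger's theorem, and the adelic Minkowski estimate for the volume. The one unnecessary complication you introduce is the ambiguity between $M_k=\log\lambda_1(\tbigwedge^k\cC_\uxi(q))$ and $M_k=L^{(k)}_{\uxi,1}(q)$: the paper simply takes $M_k=L^{(k)}_{\uxi,1}$ exactly, which makes the parenthetical worry in step (4) disappear entirely, since the slope-change hypothesis applies directly to $L^{(k)}_{\uxi,1}$ and the last assertion of Lemma \ref{points:lemma2} gives $L^{(k)}_{\uxi,1}(q)=L^{(k)}_{\uxi,2}(q)$ at that exact $q$.
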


\begin{proof}
Set $M_k=L^{(k)}_{\uxi,1}$ for $k=1,\dots,n-1$ and define $M_n(q)=q$ for each
$q\ge 0$.   It suffices to show that these functions and the functions $L_k=L_{\uxi,k}$
satisfy all the hypotheses of Proposition \ref{comb:prop} for some constant
$c=c(K,\pw,n)$.  By Lemma \ref{points:lemma2}, we only have to verify
conditions (2) and (4) of that proposition.   Theorem \ref{adelic:thm:MBV} gives
\begin{equation}
\label{points:prop1:eq}
 \lambda_1(\cC_\uxi(q))\cdots\lambda_n(\cC_\uxi(q))
     \asymp \mu(\cC_\uxi(q))^{-1/d} \asymp e^q,
\end{equation}
while for each $k=1,\dots,n-1$, Theorem \ref{adelic:thm:Burger} provides
\begin{align*}
 &\lambda_1\big(\tbigwedge^k\cC_\uxi(q)\big)
 \asymp \lambda_1(\cC_\uxi(q))\cdots\lambda_k(\cC_\uxi(q)),\\
 &\lambda_2\big(\tbigwedge^k\cC_\uxi(q)\big)
 \asymp \lambda_1(\cC_\uxi(q))\cdots\lambda_{k-1}(\cC_\uxi(q))
   \lambda_{k+1}(\cC_\uxi(q)).
\end{align*}
The inclusions of Lemma \ref{points:lemma3} combined with
\eqref{points:lemma2:eq1} also imply that
\[
 \lambda_j\big(\tbigwedge^k\cC_\uxi(q)\big)
   \asymp \lambda_j\big(\cC^{(k)}_\uxi(q)\big)
   \asymp \exp(L^{(k)}_{\uxi,j}(q))
\]
for $j=1,\dots,\binom{n}{k}$.  Taking logarithms, we obtain the inequalities
of Proposition \ref{comb:prop} (2) as well as
\[
 |L^{(k)}_{\uxi,2}(q)-L_1(q)-\cdots-L_{k-1}(q)-L_{k+1}(q)|\le c
   \quad (1\le k<n, \ q\ge 0),
\]
for some constant $c=c(K,\pw,n)$.  Finally, if, for some $k<n$, the function
$M_k$ changes slope from $1$
to $0$ at a point $q>0$, Lemma \ref{points:lemma2} gives
$L^{(k)}_{\uxi,2}(q)=L^{(k)}_{\uxi,1}(q)=M_k(q)$.  Then comparing the
last estimate with that of Proposition \ref{comb:prop} (2), we obtain
$|L_{k+1}(q)-L_k(q)|  \le 2c$.  Thus condition (4) of
Proposition \ref{comb:prop} holds as well.
\end{proof}

To compare approximation to $\uL_\uxi$ by an $n$-system $\uP$ and
approximation to $\uL^*_\uxi$ by the dual map $\uP^*$ defined by
\eqref{results:P*}, we first note the following equality.

\begin{lemma}
\label{points:lemmaL}
We have $\uL^{(n-1)}_\uxi=\uL^*_\uxi$.
\end{lemma}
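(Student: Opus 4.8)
The claim is $\uL^{(n-1)}_\uxi=\uL^*_\uxi$, where $\uL^{(n-1)}_\uxi$ is built from $(n-1)$-vectors $\uX\in\bigwedge^{n-1}K^n$ via $D_\uxi(\uX)=\norm{\uxi\iprod\uX}_\pw^{d_\pw/d}\prod_{\pv\ne\pw}\norm{\uX}_\pv^{d_\pv/d}$, while $\uL^*_\uxi$ is built from vectors $\ux\in K^n$ via $D^*_\uxi(\ux)=|\ux\cdot\uxi|_\pw^{d_\pw/d}\prod_{\pv\ne\pw}\norm{\ux}_\pv^{d_\pv/d}$ (recall $\norm{\uxi}_\pw=1$). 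Both are defined as the smallest $t\ge 0$ admitting $j$ linearly independent solutions of $H(\cdot)\le e^t$, $D_\uxi(\cdot)\le e^{t-q}$ (resp.\ $D^*_\uxi$). So it suffices to exhibit a height- and $D$-preserving bijection between lines in $K^n$ and lines in $\bigwedge^{n-1}K^n$ that also preserves linear independence of tuples.

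The natural candidate is the duality isomorphism $\varphi:=\varphi_1\colon K^n\xrightarrow{\sim}\bigwedge^{n-1}K^n$ from \S\ref{metric:ssec:duality}, characterized by $(\ux\wedge\uY)\cdot\uE=\varphi(\ux)\cdot\uY$ for all $\uY\in\bigwedge^{n-1}K^n$. The plan is: \textbf{(i)} by \eqref{metric:duality:eq} applied with the canonical basis (which is self-dual), $\varphi$ is an isometry on $K_\pv^n$ for every $\pv$; hence $\norm{\varphi(\ux)}_\pv=\norm{\ux}_\pv$ for all $\pv$, so $H(\varphi(\ux))=H(\ux)$. \textbf{(ii)} $\varphi$ carries a line $\langle\ux\rangle$ to the line $\bigwedge^{n-1}V$ where $V=\langle\ux\rangle^\perp$ (this is exactly the displayed identity $\varphi_k(\bigwedge^kV)=\bigwedge^{n-k}V^\perp$ with $k=1$, or can be checked directly); more relevantly I need the contraction identity $\uxi\iprod\varphi(\ux)=\pm(\ux\cdot\uxi)$ up to a universal sign, i.e.\ the contraction of the $(n-1)$-vector $\varphi(\ux)$ against $\uxi$ recovers (a scalar multiple of) the scalar $\ux\cdot\uxi$, living in $\bigwedge^{0}K^n_\pw=K_\pw$. \textbf{(iii)} combining, $D_\uxi(\varphi(\ux))=\norm{\uxi\iprod\varphi(\ux)}_\pw^{d_\pw/d}\prod_{\pv\ne\pw}\norm{\varphi(\ux)}_\pv^{d_\pv/d}=|\ux\cdot\uxi|_\pw^{d_\pw/d}\prod_{\pv\ne\pw}\norm{\ux}_\pv^{d_\pv/d}=D^*_\uxi(\ux)$. \textbf{(iv)} since $\varphi$ is a $K$-linear isomorphism it maps linearly independent tuples to linearly independent tuples and surjects onto all lines in $\bigwedge^{n-1}K^n$; therefore for every $q\ge 0$ and $j$, the set of $j$-tuples competing in the definition of $L^{(n-1)}_{\uxi,j}(q)$ is exactly the $\varphi$-image of the set competing for $L^*_{\uxi,j}(q)$, with identical values of $\max_{\uX}L_\uxi(\uX,q)$. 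Taking minima gives $L^{(n-1)}_{\uxi,j}(q)=L^*_{\uxi,j}(q)$ for all $j$, hence the claimed equality of maps.

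The one step that needs genuine care — the ``hard part'' — is \textbf{(ii)}: pinning down the identity $\uxi\iprod\varphi_1(\ux)=\varepsilon\,(\ux\cdot\uxi)$ for a sign $\varepsilon\in\{\pm1\}$ independent of $\ux,\uxi$. I would verify this by expanding in the canonical basis: write $\ux=\sum_i x_i\ue_i$, so $\varphi_1(\ue_i)=\epsilon(i,\hat\imath)\,\ue_1\wedge\cdots\wedge\widehat{\ue_i}\wedge\cdots\wedge\ue_n$ by \eqref{metric:duality:eq}, and then compute $\uxi\iprod\varphi_1(\ue_i)$ using the contraction formula \eqref{points:eq:contraction}; only the term where $\xi_i$ is contracted survives and one checks the signs cancel to give $\pm\xi_i$, uniformly in $i$, hence $\uxi\iprod\varphi_1(\ux)=\pm\sum_i x_i\xi_i=\pm(\ux\cdot\uxi)$. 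Since all four quantities $H$, $D_\uxi$, $D^*_\uxi$, and the norms are insensitive to a global sign, this sign is harmless. Everything else is bookkeeping: isometry of $\varphi_1$ is already recorded in \S\ref{metric:ssec:duality}, and the translation between ``$j$ linearly independent solutions'' on the two sides is immediate from $\varphi_1$ being a linear isomorphism.
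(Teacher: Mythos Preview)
Your overall framework is the paper's: use the duality isomorphism $\varphi=\varphi_1\colon K^n\to\tbigwedge^{n-1}K^n$, note it is an isometry at every place (hence preserves heights), match up $D_\uxi(\varphi(\ux))$ with $D^*_\uxi(\ux)$, and conclude. But step (ii) contains a genuine error that breaks the argument.

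You misread the definition of $D^*_\uxi$. In this paper (see \S\ref{results:ssec:exp} and the macro $\Dwedge=D^*$), the starred quantity is
\[
 D^*_\uxi(\ux)=\norm{\ux\wedge\uxi}_\pw^{d_\pw/d}\prod_{\pv\ne\pw}\norm{\ux}_\pv^{d_\pv/d},
\]
built from the \emph{wedge} $\ux\wedge\uxi$, not from the dot product $\ux\cdot\uxi$; the dot product goes into the unstarred $D_\uxi$. So the identity you set out to prove, $\uxi\iprod\varphi_1(\ux)=\pm(\ux\cdot\uxi)$, is aiming at the wrong target. It is also not well-typed: for $\uX\in\tbigwedge^{n-1}K_\pw^n$ the contraction $\uxi\iprod\uX$ lands in $\tbigwedge^{n-2}K_\pw^n$, a space of dimension $\binom{n}{2}$, not in the scalars. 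Your claim ``only the term where $\xi_i$ is contracted survives'' is false for $n\ge 3$: contracting $\uxi$ against $\ue_1\wedge\cdots\wedge\widehat{\ue_i}\wedge\cdots\wedge\ue_n$ produces one $(n-2)$-vector term for \emph{each} $j\ne i$, and none of them involve $\xi_i$.

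The identity you actually need (and which the paper establishes by a short computation) is the norm equality
\[
 \norm{\uxi\iprod\varphi(\ux)}_\pw=\norm{\ux\wedge\uxi}_\pw
 \quad\text{for all }\ux\in K_\pw^n,
\]
which, combined with the isometry of $\varphi$ at every place, gives $H(\varphi(\ux))=H(\ux)$ and $D_\uxi(\varphi(\ux))=D^*_\uxi(\ux)$ with the correct (wedge) definition of $D^*_\uxi$. With this fix, your steps (i), (iii), (iv) go through exactly as written.
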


\begin{proof}
Consider the $K$-linear map $\varphi\colon K^n \to \bigwedge^{n-1}K^n$ given by
\[
 \varphi(\ux)=\ux\iprod(\ue_1\wedge\cdots\wedge\ue_n)
\]
for each $\ux\in K^n$.  Writing $\ux=(x_1,\dots,x_n)$, we find that
\[
 \varphi(\ux)
  =\sum_{i=1}^n (-1)^{i-1} x_i
    \ue_1\wedge\cdots\wedge\widehat{\,\ue_i\,}\wedge\cdots\wedge\ue_n.
\]
Thus $\varphi$ is an isomorphism and, for each place $\pv$ of $K$, it extends to
a $K_\pv$-linear isometry $\varphi_\pv\colon K_\pv^n \to \bigwedge^{n-1}K_\pv^n$
given by the same formulas.  Moreover, a short computation shows that
\[
 \norm{\uxi\iprod\varphi_\pw(\ux)}_\pw=\norm{\ux\wedge\uxi}_\pw
\]
for each $\ux\in K_\pw^n$.  Thus, for each non-zero $\ux\in K^n$, we have
\[
 H(\varphi(\ux))=H(\ux)
 \et
 D_\uxi(\varphi(\ux)) = D^*_\uxi(\ux),
\]
and the conclusion follows since $\varphi$ is an isomorphism.
\end{proof}

\begin{lemma}
\label{points:lemmaLc}
There is a constant $c^*=c^*(K,\pw,n)$ such that $|L^*_{\uxi,j}(q)+L_{\uxi,k}(q)-q|\le c^*$
for each $q\ge 0$ and each $j,k\in\{1,\dots,n\}$ with $j+k=n+1$.
\end{lemma}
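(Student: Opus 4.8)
The plan is to compare the two families of minima $L_{\uxi,k}(q)$ and $L^*_{\uxi,j}(q)$ through Mahler duality of convex bodies, exactly as the classical relation $\lambda_k(\cC)\lambda_{n+1-k}(\cC^*)\asymp 1$ works for convex bodies in $\bR^n$, but in its adelic form. First I would introduce the dual adelic convex body $\cC_\uxi(q)^*$ of $\cC_\uxi(q)$ place by place (for $\pv\nmid\infty$ the $\cO_\pv$-dual lattice, for $\pv\mid\infty$ the polar body), and recall Mahler's duality for the successive minima in the adelic setting, which gives
\[
 \lambda_k(\cC_\uxi(q))\,\lambda_{n+1-k}(\cC_\uxi(q)^*)\asymp 1
 \qquad(1\le k\le n),
\]
with implicit constants depending only on $K$ and $n$. (This can be cited from Bombieri--Vaaler or McFeat, or derived from Burger's compound-body theorem, Theorem \ref{adelic:thm:Burger}, applied with $k=n-1$, together with Theorem \ref{adelic:thm:MBV}.)

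Next I would identify $\cC_\uxi(q)^*$, up to a bounded factor, with the convex body $\cC^{(n-1)}_\uxi(q)$ of $\bigwedge^{n-1}K_\bA^n\cong K_\bA^n$ — more precisely with its image under the isometry $\varphi$ of Lemma \ref{points:lemmaL}. The key local computation is that, for each place $\pv\ne\pw$, the dual of the unit ball $\cO_\pv^n$ (resp.\ the euclidean unit ball) is again the unit ball, so the constraints match; and at $\pw$, the polar of the "slab" $\{\norm{\ux}_\pw\le1,\ |\ux\cdot\uxi|_\pw\le e^{-qd/d_\pw}\}$ is, up to a bounded factor, the convex hull of the unit ball together with $e^{qd/d_\pw}\langle\uxi\rangle_{K_\pw}$, which is exactly the condition defining $\cC^{(n-1)}_\uxi(q)$ after one identifies $\bigwedge^{n-1}K_\pw^n$ with $K_\pw^n$ via $\varphi_\pw$ and uses $\norm{\uxi\iprod\varphi_\pw(\ux)}_\pw=\norm{\ux\wedge\uxi}_\pw$ from the proof of Lemma \ref{points:lemmaL}. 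Taking $n$-th minima and using Proposition \ref{dilations:prop}, together with the estimate $\exp(L^{(k)}_{\uxi,j}(q))\asymp\lambda_j(\cC^{(k)}_\uxi(q))$ of Lemma \ref{points:lemma2} (with $k=n-1$) and the equality $\uL^{(n-1)}_\uxi=\uL^*_\uxi$ of Lemma \ref{points:lemmaL}, the duality relation becomes
\[
 \exp\bigl(L_{\uxi,k}(q)\bigr)\exp\bigl(L^*_{\uxi,n+1-k}(q)\bigr)\asymp e^q,
\]
and taking logarithms yields the lemma with $c^*=c^*(K,\pw,n)$.

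The main obstacle I expect is the clean identification of the polar body at the place $\pw$ with $\cC^{(n-1)}_\uxi(q)$ up to a bounded factor, since the slab $\cC_{\uxi,\pw}(q)$ is not symmetric under the duality in an obvious way and at non-archimedean $\pw$ one must be careful that "polar" is replaced by the $\cO_\pw$-dual module and that the discreteness of the value group only costs a bounded factor (as in Lemma \ref{points:lemma1}). An alternative, perhaps cleaner, route that sidesteps explicit polars is to avoid duality of convex bodies altogether: apply Burger's theorem (Theorem \ref{adelic:thm:Burger}) to $\cC_\uxi(q)$ with the compound exponent $k=n-1$, which gives $\lambda_j(\bigwedge^{n-1}\cC_\uxi(q))\asymp\Lambda_j$ where $\Lambda_j$ is the $j$-th smallest of the products $\prod_{i\ne m}\lambda_i(\cC_\uxi(q))$; combine this with $\lambda_1(\cC_\uxi(q))\cdots\lambda_n(\cC_\uxi(q))\asymp e^q$ from \eqref{points:prop1:eq} to get $\Lambda_j\asymp e^q/\lambda_{n+1-j}(\cC_\uxi(q))$; then use Lemma \ref{points:lemma3} (with $k=n-1$) to pass from $\bigwedge^{n-1}\cC_\uxi(q)$ to $\cC^{(n-1)}_\uxi(q)$, and finally Lemma \ref{points:lemma2} and Lemma \ref{points:lemmaL} to rewrite everything in terms of $L_{\uxi,k}$ and $L^*_{\uxi,n+1-k}$. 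Taking logarithms again gives $|L^*_{\uxi,j}(q)+L_{\uxi,k}(q)-q|\le c^*$ for $j+k=n+1$, as required.
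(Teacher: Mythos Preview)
Your alternative route at the end of the proposal is exactly the paper's proof: apply Theorem \ref{adelic:thm:Burger} with $k=n-1$ to get $\lambda_j\big(\tbigwedge^{n-1}\cC_\uxi(q)\big)\asymp e^q/\lambda_k(\cC_\uxi(q))$ via \eqref{points:prop1:eq}, then chain Lemmas \ref{points:lemma3}, \ref{points:lemma2} and \ref{points:lemmaL} to rewrite both sides as $\exp(L^*_{\uxi,j}(q))$ and $e^q/\exp(L_{\uxi,k}(q))$, and take logarithms.

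Your primary plan via explicit Mahler duality of $\cC_\uxi(q)$ is a genuinely different route. It would work, but as you yourself note, the identification of the polar body at $\pw$ with the $\pw$-component of $\cC^{(n-1)}_\uxi(q)$ (up to bounded factors, and through $\varphi_\pw$) is extra work that the paper avoids entirely. The compound-body approach buys you this comparison for free, since Lemma \ref{points:lemma3} already packages the inclusion $\tbigwedge^{n-1}\cC_\uxi(q)\asymp\cC^{(n-1)}_\uxi(q)$ without ever mentioning polars. So the paper's route is shorter and stays within the toolkit already developed in section \ref{sec:points}; your direct-duality route would require either importing an adelic Mahler duality statement or the local polar computation you sketch, neither of which appears elsewhere in the paper.
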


\begin{proof}
For $q$, $j$ and $k$ as above, Theorem \ref{adelic:thm:Burger} combined
with \eqref{points:prop1:eq} provides
\begin{align*}
 &\lambda_j\big(\tbigwedge^{n-1}\cC_\uxi(q)\big)
 \asymp \lambda_1(\cC_\uxi(q))\cdots\widehat{\lambda_k(\cC_\uxi(q))}
     \cdots\lambda_{n}(\cC_\uxi(q))
 \asymp \frac{e^q}{\lambda_k(\cC_\uxi(q))}.
\end{align*}
Using Lemmas \ref{points:lemma3}, \ref{points:lemma2} and \ref{points:lemmaL}
in this order, we also find
\[
 \lambda_j\big(\tbigwedge^{n-1}\cC_\uxi(q)\big)
 \asymp \lambda_j(\cC^{(n-1)}_\uxi(q))
 \asymp \exp(L_{\uxi,j}^{(n-1)}(q))
 = \exp(L^*_{\uxi,j}(q)),
\]
while $\lambda_k(\cC_\uxi(q))\asymp \exp(L_{\uxi,k}(q))$.  The conclusion
follows by taking logarithms.
\end{proof}

We deduce the following complement to Proposition \ref{points:prop1}.

\begin{proposition}
\label{points:prop2}
Let $\uP=(P_1,\dots,P_n)$ be an $n$-system on $[0,\infty)$.  If one of the conditions
\eqref{results:thmA:eq} from Theorem A holds with a constant $c$, then the
other holds with $c$ replaced by $c+c^*\sqrt{n}$ where $c^*$ comes from Lemma
\ref{points:lemmaLc}.
\end{proposition}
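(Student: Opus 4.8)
The plan is to exploit the duality relation $\uP^*(q)=(q-P_n(q),\dots,q-P_1(q))$ together with the estimate of Lemma \ref{points:lemmaLc}, which says that $L^*_{\uxi,j}(q)$ and $q-L_{\uxi,k}(q)$ differ by at most $c^*$ whenever $j+k=n+1$. The whole argument is a componentwise comparison of two vectors of length $n$, followed by passage from the sup-norm bound on components to the euclidean-norm bound on the vector (this is where the factor $\sqrt{n}$ enters).

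First I would assume that the first condition in \eqref{results:thmA:eq} holds, namely $\sup_{q\ge 0}\|\uL_\uxi(q)-\uP(q)\|\le c$. Since $\|\cdot\|$ is the euclidean norm on $\bR^n$, this gives $|L_{\uxi,k}(q)-P_k(q)|\le c$ for each $k=1,\dots,n$ and each $q\ge 0$. Now fix $q\ge 0$ and an index $j\in\{1,\dots,n\}$, and set $k=n+1-j$. Then the $j$-th coordinate of $\uP^*(q)$ is $q-P_k(q)$, and I would estimate
\[
 |L^*_{\uxi,j}(q)-(q-P_k(q))|
  \le |L^*_{\uxi,j}(q)-(q-L_{\uxi,k}(q))| + |L_{\uxi,k}(q)-P_k(q)|
  \le c^*+c,
\]
using Lemma \ref{points:lemmaLc} for the first term and the hypothesis for the second. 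Thus every coordinate of $\uL^*_\uxi(q)-\uP^*(q)$ is bounded in absolute value by $c+c^*$, hence $\|\uL^*_\uxi(q)-\uP^*(q)\|\le (c+c^*)\sqrt{n}\le c+c^*\sqrt{n}+\text{(harmless)}$; more precisely I would simply record $\|\uL^*_\uxi(q)-\uP^*(q)\|\le \sqrt{n}\,(c+c^*)$ — but to match the stated constant $c+c^*\sqrt n$ I should be a bit more careful: apply the euclidean bound only to the discrepancy coming from $c^*$ and keep the $c$-part in euclidean form from the start. Concretely, writing $\uL^*_\uxi(q)-\uP^*(q) = \big(\uL^*_\uxi(q)-\widetilde{\uL_\uxi}(q)\big) + \big(\widetilde{\uL_\uxi}(q)-\uP^*(q)\big)$ where $\widetilde{\uL_\uxi}(q):=(q-L_{\uxi,n}(q),\dots,q-L_{\uxi,1}(q))$, the second vector equals the coordinate-reversal of $\uP(q)-\uL_\uxi(q)$ and so has euclidean norm $\le c$, while the first has every coordinate bounded by $c^*$ and hence euclidean norm $\le c^*\sqrt n$; the triangle inequality then yields $\|\uL^*_\uxi(q)-\uP^*(q)\|\le c+c^*\sqrt n$, as claimed. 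The reverse implication (assuming the $\uL^*$ condition and deducing the $\uL$ one) is symmetric: applying $\uP\mapsto\uP^*$ twice is the identity up to coordinate reversal, and $\uL_\uxi$ is recovered from $\uL^*_\uxi$ by the same relation of Lemma \ref{points:lemmaLc} read in the other direction.

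The argument has no real obstacle — the content is entirely in Lemma \ref{points:lemmaLc}, which has already been proved. The only point requiring a little care is the bookkeeping that separates the $c$-contribution (kept in euclidean norm, since the hypothesis is stated that way) from the $c^*$-contribution (which is a per-coordinate bound and so picks up the $\sqrt n$), so that the final constant is exactly $c+c^*\sqrt n$ rather than $\sqrt n\,(c+c^*)$.
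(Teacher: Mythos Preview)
Your proof is correct and follows essentially the same approach as the paper: both rewrite Lemma \ref{points:lemmaLc} as $\big|(L^*_{\uxi,j}(q)-P^*_j(q))+(L_{\uxi,k}(q)-P_k(q))\big|\le c^*$ (using $P^*_j(q)+P_k(q)=q$), and then use the triangle inequality with the coordinate-reversed vector to get $c+c^*\sqrt{n}$. The paper's version is simply more terse, compressing your decomposition $\uL^*_\uxi-\uP^*=(\uL^*_\uxi-\widetilde{\uL_\uxi})+(\widetilde{\uL_\uxi}-\uP^*)$ into the single displayed inequality above and leaving the final norm estimate implicit.
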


\begin{proof}
With $q$, $j$ and $k$ as in Lemma \ref{points:lemmaLc}, the definition of $\uP^*$
in \eqref{results:P*} gives $P^*_j(q)+P_k(q)=q$.  Hence the inequality of the lemma
may be restated as
\[
 \big|(L^*_{\uxi,j}(q)-P^*_j(q))+(L_{\uxi,k}(q)-P_k(q))\big|\le c^*,
\]
and the result follows.
\end{proof}

We conclude this section with the following observation.

\begin{lemma}
\label{points:lemmaC*}
For $q\ge0$, define $\cC^*_\uxi(q)$ to be the set of points
$(\ux_\pv) \in K_\bA^n$ satisfying
\[
 \text{$\norm{\ux_\pv}_\pv\le 1$ \ for each $\pv \in \MK$ \ and \
  $\norm{\ux_\pw\wedge\uxi}_\pw\le e^{-qd/d_\pw}$.}
\]
Then $\lambda_j(\cC^*_\uxi(q))=\lambda_j(\cC_\uxi^{(n-1)}(q))
\asymp \exp(L^{(n-1)}_{\uxi,j}(q))=\exp(L^*_{\uxi,j}(q))$ for each
$j=1,\dots,n$.
\end{lemma}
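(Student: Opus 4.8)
The plan is to reduce the asserted chain of relations to three ingredients already available: the isomorphism $\varphi$ from the proof of Lemma \ref{points:lemmaL}, the compound‑body comparison of Lemma \ref{points:lemma2} (applied with $k=n-1$), and the identity $\uL^{(n-1)}_\uxi=\uL^*_\uxi$ of Lemma \ref{points:lemmaL}. Two of the three claimed relations are then immediate: taking $k=n-1$ and $N=\binom{n}{n-1}=n$ in Lemma \ref{points:lemma2} gives $\lambda_j(\cC^{(n-1)}_\uxi(q))\asymp\exp(L^{(n-1)}_{\uxi,j}(q))$ for each $j$, while Lemma \ref{points:lemmaL} gives $L^{(n-1)}_{\uxi,j}=L^*_{\uxi,j}$. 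So the only thing left to prove is the \emph{exact} equality $\lambda_j(\cC^*_\uxi(q))=\lambda_j(\cC^{(n-1)}_\uxi(q))$.

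For this I would invoke the $K$-linear isomorphism $\varphi\colon K^n\to\bigwedge^{n-1}K^n$, $\varphi(\ux)=\ux\iprod(\ue_1\wedge\cdots\wedge\ue_n)$, which by the proof of Lemma \ref{points:lemmaL} extends at each place $\pv$ to a $K_\pv$-linear isometry $\varphi_\pv\colon K_\pv^n\to\bigwedge^{n-1}K_\pv^n$ satisfying $\norm{\uxi\iprod\varphi_\pw(\ux)}_\pw=\norm{\ux\wedge\uxi}_\pw$. Since each $\varphi_\pv$ is an isometry, it carries the unit ball of $K_\pv^n$ onto that of $\bigwedge^{n-1}K_\pv^n$; hence the product $\varphi_\bA=\prod_\pv\varphi_\pv$ is a well-defined $K_\bA$-linear bijection $K_\bA^n\to\bigwedge^{n-1}K_\bA^n$ whose restriction to the diagonal copy of $K^n$ is $\varphi$. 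Comparing the defining inequalities of $\cC^*_\uxi(q)$ and $\cC^{(n-1)}_\uxi(q)$ and using the relation $\norm{\uxi\iprod\varphi_\pw(\ux_\pw)}_\pw=\norm{\ux_\pw\wedge\uxi}_\pw$ at the place $\pw$, I would check that $\varphi_\bA$ maps $\cC^*_\uxi(q)$ bijectively onto $\cC^{(n-1)}_\uxi(q)$; in particular $\cC^*_\uxi(q)$ is a genuine convex body of $K_\bA^n$, being the $\varphi_\bA$-preimage of one. Moreover, since the dilation affects only the archimedean components, where $\varphi_\pv$ is $K_\pv$-linear, $\varphi_\bA$ also carries $\lambda\,\cC^*_\uxi(q)$ onto $\lambda\,\cC^{(n-1)}_\uxi(q)$ for every $\lambda>0$.

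Finally, because $\varphi$ is a $K$-linear isomorphism, it sends any $j$ points of $K^n$ that are linearly independent over $K$ to $j$ points of $\bigwedge^{n-1}K^n$ linearly independent over $K$, and conversely. Hence, for each $\lambda>0$, the dilate $\lambda\,\cC^*_\uxi(q)$ contains $j$ linearly independent points of $K^n$ if and only if $\lambda\,\cC^{(n-1)}_\uxi(q)$ contains $j$ linearly independent points of $\bigwedge^{n-1}K^n$, which gives $\lambda_j(\cC^*_\uxi(q))=\lambda_j(\cC^{(n-1)}_\uxi(q))$ and completes the proof. I do not expect any real obstacle here: all the substantive work (the construction of $\varphi_\pv$ with its norm property, and the comparison of $\cC^{(n-1)}_\uxi(q)$ with the compound and hence with the successive minima of $\cC_\uxi(q)$) is already in place, and what remains is the routine bookkeeping of lifting the local isometries $\varphi_\pv$ to the adelic level and tracking linear independence.
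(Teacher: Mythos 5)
Your proof is correct and takes essentially the same approach as the paper: both hinge on the observation that the $K_\bA$-linear isomorphism $\varphi_\bA$ (assembled from the local isometries $\varphi_\pv$ of Lemma \ref{points:lemmaL}) carries $\cC^*_\uxi(q)$ onto $\cC^{(n-1)}_\uxi(q)$, so the two convex bodies have identical successive minima, and the remaining estimates are then read off from Lemmas \ref{points:lemma2} and \ref{points:lemmaL}. The paper states this more tersely, while you spell out the bookkeeping (compatibility with dilations and preservation of linear independence), but the argument is the same.
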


\begin{proof}
Going back to the proof of Lemma \ref{points:lemmaL}, we find that the
$K_\bA$-linear isomorphism $\varphi_\bA\colon K_\bA^n \to
\bigwedge^{n-1}K_\bA^n$ which extends $\varphi$ maps $\cC^*_\uxi(q)$
to $\cC^{(n-1)}_\uxi(q)$ for each $q\ge 0$.  Thus these convex bodies
have the same minima.  The remaining estimates follow from
\eqref{points:lemma2:eq1} in Lemma \ref{points:lemma2}, and from
Lemma \ref{points:lemmaL}.
\end{proof}

%
%

\section{Construction of bases}
\label{sec:constr}

As in the preceding section, we assume $n\ge 2$, and we fix a place $\pw\in\MK$.
We also set
\[
 S=\MKinf\cup\{\pw\}
\]
and denote by $\cO_S=\cap_{\pv\notin S}(K\cap\cO_\pv)$ the ring of $S$-integers of $K$.
The goal of this section is to provide a general recursive construction of bases of
$\cO_S^n$ as an $\cO_S$-module.  In the next section, we will use it to complete
the proof of Theorem A.  The general strategy is similar to that of \cite[\S 5]{R2015}
but complicated by the fact that we need these bases to obey several properties
at each place of $S$.  In particular, we will need them to be almost orthogonal in
$K_\pv^n$ for each $\pv\in S\setminus\{\pw\}$, in the sense of section \ref{metric:ssec:ao}
for $L=K_\pv$.  We start by recalling two general results of approximation by
elements of $\cO_S$ within $\prod_{\pv\in S}K_\pv$.

The group of $S$-units of $K$ is the group $\cO_S^*$ of invertible elements of $\cO_S$.
It is well-known that its image under the logarithmic embedding, namely the set of
points $(\log|\varepsilon|_\pv)_{\pv\in S}$ with $\varepsilon\in\cO^*_S$,
forms a lattice within the hyperplane of $\bR^S$ of points $(x_\pv)_{\pv\in S}$ with
$\sum_{\pv\in S}d_\pv x_\pv=0$.  Thus, there is a constant $c_2=c_2(K,S)\ge 1$ with
the following property.

\begin{lemma}
\label{constr:lemma:units}
For any choice of positive real numbers $(r_\pv)_{\pv\in S}$ with
$\prod_{\pv\in S}r_\pv^{d_\pv}=1$, there exists an $S$-unit $\varepsilon\in\cO_S^*$
which satisfies $c_2^{-1}r_\pv\le |\varepsilon|_\pv\le c_2r_\pv$ for each $\pv \in S$.
\end{lemma}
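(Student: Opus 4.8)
The plan is to deduce the statement directly from Dirichlet's $S$-unit theorem, in the form recalled in the paragraph preceding the lemma. First I would note that the hypothesis $\prod_{\pv\in S}r_\pv^{d_\pv}=1$ is equivalent to $\sum_{\pv\in S}d_\pv\log r_\pv=0$, so that the point $y=(\log r_\pv)_{\pv\in S}$ belongs to the hyperplane $H=\{(x_\pv)_{\pv\in S}\in\bR^S\,;\,\sum_{\pv\in S}d_\pv x_\pv=0\}$. By the quoted fact, the logarithmic embedding $\varepsilon\mapsto(\log|\varepsilon|_\pv)_{\pv\in S}$ maps $\cO_S^*$ onto a full lattice $\Lambda$ in $H$, and the whole point of the lemma is simply that $y$ is not far from $\Lambda$.

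The second step is the elementary observation that a full lattice in a finite-dimensional real vector space has a bounded fundamental domain, hence a finite covering radius: there is a constant $\rho=\rho(K,S)\ge 0$, depending only on $\Lambda$ and therefore only on $K$ and $S$, such that every point of $H$ lies within distance $\rho$ (in the supremum norm on $\bR^S$) of some point of $\Lambda$. Applying this to $y$ yields an $S$-unit $\varepsilon\in\cO_S^*$ with $|\log|\varepsilon|_\pv-\log r_\pv|\le\rho$ for each $\pv\in S$. Exponentiating and setting $c_2=c_2(K,S):=e^\rho\ge 1$ then gives $c_2^{-1}r_\pv\le|\varepsilon|_\pv\le c_2 r_\pv$ for all $\pv\in S$, which is the assertion.

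All the substance here is carried by the $S$-unit theorem, which is already in use in the text, so there is no genuine obstacle to overcome; the only point that deserves a word of care is to record that the constant $c_2$ depends solely on $K$ and on the finite set $S=\MKinf\cup\{\pw\}$. This is clear because $\rho$ is an invariant of the lattice $\Lambda$ alone and in particular does not depend on the prescribed data $(r_\pv)_{\pv\in S}$.
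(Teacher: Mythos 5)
Your argument is correct and is exactly the one the paper intends: the text states the lemma without a formal proof, deriving it directly from the remark that the logarithmic image of $\cO_S^*$ is a full lattice in the hyperplane $\sum_{\pv\in S}d_\pv x_\pv=0$. You have simply made explicit the covering-radius step that the authors leave implicit, so the two approaches coincide.
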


From \cite[Theorem 3]{Ma1964} of Mahler, there is also a constant $c_3=c_3(K,S)\ge 1$
with the following property.

\begin{lemma}
\label{constr:lemma:integers}
For any $(a_\pv)_{\pv\in S} \in \prod_{\pv\in S}K_\pv$ and any family of positive real
numbers $(t_\pv)_{\pv\in S}$ with $\prod_{\pv\in S}t_\pv^{d_\pv/d}\ge c_3$, there
exists an $S$-integer $\alpha\in\cO_S$ which satisfies $|\alpha-a_\pv|_\pv\le t_\pv$
for each $\pv \in S$.
\end{lemma}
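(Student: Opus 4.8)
The plan is to deduce this statement directly from Mahler's strong approximation theorem \cite[Theorem 3]{Ma1964} applied to the finite set of places $S$, in exactly the same spirit as the idèle-theoretic argument used for Lemma \ref{dilations:lemma:strongapp} above. The key point is that $\cO_S$ is a discrete, cocompact subset of $\prod_{\pv\in S}K_\pv$, with covolume (for a suitably normalized additive Haar measure) controlled by the discriminant of $K$ and by the local degrees $d_\pv$ for $\pv\in S$; all of this depends only on $K$ and $S$. Concretely, the product $\prod_{\pv\in S}\{x_\pv : |x_\pv|_\pv\le t_\pv\}$ is a convex body (in the archimedean factors) times a compact open subgroup (in the $\pw$-factor when $\pw\nmid\infty$), whose measure is $\asymp\prod_{\pv\in S}t_\pv^{d_\pv}$ up to a bounded factor coming from the discrete valuation group at $\pw$.

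First I would fix the normalization: equip each $K_\pv$ with the additive Haar measure $\mu_\pv$ of section \ref{sec:adelic}, and give $\prod_{\pv\in S}K_\pv$ the product measure. Then $\cO_S$, embedded diagonally, is a lattice of some covolume $v(K,S)>0$. Second, given $(a_\pv)_{\pv\in S}$ and $(t_\pv)_{\pv\in S}$ with $\prod_{\pv\in S}t_\pv^{d_\pv/d}\ge c_3$, I would consider the translated box
\[
 B=\prod_{\pv\in S}\{x_\pv\in K_\pv : |x_\pv-a_\pv|_\pv\le t_\pv\},
\]
which is a translate of the symmetric box $B_0=\prod_{\pv\in S}\{x_\pv : |x_\pv|_\pv\le t_\pv\}$. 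Its volume is $\mu(B_0)\ge \kappa\prod_{\pv\in S}t_\pv^{d_\pv}$ for a constant $\kappa=\kappa(K,S)>0$ (the loss $\kappa<1$ absorbing the rounding of $t_\pw$ down to the valuation group when $\pw\nmid\infty$, and the factors $2$ or $\pi$ at archimedean places). Third, I would choose $c_3=c_3(K,S)$ large enough that $\mu(B_0)\ge v(K,S)$ (using $\prod t_\pv^{d_\pv}\ge c_3^{d}$); then by Minkowski's convex body theorem — equivalently, by the adelic Theorem \ref{adelic:thm:MBV} applied to $n=1$ over the completions in $S$, or simply by pigeonhole on the compact quotient — the set $B$ meets $\cO_S$. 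Here one uses that $B_0$ is symmetric and convex in the archimedean coordinates and is an additive subgroup in the $\pw$-coordinate, so the usual Minkowski argument (a measure-theoretic pigeonhole on $\big(\prod_{\pv\in S}K_\pv\big)/\cO_S$, then taking a difference of two points lying in the same coset and in $\tfrac12 B_0$) goes through verbatim. Any $\alpha\in B\cap\cO_S$ is the required $S$-integer.

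The only genuine subtlety — and the step I would be most careful about — is the non-archimedean factor at $\pw$ (when $\pw\nmid\infty$): there $\{x : |x-a_\pw|_\pw\le t_\pw\}$ is already an additive coset of the open compact subgroup $\{|x|_\pw\le t_\pw\}$, so "convexity" must be replaced by the observation that a difference of two elements of this set again has $|\cdot|_\pw\le t_\pw$ (indeed $\le\max$, i.e. the ultrametric inequality makes the halving step unnecessary there). Thus the Minkowski halving is only applied in the archimedean coordinates, and in the $\pw$-coordinate one keeps the full radius; this is precisely why the constant $c_3$ need only depend on $K$ and $S$ and not on the individual $t_\pv$. Alternatively, one can quote \cite[Theorem 3]{Ma1964} as a black box, which already packages this mixed archimedean/non-archimedean statement; I would present the proof in that concise form, citing Mahler, and only remark on the measure computation to make the dependence of $c_3$ on $(K,S)$ transparent.
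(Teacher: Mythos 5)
Your final move — quote Mahler's Theorem~3 as a black box and remark on the source of the $S$-dependence of $c_3$ — is exactly what the paper does: the lemma is introduced with the sentence ``From \cite[Theorem 3]{Ma1964}\ldots there is also a constant $c_3=c_3(K,S)\ge 1$ with the following property,'' and the only elaboration is the remark that Mahler's constant depends on $K$ alone when the $t_\pv$ lie in the respective valuation groups, so that rounding $t_\pw$ down introduces only a weak dependence on $S$. So on the presentation you ultimately propose, you agree with the paper.

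However, the self-contained measure-theoretic argument you sketch as an ``equivalently'' does not work as stated, and you should be careful not to present it as a proof. The pigeonhole step you describe — find two points of $B$ in the same coset of $\cO_S$ and take their difference, possibly after halving in the archimedean coordinates — produces a \emph{nonzero lattice point in $B_0$ (or $2B_0$)}, i.e.\ it proves the \emph{homogeneous} statement that the zero-centred box contains an element of $\cO_S\setminus\{0\}$. What the lemma asserts is the \emph{inhomogeneous} statement that the translated box $B=(a_\pv)_{\pv\in S}+B_0$ already contains a lattice point, equivalently that $B_0+\cO_S=\prod_{\pv\in S}K_\pv$. That is a covering-radius assertion, and it does not follow from $\mu(B_0)$ exceeding the covolume: a symmetric box of large volume can still fail to cover a fundamental domain after translation (think of a long thin box aligned with a lattice vector). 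To make a Minkowski-style argument rigorous one would need to control the covering radius, e.g.\ via the Jarn\'{\i}k-type transference $\mu_{\mathrm{cov}}\le\tfrac12(\lambda_1+\cdots+\lambda_m)$ combined with a preliminary rescaling by an $S$-unit (Lemma~\ref{constr:lemma:units}) to make the box balanced so that \emph{each} $\lambda_i$ is small, not just their product. None of that machinery is in your sketch, so the sketch as written proves the wrong thing. Since the paper simply cites Mahler without any measure computation, the safe course is to do the same and drop the Minkowski paragraph, or at least replace it with a correct covering-radius argument.
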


In fact, the result of Mahler shows this with a constant $c_3$ that depends only on $K$ when
each $t_\pv$ belongs to the valuation group of $K_\pv$.  As we do not
require this, our constant $c_3$ depends on $S$ as well, but in a weak form.

From now on, we fix a constant $C$ in the valuation group of $K_\pw$, with
\begin{equation}
\label{constr:eq0}
 C\ge n2^{n+1}(c_3c_4)^{d/d_\pw}
 \quad\text{where}\quad
 c_4=n2^n(2ec_2)^2.
\end{equation}
In agreement with the notation of section \ref{metric:ssec:ao} for $L=K_\pw$, we set
\[
 \delta=\begin{cases} 1 &\text{if $\pw\mid\infty$,}\\ 0 &\text{otherwise.}\end{cases}
\]
For the other places $\pv\in S\setminus\{\pw\}$, no special notation is needed since
they all are archimedean and so the results of that section apply to $L=K_\pv$ with
$\delta=1$.  For convenience, we also define
\begin{equation}
\label{constr:eq:Delta}
 \Delta=\{(a_1,\dots,a_n)\in\bZ^n\,;\, 0\le a_1\le \cdots\le a_n\}.
\end{equation}
We are interested in bases of $\cO_S^n$ with three kinds of properties.

\begin{definition}
\parindent=0pt
Let $\ux=(\ux_1,\dots,\ux_n)$ be a basis of $\cO_S^n$ over $\cO_S$.  We say
that
\begin{itemize}[label=\textbullet, labelindent=10pt, leftmargin=*]
\item $\ux$ is \emph{admissible} if, for any $\pv \in \Sw$,   it is almost
  orthogonal in $K_\pv^n$ and satisfies $1\le \norm{\ux_j}_\pv\le (2ec_2)^2$ \ for
  $j=1,\dots,n$;
\smallskip
\item
  $\ux$ has \emph{size}  $\ua=(a_1,\dots,a_n)\in\Delta$ \ if \
  $C^{a_j}\le \norm{\ux_j}_\pw\le (1+\delta)C^{a_j}$ \ for $j=1,\dots,n$;
\smallskip
\item
  $\ux$ has \emph{type}  $(k,\ell)$ for integers $1\le k<\ell\le n$ if
  \[
   \dist_\pw\big(\ux_\ell,\langle\ux_1,\dots,\widehat{\ux_k},\dots,\ux_{\ell-1}\rangle_{K_\pw})
   \ge 1-\frac{1}{2^{\ell-1}}.
  \]
\end{itemize}
\end{definition}

We start with two quick consequences.

\begin{lemma}
\label{constr:lemma0}
Suppose that $\ux=(\ux_1,\dots,\ux_n)$ is an admissible basis of $\cO_S^n$
over $\cO_S$ of size $\ua=(a_1,\dots,a_n)\in\Delta$.  Then, for
$j=1,\dots,\ell$, we have $C^{a_jd_\pw/d}\le H(\ux_j)\le c_5 C^{a_jd_\pw/d}$
where $c_5=(2ec_2)^2$.
\end{lemma}

\begin{proof}
Let $j\in\{1,\dots,n\}$.  For each place $\pv$ of $K$ not in $S$, the $n$-tuple
$\ux$ is a basis of $\cO_\pv^n$ over $\cO_\pv$, hence $\norm{\ux_j}_\pv=1$.
Thus we have $H(\ux_j)=\prod_{\pv\in S}\norm{\ux_j}_\pv^{d_\pv/d}$
and the conclusion follows.
\end{proof}

\begin{lemma}
\label{constr:lemmaV}
Let $k$, $\ell$, $m$ be integers with $1\le k<\ell\le m\le n$ and let
$\uy=(\uy_1,\dots,\uy_n)$ be an admissible basis of $\cO_S^n$ over $\cO_S$.
Suppose that the subsequences $(\uy_1,\dots,\widehat{\uy_\ell\,},\dots,\uy_m)$ and
$(\uy_1,\dots,\widehat{\uy_k},\dots,\uy_m)$ are both almost orthogonal
in $K_\pw^n$.  Then, the subspaces
\[
 V_1=\langle\uy_1,\dots,\widehat{\uy_\ell\,},\dots,\uy_m\rangle_{K_\pw}
 \et
 V_2=\langle\uy_1,\dots,\widehat{\uy_k},\dots,\uy_m\rangle_{K_\pw}
\]
that they span in $K_\pw^n$ satisfy
\[
 \dist_\pw(V_1,V_2)^{d_\pw/d}
 \le e^{4}\frac{H(\langle\uy_1,\dots,\uy_m\rangle_K)}{H(\uy_1)\cdots H(\uy_m)}.
\]
\end{lemma}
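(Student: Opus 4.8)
The plan is to reduce the statement of Lemma \ref{constr:lemmaV} to the local estimate of Lemma \ref{metric:lemmaV} applied at the place $\pw$, using the hypothesis that $\uy$ is admissible to control the local contributions at all other places of $S$ and (automatically) at the places outside $S$. First I would apply Lemma \ref{metric:lemmaV} with $L=K_\pw$ to the linearly independent points $\uy_1,\dots,\uy_m$ of $K_\pw^n$: since the two subsequences obtained by deleting $\uy_\ell$ and $\uy_k$ are almost orthogonal in $K_\pw^n$ by hypothesis, that lemma gives
\[
 \dist_\pw(V_1,V_2)
   \le e^{4\delta}\frac{\norm{\uy_1\wedge\cdots\wedge\uy_m}_\pw}
       {\norm{\uy_1}_\pw\cdots\norm{\uy_m}_\pw}\,,
\]
where $\delta\le 1$, so the numerator is $\norm{\uy_1\wedge\cdots\wedge\uy_m}_\pw$ and the denominator is a product of $\pw$-norms.

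Next I would raise both sides to the power $d_\pw/d$ and rewrite the right-hand side as a quotient of heights. The key point is that for every place $\pv\notin S$ the $n$-tuple $\uy$ is a basis of $\cO_\pv^n$ over $\cO_\pv$, hence $\norm{\uy_j}_\pv=1$ for each $j$ and also $\norm{\uy_1\wedge\cdots\wedge\uy_m}_\pv=1$ (the wedge of part of an $\cO_\pv$-basis has norm $1$). Therefore
\[
 H(\langle\uy_1,\dots,\uy_m\rangle_K)
  = \norm{\uy_1\wedge\cdots\wedge\uy_m}_\pw^{d_\pw/d}
     \prod_{\pv\in\Sw}\norm{\uy_1\wedge\cdots\wedge\uy_m}_\pv^{d_\pv/d}
\]
and similarly $H(\uy_j)=\norm{\uy_j}_\pw^{d_\pw/d}\prod_{\pv\in\Sw}\norm{\uy_j}_\pv^{d_\pv/d}$ for each $j$, by Lemma \ref{constr:lemma0}'s argument. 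Dividing, the $\pw$-contributions reproduce exactly the right-hand side of the displayed bound from Lemma \ref{metric:lemmaV} (raised to $d_\pw/d$), and so
\[
 \dist_\pw(V_1,V_2)^{d_\pw/d}
  \le e^{4\delta d_\pw/d}\,
      \frac{H(\langle\uy_1,\dots,\uy_m\rangle_K)}{H(\uy_1)\cdots H(\uy_m)}\cdot R,
\]
where $R$ is the product over $\pv\in\Sw$ of $\norm{\uy_1}_\pv^{d_\pv/d}\cdots\norm{\uy_m}_\pv^{d_\pv/d}\big/\norm{\uy_1\wedge\cdots\wedge\uy_m}_\pv^{d_\pv/d}$.

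Finally I would bound the correction factor $R$ using admissibility at the places $\pv\in\Sw$. There, $\uy$ is almost orthogonal in $K_\pv^n$, so any subsequence is too, and Lemma \ref{metric:lemmaQ} (with $L=K_\pv$, $\delta=1$) gives $\norm{\uy_1\wedge\cdots\wedge\uy_m}_\pv\ge e^{-2}\norm{\uy_1}_\pv\cdots\norm{\uy_m}_\pv$; hence each factor of $R$ is at most $e^{2d_\pv/d}$, and $R\le e^{2\sum_{\pv\in\Sw}d_\pv/d}\le e^{2}$ since $\sum_{\pv\in\Sw}d_\pv\le d$. Combining, $\dist_\pw(V_1,V_2)^{d_\pw/d}\le e^{4\delta d_\pw/d+2}\,H(\langle\uy_1,\dots,\uy_m\rangle_K)/(H(\uy_1)\cdots H(\uy_m))$, and since $\delta d_\pw/d\le 1/2$ the constant is at most $e^{4}$, as claimed. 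The only mildly delicate point — the ``main obstacle,'' though it is routine — is the bookkeeping that converts the purely local inequality at $\pw$ into one about global heights: one must carefully use that $\uy$ is an $\cO_\pv$-basis for $\pv\notin S$ so that all those places contribute trivially to every height, and then absorb the $\pv\in\Sw$ corrections into the constant via Lemma \ref{metric:lemmaQ}; no estimate involving the parameter $C$ or Lemmas \ref{constr:lemma:units}–\ref{constr:lemma:integers} is needed here.
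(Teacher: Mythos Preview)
Your approach is essentially the same as the paper's: apply Lemma \ref{metric:lemmaV} at $\pw$, convert to heights using that $\uy$ is an $\cO_\pv$-basis for $\pv\notin S$, and absorb the factors at $\pv\in\Sw$ via Lemma \ref{metric:lemmaQ}. The structure is correct.

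There is, however, a slip in your final constant. The claim ``$\delta d_\pw/d\le 1/2$'' is false in general: take $K=\bQ$ and $\pw=\infty$, where $\delta=1$ and $d_\pw/d=1$. Your crude bound $R\le e^2$ together with $e^{4\delta d_\pw/d}$ would then give $e^{4+2}=e^6$, not $e^4$. The repair is that when $\delta=1$ the place $\pw$ is archimedean, so $\Sw=\MKinf\setminus\{\pw\}$ and $\sum_{\pv\in\Sw}d_\pv=d-d_\pw$ (not merely $\le d$); hence $R\le e^{2(d-d_\pw)/d}$ and the total exponent is $4d_\pw/d+2(d-d_\pw)/d=2+2d_\pw/d\le 4$. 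When $\delta=0$ you already have exponent $0+2\le 4$. This is exactly how the paper's bound $A\le e^4$ comes out.
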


\begin{proof}
Using Lemma \ref{metric:lemmaV} with $L=K_\pw$, we find
\[
  \dist_\pw(V_1,V_2)^{d_\pw/d}
  \le \left( e^{4\delta}\frac{\norm{\uy_1\wedge\cdots\wedge\uy_m}_\pw}%
       {\norm{\uy_1}_\pw\cdots\norm{\uy_m}_\pw}\right)^{d_\pw/d}
   = A \frac{H(\langle\uy_1,\dots,\uy_m\rangle_K)}{H(\uy_1)\cdots H(\uy_m)},
\]
where
\[
  A = e^{4\delta d_\pw/d}
        \prod_{\pv\in S\setminus\{\pw\}}\left(\frac{\norm{\uy_1}_\pv\cdots\norm{\uy_m}_\pv}%
           {\norm{\uy_1\wedge\cdots\wedge\uy_m}_\pv}\right)^{d_\pv/d}.
\]
Since $\uy$ is an admissible basis of $\cO_S^n$, the sequence $(\uy_1,\dots,\uy_m)$
is almost orthogonal in $K_\pv^n$ for each place $\pv\in S$ other than $\pw$.  As such
a place $\pv$ is archimedean, Lemma \ref{metric:lemmaQ} shows that the corresponding
factor of $A$ is bounded above by $e^{2d_\pv/d}$.  This gives $A\le e^4$.
\end{proof}

The main result of this section is the following construction which, in essence,
generalizes \cite[Lemma 5.1]{R2015}.  The crucial novelty is the introduction
of an $S$-unit $\varepsilon$ in condition (3) below (in the context of \cite{R2015}
where $K=\bQ$ and $S=\{\infty\}$, it would simply be $\varepsilon=\pm 1$).

\begin{lemma}
\label{constr:lemma1}
Let $h$, $k$, $\ell$ be integers with
\[
 1 \le k < \ell \le n \et 1\le h\le \ell \le n.
\]
Suppose that elements $\ua=(a_1,\dots,a_n)$ and $\ub=(b_1,\dots,b_n)$
of $\Delta$ satisfy
\begin{itemize}
\labelsep=7mm
\item[{\rm (1)}] $b_\ell>a_\ell
 \et
 (b_1,\dots,\widehat{\,b_\ell\,},\dots,b_n)=(a_1,\dots,\widehat{a_h\,},\dots,a_n)$.
\end{itemize}
Suppose moreover that $\ux=(\ux_1,\dots,\ux_n)$ is an admissible basis
of $\cO_S^n$ over $\cO_S$ of size $\ua$.  Then there exists an admissible
basis $\uy=(\uy_1,\dots,\uy_n)$ of $\cO_S^n$ over $\cO_S$ of size $\ub$
and type $(k,\ell)$ such that
\begin{itemize}
\labelsep=7mm
\item[{\rm (2)}] $(\uy_1,\dots,\widehat{\uy_\ell\,},\dots,\uy_n)
                  = (\ux_1,\dots,\widehat{\ux_h},\dots,\ux_n)$,
\medskip
\item[{\rm (3)}] $\uy_\ell \in
   \varepsilon\ux_h + \langle\ux_1,\dots,\widehat{\ux_h},\dots,\ux_\ell\rangle_{\cO_S}$
   for some $\varepsilon\in\cO_S^*$.
\end{itemize}
\end{lemma}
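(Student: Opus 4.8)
The plan is to build $\uy_\ell$ as a perturbation of $\varepsilon\ux_h$ by an $\cO_S$-linear combination of $\ux_1,\dots,\widehat{\ux_h},\dots,\ux_\ell$, chosen so that in $K_\pw^n$ it acquires the right size ($C^{b_\ell}\le\norm{\uy_\ell}_\pw\le(1+\delta)C^{b_\ell}$), the right type (it stays almost orthogonal, with the sharpened constant $1-2^{-(\ell-1)}$, to the span of $\ux_1,\dots,\widehat{\ux_k},\dots,\ux_{\ell-1}$), and so that at every archimedean place $\pv\neq\pw$ it remains of controlled norm ($1\le\norm{\uy_\ell}_\pv\le(2ec_2)^2$) and does not destroy almost orthogonality. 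Then set $\uy_j=\ux_j$ for $j<\ell$, $j\neq h$, set $\uy_j=\ux_{j+1}$ for $\ell\le j<n$ — more precisely, re-index so that (2) holds with $\uy_\ell$ inserted in position $\ell$ — and verify this is still a basis of $\cO_S^n$. The unimodularity is immediate: passing from $\ux$ to $\uy$ is, up to reordering, the column operation replacing $\ux_h$ by $\varepsilon\ux_h+(\text{combination of other }\ux_i)$, whose matrix has determinant the $S$-unit $\varepsilon$, hence is invertible over $\cO_S$.

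**Choosing the $S$-unit.** First I would fix the $S$-unit $\varepsilon$. We want $|\varepsilon|_\pw$ to be roughly $C^{b_\ell-a_h}$ (the ratio of target size to the size of $\ux_h$) and $|\varepsilon|_\pv$ to be roughly $1$ at the other places of $S$, subject to the product constraint $\prod_{\pv\in S}|\varepsilon|_\pv^{d_\pv}=1$. Since $C$ lies in the valuation group of $K_\pw$ and $b_\ell>a_h$ (this follows from (1): $b_\ell>a_\ell\ge a_h$ because $h\le\ell$ and $\ua\in\Delta$), the prescribed values satisfy the hypothesis of Lemma~\ref{constr:lemma:units}, so there is $\varepsilon\in\cO_S^*$ with $c_2^{-1}r_\pv\le|\varepsilon|_\pv\le c_2 r_\pv$ for the chosen $(r_\pv)$. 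This makes $\norm{\varepsilon\ux_h}_\pw$ comparable to $C^{b_\ell}$ up to a factor like $c_2(2ec_2)$, and keeps $\norm{\varepsilon\ux_h}_\pv$ of order $1$ at $\pv\in\Sw$.

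**Choosing the $\cO_S$-coefficients.** Next, I would produce coefficients $\alpha_i\in\cO_S$ (for $i\in\{1,\dots,\ell\}\setminus\{h\}$) via Lemma~\ref{constr:lemma:integers}. The target local conditions are: at $\pw$, the combination $\sum\alpha_i\ux_i$ should have $\pw$-norm strictly smaller than $C^{b_\ell}$ (say by a factor of order $C^{-1}$, using that $C$ is large by \eqref{constr:eq0}) and should point in a direction that adjusts $\varepsilon\ux_h$ so the resulting $\uy_\ell$ is almost orthogonal to $\langle\ux_1,\dots,\widehat{\ux_k},\dots,\ux_{\ell-1}\rangle_{K_\pw}$ with the sharpened constant; at each archimedean $\pv\neq\pw$, the combination should be small enough in $\pv$-norm to keep $\norm{\uy_\ell}_\pv$ in $[1,(2ec_2)^2]$ and to preserve almost orthogonality of the whole basis in $K_\pv^n$. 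The product of the prescribed bounds $t_\pv$ is arranged to exceed $c_3$ — this is exactly what the factor $n2^{n+1}(c_3c_4)^{d/d_\pw}$ in \eqref{constr:eq0} is calibrated for — so Lemma~\ref{constr:lemma:integers} applies. Concretely, at $\pw$ one first uses the admissibility of $\ux$ (hence its almost orthogonality in $K_\pw^n$ and Lemma~\ref{metric:lemma:pscal}/Lemma~\ref{metric:lemmaC}) to identify the unique "orthogonal-projection" coefficients in $K_\pw$ that would make $\uy_\ell$ exactly orthogonal to the relevant span, then approximates those by $\cO_S$-integers; the sharp type constant $1-2^{-(\ell-1)}$ comes out because the approximation error is controlled by $C^{-1}$, which is small by \eqref{constr:eq0}.

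**Verification and the main obstacle.** Finally I would check the three bullet conditions for $\uy$: \emph{size} $\ub$ — the coordinates $\uy_j=\ux_{\sigma(j)}$ keep their sizes since by (1) the multiset $\{b_1,\dots,\widehat{b_\ell},\dots,b_n\}$ equals $\{a_1,\dots,\widehat{a_h},\dots,a_n\}$, and $\norm{\uy_\ell}_\pw$ is between $C^{b_\ell}$ and $(1+\delta)C^{b_\ell}$ by the dominant-term estimate (the $\varepsilon\ux_h$ term sits at height $\asymp C^{b_\ell}$ and the correction is a factor $\asymp C^{-1}$ below, so when $\pw$ is non-archimedean the ultrametric inequality gives equality on the nose, and when $\pw$ is archimedean the $(1+\delta)=2$ slack absorbs the perturbation); \emph{admissibility} — at $\pv\in\Sw$ the norm $\norm{\uy_\ell}_\pv$ is pinned in $[1,(2ec_2)^2]$ because $\norm{\varepsilon\ux_h}_\pv\in[c_2^{-1},c_2(2ec_2)^2]\cap$(rescaled) and the correction is negligible, and almost orthogonality in $K_\pv^n$ is preserved because $\uy_\ell$ differs from the almost-orthogonal vector $\varepsilon\ux_h$ by something tiny (use the distance estimates of section~\ref{sec:metric}); \emph{type} $(k,\ell)$ — this is the defining property we built in. The main obstacle is the simultaneous bookkeeping at $\pw$: one must choose the correction direction so that it \emph{both} lowers the component of $\uy_\ell$ along $\langle\ux_1,\dots,\widehat{\ux_k},\dots,\ux_{\ell-1}\rangle_{K_\pw}$ to achieve the type condition \emph{and} leaves $\uy_\ell$ of the prescribed size $C^{b_\ell}$, while the very same $\cO_S$-integers $\alpha_i$ — produced by a single application of Lemma~\ref{constr:lemma:integers} — must also be small at every other archimedean place. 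Threading these constraints is what forces the somewhat baroque constant in \eqref{constr:eq0}, and getting the inequalities to close (rather than the analysis over $\bQ$ in \cite{R2015}, where there are no auxiliary places and $\varepsilon=\pm1$) is the real content of the lemma.
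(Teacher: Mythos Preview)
There is a genuine gap in your choice of the $S$-unit. You propose $|\varepsilon|_\pw\approx C^{b_\ell-a_h}$ and $|\varepsilon|_\pv\approx 1$ for $\pv\in S\setminus\{\pw\}$, but the product formula for $S$-units forces $\prod_{\pv\in S}|\varepsilon|_\pv^{d_\pv}=1$, so these two requirements are incompatible whenever $b_\ell>a_h$ (which always holds, since $b_\ell>a_\ell\ge a_h$). If you insist on $|\varepsilon|_\pw\approx C^{b_\ell-a_h}$, then the values $|\varepsilon|_\pv$ at the other places of $S$ must be correspondingly small, making $\norm{\varepsilon\ux_h}_\pv$ tiny there. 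You then face a dilemma: either the correction $\sum_i\alpha_i\ux_i$ is also small at $\pv$, in which case $\norm{\uy_\ell}_\pv<1$ and admissibility fails; or the correction dominates at $\pv$, but then $\uy_\ell$ lies essentially in $V_\pv=\langle\ux_1,\dots,\widehat{\ux_h},\dots,\ux_\ell\rangle_{K_\pv}=\langle\uy_1,\dots,\uy_{\ell-1}\rangle_{K_\pv}$, so $\dist_\pv(\uy_\ell,V_\pv)\approx 0$ and almost orthogonality fails anyway. The component of $\uy_\ell$ transverse to $V_\pv$ can only come from the term $\varepsilon\ux_h$, so $\norm{\varepsilon\ux_h}_\pv$ must be of order~$1$ at every $\pv\neq\pw$; that is what $\varepsilon$ is actually for.

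The paper therefore reverses the roles you assign to $\varepsilon$ and to the correction. It chooses $\varepsilon$ so that $\norm{\varepsilon\ux_h}_\pv\in[2e^2,\,2e^2c_2^2]$ for each $\pv\in S\setminus\{\pw\}$, accepting that $|\varepsilon|_\pw$ is then merely bounded. The large size $C^{b_\ell}$ at $\pw$ is produced by the \emph{correction}: one writes
\[
 \varepsilon\ux_h=c_\pw\uu_\pw+B\uv_\pw+\sum_{j=1}^{\ell-1}c_{\pw,j}\uy_j,
\]
where $\uv_\pw$ is a unit vector in $V_\pw$ orthogonal to $W_\pw=\langle\uy_1,\dots,\widehat{\uy_k},\dots,\uy_{\ell-1}\rangle_{K_\pw}$ and $|B|_\pw=(1+\delta/2)C^{b_\ell}$ is \emph{prescribed}; since $\uv_\pw\in V_\pw$, this merely shifts the coefficients $c_{\pw,j}$ to large values. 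The $\cO_S$-integers $\alpha_j$ from Lemma~\ref{constr:lemma:integers} approximate these (large) $c_{\pw,j}$ with bounded error, so that $\uy_\ell=\varepsilon\ux_h-\sum_j\alpha_j\uy_j$ is close to $c_\pw\uu_\pw+B\uv_\pw$, which simultaneously has the right $\pw$-norm and the right distance to $W_\pw$ (the type condition). A smaller slip: admissibility of $\ux$ gives almost orthogonality only at places $\pv\in S\setminus\{\pw\}$, not at $\pw$, so your appeal to ``its almost orthogonality in $K_\pw^n$'' is unfounded; no such hypothesis on $\ux$ at $\pw$ is available or needed.
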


\begin{proof}
We use (2) as a definition of $\uy_1,\dots,\widehat{\uy_\ell\,},\dots,\uy_n$.
Then $\uy=(\uy_1,\dots,\uy_n)$ is a basis of $\cO_S^n$ over $\cO_S$ for any
choice of $\uy_\ell$ satisfying (3).

We set $r_\pv=2e^2c_2\norm{\ux_h}_\pv^{-1}$ for each $\pv \in \Sw$,
and define $r_\pw$ by the condition $\prod_{\pv\in S}r_\pv^{d_\pv}=1$.  Since $\ux$
is admissible, we have $r_\pv\ge (2c_2)^{-1}$ for $\pv \neq w$, so $r_\pw^{d_\pw}\le
(2c_2)^{d}$.  Then Lemma \ref{constr:lemma:units} provides an $S$-unit
$\varepsilon\in\cO_S^*$ with
\begin{align}
&2e^2 \le \norm{\varepsilon\ux_h}_\pv \le 2e^2c_2^2
     \quad\text{for each $\pv \in \Sw$,}
\label{constr:lemma1:eq4}\\
&|\varepsilon|_\pw^{d_\pw}\le (c_2r_\pw)^{d_\pw}\le (2c_2)^{2d}.
\label{constr:lemma1:eq5}
\end{align}
For each $\pv\in S$, we define
\[
\begin{aligned}
  U_\pv&=\langle\ux_1,\dots,\ux_\ell\rangle_{K_\pv},\\
  V_\pv&=\langle\ux_1,\dots,\widehat{\ux_h},\dots,\ux_\ell\rangle_{K_\pv}
           =\langle\uy_1,\dots,\uy_{\ell-1}\rangle_{K_\pv},
\end{aligned}
\]
and we choose a unit vector $\uu_\pv\in U_\pv$ such that
\[
 U_\pv = \langle\uu_\pv\rangle_{K_\pv} \ptop V_\pv.
\]
If $\pv\neq\pw$, we write
\begin{equation}
\label{constr:lemma1:eq6}
 \varepsilon\ux_h = c_\pv\uu_\pv + \sum_{j=1}^{\ell-1} c_{\pv,j}\uy_j
\end{equation}
with coefficients $c_\pv$ and $c_{\pv,j}$ in $K_\pv$.  For $\pv=\pw$, we also define
\[
 W_\pw=\langle\uy_1,\dots,\widehat{\uy_k},\dots,\uy_{\ell-1}\rangle_{K_\pw},
\]
and choose a unit vector $\uv_\pw\in V_\pw$ such that
\[
 V_\pw = \langle\uv_\pw\rangle_{K_\pw} \ptop W_\pw.
\]
This provides a decomposition $U_\pw= \langle\uu_\pw\rangle_{K_\pw} \ptop
\langle\uv_\pw\rangle_{K_\pw} \ptop W_\pw$.  We choose $B\in K_\pw$ with
\begin{equation}
\label{constr:lemma1:eq7}
 |B|_\pw=(1+\delta/2)C^{b_\ell}.
\end{equation}
This is possible because if $\pw\nmid\infty$, then $\delta=0$ and $C$ belongs
to the valuation group of $K_\pw$.  We then write
\begin{equation}
\label{constr:lemma1:eq8}
 \varepsilon\ux_h
  = c_\pw\uu_\pw + B\uv_\pw + \sum_{j=1}^{\ell-1} c_{\pw,j}\uy_j
\end{equation}
with $c_\pw$ and $c_{\pw,j}$ in $K_\pw$.  The approximation lemma
\ref{constr:lemma:integers} provides, for each $j=1,\dots,\ell-1$, an $S$-integer
$\alpha_j\in\cO_S$ such that
\begin{equation}
\label{constr:lemma1:eq9}
\begin{aligned}
 &|\alpha_j-c_{\pv,j}|_\pv \le c_4^{-1}
    \quad \text{for all $\pv\in \Sw$,}\\
 &|\alpha_j-c_{\pw,j}|_\pw \le (c_3c_4)^{d/d_\pw}
\end{aligned}
\end{equation}
for the constant $c_4=n2^n(2ec_2)^2$ defined in \eqref{constr:eq0}.  Then
the point
\begin{equation}
\label{constr:lemma1:eq10}
 \uy_\ell = \varepsilon\ux_h - \sum_{j=1}^{\ell-1} \alpha_j\uy_j
\end{equation}
fulfills condition (3) and so $\uy=(\uy_1,\dots,\uy_n)$ is an $\cO_S$-basis
of $\cO_S^n$.

\medskip
$\mathbf{1}^\circ$
To show that $\uy$ is admissible, we fix a place $\pv\in\Sw \subseteq \MKinf$.
Since $\ux$ is admissible, we obtain directly $1\le \norm{\uy_j}_\pv \le (2ec_2)^2$
for each $j\neq\ell$ because of equality (2).  As $\ux$ is almost orthogonal in
$K_\pv^n$, its subsequence
\[
 (\ux_1,\dots,\widehat{\ux_h},\dots,\ux_\ell) = (\uy_1,\dots,\uy_{\ell-1})
\]
is also almost orthogonal.  Moreover, for each integer $j$ with $\ell+1\le j\le n$,
we have $\ux_j=\uy_j$ and $\langle\ux_1,\dots,\ux_{j-1}\rangle_{K_\pv}
=\langle\uy_1,\dots,\uy_{j-1}\rangle_{K_\pv}$, thus
\[
 \dist_\pv\big(\uy_j,\ \langle\uy_1,\dots,\uy_{j-1}\rangle_{K_\pv}\big)
 = \dist_\pv\big(\ux_j,\ \langle\ux_1,\dots,\ux_{j-1}\rangle_{K_\pv}\big)
 \ge 1-(1/2)^{j-1}.
\]
Since $\langle\uy_1,\dots,\uy_{\ell-1}\rangle_{K_\pv} = V_\pv$, it only remains
to show that
\begin{equation}
\label{constr:lemma1:eq11}
 1\le \norm{\uy_\ell}_\pv \le (2ec_2)^2
 \et
 \dist_\pv(\uy_\ell, V_\pv)\ge 1-(1/2)^{\ell-1}.
\end{equation}

To this end, we first note that, since $(\ux_1,\dots,\ux_n)$ is almost orthogonal
in $K_\pv^n$, Lemmas \ref{metric:lemmaCbis} and \ref{metric:lemmaQ} yield
\[
 1 \ge \dist_\pv(\ux_h, V_\pv)
  = \frac{\norm{\ux_1\wedge\cdots\wedge\ux_\ell}_\pv}%
     {\norm{\ux_h}_\pv\norm{\ux_1\wedge\cdots\wedge\widehat{\ux_h}%
      \wedge\cdots\wedge\ux_\ell}_\pv}
  \ge \frac{\norm{\ux_1\wedge\cdots\wedge\ux_\ell}_\pv}%
     {\norm{\ux_1}_\pv\cdots\norm{\ux_\ell}_\pv}
  \ge e^{-2}.
\]
On the other hand, Lemma \ref{metric:lemmaC}
applied to the decomposition of $\epsilon\ux_h$ in \eqref{constr:lemma1:eq6} gives
\[
 \dist_\pv(\ux_h, V_\pv) = \dist_\pv(\varepsilon\ux_h, V_\pv)
  =\frac{|c_\pv|_\pv}{\norm{\varepsilon\ux_h}_\pv}.
\]
Using the estimates of \eqref{constr:lemma1:eq4} for $\norm{\varepsilon\ux_h}_\pv$,
we deduce that
\begin{equation}
\label{constr:lemma1:eq12}
 2 \le |c_\pv|_\pv \le 2e^2c_2^2.
\end{equation}
Combining \eqref{constr:lemma1:eq6} and  \eqref{constr:lemma1:eq10}, we
obtain
\[
 \uy_\ell = c_\pv\uu_\pv + \sum_{j=1}^{\ell-1}(c_{\pv,j}-\alpha_j)\uy_j.
\]
Using \eqref{constr:lemma1:eq9}, this decomposition of $\uy_\ell$ implies
\[
 \norm{\uy_\ell-c_\pv\uu_\pv}_\pv
   \le \sum_{j=1}^{\ell-1}c_4^{-1}\norm{\uy_j}_\pv
   \le nc_4^{-1}(2ec_2)^2 = 2^{-n},
\]
and thus $1\le \norm{\uy_\ell}_\pv \le (2ec_2)^2$ by \eqref{constr:lemma1:eq12}.
Finally Lemma \ref{metric:lemmaC} gives
\[
 \dist_\pv(\uy_\ell, V_\pv)
  = \frac{|c_\pv|_\pv}{\norm{\uy_\ell}_\pv}
  \ge \frac{|c_\pv|_\pv}{|c_\pv|_\pv+2^{-n}}
  \ge \frac{2}{2+2^{-n}}
  \ge 1-\frac{1}{2^{n+1}},
\]
which completes the proof of \eqref{constr:lemma1:eq11}. Thus $\uy$ is admissible.

\medskip
$\mathbf{2}^\circ$
We now show that $\uy$ has size $\ub$.  Since $\ux$ has size $\ua$, relations
(1) and (2) reduce the problem to showing that
\begin{equation}
\label{constr:lemma1:eq13}
 C^{b_\ell} \le \norm{\uy_\ell}_\pw \le (1+\delta)C^{b_\ell}.
\end{equation}

To prove this, we first combine \eqref{constr:lemma1:eq8}
and \eqref{constr:lemma1:eq10} to obtain
\[
 \uy_\ell = c_\pw\uu_\pw + B\uv_\pw + \sum_{j=1}^{\ell-1}(c_{\pw,j}-\alpha_j)\uy_j,
\]
and note that the decomposition of $\epsilon\ux_h$ in \eqref{constr:lemma1:eq8} implies
\[
 |c_\pw|_\pw = \norm{c_\pw\uu_\pw}_\pw
   \le \norm{\varepsilon\ux_h}_\pw \le (2c_2)^{2d/d_\pw}\norm{\ux_h}_\pw,
\]
where the last estimation comes from \eqref{constr:lemma1:eq5}.  Using
\eqref{constr:lemma1:eq9}, we deduce that
\[
 \norm{\uy_\ell-B\uv_\pw}_\pw
   \le |c_\pw|_\pw  + \sum_{j=1}^{\ell-1}(c_3c_4)^{d/d_\pw}\norm{\uy_j}_\pw
   \le (c_3c_4)^{d/d_\pw}\sum_{j=1}^{\ell} \norm{\ux_j}_\pw.
\]
We also note that $\norm{\ux_j}_\pw \le 2C^{a_j} \le 2C^{b_\ell-1}$ for
$j=1,\dots,\ell$ because $\ux$ has size $\ua$, and hypothesis (1) gives
$a_1\le\cdots\le a_\ell<b_\ell$.  Using hypothesis \eqref{constr:eq0} on $C$, we
conclude that
\begin{equation}
\label{constr:lemma1:eq14}
 \norm{\uy_\ell-B\uv_\pw}_\pw
   \le 2n(c_3c_4)^{d/d_\pw}C^{b_\ell-1} \le 2^{-n}C^{b_\ell}.
\end{equation}
Using the value  for $|B|_\pw$ in  \eqref{constr:lemma1:eq7}, this
yields \eqref{constr:lemma1:eq13}.  So $\uy$ has size $\ub$.

\medskip
$\mathbf{3}^\circ$
It remains to show that $\uy$ has type $(k,\ell)$.

By \eqref{constr:lemma1:eq14}, we have a decomposition
$\uy_\ell=B\uv_\pw+\uz$ with $\norm{\uz}_\pw\le 2^{-n}|B|_\pw$.  Thus,
\begin{align*}
 \dist_\pw(\uy_\ell,W_\pw)
  &\ge \frac{|B|_\pw-\norm{\uz}_\pw}{\norm{\uy_\ell}_\pw}
   \ge \frac{1-2^{-n}}{1+2^{-n}}
   \ge 1-\frac{1}{2^{n-1}}
   &&\text{if $\pw\mid\infty$,}\\[5pt]
 \dist_\pw(\uy_\ell,W_\pw)
  &\ge \frac{|B|_\pw}{\norm{\uy_\ell}_\pw} =1
   &&\text{otherwise}.
\end{align*}
In both cases, this yields $\dist_\pw(\uy_\ell,W_\pw)\ge 1-\delta/2^{\ell-1}$.
So $\uy$ has type $(k,\ell)$.
\end{proof}

We will also need the following complementary result.

\begin{lemma}
\label{constr:lemma1bis}
With the same hypotheses and notation, let $m$ be an integer with $\ell\le m\le n$.
\begin{itemize}[labelindent=15pt, leftmargin=*]
\item[{\rm (i)}] We have $\langle\ux_1,\dots,\ux_m\rangle_K=
  \langle\uy_1,\dots,\uy_m\rangle_K$.  If $m<n$, we also have $\ux_{m+1}=
  \uy_{m+1}$.
\item[{\rm (ii)}] If $(\ux_1,\dots,\widehat{\ux_h},\dots,\ux_\ell)$ is almost
  orthogonal in $K_\pw^n$, then $(\uy_1,\dots,\widehat{\uy_k},\dots,\uy_\ell)$
  is as well almost orthogonal in $K_\pw^n$.
\item[{\rm (iii)}] If both $(\ux_1,\dots,\widehat{\ux_h},\dots,\ux_m)$ and
  $(\uy_1,\dots,\widehat{\uy_k},\dots,\uy_m)$ are almost orthogonal
  in $K_\pw^n$, then the subspaces of $K_\pw^n$ that they span,
  \[
  V_1=\langle\ux_1,\dots,\widehat{\ux_h},\dots,\ux_m\rangle_{K_\pw}
  \et
  V_2=\langle\uy_1,\dots,\widehat{\uy_k},\dots,\uy_m\rangle_{K_\pw}
  \]
  satisfy
  \begin{equation}
  \label{constr:lemma1:eq15}
  \dist_\pw(V_1,V_2)^{d_\pw/d}
  \le e^{4}\frac{H(\langle\uy_1,\dots,\uy_m\rangle_K)}{H(\uy_1)\cdots H(\uy_m)}
  \le e^4\frac{c_5}{C^{d_\pw/d}}
       \frac{H(\langle\ux_1,\dots,\ux_m\rangle_K)}{H(\ux_1)\cdots H(\ux_m)}.
  \end{equation}
\end{itemize}
\end{lemma}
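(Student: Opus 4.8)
The plan is to derive all three items from the index bookkeeping furnished by conditions (1)--(3) of Lemma~\ref{constr:lemma1}, together with the two preparatory Lemmas~\ref{constr:lemma0} and \ref{constr:lemmaV}. For (i), I would first unwind condition~(2) coordinate by coordinate: since $h\le\ell$, the identity $(\uy_1,\dots,\widehat{\uy_\ell\,},\dots,\uy_n)=(\ux_1,\dots,\widehat{\ux_h},\dots,\ux_n)$ forces $\uy_j=\ux_j$ for $j<h$ and for $j>\ell$, and $\uy_j=\ux_{j+1}$ for $h\le j\le\ell-1$; in particular $\langle\uy_1,\dots,\uy_{\ell-1}\rangle_K=\langle\ux_1,\dots,\widehat{\ux_h},\dots,\ux_\ell\rangle_K$ and $\uy_{m+1}=\ux_{m+1}$ whenever $m<n$ and $m\ge\ell$. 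Condition~(3) reads $\uy_\ell=\varepsilon\ux_h+\uz$ with $\varepsilon\in\cO_S^*$ and $\uz\in\langle\uy_1,\dots,\uy_{\ell-1}\rangle_{\cO_S}$, so $\langle\uy_1,\dots,\uy_\ell\rangle_K=\langle\uy_1,\dots,\uy_{\ell-1}\rangle_K+K\ux_h=\langle\ux_1,\dots,\ux_\ell\rangle_K$; adjoining the equal vectors $\uy_j=\ux_j$ for $\ell<j\le m$ yields $\langle\uy_1,\dots,\uy_m\rangle_K=\langle\ux_1,\dots,\ux_m\rangle_K$, which with the previous line is exactly~(i).

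For (ii), observe that by the description above the sequence $(\uy_1,\dots,\uy_{\ell-1})$ is precisely $(\ux_1,\dots,\widehat{\ux_h},\dots,\ux_\ell)$, assumed almost orthogonal in $K_\pw^n$; hence so is its subsequence $(\uy_1,\dots,\widehat{\uy_k},\dots,\uy_{\ell-1})$, which is the prefix of the sequence in question (the case $\ell=2$ being trivial, I may assume $\ell\ge 3$). It then remains to append $\uy_\ell$, at position $\ell-1$, and to verify $\dist_\pw\big(\uy_\ell,\langle\uy_1,\dots,\widehat{\uy_k},\dots,\uy_{\ell-1}\rangle_{K_\pw}\big)\ge 1-\delta/2^{\ell-2}$. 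Here I would quote the construction of $\uy$ rather than the bare notion of type: inequality \eqref{constr:lemma1:eq14} together with part $\mathbf{3}^\circ$ of the proof of Lemma~\ref{constr:lemma1} gives $\dist_\pw(\uy_\ell,W_\pw)\ge 1-\delta/2^{\ell-1}$ with $W_\pw=\langle\uy_1,\dots,\widehat{\uy_k},\dots,\uy_{\ell-1}\rangle_{K_\pw}$, and this is $=1$ when $\pw\nmid\infty$. Positivity of this distance, combined with the linear independence over $K_\pw$ of the almost orthogonal prefix, gives linear independence over $K_\pw$ of the whole sequence, so it is almost orthogonal.

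For (iii), item (i) shows that $(\uy_1,\dots,\widehat{\uy_\ell\,},\dots,\uy_m)$ equals $(\ux_1,\dots,\widehat{\ux_h},\dots,\ux_m)$, so that $V_1=\langle\uy_1,\dots,\widehat{\uy_\ell\,},\dots,\uy_m\rangle_{K_\pw}$ and the sequences $(\uy_1,\dots,\widehat{\uy_\ell\,},\dots,\uy_m)$ and $(\uy_1,\dots,\widehat{\uy_k},\dots,\uy_m)$ are both almost orthogonal by hypothesis; Lemma~\ref{constr:lemmaV}, applied with these $k$, $\ell$ and the present $m$, gives the first inequality of \eqref{constr:lemma1:eq15}. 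For the second, I would use (i) to rewrite $H(\langle\uy_1,\dots,\uy_m\rangle_K)=H(\langle\ux_1,\dots,\ux_m\rangle_K)$ and the coordinate description of (i) to get $\prod_{j=1}^m H(\uy_j)=\big(H(\uy_\ell)/H(\ux_h)\big)\prod_{j=1}^m H(\ux_j)$, so that the claimed bound reduces to $C^{d_\pw/d}H(\ux_h)\le c_5\,H(\uy_\ell)$. This last inequality follows from Lemma~\ref{constr:lemma0}, which gives $H(\ux_h)\le c_5 C^{a_hd_\pw/d}$ (valid since $h\le\ell$) and $H(\uy_\ell)\ge C^{b_\ell d_\pw/d}$, once one notes that hypothesis~(1) together with $\ua\in\Delta$ forces $b_\ell>a_\ell\ge a_h$, i.e.\ $b_\ell\ge a_h+1$.

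The only point requiring real care is the one flagged in (ii): the literal definition of type $(k,\ell)$ supplies merely $\dist_\pw(\uy_\ell,W_\pw)\ge 1-1/2^{\ell-1}$, which does not force orthogonality at a non-archimedean $\pw$, so one must instead invoke the sharper estimate $\dist_\pw(\uy_\ell,W_\pw)\ge 1-\delta/2^{\ell-1}$ actually obtained inside the proof of Lemma~\ref{constr:lemma1}. Everything else is a direct substitution into the already established Lemmas~\ref{constr:lemma0} and \ref{constr:lemmaV} and into the index relations of (i).
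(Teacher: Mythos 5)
Your proof is correct and follows essentially the same route as the paper's. The one genuine refinement you supply is in part (ii): you observe that reading the type $(k,\ell)$ property literally, as
\[
 \dist_\pw\big(\uy_\ell,\langle\uy_1,\dots,\widehat{\uy_k},\dots,\uy_{\ell-1}\rangle_{K_\pw}\big)\ge 1-\frac{1}{2^{\ell-1}},
\]
with a $1$ rather than a $\delta$ in the numerator, does not in general force $\dist_\pw=1$ when $\pw\nmid\infty$, whereas almost orthogonality at a non-archimedean place demands exact orthogonality. The paper's proof writes only ``Since $\uy$ is of type $(k,\ell)$'', glossing over this. Your fix is correct: the lemma concerns the specific $\uy$ constructed in Lemma~\ref{constr:lemma1}, and step $\mathbf{3}^\circ$ of that construction actually establishes the sharper bound $\dist_\pw\ge 1-\delta/2^{\ell-1}$, which is $\ge 1-\delta/2^{\ell-2}$ and hence exactly what is needed to append $\uy_\ell$ at position $\ell-1$ of the already almost orthogonal prefix. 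Your parts (i) and (iii) coincide with the paper's argument (which likewise reduces (iii) to Lemma~\ref{constr:lemmaV}, the identity $H(\langle\uy_1,\dots,\uy_m\rangle_K)=H(\langle\ux_1,\dots,\ux_m\rangle_K)$, and the height comparison $H(\ux_h)/H(\uy_\ell)\le c_5 C^{(a_h-b_\ell)d_\pw/d}\le c_5 C^{-d_\pw/d}$ from Lemma~\ref{constr:lemma0}), modulo a more explicit coordinate unwinding in (i).
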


When (iii) holds with $m<n$, estimates \eqref{constr:lemma1:eq15}
together with Corollary \ref{metric:cor:lemmaD} allow us to connect the distances from
$\ux_{m+1}=\uy_{m+1}$ to $V_1$ and $V_2$ in terms of heights only.

\begin{proof}
Part (i) follows immediately from conditions (2) and (3) of Lemma
\ref{constr:lemma1}.

To prove (ii), we note that $(\ux_1,\dots,\widehat{\ux_h},\dots,\ux_\ell)
=(\uy_1,\dots,\uy_{\ell-1})$.  Thus, if this sequence is almost orthogonal in
$K_\pw^n$, so is its subsequence $(\uy_1,\dots,\widehat{\uy_k},\dots,\uy_{\ell-1})$.
Since $\uy$ is of type $(k,\ell)$, we then conclude that
$(\uy_1,\dots,\widehat{\uy_k},\dots,\uy_{\ell})$ is almost orthogonal in
$K_\pw^n$.

Under the hypotheses of (iii), the first inequality in \eqref{constr:lemma1:eq15}
follows from Lemma \ref{constr:lemmaV}
because $(\ux_1,\dots,\widehat{\ux_h},\dots,\ux_m)$ coincides with
$(\uy_1,\dots,\widehat{\uy_\ell},\dots,\uy_m)$.  To prove the second inequality,
we use the fact that $\ux$ and $\uy$ have respective sizes $\ua$ and $\ub$ with
$a_h\le a_\ell<b_\ell$.  By Lemma \ref{constr:lemma0}, we obtain
\[
 \frac{H(\ux_1)\cdots H(\ux_m)}{H(\uy_1)\cdots H(\uy_m)}
  = \frac{H(\ux_h)}{H(\uy_\ell)}
  \le c_5 C^{(a_h-b_\ell)d_\pw/d}
  \le \frac{c_5}{C^{d_\pw/d}}.
\]
The required inequality follows since $H(\langle\uy_1,\dots,\uy_m\rangle_K)
=H(\langle\ux_1,\dots,\ux_m\rangle_K)$ by (i).
\end{proof}

We conclude with the following existence result which for us replaces
\cite[Lemma 5.2]{R2015}.

\begin{lemma}
\label{constr:lemma2}
Let $\ua=(a_1,\dots,a_n)\in\bZ^n$ with $0\le a_1<\cdots<a_n$.
There exists an admissible basis $\ux=(\ux_1,\dots,\ux_n)$ of $\cO_S^n$ over $\cO_S$
of size $\ua$ and type $(1,n)$ such that $(\ux_1,\dots,\ux_{n-1})$ is almost orthogonal
in $K_\pw^n$.
\end{lemma}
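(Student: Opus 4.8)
The plan is to obtain $\ux$ by a short chain of applications of Lemma \ref{constr:lemma1}, starting from the canonical basis $(\ue_1,\dots,\ue_n)$ of $\cO_S^n$ and using at every step the parameters $(h,k,\ell)=(1,1,n)$. Note first that $(\ue_1,\dots,\ue_n)$ is an admissible basis of $\cO_S^n$ of size $(0,\dots,0)\in\Delta$: each $\ue_j$ has $\norm{\ue_j}_\pv=1$ at every place $\pv$, and $(\ue_1,\dots,\ue_n)$ is orthonormal in $K_\pv^n$ for each $\pv$, so in particular it is almost orthogonal in $K_\pw^n$, has type $(1,n)$, and satisfies $\dist_\pw(\ue_n,\langle\ue_2,\dots,\ue_{n-1}\rangle_{K_\pw})=1$.

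\textbf{The chain of moves.} Put $r=1$ if $a_1=0$ and $r=0$ otherwise, and for $m=0,1,\dots,n-r$ set
\[
 \ua^{(m)}=\big(\underbrace{0,\dots,0}_{n-m},\,a_{r+1},a_{r+2},\dots,a_{r+m}\big)\in\Delta ,
\]
so that $\ua^{(0)}=(0,\dots,0)$ and $\ua^{(n-r)}=(a_1,\dots,a_n)$ (when $r=1$ one uses $a_1=0$). For $1\le m\le n-r$ the pair $(\ua,\ub)=(\ua^{(m-1)},\ua^{(m)})$ satisfies hypothesis (1) of Lemma \ref{constr:lemma1} with $(h,\ell)=(1,n)$: deleting the last entry of $\ub$ and the first entry of $\ua$ yields in both cases $(0,\dots,0,a_{r+1},\dots,a_{r+m-1})$, while the last entry $a_{r+m}$ of $\ub$ strictly exceeds the last entry of $\ua$ (which is $0$ if $m=1$ and $a_{r+m-1}$ if $m\ge 2$), by the strict inequalities $0<a_{r+1}<\dots<a_n$. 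Hence one may define recursively admissible bases $\ux^{(0)}=(\ue_1,\dots,\ue_n),\ \ux^{(1)},\dots,\ux^{(n-r)}$ of $\cO_S^n$ by letting $\ux^{(m)}$ be the basis furnished by Lemma \ref{constr:lemma1} applied to $\ux^{(m-1)}$ with $(h,k,\ell)=(1,1,n)$; by that lemma, $\ux^{(m)}$ is admissible, of size $\ua^{(m)}$, and of type $(1,n)$. Taking $m=n-r$ gives an admissible basis $\ux=\ux^{(n-r)}$ of $\cO_S^n$ of size $\ua$ and type $(1,n)$.

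\textbf{Propagating almost-orthogonality at $\pw$.} It remains to check that $(\ux_1,\dots,\ux_{n-1})$ is almost orthogonal in $K_\pw^n$, and this is the only genuinely delicate point. Condition (2) of Lemma \ref{constr:lemma1} with $(h,\ell)=(1,n)$ gives $(\ux^{(m)}_1,\dots,\ux^{(m)}_{n-1})=(\ux^{(m-1)}_2,\dots,\ux^{(m-1)}_n)$ for $m\ge 1$, so it suffices to prove by induction on $m$ that $(\ux^{(m)}_1,\dots,\ux^{(m)}_{n-1})$ is almost orthogonal in $K_\pw^n$, the case $m=0$ being clear. Assuming the assertion for $m-1$, the subsequence $(\ux^{(m-1)}_2,\dots,\ux^{(m-1)}_{n-1})$ is again almost orthogonal in $K_\pw^n$, while the type $(1,n)$ of $\ux^{(m-1)}$ yields $\dist_\pw(\ux^{(m-1)}_n,\langle\ux^{(m-1)}_2,\dots,\ux^{(m-1)}_{n-1}\rangle_{K_\pw})\ge 1-\delta/2^{\,n-1}$ — this is the bound actually produced in the proof of Lemma \ref{constr:lemma1}, and for $\delta=1$ it reduces to the bound $1-1/2^{\,n-1}$ of the definition of type, while for $\delta=0$ it gives the exact orthogonality that ``almost orthogonal'' then requires. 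Since $\delta/2^{\,n-1}\le \delta/2^{\,n-2}$, appending $\ux^{(m-1)}_n$ preserves almost orthogonality in $K_\pw^n$, so that $(\ux^{(m-1)}_2,\dots,\ux^{(m-1)}_n)=(\ux^{(m)}_1,\dots,\ux^{(m)}_{n-1})$ is almost orthogonal in $K_\pw^n$; this closes the induction. Applying the claim with $m=n-r$ completes the proof, the remaining verifications (hypothesis (1) of Lemma \ref{constr:lemma1}, admissibility and size of the intermediate bases) being immediate from the strict monotonicity of $(a_j)_j$ and the conclusions of that lemma.
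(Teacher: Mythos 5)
Your proof is correct and follows the same route as the paper's: repeated application of Lemma \ref{constr:lemma1} with $(h,k,\ell)=(1,1,n)$ starting from the canonical basis, combined with the observation that at each step the constructed basis is in fact orthogonal (when $\pw\nmid\infty$) or close to it (when $\pw\mid\infty$) at position $n$. You are in fact slightly more careful than the published proof on two small points: you explicitly handle the degenerate case $a_1=0$ (where the first application of Lemma \ref{constr:lemma1} would fail since it requires $b_\ell>a_\ell$) by inserting the parameter $r$, and you explicitly invoke the sharper bound $1-\delta/2^{\ell-1}$ from the proof of Lemma \ref{constr:lemma1} rather than the weaker $1-1/2^{\ell-1}$ of the definition of \emph{type}, which is genuinely needed for almost orthogonality when $\pw$ is non-archimedean.
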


\begin{proof}
Starting from the canonical basis $(\ue_1,\dots,\ue_n)$ of $K^n$, Lemma
\ref{constr:lemma1} applied recursively $n$ times with $h=k=1$ and $\ell=n$
provides points $\ux_1,\dots,\ux_n$ of $\cO_S^n$ such that, for each
$j=0,\dots,n$, the $n$-tuple $(\ue_{j+1},\dots,\ue_n,\ux_1,\dots,\ux_j)$
is an admissible basis of $\cO_S^n$ of size $(0,\dots,0,a_1,\dots,a_j)$
and type $(1,n)$.  For each $j$ with $2\le j\le n-1$, we find
\[
 \dist_\pw\big(\ux_j,\langle\ux_1,\dots,\ux_{j-1}\rangle_{K_\pw}\big)
  \ge \dist_\pw\big(\ux_j,\langle\ue_{j+2},\dots,\ue_n,\ux_1,\dots,
            \ux_{j-1}\rangle_{K_\pw}\big)
  \ge 1-1/2^{n-1}.
\]
Thus $(\ux_1,\dots,\ux_{n-1})$ is almost orthogonal in $K_\pw^n$.
\end{proof}

%
%

\section{From $n$-systems to points}
\label{sec:converse}

Let $\uL\colon[0,\infty)\to\bR^n$ be an arbitrary $n$-system.  To complete the
proof of Theorem A, it remains to show the existence of a non-zero point
$\uxi\in K_\pw^n$ for which $\norm{\uL-\uL_\uxi}$ is bounded above by a constant
depending only on $K$, $\pw$ and $n$.  To this end, consider the $n$-system
$\uR=(R_1,\dots,R_n)$ provided by Corollary \ref{comb:cor} for the choice
of $c'=2c$, where
\[
 c=\frac{d_\pw}{d}\log(C)
\]
for the constant $C=C(K,\pw,n)$ of the preceding section, satisfying \eqref{constr:eq0}.
Since $\norm{\uL-\uR}$ is bounded above by a constant depending only on $K$, $n$
and $\pw$, it suffices to construct a non-zero point $\uxi\in K_\pw^n$ for which
$\norm{\uR-\uL_\uxi}$ is also bounded above by such a constant.

Let $q_0=(n^2-n+1)c$, so that the restriction of $\uR$ to $[q_0,\infty)$ is rigid
of mesh $c$.  We denote by $(q_i)_{0\le i<s}$ the finite or infinite sequence
of switch points of $\uR$ on that interval, with cardinality $s\in\{1,2,\dots\}\cup\{\infty\}$.
For each integer $i$ with $0\le i<s$, we set
\[
 \ua^{(i)}=c^{-1}\uR(q_i)=(a_1^{(i)},\dots,a_n^{(i)})\in \Delta,
\]
where $\Delta\subset\bZ^n$ is defined by \eqref{constr:eq:Delta}.  We have
$R_j(q_i)=ca_j^{(i)}$ for $j=1,\dots,n$ and
\begin{equation}
\label{converse:eq:qi}
 q_i = ca_1^{(i)}+\cdots+ca_n^{(i)} \quad (0\le i<s).
\end{equation}
We also denote by $k_i$ the index $j$ for which the right derivative of $R_j$ at
$q_i$ is $1$ and, when $i>0$, we denote by $\ell_i$ the index $j$ for which
the left derivative of $R_j$ at $q_i$ is $1$.  By the choice of $\uR$, we have
$k_0=1$.  Finally, we set $\ell_0=n$.  Then, for each integer $i$ with $1\le i<s$,
we have
\begin{itemize}
\item[{\rm (P1)}] $1=k_0 < \ell_0=n$ \ and \ $1\le k_i<\ell_i\le n$,\\
\item[{\rm (P2)}] $\ell_i\ge k_{i-1}$ \ and \ $a_{\ell_i}^{(i)}>a_{\ell_i}^{(i-1)}$,\\
\item[{\rm (P3)}] $\big(a_1^{(i)},\dots,\widehat{a_{\ell_i}^{(i)}},\dots,a_n^{(i)}\big)
  = \big(a_1^{(i-1)},\dots,\widehat{a_{k_{i-1}}^{(i-1)}},\dots,a_n^{(i-1)}\big)$.
\end{itemize}

From these data, it is a simple matter to reconstruct the function $\uR$.  Let
\[
 \Phi\colon \bR^n\to
  \{(x_1,\dots,x_n)\in\bR^n\,;\,x_1\le x_2\le \cdots\le x_n\}
\]
denote the continuous function which reorders the coordinates of a point as
a monotonically increasing sequence, and set $q_s=\infty$ if $s<\infty$.  Then,
for each $q\in [q_i,q_{i+1})$, we have
\begin{equation}
\label{converse:eq:R}
 \uR(q)=\Phi(\uR(q_i)+(q-q_i)\ue_{k_i})
          =\Phi(R_1(q_i),\dots,R_{k_i}(q_i)+q-q_i,\dots,R_n(q_i))
\end{equation}
The formula also extends to $q=q_{i+1}$ if $i+1<s$.

We first apply the results of the preceding section to construct a specific
sequence of bases of $\cO_S^n$.  Its relevance to our problem will become
clear in the corollaries that we derive afterwards.

\begin{proposition}
\label{converse:prop}
There exists a sequence $(\ux^{(i)})_{0\le i<s}$ of bases of $\cO_S^n$ over
$\cO_S$ such that, for each integer $i$ with $0\le i<s$, the basis $\ux^{(i)}=
(\ux_1^{(i)},\dots,\ux_n^{(i)})$ is admissible of size $\ua^{(i)}$ and type
$(k_i,\ell_i)$ with the additional property that, when $i\ge 1$,
\begin{itemize}
\labelsep=7mm
\item[{\rm (1)}]
  $\big(\ux_1^{(i)},\dots,\widehat{\ux_{\ell_i}^{(i)}},\dots,\ux_n^{(i)}\big)
    = \big(\ux_1^{(i-1)},\dots,\widehat{\ux_{k_{i-1}}^{(i-1)}},\dots,\ux_n^{(i-1)}\big)$,
\medskip
\item[{\rm (2)}]
  $\ux_{\ell_i}^{(i)} \in
   \varepsilon_i\ux_{k_{i-1}}^{(i-1)}
   + \langle\ux_1^{(i-1)},\dots,\widehat{\ux_{k_{i-1}}^{(i-1)}},\dots,
         \ux_{\ell_i}^{(i-1)}\rangle_{\cO_S}$
   for some $S$-unit $\varepsilon_i\in\cO_S^*$.
\end{itemize}
We may further require that the sequence
$\wx^{(-1)}:=(\ux_1^{(0)},\dots,\ux_{n-1}^{(0)})$ is almost orthogonal in
$K_\pw^n$.  Then,  for each integer $i$ with $0\le i<s$, the sequence
$\wx^{(i)}:=(\ux_1^{(i)},\dots,\widehat{\ux_{k_i}^{(i)}},\dots,\ux_n^{(i)})$ is
also almost orthogonal in $K_\pw^n$.  Finally, for each $i$ with $-1\le i<s$, choose
a unit vector $\uu_i$ of $K_\pw^n$ which is orthogonal to each vector of\/ $\wx^{(i)}$
with respect to the dot product.  Then we further have
\begin{equation}
\label{converse:prop:eq0}
 \dist_\pw(\uu_i,\uu_j) \le 2^\delta\exp((4-q_{i+1})d/d_\pw)
 \quad (-1\le i<j<s).
\end{equation}
\end{proposition}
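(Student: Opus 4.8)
The plan is to produce the sequence $(\ux^{(i)})_{0\le i<s}$ by a recursion that carries along the three properties ``admissible, of size $\ua^{(i)}$, of type $(k_i,\ell_i)$'', then to check by a separate induction that each $\wx^{(i)}$ is almost orthogonal in $K_\pw^n$, and finally to derive \eqref{converse:prop:eq0}.

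\smallskip
\textbf{The recursion.} For $i=0$ I would apply Lemma \ref{constr:lemma2} to $\ua^{(0)}=c^{-1}\uR(q_0)$; this is legitimate because $\uR$, being the $n$-system produced by Corollary \ref{comb:cor} with $c'=2c$, satisfies $\uR(q_0)=(c,2c,4c,\dots,2(n-1)c)$, so that $\ua^{(0)}$ has strictly increasing coordinates. The lemma then gives an admissible basis $\ux^{(0)}$ of $\cO_S^n$ of size $\ua^{(0)}$ and type $(1,n)=(k_0,\ell_0)$ for which $\wx^{(-1)}=(\ux_1^{(0)},\dots,\ux_{n-1}^{(0)})$ is almost orthogonal in $K_\pw^n$, as desired. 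For $i\ge1$, given $\ux^{(i-1)}$ admissible of size $\ua^{(i-1)}$, I would apply Lemma \ref{constr:lemma1} with $(h,k,\ell)=(k_{i-1},k_i,\ell_i)$ and $(\ua,\ub)=(\ua^{(i-1)},\ua^{(i)})$: its hypotheses $1\le k<\ell\le n$ and $1\le h\le\ell\le n$ are exactly (P1)--(P2), and its hypothesis (1) is the conjunction of (P2) and (P3). It outputs an admissible basis $\ux^{(i)}$ of size $\ua^{(i)}$ and type $(k_i,\ell_i)$ which fulfils conditions (1) and (2) of the proposition (these being conditions (2) and (3) of that lemma, with the $S$-unit called $\varepsilon_i$), and Lemma \ref{constr:lemma1bis} supplies the additional facts (i)--(iii) needed below.

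\smallskip
\textbf{Almost orthogonality of the $\wx^{(i)}$.} I would argue by induction on $i$, the base $\wx^{(-1)}$ being granted. For $\wx^{(0)}=(\ux_2^{(0)},\dots,\ux_n^{(0)})$: the segment $(\ux_2^{(0)},\dots,\ux_{n-1}^{(0)})$ is a subsequence of the almost orthogonal $\wx^{(-1)}$, hence almost orthogonal, and adjoining $\ux_n^{(0)}$ keeps it so because $\ux^{(0)}$ has type $(1,n)$ and, when $\pw\nmid\infty$, the construction underlying Lemma \ref{constr:lemma1} realizes that type condition as an exact orthogonality at $\pw$. For $i\ge1$: first, Lemma \ref{constr:lemma1bis}(ii), applied to the subsequence $(\ux_1^{(i-1)},\dots,\widehat{\ux_{k_{i-1}}^{(i-1)}},\dots,\ux_{\ell_i}^{(i-1)})$ of the almost orthogonal $\wx^{(i-1)}$, shows that the truncation $(\ux_1^{(i)},\dots,\widehat{\ux_{k_i}^{(i)}},\dots,\ux_{\ell_i}^{(i)})$ of $\wx^{(i)}$ is almost orthogonal; second, for $\ell_i<m\le n$ one has $\ux_m^{(i)}=\ux_m^{(i-1)}$ and $\langle\ux_1^{(i)},\dots,\ux_{m-1}^{(i)}\rangle_K=\langle\ux_1^{(i-1)},\dots,\ux_{m-1}^{(i-1)}\rangle_K$ by Lemma \ref{constr:lemma1bis}(i), so the spans of the predecessors of $\ux_m^{(i)}$ in $\wx^{(i)}$ and in $\wx^{(i-1)}$ are hyperplanes of one and the same space, and Corollary \ref{metric:cor:lemmaD} bounds the change in $\dist_\pw(\ux_m^{(i)},\cdot)$ by the distance between those hyperplanes, which Lemma \ref{constr:lemma1bis}(iii) together with the bounds $H(\ux_j^{(i)})\ge C^{a_j^{(i)}d_\pw/d}$ of Lemma \ref{constr:lemma0} force to be a bounded multiple of $C^{-d_\pw/d}$, hence negligible once $C$ obeys \eqref{constr:eq0}. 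Carrying this through while keeping the thresholds $1-\delta/2^{m-1}$ straight is the delicate point of the whole argument; the construction (large $C$, heights growing along the recursion) is tailored precisely so that the accumulated defects stay under control.

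\smallskip
\textbf{Proof of \eqref{converse:prop:eq0}.} For $-1\le i\le\nu$ the switch points give $q_{\nu+1}\ge q_{i+1}+(\nu-i)c$, so by the iterated triangle inequality of Lemma \ref{metric:lemma:triangle} applied to $\uu_i,\uu_{i+1},\dots,\uu_j$ it suffices to establish, for each $i$, the consecutive bound $\dist_\pw(\uu_i,\uu_{i+1})\le\exp((4-q_{i+1})d/d_\pw)$: indeed, when $\pw\nmid\infty$ the maximum over $\nu\in\{i,\dots,j-1\}$ is attained at $\nu=i$, and when $\pw\mid\infty$ the sum is at most $\exp((4-q_{i+1})d/d_\pw)\sum_{k\ge0}C^{-k}=\exp((4-q_{i+1})d/d_\pw)\,C/(C-1)\le2\exp((4-q_{i+1})d/d_\pw)$ (using $cd/d_\pw=\log C$ and $C\ge2$), so in both cases the total is $\le2^\delta\exp((4-q_{i+1})d/d_\pw)$. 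To prove the consecutive bound, write $V^{(\nu)}$ for the $K_\pw$-span of the vectors of $\wx^{(\nu)}$; then $\langle\uu_\nu\rangle_{K_\pw}^\perp=V^{(\nu)}$, so Lemma \ref{metric:lemma:distdual} gives $\dist_\pw(\uu_i,\uu_{i+1})=\dist_\pw(V^{(i)},V^{(i+1)})$. By condition (1) (read with $k_0=1$, $\ell_0=n$ in the case $i=-1$), $\wx^{(i)}=(\ux_1^{(i+1)},\dots,\widehat{\ux_{\ell_{i+1}}^{(i+1)}},\dots,\ux_n^{(i+1)})$, while $\wx^{(i+1)}=(\ux_1^{(i+1)},\dots,\widehat{\ux_{k_{i+1}}^{(i+1)}},\dots,\ux_n^{(i+1)})$; both are almost orthogonal in $K_\pw^n$, so Lemma \ref{constr:lemmaV} with $\uy=\ux^{(i+1)}$, $m=n$, $k=k_{i+1}$, $\ell=\ell_{i+1}$, together with $H(K^n)=1$ and $H(\ux_j^{(i+1)})\ge C^{a_j^{(i+1)}d_\pw/d}$, gives
\[
 \dist_\pw(V^{(i)},V^{(i+1)})^{d_\pw/d}
   \le e^{4}\prod_{j=1}^{n}\frac{1}{H(\ux_j^{(i+1)})}
   \le e^{4}\,C^{-(a_1^{(i+1)}+\cdots+a_n^{(i+1)})d_\pw/d}.
\]
Since $c=(d_\pw/d)\log C$ and $q_{i+1}=c(a_1^{(i+1)}+\cdots+a_n^{(i+1)})$ by \eqref{converse:eq:qi}, the right-hand side equals $e^{4-q_{i+1}}$, whence $\dist_\pw(\uu_i,\uu_{i+1})\le\exp((4-q_{i+1})d/d_\pw)$, which is the required consecutive bound and completes the argument.
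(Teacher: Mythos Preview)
Your construction of the sequence $(\ux^{(i)})$ and your proof of \eqref{converse:prop:eq0} are correct and match the paper's argument closely.

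The gap is in your induction for the almost orthogonality of $\wx^{(i)}$ when $\pw\mid\infty$. You propose to pass from $\wx^{(i-1)}$ to $\wx^{(i)}$ directly: for $m>\ell_i$, the element $\ux_m^{(i)}=\ux_m^{(i-1)}$ sits at position $m-1$ in both sequences, so the induction hypothesis gives $\dist_\pw(\ux_m^{(i-1)},V_1)\ge 1-\delta/2^{m-2}$, and you want to conclude $\dist_\pw(\ux_m^{(i)},V_2)\ge 1-\delta/2^{m-2}$ after subtracting the small quantity $\dist_\pw(V_1,V_2)$. But this loses a positive amount at \emph{every} step $i$, and since the same position $m-1$ recurs over arbitrarily many indices $i$, the defects accumulate without bound; your remark that ``the accumulated defects stay under control'' is exactly the point that needs proof and is not supplied. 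The bound you cite from Lemma~\ref{constr:lemma1bis}(iii), read as ``a bounded multiple of $C^{-d_\pw/d}$'', is not summable over~$i$.

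The paper avoids this by not stepping back to $i-1$. For each $m$ with $\ell_t\le m<n$ it locates the \emph{largest} $r<t$ with $\ell_r>m$; at that index the needed lower bound $\dist_\pw(\ux_{m+1}^{(r)},V^{(r)})\ge 1-\delta/2^m$ holds \emph{exactly}, either by the type condition (if $m+1=\ell_r$) or because $(\ux_1^{(r)},\dots,\ux_{m+1}^{(r)})$ lies inside an almost orthogonal sequence coming from the previous step. Then the second inequality in \eqref{constr:lemma1:eq15} is iterated from $r$ to $t$, giving $\dist_\pw(V^{(i-1)},V^{(i)})^{d_\pw/d}\le e^4(c_5/C^{d_\pw/d})^{\,i-r}$, and the sum of these over $i=r+1,\dots,t$ is a finite geometric series bounded by $1/2^{m}$. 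Thus only a single controlled defect is incurred between the ``fresh'' index $r$ and the current index $t$, rather than one per step. Your sketch is missing precisely this choice of $r$ and the geometric-series estimate.
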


\begin{proof}
Lemma \ref{constr:lemma1} provides recursively such a sequence of bases
starting from any admissible basis $\ux^{(0)}=(\ux_1^{(0)},\dots,\ux_n^{(0)})$
of size $\ua^{(0)}$ and type $(k_0,\ell_0)$.  To build $\ux^{(i)}$ from $\ux^{(i-1)}$
for an integer $i$ with $1\le i<s$, we apply this lemma with $h=k_{i-1}$,
$(k,\ell)=(k_i,\ell_i)$, $\ua=\ua^{(i-1)}$ and $\ub=\ua^{(i)}$.  The hypotheses
of the lemma are fulfilled by virtue of conditions (P1)--(P3).

By Lemma \ref{constr:lemma2}, we may choose the initial basis $\ux^{(0)}$ so that
$\wx^{(-1)}$ is almost orthogonal in $K_\pw^n$.
Assuming this, we now prove by induction that $\wx^{(i)}$ is almost
orthogonal in $K_\pw^n$ for each $i$ with $0\le i<s$.

We first note that $\wx^{(0)}=(\ux_2^{(0)},\dots, \ux_n^{(0)})$ is almost
orthogonal in $K_\pw^n$ because $\ux^{(0)}$ has type $(1,n)$ and the sequence
$(\ux_2^{(0)},\dots, \ux_{n-1}^{(0)})$ is almost orthogonal in $K_\pw^n$, as a
subsequence of the almost orthogonal sequence $\wx^{(-1)}$.

Suppose now that $\wx^{(i)}$ is almost orthogonal in $K_\pw^n$ for each
$i=0,\dots,t-1$ where $t$ is an integer with $1\le t <s$.  To complete the induction step,
we will show, by induction
on $m$, that $(\ux_1^{(t)},\dots,\widehat{\ux_{k_t}^{(t)}},\dots,\ux_m^{(t)})$
is almost orthogonal in $K_\pw^n$ for each $m=\ell_t,\dots,n$.  For $m=\ell_t$,
this follows from Lemma \ref{constr:lemma1bis} (ii) since
$(\ux_1^{(t-1)},\dots,\widehat{\ux_{k_{t-1}}^{(t-1)}},\dots,\ux_{\ell_t}^{(t-1)})$
is almost orthogonal in $K_\pw^n$.  If $\ell_t=n$, we are done. Otherwise,
let $m$ be an integer with $\ell_t\le m<n$ for which
$(\ux_1^{(t)},\dots,\widehat{\ux_{k_t}^{(t)}},\dots,\ux_m^{(t)})$  is almost
orthogonal in $K_\pw^n$.   Since $\ell_0=n>m$, there is a largest integer
$r$ with $0\le r<t$ such that $\ell_r>m$.  This means that $\ell_{r+1},\dots,\ell_t\le m$
and so $k_r,\dots,k_t\le m$ by (P1) and (P2).  Moreover, we have
$\ux_{m+1}^{(r)}=\cdots=\ux_{m+1}^{(t)}$ by Lemma \ref{constr:lemma1bis} (i).
Define
\begin{equation}
\label{converse:prop:eq:UV}
 U^{(i)}=\langle \ux_1^{(i)},\dots,\ux_m^{(i)} \rangle_{K_\pw}
 \et
 V^{(i)}=\langle \ux_1^{(i)},\dots,\widehat{\ux_{k_i}^{(i)}},\dots,\ux_m^{(i)} \rangle_{K_\pw}
\end{equation}
for $i=r,\dots,t$.  We claim that
\begin{equation}
\label{converse:prop:eq:claim}
 \dist_\pw(\ux_{m+1}^{(r)}, V^{(r)}) \ge 1-\frac{\delta}{2^m}
 \et
 \dist_\pw(V^{(r)},V^{(t)}) \le \frac{1}{2^m}.
\end{equation}
If we take this for granted, then Corollary \ref{metric:cor:lemmaD} gives
$\dist_\pw(\ux_{m+1}^{(t)}, V^{(t)}) \ge 1-\delta/2^{m-1}$
which is exactly what we need to complete the induction on $m$, and thus to complete
the main induction as well.

If $r\ge 1$ and $m+1<\ell_r$, the $(m+1)$-tuple
$(\ux_1^{(r)},\dots,\ux_{m+1}^{(r)})$ is almost orthogonal
in $K_\pw^n$ as a subsequence of
\[
 \big(\ux_1^{(r)},\dots,\widehat{\ux_{\ell_r}^{(r)}},\dots,\ux_n^{(r)}\big)
    = \big(\ux_1^{(r-1)},\dots,\widehat{\ux_{k_{r-1}}^{(r-1)}},\dots,\ux_n^{(r-1)}\big).
\]
If $r=0$ and $m+1<\ell_0=n$, it is also almost orthogonal in $K_\pw^n$, as
a subsequence of $\wx^{(-1)}$.   So, independently of $r$, if $m+1<\ell_r$, we obtain
\[
 \dist_\pw(\ux_{m+1}^{(r)}, V^{(r)})
 \ge \dist_\pw(\ux_{m+1}^{(r)}, U^{(r)}) \ge 1-\frac{\delta}{2^m},
\]
which gives the first inequality in \eqref{converse:prop:eq:claim}.  If $m+1=\ell_r$,
the latter inequality holds by construction, since $\ux^{(r)}$ has type $(k_r,\ell_r)$.

To prove the second inequality in \eqref{converse:prop:eq:claim}, we apply
Lemma \ref{constr:lemma1bis} (iii).  For $i=r+1,\dots,t$, it gives
\[
 \dist_\pw(V^{(i-1)},V^{(i)})^{d_\pw/d}
  \le e^{4}\frac{H(\langle\ux^{(i)}_1,\dots,\ux^{(i)}_m\rangle_K)}{H(\ux^{(i)}_1)\cdots H(\ux^{(i)}_m)}
  \le e^4\frac{c_5}{C^{d_\pw/d}}
       \frac{H(\langle\ux^{(i-1)}_1,\dots,\ux^{(i-1)}_m\rangle_K)}{H(\ux^{(i-1)}_1)\cdots H(\ux^{(i-1)}_m)},
\]
and so, for the same values of $i$, we obtain
\[
 \dist_\pw(V^{(i-1)},V^{(i)})^{d_\pw/d}
  \le e^{4}\left(\frac{c_5}{C^{d_\pw/d}}\right)^{i-r}
       \frac{H(\langle\ux^{(r)}_1,\dots,\ux^{(r)}_m\rangle_K)}{H(\ux^{(r)}_1)\cdots H(\ux^{(r)}_m)}
  \le e^{4}\left(\frac{c_5}{C^{d_\pw/d}}\right)^{i-r}.
\]
Since $C\ge 2^n(e^4c_5)^{d/d_\pw}$, this yields $\dist_\pw(V^{(i-1)},V^{(i)})\le
2^{-(i-r)n}$ for $i=r+1,\dots,t$ and so by the triangle inequality of
Lemma \ref{metric:lemma:triangle} we obtain
\[
 \dist_\pw(V^{(r)},V^{(t)})
  \le \sum_{i=r+1}^t \dist_\pw(V^{(i-1)},V^{(i)})
  \le \frac{1}{2^{n-1}} \le \frac{1}{2^{m}},
\]
which completes the proof of \eqref{converse:prop:eq:claim}.

By the above, the sequence $\wx^{(i)}$ is almost orthogonal in $K_\pw^n$
for each $i$ with $-1\le i<s$.  For those $i$, take $V^{(i)}$ to be
the subspace of $K_\pw^n$ spanned by $\wx^{(i)}$ so that
$(V^{(i)})^\perp=\langle\uu_i\rangle_{K_\pw}$.  When $0\le i<s$, both
$\wx^{(i-1)}$ and $\wx^{(i)}$ are almost orthogonal subsequences of
$\ux^{(i)}$.  Then Lemmas \ref{metric:lemma:distdual} and \ref{constr:lemmaV}
yield
\[
 \dist_\pw(\uu_{i-1},\uu_i)^{d_\pw/d}
 = \dist_\pw(V^{(i-1)},V^{(i)})^{d_\pw/d}
 \le \frac{e^4}{H(\ux_1^{(i)})\cdots H(\ux_n^{(i)})}
 \quad (0\le i<s)
\]
because $\langle\ux_1^{(i)},\dots,\ux_n^{(i)}\rangle_K=K^n$ has height $1$.
Since the basis $\ux^{(i)}$ is admissible of size $a^{(i)}$,
Lemma \ref{constr:lemma0} further gives
\[
 \log H(\ux_j^{(i)}) \ge a_j^{(i)}(d_\pw/d)\log(C) = ca_j^{(i)}
 \quad
\text{for $j=1,\dots,n$.}
\]
Using  \eqref{converse:eq:qi}, we conclude that
\[
 \dist_\pw(\uu_{i-1},\uu_i)^{d_\pw/d}
 \le \exp\Big(4-\sum_{j=1}^n ca_j^{(i)}\Big) = \exp(4-q_i)
 \quad (0\le i<s).
\]
Since $(q_i)_{0\le i<s}$ is a strictly increasing sequence of multiples of $c$
and since $cd/d_\pw = \log(C)\ge \log(2)$, we deduce that
\[
 \dist_\pw(\uu_{j-1},\uu_j)
   \le \exp((4-q_j)d/d_\pw) \le (1/2)^{j-i}\exp((4-q_i)d/d_\pw)
\]
for each pair of integers $0\le i<j<s$.  Then, \eqref{converse:prop:eq0}
follows from the triangle inequality of Lemma \ref{metric:lemma:triangle}.
\end{proof}

For the corollary below, we recall our convention that $q_s=\infty$ when $s<\infty$.
As a special case of \eqref{points:eq:LxiX}, we also recall that, for each
$\ux\in K^n$, each non-zero $\uxi\in K_\pw^n$ and each $q\ge 0$, we have
by definition
\[
 L_\uxi(\ux,q)=\max\{\log H(\ux),q+\log D_\uxi(\ux)\}.
\]

\begin{cor}
\label{converse:cor1}
Under the hypotheses of Proposition \ref{converse:prop}, there is a unit vector
$\uxi\in K_\pw^n$ such that, for each integer $i$ with $0\le i<s$ and each
$q\in[q_i,q_{i+1})$, we have
\[
 L_\uxi(\ux_j^{(i)},q)
  \le c_6 + R_j(q_i)
      + \begin{cases}
          q-q_i &\text{if $j=k_i$,}\\
          0 &\text{otherwise,}
          \end{cases}
\]
where $c_6=6+\log(c_5)$.
\end{cor}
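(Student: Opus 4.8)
The plan is to construct $\uxi$, up to normalization, as the limit of the lines $\langle\uu_i\rangle_{K_\pw}$. Since $(q_i)_{0\le i<s}$ is strictly increasing (and unbounded if $s=\infty$), \eqref{converse:prop:eq0} shows that $(\langle\uu_i\rangle_{K_\pw})_{-1\le i<s}$ is a Cauchy sequence for the metric $\dist_\pw$ on the compact, hence complete, projective space $\bP^{n-1}(K_\pw)$. When $s=\infty$ we let $\langle\uxi\rangle_{K_\pw}$ be its limit; when $s<\infty$ we simply take $\langle\uxi\rangle_{K_\pw}=\langle\uu_{s-1}\rangle_{K_\pw}$, recalling that $q_s=\infty$. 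In both cases we normalize so that $\norm{\uxi}_\pw=1$, and letting $j\to\infty$ (resp.\ $j=s-1$) in \eqref{converse:prop:eq0}, together with the triangle inequality of Lemma \ref{metric:lemma:triangle}, yields
\begin{equation}
\label{converse:cor1:dist}
 \dist_\pw(\uu_i,\uxi)\le 2^\delta\exp\big((4-q_{i+1})d/d_\pw\big)
 \qquad(-1\le i<s),
\end{equation}
with the convention $\exp(-\infty)=0$ in the boundary case $i=s-1<\infty$.

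Fix now $i$ with $0\le i<s$, an index $j\in\{1,\dots,n\}$ and $q\in[q_i,q_{i+1})$, and recall that $R_j(q_i)=ca_j^{(i)}$ with $c=(d_\pw/d)\log C$ and that $q_i=\sum_{l=1}^n R_l(q_i)$. Because $\ux^{(i)}$ has size $\ua^{(i)}$, Lemma \ref{constr:lemma0} gives $\log H(\ux_j^{(i)})\le\log c_5+R_j(q_i)$, which is already bounded by the right-hand side of the asserted inequality. For the remaining term $q+\log D_\uxi(\ux_j^{(i)})$ of $L_\uxi(\ux_j^{(i)},q)$, we split $D_\uxi$ into its factor at $\pw$ and the rest; since $\ux^{(i)}$ is a basis of $\cO_\pv^n$ for every $\pv\notin S$, since $\norm{\ux_j^{(i)}}_\pv\le(2ec_2)^2=c_5$ for every $\pv\in S\setminus\{\pw\}$ by admissibility, and since $\sum_{\pv\in S\setminus\{\pw\}}d_\pv/d\le 1$, we obtain $\log D_\uxi(\ux_j^{(i)})\le(d_\pw/d)\log|\ux_j^{(i)}\cdot\uxi|_\pw+\log c_5$. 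As $c_6=6+\log c_5$, the corollary thus reduces to proving
\[
 \frac{d_\pw}{d}\log|\ux_{k_i}^{(i)}\cdot\uxi|_\pw\le R_{k_i}(q_i)+6-q_i
 \qquad\text{and}\qquad
 \frac{d_\pw}{d}\log|\ux_j^{(i)}\cdot\uxi|_\pw\le R_j(q_i)+6-q \ \ (j\ne k_i).
\]

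The key observation is that $\ux_j^{(i)}$ is dot-orthogonal in $K_\pw^n$ to one of the vectors $\uu_m$ with $m$ adjacent to $i$. If $j\ne k_i$, then $\ux_j^{(i)}$ is a vector of $\wx^{(i)}$, hence $\ux_j^{(i)}\cdot\uu_i=0$. If $j=k_i$ with $i\ge 1$, then condition (1) of Proposition \ref{converse:prop} identifies $\wx^{(i-1)}$ with the subsequence of $\ux^{(i)}$ obtained by removing $\ux_{\ell_i}^{(i)}$, and since $k_i<\ell_i$ by (P1) the vector $\ux_{k_i}^{(i)}$ is a vector of $\wx^{(i-1)}$, so $\ux_{k_i}^{(i)}\cdot\uu_{i-1}=0$; if $j=k_0=1$, then $\ux_1^{(0)}$ is a vector of $\wx^{(-1)}$, so $\ux_1^{(0)}\cdot\uu_{-1}=0$. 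In each case, denoting by $\uu$ the relevant reference vector and decomposing $\uxi=t\uu+\ueta$ with $\ueta$ orthogonal to $\uu$ (such a decomposition exists by \S\ref{metric:ssec:dist}), Lemma \ref{metric:lemmaC} gives $\norm{\ueta}_\pw=\dist_\pw(\uxi,\uu)$, whence $|\ux_j^{(i)}\cdot\uxi|_\pw=|\ux_j^{(i)}\cdot\ueta|_\pw\le\norm{\ux_j^{(i)}}_\pw\dist_\pw(\uxi,\uu)$ by the Cauchy--Schwarz inequality (resp.\ the ultrametric inequality if $\pw\nmid\infty$). Combining this with the size estimate $\norm{\ux_j^{(i)}}_\pw\le(1+\delta)C^{a_j^{(i)}}$ and with \eqref{converse:cor1:dist} — using $\uu=\uu_{i-1}$, which produces the exponent $q_i$, when $j=k_i$, and $\uu=\uu_i$, which produces the exponent $q_{i+1}$, when $j\ne k_i$ — we get $|\ux_j^{(i)}\cdot\uxi|_\pw\le(1+\delta)2^\delta C^{a_j^{(i)}}\exp((4-q')d/d_\pw)$ with $q'=q_i$, resp.\ $q'=q_{i+1}$. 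Applying $(d_\pw/d)\log$ turns $C^{a_j^{(i)}}$ into $R_j(q_i)$, turns the exponential into $4-q'$, and, since $(1+\delta)2^\delta\le 4\le e^2$, contributes at most $2$ from the remaining constant; this gives the two displayed inequalities (using $q<q_{i+1}$ in the second), and hence the corollary.

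The delicate point is the bookkeeping of the absolute constants, which leaves essentially no margin. One must use the \emph{adjacent} vector $\uu_{i-1}$ — not any earlier $\uu_m$ — as reference for $\ux_{k_i}^{(i)}$, so that \eqref{converse:cor1:dist} produces exactly the exponent $q_i$ needed to absorb the term $-q_i$ in the target bound; and one must estimate $|\ux_j^{(i)}\cdot\uxi|_\pw$ directly by $\norm{\ux_j^{(i)}}_\pw\dist_\pw(\uxi,\uu)$ rather than through Lemma \ref{metric:lemma:pscal}, which would cost a superfluous factor $2^\delta$ and break the inequality in the case $d_\pw=d$ with $\pw\mid\infty$ (for instance when $K=\bQ$).
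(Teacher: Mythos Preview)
Your proof is correct and follows the same overall strategy as the paper: construct $\uxi$ as the limit of the lines $\langle\uu_i\rangle_{K_\pw}$, obtain the distance bound \eqref{converse:cor1:dist}, and exploit that $\ux_{k_i}^{(i)}\in\wx^{(i-1)}$ while $\ux_j^{(i)}\in\wx^{(i)}$ for $j\neq k_i$.

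The only real difference is the bookkeeping. The paper rewrites
\[
 L_\uxi(\ux_j^{(i)},q)
  = \log H(\ux_j^{(i)})
   + \max\Big\{0,\ q + \frac{d_\pw}{d}\log\frac{|\ux_j^{(i)}\cdot\uxi|_\pw}{\norm{\ux_j^{(i)}}_\pw}\Big\},
\]
and then bounds the \emph{ratio} $|\ux_j^{(i)}\cdot\uxi|_\pw/\norm{\ux_j^{(i)}}_\pw$ directly via Lemma~\ref{metric:lemma:pscal}; the factor $\norm{\ux_j^{(i)}}_\pw$ cancels, so only $4^\delta\le\exp(2d/d_\pw)$ survives and the constant $6$ comes out cleanly. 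You instead bound the two terms of the $\max$ separately, estimating $\log D_\uxi$ by splitting off the places $\pv\neq\pw$ (contributing $\log c_5$) and the place $\pw$ (contributing $(d_\pw/d)\log|\ux_j^{(i)}\cdot\uxi|_\pw$); this forces you to carry the size factor $(1+\delta)$ from $\norm{\ux_j^{(i)}}_\pw$, and you compensate by using the orthogonal decomposition $\uxi=t\uu+\ueta$ to save the $2^\delta$ that Lemma~\ref{metric:lemma:pscal} would cost. Both routes land exactly on $6$, so your closing remark is accurate for \emph{your} arrangement of the constants but slightly misleading in general: the paper does apply Lemma~\ref{metric:lemma:pscal} and nothing breaks, because the $(1+\delta)$ never enters its computation.
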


\begin{proof}
Consider the sequence of unit vectors $(\uu_i)_{-1\le i<s}$ given by
the proposition.  If $s=\infty$, it follows from \eqref{converse:prop:eq0}
that its image in $\bP^{n-1}(K_\pw)$
converges to the class of a unit vector $\uxi\in K_\pw^n$ such that
\begin{equation}
\label{converse:cor1:eq1}
 \dist_\pw(\uu_i,\uxi)\le 2^\delta\exp((4-q_{i+1})d/d_\pw)
 \quad (-1\le i<s).
\end{equation}
When $s<\infty$, we have $q_s=\infty$ by convention, and the same holds
with $\uxi=\uu_{s-1}$.

We now fix $i$ and $j$ with $0\le i<s$ and $1\le j\le n$.  For each $q\ge 0$, we have
\begin{equation}
\label{converse:cor1:eq2}
 L_\uxi(\ux_j^{(i)},q)
  = \log H(\ux_j^{(i)})
   + \max\left\{0,\
      q + \frac{d_\pw}{d}
            \log\frac{|\ux_j^{(i)}\cdot\uxi|_\pw}{\norm{\ux_j^{(i)}}_\pw}\right\}.
\end{equation}
We also have $\ux_{k_i}^{(i)}\cdot\uu_{i-1}=0$ because $\ux_{k_i}^{(i)}$ belongs to
the sequence $\wx^{(i-1)}$.  So \eqref{metric:eq:dot} yields
\[
 |\ux_{k_i}^{(i)}\cdot\uxi|_\pw
 \le 2^\delta \norm{\ux_{k_i}^{(i)}}_\pw\dist_\pw(\uu_{i-1},\uxi).
\]
If $j\neq k_i$,  we have instead $\ux_j^{(i)}\cdot\uu_i=0$ because $\ux_j^{(i)}$
belongs to $\wx^{(i)}$, and so \eqref{metric:eq:dot} yields
\[
 |\ux_j^{(i)}\cdot\uxi|_\pw
 \le 2^\delta \norm{\ux_j^{(i)}}_\pw\dist_\pw(\uu_i,\uxi).
\]
Using \eqref{converse:cor1:eq1} and noting that $4^\delta\le \exp(2d/d_\pw)$,
we deduce that
\[
  \frac{d_\pw}{d}
            \log\frac{|\ux_j^{(i)}\cdot\uxi|_\pw}{\norm{\ux_j^{(i)}}_\pw}
  \le \begin{cases}
         6-q_i &\text{if $j=k_i$,}\\
         6-q_{i+1}  &\text{otherwise.}
       \end{cases}
\]
Since $\ux^{(i)}$ has size $\ua$, Lemma \ref{constr:lemma0} gives
$\log H(\ux_j^{(i)})\le \log(c_5) + ca_j^{(i)}=c_6 - 6 + R_j(q_i)$.
The conclusion follows by substituting the last two estimates into
\eqref{converse:cor1:eq2}.
\end{proof}

We can now complete the proof of Theorem A as follows.

\begin{cor}
\label{converse:cor2}
Let $\uxi$ be as in Corollary \ref{converse:cor1}.  There is a constant
$c_7=c_7(K,\pw,n)$ such that
\begin{equation}
 \label{converse:cor2:eq}
 \max_{1\le j\le n}|L_{\uxi,j}(q)-R_j(q)|\le c_7
\end{equation}
for each $q\ge 0$.
\end{cor}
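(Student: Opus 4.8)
The plan is to deduce \eqref{converse:cor2:eq} by proving separately, for every $j$ and every $q\ge 0$, a matching upper and lower bound on $L_{\uxi,j}(q)-R_j(q)$; all implicit constants below depend only on $K$, $\pw$ and $n$.

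\textbf{Upper bound.} On the bounded interval $[0,q_0]$ the estimate is immediate: Lemma \ref{points:lemma2} gives $0\le L_{\uxi,j}(q)\le q\le q_0$, while condition (S1) for the $n$-system $\uR$ gives $0\le R_j(q)\le q\le q_0$, so $|L_{\uxi,j}(q)-R_j(q)|\le q_0$; and $q_0=(n^2-n+1)c$ with $c=(d_\pw/d)\log C$ is a constant of the allowed type since $C=C(K,\pw,n)$. For $q\ge q_0$, choose the index $i$ with $q\in[q_i,q_{i+1})$ and put $b_{k_i}=R_{k_i}(q_i)+(q-q_i)$ and $b_j=R_j(q_i)$ for $j\ne k_i$, so that $(b_1,\dots,b_n)=\uR(q_i)+(q-q_i)\ue_{k_i}$. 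By Corollary \ref{converse:cor1}, $L_\uxi(\ux_j^{(i)},q)\le c_6+b_j$ for $j=1,\dots,n$. Since $\ux_1^{(i)},\dots,\ux_n^{(i)}$ is a basis of $\cO_S^n$, these points are $K$-linearly independent, so ordering the numbers $L_\uxi(\ux_l^{(i)},q)$ into a nondecreasing sequence $t_{(1)}\le\cdots\le t_{(n)}$ gives $L_{\uxi,j}(q)\le t_{(j)}$ for each $j$ (at least $j$ of the $\ux_l^{(i)}$ satisfy $L_\uxi(\ux_l^{(i)},q)\le t_{(j)}$). Because $t_l\le c_6+b_l$ for each $l$, elementary monotonicity of sorting shows $t_{(j)}\le c_6+(\text{the }j\text{-th smallest of }b_1,\dots,b_n)$, and by the reordering formula \eqref{converse:eq:R} this smallest value is exactly $R_j(q)$. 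Thus $L_{\uxi,j}(q)\le R_j(q)+c_8$ for all $q\ge 0$, where $c_8=\max\{c_6,q_0\}$.

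\textbf{Lower bound.} Here I would use the adelic Minkowski theorem through the identities already available. By Lemma \ref{points:lemma2}, $\exp(L_{\uxi,j}(q))\asymp\lambda_j(\cC_\uxi(q))$ for each $j$, and \eqref{points:prop1:eq} gives $\lambda_1(\cC_\uxi(q))\cdots\lambda_n(\cC_\uxi(q))\asymp e^q$; summing the logarithms over $j$ yields $\sum_{j=1}^n L_{\uxi,j}(q)=q+O(1)$, while (S1) for $\uR$ gives $\sum_{j=1}^n R_j(q)=q$, whence $\sum_{j=1}^n\bigl(L_{\uxi,j}(q)-R_j(q)\bigr)=O(1)$. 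Fixing $k$ and writing $L_{\uxi,k}(q)-R_k(q)=\sum_{j}\bigl(L_{\uxi,j}(q)-R_j(q)\bigr)-\sum_{j\ne k}\bigl(L_{\uxi,j}(q)-R_j(q)\bigr)$, I bound the $n-1$ summands with $j\ne k$ from above by $c_8$ using the upper bound already proved, obtaining $L_{\uxi,k}(q)-R_k(q)\ge -O(1)-(n-1)c_8$. Together with the upper bound this gives \eqref{converse:cor2:eq} with a suitable $c_7=c_7(K,\pw,n)$.

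I do not expect a real obstacle: the substantive content is packed into Proposition \ref{converse:prop} and Corollary \ref{converse:cor1}, and what is left is essentially bookkeeping. The step needing the most care is the transition from the coordinatewise bounds $L_\uxi(\ux_j^{(i)},q)\le c_6+b_j$ to $L_{\uxi,j}(q)\le R_j(q)+c_6$, where one combines monotonicity of sorting with the identification of the sorted vector of $(b_1,\dots,b_n)$ with $\uR(q)$ via \eqref{converse:eq:R}; and in the lower bound one should note that \eqref{points:prop1:eq} is a general property of $\uL_\uxi$ valid for \emph{any} non-zero $\uxi\in K_\pw^n$, hence in particular for the point constructed here.
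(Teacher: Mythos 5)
Your proof is correct and follows essentially the same route as the paper's: the upper bound $L_{\uxi,j}(q)\le R_j(q)+O(1)$ comes from Corollary \ref{converse:cor1}, linear independence of $\ux^{(i)}$, and the reordering formula \eqref{converse:eq:R}, and the lower bound comes from $\sum_j L_{\uxi,j}(q)=q+O(1)$ (Lemma \ref{points:lemma2} plus \eqref{points:prop1:eq}) together with $\sum_j R_j(q)=q$. The paper packages both halves at once by observing that the differences $c_6+R_j(q)-L_{\uxi,j}(q)$ are individually non-negative and have bounded sum, whereas you derive the two-sided estimate in two explicit steps; the content is identical.
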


\begin{proof}
Fix an integer $i$ with $0\le i<s$ and a point $q\in[q_i,q_{i+1})$.  Denote by
\[
 (r_1,\dots,r_n)
  = \Phi\big(L_\uxi(\ux_1^{(i)},q),\dots,L_\uxi(\ux_n^{(i)},q)\big)
\]
the numbers $L_\uxi(\ux_j^{(i)},q)$ with $1\le j\le n$ written in
monotonically increasing order.  Since $\ux^{(i)}$ is a basis of $\cO_S^n$ over
$\cO_S$, it is also a basis of $K^n$ over $K$, and so by definition we have
\[
 L_{\uxi,j}(q) \le r_j \quad (1\le j\le n).
\]
By Corollary \ref{converse:cor1} and formula \eqref{converse:eq:R}, we also
have
\[
 r_j \le c_6+R_j(q) \quad (1\le j\le n).
\]
By definition of an $n$-system, we further have $\sum_{j=1}^n R_j(q)=q$.  Thus,
by Lemma \ref{points:lemma2} (for $k=1$) and the estimates of \eqref{points:prop1:eq},
we find
\[
 \sum_{j=1}^n \big(c_6+R_j(q)-L_{\uxi,j}(q)\big)
  =nc_6 + q - \sum_{j=1}^n L_{\uxi,j}(q) \le c_6+c_7
\]
for a constant $c_7=c_7(K,\pw,n)\ge \max\{c_6,q_0\}$.  This yields
\eqref{converse:cor2:eq} because each summand in the main
sum on the left is non-negative.  Finally, \eqref{converse:cor2:eq} also holds
for each $q\in[0,q_0)$, because for such $q$, the numbers $R_j(q)$ and $L_{\uxi,j}(q)$
belong to $[0,q_0)\subseteq [0,c_7)$.
\end{proof}

%
%

\section{Spectra of exponents of approximation}
\label{sec:spectra}

We fix a place $\pw$ of $K$ and an integer $n\ge 2$.   For each non-zero
$\uxi\in K_\pw^n$, we write $\omegahat(\uxi)$ for $\omegahat(\uxi,K,\pw)$ and
similarly for the three other exponents introduced in section \ref{results:ssec:exp}.
Our goal is to show that their spectrum is independent of the choice of $K$ and $\pw$
and more precisely that it can be expressed in terms of $n$-systems, as mentioned in the
introduction.   We will also generalize to the present setting the exponents of Laurent
from \cite{La2009b} and show that the same applies to their spectrum.   We start
with the following observation.

\begin{lemma}
\label{spectra:lemma:limL/q}
For each non-zero $\uxi\in K_\pw^n$, we have
\begin{align*}
 &\liminf_{q\to\infty}\frac{L_{\uxi,1}(q)}{q}=\frac{1}{\omega(\uxi)+1},
 &&\limsup_{q\to\infty}\frac{L_{\uxi,1}(q)}{q}=\frac{1}{\omegahat(\uxi)+1},\\
 &\liminf_{q\to\infty}\frac{L^*_{\uxi,1}(q)}{q}=\frac{1}{\lambda(\uxi)+1},
 &&\limsup_{q\to\infty}\frac{L^*_{\uxi,1}(q)}{q}=\frac{1}{\lambdahat(\uxi)+1}.
\end{align*}
\end{lemma}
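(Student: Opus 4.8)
The plan is to express both pairs of exponents and both functions $L_{\uxi,1}$, $L^*_{\uxi,1}$ through a single ``best approximation'' function and then to recognise the two parametrisations involved as generalised inverses of one another. I carry this out for the pair $(\omega,\omegahat)$ and the function $L_{\uxi,1}$; the case of $(\lambda,\lambdahat)$ and $L^*_{\uxi,1}$ is word for word the same after replacing $D_\uxi$ by $D^*_\uxi$ throughout. First I would dispose of the degenerate case: if some non-zero $\ux\in K^n$ satisfies $\ux\cdot\uxi=0$, then $D_\uxi(\ux)=0$, so $\omega(\uxi)=\omegahat(\uxi)=\infty$ while, by taking this $\ux$ in \eqref{results:eq:HD} with $j=1$, one gets $L_{\uxi,1}(q)\le\log H(\ux)$ for every $q\ge 0$; hence both equalities for $L_{\uxi,1}$ hold with both sides equal to $0$. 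So assume no such $\ux$ exists, and for $t\ge 0$ put
\[
 \psi(t)=-\log\min\{D_\uxi(\ux)\,;\,\ux\in K^n\setminus\{0\},\ \,H(\ux)\le e^t\},
\]
the minimum existing (finitely many classes of bounded height) and, by our assumption, positive; then $\psi$ is a finite, non-decreasing function of $t$, bounded below.

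Next I would record two reformulations. On the one hand, the inequalities $H(\ux)\le Q$, $D_\uxi(\ux)\le Q^{-\omega}$ of Definition \ref{results:def:exp} admit a non-zero solution in $K^n$ exactly when $\psi(\log Q)\ge\omega\log Q$; requiring this for arbitrarily large, resp.\ all sufficiently large, $Q$ and taking the supremum over $\omega$ yields
\[
 \omega(\uxi)=\limsup_{t\to\infty}\frac{\psi(t)}{t}
 \et
 \omegahat(\uxi)=\liminf_{t\to\infty}\frac{\psi(t)}{t},
\]
both being $\ge 0$ since $\psi$ is bounded below. On the other hand, by the definition of $L_{\uxi,1}$ in \S\ref{results:ssec:L}, the condition of \eqref{results:eq:HD} for $j=1$ at parameters $(t,q)$ amounts to $\psi(t)\ge q-t$, so
\[
 L_{\uxi,1}(q)=\min\{t\ge 0\,;\,F(t)\ge q\}\qquad(q\ge 0),
 \qquad\text{where}\quad F(t):=t+\psi(t).
\]

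Finally I would combine the two. Since $\psi$ is non-decreasing, $F$ is strictly increasing (indeed $F(t)-F(s)\ge t-s$ for $t\ge s$) and tends to infinity, so $\{t\ge 0\,;\,F(t)\ge q\}=[L_{\uxi,1}(q),\infty)$; writing $L=L_{\uxi,1}(q)$ this gives, for all large $q$, the sandwich $F(L^-)\le q\le F(L)$ (where $F(L^-):=\lim_{t\to L^-}F(t)=L+\sup_{t<L}\psi(t)$) together with the exact identity $L_{\uxi,1}(F(t))=t$ for all large $t$. From here the four limit relations follow by short arguments. If $\omega<\omegahat(\uxi)$, then $\psi(t)\ge\omega t$ for $t$ large, so $q\ge F(L^-)\ge(1+\omega)L$ for $q$ large, giving $\limsup_q L_{\uxi,1}(q)/q\le 1/(1+\omega)$; if $\omega>\omegahat(\uxi)$, then $\psi(t)<\omega t$ for arbitrarily large $t$, and setting $q:=F(t)$ for such $t$ one has $L_{\uxi,1}(q)=t$ and $q<(1+\omega)t$, giving $\limsup_q L_{\uxi,1}(q)/q\ge 1/(1+\omega)$; letting $\omega\to\omegahat(\uxi)$ proves the equality for $\limsup_q L_{\uxi,1}(q)/q$. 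Symmetrically, using $\psi(t)<\omega t$ for all large $t$ when $\omega>\omega(\uxi)$ together with $q\le F(L)<(1+\omega)L$, and $\psi(t)\ge\omega t$ for arbitrarily large $t$ when $\omega<\omega(\uxi)$ together with $q:=F(t)\ge(1+\omega)t$ and $L_{\uxi,1}(q)=t$, one obtains the equality for $\liminf_q L_{\uxi,1}(q)/q$. The bottom two equalities are the verbatim analogue with $D^*_\uxi$, $L^*_{\uxi,1}$, $\lambda$, $\lambdahat$ in place of $D_\uxi$, $L_{\uxi,1}$, $\omega$, $\omegahat$. The one point requiring care is the bookkeeping in this last step --- pairing each of $\liminf$, $\limsup$ with the correct exponent and covering the boundary values $0$ and $\infty$; everything else is a direct unwinding of the definitions.
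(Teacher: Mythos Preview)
Your proof is correct. Both your argument and the paper's rest on the same basic observation---that the existence of a non-zero $\ux\in K^n$ with $H(\ux)\le e^t$ and $D_\uxi(\ux)\le e^{-\omega t}$ is exactly the condition $L_{\uxi,1}((\omega+1)t)\le t$---but the paper exploits this far more directly: it simply makes the substitution $q=(\omega+1)t$ and reads off that $\omegahat(\uxi)$ (resp.\ $\omega(\uxi)$) is the supremum of all $\omega\ge 0$ for which $L_{\uxi,1}(q)/q\le 1/(\omega+1)$ holds for all large (resp.\ arbitrarily large) $q$, which is the statement.

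Your route via the auxiliary function $\psi(t)$ and its companion $F(t)=t+\psi(t)$, then recognising $L_{\uxi,1}$ as the generalised inverse of $F$, is a valid and natural reformulation, but it adds a layer of bookkeeping (the sandwich $F(L^-)\le q\le F(L)$, the identity $L_{\uxi,1}(F(t))=t$, and the four separate $\limsup/\liminf$ arguments) that the paper's change of variables sidesteps entirely. What your approach buys is explicitness: you isolate the degenerate case $\ux\cdot\uxi=0$ and handle the boundary values carefully, whereas the paper's terse proof leaves these as routine. But for a result this close to the definitions, the paper's two-line argument is the cleaner one.
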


\begin{proof}
By Definition \ref{results:def:exp}, the number $\omegahat(\uxi)$
(resp.~$\omega(\uxi)$) is the supremum of all $\omega\ge 0$ such that, for each
sufficiently large $t>0$ (resp.~for arbitrarily large $t>0$), there is a non-zero
point $\ux\in K^n$ with $H(\ux)\le e^t$ and $D_\uxi(\ux)\le e^{-\omega t}$.
By definition of $L_{\uxi,1}$ in section \ref{results:ssec:L}, the existence of such
$\ux$ translates into $L_{\uxi,1}((\omega+1)t)\le t$.   Using the change of variables
$q=(\omega+1)t$, we deduce that $\omegahat(\uxi)$ (resp.~$\omega(\uxi)$)
is the supremum of all $\omega\ge 0$ for which $q^{-1}L_{\uxi,1}(q)\le 1/(\omega+1)$
for each sufficiently large $q>0$ (resp.~for arbitrarily large $q>0$).  This yields
the first row of formulas.  The second one is proved in the same way.
\end{proof}

For any function $\uP=(P_1,\dots,P_n)\colon[0,\infty)\to\bR^n$ and any $j=1,\dots,n$,
we define
\[
 \phibot_j(\uP)=\liminf_{q\to\infty}\frac{P_j(q)}{q}
 \et
 \phitop_j(\uP)=\limsup_{q\to\infty}\frac{P_j(q)}{q}.
\]
The following generalization of \cite[Corollary 1.4]{R2015} characterizes the spectrum
of $(\omega,\omegahat,\lambda,\lambdahat)$.

\begin{proposition}
\label{spectra:prop:4exp}
The set $\cS$ of quadruples
\[
 \left(
     \frac{1}{\omega(\uxi)+1}, \frac{1}{\omegahat(\uxi)+1}, \frac{1}{\lambda(\uxi)+1},
     \frac{1}{\lambdahat(\uxi)+1}
 \right) \in [0,1]^4
\]
where $\uxi\in K_\pw^n$ has $K$-linearly independent coordinates coincides with the
set of quadruples
\[
 \big(\phibot_1(\uP), \phitop_1(\uP), 1-\phitop_n(\uP), 1-\phibot_n(\uP)\big) \in [0,1]^4
\]
where $\uP=(P_1,\dots,P_n)$ is an $n$-system with first component $P_1$ unbounded.
\end{proposition}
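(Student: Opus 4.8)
The plan is to reduce the statement to Theorem A and Lemma \ref{spectra:lemma:limL/q}, after recording one preliminary observation: for a non-zero point $\uxi\in K_\pw^n$, the function $L_{\uxi,1}$ is unbounded on $[0,\infty)$ if and only if $\uxi$ has $K$-linearly independent coordinates; equivalently, $L_{\uxi,1}$ is bounded precisely when there is a non-zero $\ux\in K^n$ with $\ux\cdot\uxi=0$. In one direction, such an $\ux$ has $D_\uxi(\ux)=0$, so $L_\uxi(\ux,q)=\log H(\ux)$ for every $q\ge 0$ and hence $L_{\uxi,1}(q)\le\log H(\ux)$ stays bounded. In the other, if $L_{\uxi,1}\le B$ throughout $[0,\infty)$, then for each $q\in\bN$ there is a non-zero $\ux_q\in K^n$ with $H(\ux_q)\le e^B$ and $D_\uxi(\ux_q)\le e^{B-q}$; since only finitely many classes in $\bP^{n-1}(K)$ have height at most $e^B$ and since $D_\uxi$ is a projective invariant, one fixed class, say with representative $\ux$, occurs for infinitely many $q$, forcing $D_\uxi(\ux)=0$, that is $\ux\cdot\uxi=0$.

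For the inclusion of $\cS$ into the set of quadruples arising from $n$-systems with unbounded first component, fix $\uxi\in K_\pw^n$ with $K$-linearly independent coordinates and let $\uP=(P_1,\dots,P_n)$ be an $n$-system for which both $\uL_\uxi-\uP$ and $\uL^*_\uxi-\uP^*$ are bounded, as provided by Theorem A. Dividing by $q$ and letting $q\to\infty$, the inferior (resp.\ superior) limit of $q^{-1}L_{\uxi,1}(q)$ equals that of $q^{-1}P_1(q)$, namely $\phibot_1(\uP)$ (resp.\ $\phitop_1(\uP)$); likewise, since $P^*_1(q)=q-P_n(q)$ by \eqref{results:P*}, the inferior (resp.\ superior) limit of $q^{-1}L^*_{\uxi,1}(q)$ equals $1-\phitop_n(\uP)$ (resp.\ $1-\phibot_n(\uP)$). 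Comparing these four equalities with Lemma \ref{spectra:lemma:limL/q} yields
\[
 \left(\frac{1}{\omega(\uxi)+1},\ \frac{1}{\omegahat(\uxi)+1},\ \frac{1}{\lambda(\uxi)+1},\ \frac{1}{\lambdahat(\uxi)+1}\right)
 =\big(\phibot_1(\uP),\,\phitop_1(\uP),\,1-\phitop_n(\uP),\,1-\phibot_n(\uP)\big),
\]
and $P_1=L_{\uxi,1}+O(1)$ is unbounded by the preliminary observation, so this quadruple lies in the target set.

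For the reverse inclusion, start from an $n$-system $\uP$ with $P_1$ unbounded and apply the converse part of Theorem A to obtain a non-zero $\uxi\in K_\pw^n$ for which one of the two conditions in \eqref{results:thmA:eq} holds; by the last sentence of that theorem the other then holds with a larger constant, so both $\uL_\uxi-\uP$ and $\uL^*_\uxi-\uP^*$ are bounded. Then $L_{\uxi,1}=P_1+O(1)$ is unbounded, so $\uxi$ has $K$-linearly independent coordinates by the preliminary observation, and the same limit computation as above shows that the quadruple attached to $\uxi$ equals $\big(\phibot_1(\uP),\phitop_1(\uP),1-\phitop_n(\uP),1-\phibot_n(\uP)\big)$, which therefore lies in $\cS$. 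Together the two inclusions give the asserted equality of sets.

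Most of the content is carried by Theorem A, so I do not expect a serious obstacle; the only genuinely new ingredient is the preliminary observation linking unboundedness of $L_{\uxi,1}$ to $K$-linear independence of the coordinates of $\uxi$ (which rests on the finiteness of the set of bounded-height points of $\bP^{n-1}(K)$), while the one point needing a little care is the bookkeeping of inferior and superior limits under the duality $P^*_1(q)=q-P_n(q)$. Otherwise the argument is the transcription of \cite[Corollary 1.4]{R2015} to the number-field setting.
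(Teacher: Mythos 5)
Your proof is correct and follows essentially the same route as the paper: both rest on Theorem A, Lemma \ref{spectra:lemma:limL/q}, and the duality relation $P^*_1(q)=q-P_n(q)$. The only difference is that you spell out the argument for why unboundedness of $L_{\uxi,1}$ is equivalent to $K$-linear independence of the coordinates of $\uxi$ (via finiteness of bounded-height points in $\bP^{n-1}(K)$ and projective invariance of $D_\uxi$), whereas the paper simply asserts this equivalence as following from the definition.
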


\begin{proof}
By the lemma, $\cS$ consists of the points $(\phibot_1(\uL_\uxi), \phitop_1(\uL_\uxi),
\phibot_1(\uL^*_\uxi), \phitop_1(\uL^*_\uxi))$ for all $\uxi\in K_\pw^n$ with
$K$-linearly independent coordinates.  By definition of $L_{\uxi,1}$ in section \ref{results:ssec:L},
that condition on $\uxi$ is equivalent to asking that $L_{\uxi,1}$ is unbounded.  Thus,
by Theorem A, the set $\cS$ consists of the points
$(\phibot_1(\uP), \phitop_1(\uP), \phibot_1(\uP^*), \phitop_1(\uP^*)\big)$ where
$\uP=(P_1,\dots,P_n)$ is an $n$-system whose first component $P_1$
is unbounded.  The conclusion follows since, for any $n$-system $\uP$, one has
$\phibot_1(\uP^*)=1-\phitop_n(\uP)$ and $\phitop_1(\uP^*)=1-\phibot_n(\uP)$ .
\end{proof}

\begin{cor}
\label{spectra:cor}
The set $\cS$ is independent of the choice of $K$ and $\pw$.  In particular,
since Jarn\'{\i}k's identity \eqref{intro:eq:Jarnik} holds for any point $\uxi$ of
$\bQ_\infty^3=\bR^3$ with $\bQ$-linearly independent coordinates, it also holds
for any point $\uxi$ of $K_\pw^3$ with $K$-linearly independent coordinates.
\end{cor}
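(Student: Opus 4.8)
The plan is to derive both assertions directly from Proposition \ref{spectra:prop:4exp}, with no further input from the geometry of numbers. First I would note that the right-hand side of Proposition \ref{spectra:prop:4exp} --- the set of quadruples $\big(\phibot_1(\uP),\phitop_1(\uP),1-\phitop_n(\uP),1-\phibot_n(\uP)\big)$ attached to $n$-systems $\uP$ whose first component is unbounded --- is a purely combinatorial object: it depends only on the integer $n$ and makes no reference to $K$ or to $\pw$. Since Proposition \ref{spectra:prop:4exp} identifies $\cS$ with this set, it follows that $\cS$ is one and the same subset of $[0,1]^4$ for every number field $K$ and every place $\pw$ of $K$. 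This is the first assertion.

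For the statement about Jarn\'{\i}k's identity I would specialize to $n=3$. Let $\uxi\in K_\pw^3$ have $K$-linearly independent coordinates; its associated quadruple lies in $\cS$, which by the first assertion is the same set one obtains when $K=\bQ$ and $\pw$ is its archimedean place. Hence there is a point $\uxi'\in\bR^3$ with $\bQ$-linearly independent coordinates whose quadruple of exponents coincides with that of $\uxi$; in particular $\omegahat(\uxi')=\omegahat(\uxi)$ and $\lambdahat(\uxi')=\lambdahat(\uxi)$, because the second and fourth coordinates of the quadruple are $1/(\omegahat+1)$ and $1/(\lambdahat+1)$, and $t\mapsto 1/(t+1)$ is injective on $[2,\infty]$. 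Jarn\'{\i}k's identity \eqref{intro:eq:Jarnik} holds for $\uxi'$ by \cite[Satz 1]{Ja1938}, and, being a relation in the two quantities $\omegahat$ and $\lambdahat$ alone, it holds for $\uxi$ as well.

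There is no real obstacle here: the substance has already been supplied by Theorem A and Proposition \ref{spectra:prop:4exp}, and the corollary is the purely formal observation that a relation recorded among the coordinates of the common spectrum $\cS$, valid for one choice of $(K,\pw)$, is valid for all of them. The only points deserving a word of care are that \eqref{intro:eq:Jarnik} is genuinely a statement in $\omegahat$ and $\lambdahat$ alone --- so that membership of the quadruple in $\cS$ transports it faithfully between different $(K,\pw)$ --- and that Jarn\'{\i}k's theorem is available for \emph{every} $\uxi'\in\bR^3$ with $\bQ$-linearly independent coordinates, including the degenerate cases such as $\omegahat=\infty$, which is classical. Should one prefer not to cite \cite{Ja1938}, one could instead translate \eqref{intro:eq:Jarnik} into an equivalent constraint on $3$-systems and verify it combinatorially; but invoking the classical theorem is the shorter route.
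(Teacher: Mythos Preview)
Your proposal is correct and is exactly the argument the paper intends: the corollary is stated without proof precisely because it follows immediately from Proposition~\ref{spectra:prop:4exp}, the right-hand side of which depends only on $n$. Your added remarks on injectivity of $t\mapsto 1/(t+1)$ and the handling of the degenerate case $\omegahat=\infty$ are more than the paper bothers to spell out, but they are correct and harmless.
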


To generalize the exponents of Laurent from \cite{La2009b}, we fix an integer $k$
with $1\le k\le n-1$, and we assume for simplicity that $\norm{\uxi}_\pw=1$.
For each non-zero $\uX\in\tbigwedge^kK^n$, we define
\[
 D_\uxi^*(\uX)=\norm{\uxi\wedge \uX}_\pw^{d_\pw/d}
    \prod_{\pv\neq\pw}\norm{\uX}_\pv^{d_\pv/d}
\]
in addition to the quantity $D_\uxi(\uX)$ from Definition \ref{points:def:L}.
For $k=1$, this agrees with the notation of section \ref{results:ssec:exp}
since $\norm{\uxi}_\pw=1$.   We note the following fact.

\begin{lemma}
\label{spectra:lemma:equiv}
For given $\omega\ge 0$ and $Q\ge 1$, the following conditions are equivalent:
\begin{itemize}
 \item[{\rm (i)}] there exists a non-zero $\uX\in\tbigwedge^kK^n$ with $H(\uX)\le Q$ and
 $D_\uxi^*(\uX)\le Q^{-\omega}$;
 \smallskip
 \item[{\rm (ii)}] there exists a non-zero $\uY\in\tbigwedge^{n-k}K^n$ with $H(\uY)\le Q$ and
 $D_\uxi(\uY)\le Q^{-\omega}$.
\end{itemize}
\end{lemma}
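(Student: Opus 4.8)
The plan is to transport condition (i) into condition (ii) via the duality isomorphism $\varphi_k\colon\tbigwedge^kK^n\to\tbigwedge^{n-k}K^n$ from \S\ref{metric:ssec:duality}. Recall that, at each place $\pv$ of $K$, the map $\varphi_k$ extends to a $K_\pv$-linear isometry $\tbigwedge^kK_\pv^n\to\tbigwedge^{n-k}K_\pv^n$, since the canonical basis of $K_\pv^n$ is its own dual. Being an isometry at every place, $\varphi_k$ preserves all local norms --- $\norm{\varphi_k(\uX)}_\pv=\norm{\uX}_\pv$ for every $\pv$ --- and hence also the height: $H(\varphi_k(\uX))=H(\uX)$ for every non-zero $\uX\in\tbigwedge^kK^n$. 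Moreover $\varphi_k$ is a $K$-linear bijection.

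The crux is the identity $\norm{\uxi\wedge\uX}_\pw=\norm{\uxi\iprod\varphi_k(\uX)}_\pw$ for every $\uX\in\tbigwedge^kK_\pw^n$. I would deduce it from the sharper statement $\varphi_{k+1}(\uxi\wedge\uX)=(-1)^k\,\uxi\iprod\varphi_k(\uX)$ in $\tbigwedge^{n-k-1}K_\pw^n$ by taking norms, using that $\varphi_{k+1}$ is an isometry and that norms ignore the sign. To check this equality, pair each side with an arbitrary $\uY\in\tbigwedge^{n-k-1}K_\pw^n$ for the dot product of \S\ref{metric:ssec:duality}. By the defining property of $\varphi_{k+1}$, the left side pairs to $\big((\uxi\wedge\uX)\wedge\uY\big)\cdot\uE$. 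On the right, the adjunction between contraction by $\uxi$ and left exterior multiplication by $\uxi$, namely $(\uxi\iprod\varphi_k(\uX))\cdot\uY=\varphi_k(\uX)\cdot(\uxi\wedge\uY)$ --- which is immediate from \eqref{points:eq:contraction}, the determinant formula for the dot product on exterior powers, and Laplace expansion along the first column --- followed by the defining property of $\varphi_k$, gives $\varphi_k(\uX)\cdot(\uxi\wedge\uY)=\big(\uX\wedge\uxi\wedge\uY\big)\cdot\uE=(-1)^k\big((\uxi\wedge\uX)\wedge\uY\big)\cdot\uE$. Thus the two pairings differ by the factor $(-1)^k$ for every $\uY$, and non-degeneracy of the dot product on $\tbigwedge^{n-k-1}K_\pw^n$ yields the claim. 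The degenerate cases $k=n-1$, where $\tbigwedge^{0}K_\pw^n=K_\pw$ and $\varphi_n(\uE)=1$, and $k=1$, which reproduces the computation in the proof of Lemma \ref{points:lemmaL}, are handled by the same argument.

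Since $\norm{\uxi}_\pw=1$, combining the preceding observations gives, for every non-zero $\uX\in\tbigwedge^kK^n$, both $H(\uX)=H(\varphi_k(\uX))$ and $D^*_\uxi(\uX)=D_\uxi(\varphi_k(\uX))$. Hence, if $\uX$ satisfies (i), then $\varphi_k(\uX)$ is a non-zero element of $\tbigwedge^{n-k}K^n$ satisfying (ii); and conversely, since $\varphi_k$ is bijective, every non-zero $\uY\in\tbigwedge^{n-k}K^n$ satisfying (ii) equals $\varphi_k(\uX)$ for a non-zero $\uX\in\tbigwedge^kK^n$, which then satisfies (i). This establishes the equivalence. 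The one real obstacle is the compatibility identity $\varphi_{k+1}(\uxi\wedge\uX)=\pm\,\uxi\iprod\varphi_k(\uX)$; the remainder is a routine transfer of heights and local norms through an isometry, the only point of care being that $\varphi_{k+1}$, $\varphi_k$ and the contraction by $\uxi$ must all be taken over $K_\pw$, where $\uxi$ lives, rather than over $K$.
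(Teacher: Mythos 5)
Your proof is correct and follows the same route as the paper: transport condition (i) to condition (ii) via the duality isometry $\varphi_k$, using that $\varphi_k$ preserves heights and that $\norm{\uxi\wedge\uX}_\pw=\norm{\uxi\iprod\varphi_k(\uX)}_\pw$. The only difference is that the paper cites the compatibility identity $\uxi\iprod\varphi_{k,\pw}(\uX)=\varphi_{k+1,\pw}(\uX\wedge\uxi)$ from Bugeaud--Laurent, whereas you verify the (equivalent, up to the sign $(-1)^k$ absorbed by moving $\uxi$ past $\uX$) identity directly by pairing both sides against an arbitrary $\uY$ and invoking the adjunction between contraction and left exterior multiplication; this is a sound and self-contained substitute.
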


\begin{proof}
Consider the $K_\pv$-linear isometry $\varphi_{k,\pv}\colon\tbigwedge^kK_\pv^n \to
\tbigwedge^{n-k}K_\pv^n$ from section \ref{metric:ssec:duality} for  each
$\pv\in\MK$.  They all restrict to a single $K$-linear isomorphism
$\varphi_k\colon\tbigwedge^kK^n \to \tbigwedge^{n-k}K^n$.  Moreover, for
each $\uX\in\tbigwedge^kK_\pw^n$, we have
\[
 \uxi\iprod\varphi_{k,\pw}(\uX) = \varphi_{k+1,\pw}(\uX\wedge\uxi)
\]
(cf.\ \cite[\S3, Lemma 2]{BL2010}) and thus $\norm{\uxi\iprod\varphi_{k,\pw}(\uX)}_\pw
=\norm{\uX\wedge\uxi}_\pw$.  We conclude that, if $\uX\in \tbigwedge^kK^n$
is non-zero, then the point $\uY=\varphi_k(\uX)\in \tbigwedge^{n-k}K^n$
is non-zero with $H(\uY)=H(\uX)$ and $D_\uxi(\uY)=D^*_\uxi(\uX)$.   Thus
the two conditions are equivalent.
\end{proof}

\begin{definition}
\label{spectra:def:expL}
We denote by $\omega_{k-1}(\uxi)$ (resp.\ $\omegahat_{k-1}(\uxi)$) the supremum of all
$\omega\ge 0$ for which the equivalent conditions of Lemma \ref{spectra:lemma:equiv}
are fulfilled for arbitrarily large values of $Q$ (resp.\ for all sufficiently large values of $Q$).
\end{definition}

For $k=1$, applying condition (i) shows that $\omega_0(\uxi)=\lambda(\uxi)$ and
$\omegahat_0(\uxi)=\lambdahat(\uxi)$.  For $k=n-1$, applying condition (ii) instead shows
that $\omega_{n-2}(\uxi)=\omega(\uxi)$ and $\omegahat_{n-2}(\uxi)=\omegahat(\uxi)$.
Moreover, for $K=\bQ$, $\pw=\infty$ and any choice of $k$, the number $\omega_{k-1}(\uxi)$
defined above coincides with the exponent of Laurent from \cite{La2009b},
in view of condition (i): see \cite[\S2, Remark]{La2009b} or \cite[\S4, Proposition]{BL2010}.

Arguing as in Lemma \ref{spectra:lemma:limL/q}, using condition (ii) and the definition
of $L_{\uxi,1}^{(k-1)}$, we deduce that
\[
\frac{1}{\omega_{k-1}(\uxi)+1}=\liminf_{q\to\infty} \frac{L_{\uxi,1}^{(n-k)}(q)}{q}
\et
\frac{1}{\omegahat_{k-1}(\uxi)+1}=\limsup_{q\to\infty} \frac{L_{\uxi,1}^{(n-k)}(q)}{q}.
\]
On the other hand, the proof of Proposition \ref{points:prop1} shows that $L_{\uxi,1}^{(j)}$
differs by a bounded function from $L_{\uxi,1}+\cdots+L_{\uxi,j}$ for each $j=1,\dots,n-1$.
We conclude that the spectrum of these $2n-2$ exponents is independent of $K$
and $\pw$, and characterized as follows (cf.\ \cite[Proposition 3.1]{R2016}).

\begin{proposition}
\label{spectra:prop:allexp}
The set of points
\[
 \left(
     (\omega_0(\uxi)+1)^{-1},\dots,(\omega_{n-2}(\uxi)+1)^{-1},
     (\omegahat_0(\uxi)+1)^{-1}, \dots, (\omegahat_{n-2}(\uxi)+1)^{-1}
 \right)
\]
where $\uxi\in K_\pw^n$ has $K$-linearly independent coordinates
coincides with the set of points
\[
 \big(\psibot_{n-1}(\uP), \dots, \psibot_{1}(\uP), \psitop_{n-1}(\uP), \dots, \psitop_1(\uP)\big)
\]
where $\uP=(P_1,\dots,P_n)$ is an $n$-system with first component $P_1$ unbounded, and
\[
 \psibot_j(\uP)=\liminf_{q\to\infty}\frac{P_1(q)+\cdots+P_j(q)}{q}
 \et
 \psitop_j(\uP)=\limsup_{q\to\infty}\frac{P_1(q)+\cdots+P_j(q)}{q}
\]
 for $j=1,\dots,n-1$.
\end{proposition}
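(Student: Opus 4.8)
The plan is to assemble the statement from ingredients already at hand, so that the only genuine work is bookkeeping of indices. Just before the statement we recorded, for $k=1,\dots,n-1$,
\[
 \frac{1}{\omega_{k-1}(\uxi)+1}=\liminf_{q\to\infty}\frac{L^{(n-k)}_{\uxi,1}(q)}{q}
 \et
 \frac{1}{\omegahat_{k-1}(\uxi)+1}=\limsup_{q\to\infty}\frac{L^{(n-k)}_{\uxi,1}(q)}{q},
\]
together with the fact, drawn from the proof of Proposition~\ref{points:prop1}, that $L^{(j)}_{\uxi,1}-\big(L_{\uxi,1}+\cdots+L_{\uxi,j}\big)$ is bounded for $j=1,\dots,n-1$. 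Since adding a bounded function to $f$ alters neither $\liminf_{q\to\infty}f(q)/q$ nor $\limsup_{q\to\infty}f(q)/q$, these combine to give, for $k=1,\dots,n-1$,
\[
 \frac{1}{\omega_{k-1}(\uxi)+1}=\psibot_{n-k}(\uL_\uxi)
 \et
 \frac{1}{\omegahat_{k-1}(\uxi)+1}=\psitop_{n-k}(\uL_\uxi),
\]
with $\psibot_j,\psitop_j$ as in the statement, applied to the function $\uL_\uxi$. As $k$ runs from $1$ to $n-1$ the index $n-k$ runs from $n-1$ down to $1$, so for fixed $\uxi$ the $2(n-1)$-tuple in the proposition equals $\big(\psibot_{n-1}(\uL_\uxi),\dots,\psibot_1(\uL_\uxi),\psitop_{n-1}(\uL_\uxi),\dots,\psitop_1(\uL_\uxi)\big)$.

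It then remains to pass from $\uL_\uxi$ to $n$-systems, which is precisely the content of Theorem~A. If $\uxi$ has $K$-linearly independent coordinates, choose an $n$-system $\uP$ with $\uL_\uxi-\uP$ bounded; then $\big(L_{\uxi,1}+\cdots+L_{\uxi,j}\big)-\big(P_1+\cdots+P_j\big)$ is bounded, whence $\psibot_j(\uL_\uxi)=\psibot_j(\uP)$ and $\psitop_j(\uL_\uxi)=\psitop_j(\uP)$ for $j=1,\dots,n-1$. Moreover, as already used in the proof of Proposition~\ref{spectra:prop:4exp}, the condition that $\uxi$ has $K$-linearly independent coordinates is equivalent to $L_{\uxi,1}$ being unbounded, hence to $P_1$ being unbounded. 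Conversely, given any $n$-system $\uP$ with $P_1$ unbounded, Theorem~A produces a non-zero $\uxi\in K_\pw^n$ with $\uL_\uxi-\uP$ bounded, and such a $\uxi$ has $L_{\uxi,1}$ unbounded, hence $K$-linearly independent coordinates. Combining these two directions with the first paragraph identifies the two sets in the statement, the coordinates appearing in the same order $\psibot_{n-1},\dots,\psibot_1,\psitop_{n-1},\dots,\psitop_1$ on both sides.

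The proposition has no hard analytic core of its own: all the substance — Theorem~A, Burger's compound-body theorem, and the Section~\ref{sec:points} estimates that yield the identities used above — is already established. The only points needing attention are the order-reversal $k\mapsto n-k$ relating $\omega_{k-1},\omegahat_{k-1}$ to the partial-sum ratios $\psibot_j,\psitop_j$, and checking that the side condition ``$P_1$ unbounded'' corresponds exactly to ``$\uxi$ has $K$-linearly independent coordinates'' under Theorem~A. I expect both to be routine, so I anticipate no real obstacle here.
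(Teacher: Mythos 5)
Your proof is correct and takes the same route the paper implicitly follows: the formulas relating $\omega_{k-1}$, $\omegahat_{k-1}$ to the inferior/superior limits of $L^{(n-k)}_{\uxi,1}(q)/q$, the fact (from the proof of Proposition \ref{points:prop1}) that $L^{(j)}_{\uxi,1}$ differs from $L_{\uxi,1}+\cdots+L_{\uxi,j}$ by a bounded function, Theorem~A to replace $\uL_\uxi$ by an $n$-system, and the equivalence of the linear-independence condition with unboundedness of the first component — all of which are exactly the ingredients the paper lays out in the paragraph preceding the proposition, which it treats as self-evident. Your index bookkeeping ($k\mapsto n-k$) and the two-directional use of Theorem~A are handled correctly.
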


As in \cite[Definition 2]{La2009b}, the exponent $\omega_{k-1}(\uxi)$ can be
described geometrically as the supremum of all $\omega\ge 0$ for which there are
infinitely many subspaces $V$ of $K^n$ of dimension $k$ that satisfy
\[
 \inf\{\dist_\pw(\uxi,\ux)\,;\,\ux\in V\} \le H(V)^{-(\omega+1)d/d_\pw}.
\]

We simply sketch the proof which is similar in spirit to that of \cite[\S4, Proposition]{BL2010},
based on the results of section \ref{sec:points}.  We first observe that, in defining
$\omega_{k-1}(\uxi)$ through condition (ii) of Lemma \ref{spectra:lemma:equiv}, we may
take for $\uY$ a point of $\tbigwedge^{n-k}K^n$ which realizes the first 
minimum of $\cC^{(n-k)}_\uxi(t)$
where $t=(\omega+1)\log(Q)$.  Since that convex body is comparable to
$\tbigwedge^{n-k}\cC_\uxi(t)$, we may even take for $\uY$ the wedge product of
the first $n-k$ points of a basis of $K^n$ which realizes the minima of $\cC_\uxi(t)$
(see the comments after Theorem \ref{adelic:thm:Burger}).  Let $W$ denote the subspace
of $K^n$ spanned by these points, so that $\uY$ spans $\tbigwedge^{n-k}W$, and
let $V=W^\perp$.  Then, going back to the proof of the lemma,  we find that
$\varphi_k(\tbigwedge^kV)=\tbigwedge^{n-k}W$ and so $\uY=\varphi_k(\uX)$
for some generator $\uX$ of $\tbigwedge^kV$.  In defining $\omega_{k-1}(\uxi)$
through condition (i), we may thus assume that $\uX$ has this form.  For such a point,
Lemma \ref{metric:lemmaCbis} yields $D_\uxi^*(\uX) = \dist_\pw(\uxi,\overline{V})^{d_\pw/d}H(V)$
where $\overline{V}=\langle V\rangle_{K_\pw}$ is the topological closure of $V$ in
$K_\pw^n$, and the claim follows.

%
%

\section{The principle of Thunder}
\label{sec:principle}

In his alternative proof of the adelic Minkowski's theorem
\cite{Th2002}, Jeff Thunder relates the successive minima
of a convex body of $K_\bA^n$ to those of an appropriate
convex body of $\bQ_\bA^{dn}$.  We formulate his idea below
as a general principle.

To this end, we use the following construction where $\bQ^{dn}$
and $K^n$ are viewed respectively as subsets of $\bQ_\bA^{dn}$
and $K_\bA^n$ under the diagonal embedding.

\begin{lemma}
\label{lemma:T}
Let $T\colon \bQ^{dn}\to K^n$ be a $\bQ$-linear isomorphism.  For
each place $\pu$ of $\bQ$ and each place $\pv$ of $K$ above $\pu$, we
denote by $T_\pv\colon\bQ_\pu^{dn}\to K_\pv$ the $\bQ_\pu$-linear map
which extends $T$.  Then
\[
 \begin{array}{rrcl}
   T_\pu\,: &\bQ_\pu^{dn}&\longrightarrow &\prod_{\pv\mid\pu}K_\pv^n \\
   &\ux\ &\longmapsto &(T_\pv(\ux))_{\pv\mid\pu}
 \end{array}
\]
is a $\bQ_\pu$-linear isomorphism.  Moreover the map
\[
 \begin{array}{rrcl}
   T_\bA\,: &\bQ_\bA^{dn}&\longrightarrow &K_\bA^n \\
   &(\ux_\pu)_{\pu\in M(\bQ)}&\longmapsto &\big((T_\pv(\ux_\pu))_{\pv\mid\pu}\big)_{\pu\in M(\bQ)}
 \end{array}
\]
is the unique $\bQ_\bA$-linear map which extends $T$.  It is also an isomorphism.
\end{lemma}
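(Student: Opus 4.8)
The plan is to deduce everything from the standard behaviour of $K$ under extension of scalars, which I would quote from a standard reference: for each place $\pu$ of $\bQ$ there is a canonical $\bQ_\pu$-algebra isomorphism $K\otimes_\bQ\bQ_\pu\cong\prod_{\pv\mid\pu}K_\pv$, and for a finite place $\pu=p$ its integral refinement $\cO_K\otimes_\bZ\bZ_p\cong\prod_{\pv\mid p}\cO_\pv$ (the ring of integers $\cO_\pv=\{x\in K_\pv\,;\,|x|_\pv\le1\}$ on the right). Granting the first, tensoring the $\bQ$-linear isomorphism $T$ with $\bQ_\pu$ produces a $\bQ_\pu$-linear isomorphism $\bQ_\pu^{dn}=\bQ^{dn}\otimes_\bQ\bQ_\pu\to K^n\otimes_\bQ\bQ_\pu=\prod_{\pv\mid\pu}K_\pv^n$. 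To identify this with the map $\ux\mapsto(T_\pv(\ux))_{\pv\mid\pu}$ of the statement, I would evaluate both on the standard basis $(\ue_1,\dots,\ue_{dn})$ of $\bQ^{dn}$: each sends $\ue_i$ to the tuple whose $\pv$-component is $T(\ue_i)\in K^n$ viewed in $K_\pv^n$. Hence $T_\pu$ is a $\bQ_\pu$-linear isomorphism. (Alternatively, as $\sum_{\pv\mid\pu}d_\pv=d$ gives $\dim_{\bQ_\pu}\prod_{\pv\mid\pu}K_\pv^n=dn=\dim_{\bQ_\pu}\bQ_\pu^{dn}$, it would be enough to check injectivity of $T_\pu$, for which one again invokes the decomposition.)

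For the adelic statement, recall that $\bQ_\bA^{dn}$ and $K_\bA^n$ are the restricted products of the $\bQ_\pu^{dn}$ and the $\prod_{\pv\mid\pu}K_\pv^n$ with respect to their standard integral parts $\cO_\pu^{dn}$ and $\prod_{\pv\mid\pu}\cO_\pv^n$ (at the finite places). Then $T_\bA$ is a well-defined $\bQ_\bA$-linear map, and an isomorphism, provided $T_\pu$ carries $\cO_\pu^{dn}$ onto $\prod_{\pv\mid\pu}\cO_\pv^n$ for all but finitely many $\pu$. This I would obtain from commensurability of lattices: $T(\bZ^{dn})$ is a full-rank $\bZ$-submodule of $K^n$, hence agrees with $\cO_K^n$ after localizing at every prime $p$ outside some finite set, and $\cO_K^n\otimes_\bZ\bZ_p=\prod_{\pv\mid p}\cO_\pv^n$ inside $\prod_{\pv\mid p}K_\pv^n$ by the integral refinement above; at such $p$ this forces $T_\pu(\cO_\pu^{dn})=\prod_{\pv\mid\pu}\cO_\pv^n$. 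The $\bQ_\bA$-linearity of $T_\bA$ is then formal (each $T_\pv$ is $\bQ_\pu$-linear and the $\bQ_\bA$-action on $K_\bA^n$ is induced placewise by $\bQ_\pu\hookrightarrow K_\pv$), and its bijectivity follows from the componentwise bijectivity of the $T_\pu$ together with the matching of integral parts, which also controls $T_\pu^{-1}$ at almost all $\pu$.

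Finally, $T_\bA$ extends $T$ because for $\ux\in\bQ^{dn}$ every component of $(T_\pv(\ux))_{\pv\mid\pu}$ equals $T(\ux)\in K^n$, so $T_\bA(\ux)$ is the diagonal image of $T(\ux)$; and it is the only $\bQ_\bA$-linear extension, since $(\ue_1,\dots,\ue_{dn})\subseteq\bQ^{dn}$ is a $\bQ_\bA$-basis of $\bQ_\bA^{dn}$, so any such extension is forced to send $\ue_i\mapsto T(\ue_i)$. In short, once the local decomposition isomorphisms $K\otimes_\bQ\bQ_\pu\cong\prod_{\pv\mid\pu}K_\pv$ and $\cO_K\otimes_\bZ\bZ_p\cong\prod_{\pv\mid p}\cO_\pv$ are granted, the proof is pure base-change and restricted-product bookkeeping; those two classical facts are the only real input, and the place I would flag as the crux of the matter.
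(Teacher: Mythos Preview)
Your proof is correct, but the paper's route is a bit different and shorter for the local step. Rather than invoking the structure theorem $K\otimes_\bQ\bQ_\pu\cong\prod_{\pv\mid\pu}K_\pv$, the paper simply observes that $T_\pu$ is $\bQ_\pu$-linear between spaces of the same dimension $dn=\sum_{\pv\mid\pu}d_\pv n$, and that $T_\pu(\bQ^{dn})$ is the diagonal image of $K^n$ in $\prod_{\pv\mid\pu}K_\pv^n$, which is dense by weak approximation; since the image of a linear map is a (closed) subspace, density forces surjectivity, hence bijectivity. The paper then dismisses the adelic assertion with ``the others follow from it.''

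Your approach via base change and the integral refinement $\cO_K\otimes_\bZ\bZ_p\cong\prod_{\pv\mid p}\cO_\pv$ is more structural and, in particular, makes explicit why $T_\bA$ respects the restricted-product structure (namely, $T_\pu(\bZ_\pu^{dn})=\prod_{\pv\mid\pu}\cO_\pv^n$ for almost all $\pu$), a point the paper leaves implicit. The paper's argument has the virtue of brevity and of avoiding any appeal to the tensor decomposition; yours has the virtue of being self-contained about well-definedness on ad\`eles and of naming exactly which classical inputs are doing the work.
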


\begin{proof}
By construction the map $T_\pu$ is $\bQ_\pu$-linear with domain and
codomain of the same dimension $dn=\sum_{\pv\mid\pu}d_\pv n$ as vector
spaces over $\bQ_\pu$.  Moreover $T_\pu(\bQ^{dn})$ is the image
of $K^n$ in $\prod_{\pv\mid\pu}K_\pv^n$ under the diagonal embedding,
which is dense in this product.  So $T_\pu$ is surjective and therefore
it is an isomorphism.  This proves the first assertion, and the
others follow from it.
\end{proof}

\begin{proposition}[Thunder's principle]
\label{prop:Thunder_principle}
With the notation of Lemma \ref{lemma:T}, let $\cK$ be a convex
body of $K_\bA^n$.  Then, $\cC:=T_\bA^{-1}(\cK)$ is a convex body of
$\bQ_\bA^{dn}$ and we have
\begin{equation*}
 \label{prop:Thunder_principle:eq1}
 \lambda_{d(i-1)+j}(\cC)\asymp \lambda_i(\cK)
 \quad
 (1\le i\le n,\ 1\le j\le d),
\end{equation*}
with implied constants that depend only on $K$, $n$ and $T$.
\end{proposition}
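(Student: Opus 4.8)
The plan is to read off both inequalities directly from the definition of the successive minima, transporting systems of linearly independent points back and forth through $T$ and $T_\bA$, and using only the elementary fact that a family of $\bQ$-linearly independent vectors in $K^n$ spans a $K$-subspace of $K$-dimension at least $1/d$ times its cardinality. No use of Minkowski's theorem is needed for the individual estimates; Theorem \ref{adelic:thm:MBV} only enters at the end as a sanity check.

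First I would record that $\cC=T_\bA^{-1}(\cK)$ is genuinely a convex body of $\bQ_\bA^{dn}$: its archimedean component is the preimage of a Minkowski convex body under the $\bR$-linear isomorphism $T_\infty$, hence again one; for a rational prime $p$, the module $\prod_{\pv\mid p}\cK_\pv$ is a finitely generated $\bZ_p$-submodule of $\prod_{\pv\mid p}K_\pv^n$ of full rank $dn$ (each $\cK_\pv$ being a free $\cO_\pv$-module of rank $n$, hence a free $\bZ_p$-module of rank $nd_\pv$), so its preimage under the $\bQ_p$-linear isomorphism $T_\pu$ is again finitely generated of full rank, and it coincides with $\bZ_p^{dn}$ for all but finitely many $p$ because $T$ and $T^{-1}$ have coordinates in $K$. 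The decisive point — and the one I expect to require the most care — is the compatibility of dilations on the two sides: since $T_\bA$ is $\bQ_\bA$-linear, in particular $\bR$-linear at the archimedean place, scaling the (unique) archimedean component of $\bQ_\bA^{dn}$ by a real $\lambda>0$ corresponds under $T_\bA$ to scaling \emph{all} the archimedean components of $K_\bA^n$ by $\lambda$; consequently $T_\bA^{-1}(\lambda\cK)=\lambda\cC$, so that for $\ux\in\bQ^{dn}$ one has $\ux\in\lambda\cC$ if and only if $T(\ux)\in\lambda\cK$.

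For the lower bound, fix $i,j$ with $1\le i\le n$, $1\le j\le d$, set $m=d(i-1)+j$, and take any $\mu>\lambda_m(\cC)$. Then $\mu\cC$ contains $m$ $\bQ$-linearly independent points of $\bQ^{dn}$, whose images under $T$ are $m$ $\bQ$-linearly independent points of $K^n$ lying in $\mu\cK$. Since $m>d(i-1)$, the $K$-span of these images has $K$-dimension at least $i$, so $i$ of them are $K$-linearly independent; hence $\lambda_i(\cK)\le\mu$, and letting $\mu\downarrow\lambda_m(\cC)$ gives $\lambda_i(\cK)\le\lambda_{d(i-1)+j}(\cC)$, with implied constant $1$. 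For the upper bound, fix once and for all a $\bQ$-basis $\alpha_1,\dots,\alpha_d$ of $K$ consisting of algebraic integers and put $C_0=\max_{1\le k\le d}\max_{\pv\mid\infty}|\alpha_k|_\pv\ge1$. Because each $\cK_\pv$ is convex, contains $0$, and is stable under $\cO_\pv$, multiplication by $\alpha_k$ yields $\alpha_k\cK_\pv\subseteq C_0\cK_\pv$ at every archimedean $\pv$ and $\alpha_k\cK_\pv\subseteq\cK_\pv$ at every non-archimedean $\pv$ (there $|\alpha_k|_\pv\le1$); hence $\alpha_k\cK\subseteq C_0\cK$ for each $k$. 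Now take any $\mu>\lambda_i(\cK)$ and choose $K$-linearly independent $\uy_1,\dots,\uy_i\in\mu\cK$. The $di$ vectors $\alpha_k\uy_\ell$ ($1\le k\le d$, $1\le\ell\le i$) are $\bQ$-linearly independent in $K^n$ (if $\sum_{k,\ell}c_{k\ell}\alpha_k\uy_\ell=0$ with $c_{k\ell}\in\bQ$, then $K$-linear independence of the $\uy_\ell$ forces $\sum_k c_{k\ell}\alpha_k=0$ for each $\ell$, whence all $c_{k\ell}=0$) and they lie in $\alpha_k(\mu\cK)=\mu(\alpha_k\cK)\subseteq C_0\mu\cK$, so their $T$-preimages are $di$ $\bQ$-linearly independent points of $\bQ^{dn}$ in $T_\bA^{-1}(C_0\mu\cK)=C_0\mu\cC$; thus $\lambda_{d(i-1)+j}(\cC)\le\lambda_{di}(\cC)\le C_0\mu$, and letting $\mu\downarrow\lambda_i(\cK)$ finishes the estimate.

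Combining the two bounds gives $\lambda_i(\cK)\le\lambda_{d(i-1)+j}(\cC)\le C_0\lambda_i(\cK)$, with $C_0$ depending only on $K$ (so, in particular, only on $K$, $n$ and $T$), which is the asserted $\asymp$. The only genuinely delicate step is the dilation compatibility $T_\bA^{-1}(\lambda\cK)=\lambda\cC$; the rest is bookkeeping with linear independence and the multiplicativity of idèle scaling. As a consistency check, multiplying the resulting blocks of $d$ comparable minima together and invoking Theorem \ref{adelic:thm:MBV} for $\cC$ and for $\cK$, together with $\mu(\cC)\asymp\mu(\cK)$, recovers the relation $\lambda_1(\cC)\cdots\lambda_{dn}(\cC)\asymp(\lambda_1(\cK)\cdots\lambda_n(\cK))^d$.
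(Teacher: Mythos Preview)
Your proof is correct, and the upper bound $\lambda_{d(i-1)+j}(\cC)\le C_0\lambda_i(\cK)$ via the integral basis $\alpha_1,\dots,\alpha_d$ is exactly the paper's argument. The difference lies in the lower bound. The paper does \emph{not} prove $\lambda_i(\cK)\le\lambda_{d(i-1)+j}(\cC)$ directly; instead it applies the adelic Minkowski theorem (Theorem~\ref{adelic:thm:MBV}) to both $\cC$ and $\cK$, together with $\mu(\cC)\asymp\mu(\cK)$, to obtain the product relation $\prod_{m=1}^{dn}\lambda_m(\cC)\asymp\big(\prod_{i=1}^n\lambda_i(\cK)\big)^d$, and then combines this with the already established upper bounds to force the matching lower bounds. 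Your route is more elementary: the pigeonhole observation that $d(i-1)+j>d(i-1)$ $\bQ$-independent vectors in $K^n$ must have $K$-span of dimension at least $i$ gives the inequality with constant $1$, bypassing Minkowski entirely. What you gain is a cleaner self-contained argument with an explicit constant; what the paper's method illustrates is the standard ``one inequality plus the volume identity forces the other'' trick, which is often reusable when only one direction is easy to see by hand.
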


\begin{proof}
Writing $\cK=\prod_{\pv\in \MK}\cK_\pv$, we find that
$\cC=\prod_{u\in M(\bQ)}\cC_u$ where
\[
 \cC_\pu=\bigcap_{\pv\mid\pu}T_\pv^{-1}(\cK_\pv)
\]
is a convex body of $\bQ_\pu^{dn}$ for each $\pu\in M(\bQ)$.  For all but
finitely many places $\pu\neq \infty$, we also have that $T_\pu(\bZ_\pu^{dn})=
\prod_{\pv\mid\pu}\cOv^n=\prod_{\pv\mid\pu}\cK_\pv$ and so
$\cC_\pu=\bZ_\pu^{dn}$.  Thus $\cC$ is a convex body of $\bQ_\bA^{dn}$.

Choose linearly independent elements $\uy_1,\dots,\uy_n$ of $K^n$
over $K$ which realize the successive minima of $\cK$, choose
a basis $\uomega=(\omega_1,\dots,\omega_d)$ of the ring of integers
$\cO_K$ of $K$ as a $\bZ$-module, and let
$c=\max\{|\uomega_j|_\pv\,;\,1\le j\le d \text{ and }\pv\mid\infty\}$.  Then the points
\begin{equation*}
 \label{prop:Thunder_principle:eq2}
 \ux_{i,j}:=T^{-1}(\omega_j\uy_i) \in \bQ^{dn}
 \quad
 (1\le i\le n,\ 1\le j\le d)
\end{equation*}
are linearly independent over $\bQ$.  For each indexing pair $(i,j)$, we
also have $\uy_i\in\lambda_i(\cK)\cK$ and $\omega_j\cK\subseteq c\cK$,
thus $\omega_j\uy_i\in c\lambda_i(\cK)\cK$.  As $T_\infty$ is
linear over $\bQ_\infty=\bR$, this means that
\[
  \ux_{i,j} \in T_\bA^{-1}(c\lambda_i(\cK)\cK)=c\lambda_i(\cK)\cC.
\]
In view of the linear independence of the points $\ux_{i,j}$ and the fact that
$\lambda_1(\cK)\le \cdots\le \lambda_n(\cK)$, we conclude that
\[
 \lambda_{d(i-1)+j}(\cC)\le c\lambda_i(\cK)
 \quad
 (1\le i\le n,\ 1\le j\le d).
\]
On the other hand, since $T_\bA$ is $\bQ_\bA$-linear and invertible,
we have $\mu(\cC)=c'\mu(\cK)$ for some constant $c'>0$ which depends only
on $T_\bA$. So, the adelic Minkowski's theorem \ref{adelic:thm:MBV},
applied to $\cC$ and $\cK$ separately, yields
\[
 \lambda_1(\cC)\cdots\lambda_{dn}(\cC)
   \asymp \mu(\cC)^{-1}
   \asymp \mu(\cK)^{-1}
   \asymp (\lambda_1(\cK)\cdots\lambda_n(\cK))^d.
\]
The conclusion follows.
\end{proof}

%
%

\section{Proofs of Theorems B and C}
\label{sec:proofBC}

We fix a basis $\ualpha=(\alpha_1,\dots,\alpha_d)$ of $K$ over $\bQ$,
a place $\pw$ of $K$ of local degree $d_\pw=1$ above a place
$\ell$ of $\bQ$, so that $K_\pw=\bQ_\ell$, and a non-zero
point $\uxi=(\xi_1,\dots,\xi_n)\in K_\pw^n=\bQ_\ell^n$,
assuming $n\ge 2$.  We form
\[
 \Xi=\ualpha\otimes\uxi=\big(\alpha_1\uxi,\dots,\alpha_d\uxi\big)
     \in (K_\pw^n)^d=(\bQ_\ell^n)^d,
\]
and note that
\begin{equation}
 \label{proofBC:eq:normXi}
 \norm{\Xi}_\ell = \norm{\ualpha}_\pw \norm{\uxi}_\pw.
\end{equation}
In order to apply the results of section \ref{sec:points}, we need to adjust Definition
\ref{points:def:C} so that the family of convex bodies in $\bQ_\bA^{dn}$ attached to $\Xi$
and the family of convex bodies in $K_\bA^n$ attached to $\uxi$ do not depend
on the norms of those points.  Thus, for each $q\ge 0$, we define
\begin{align*}
 \cCXi(q)
  &=\big\{\, (\ux_u)\in \bQ_\bA^{dn} \ ;\
      \norm{\Xi}_\ell^{-1} |\ux_\ell\cdot\Xi|_\ell\le e^{-q}
      \,\, \ \text{and}\
      \norm{\ux_\pu}_\pu\le 1 \ \text{for each $\pu\in M(\bQ)$}\,
      \big\}\,,\\
 \cCxi(q)
  &=\big\{\, (\uy_\pv)\in K_\bA^{n} \,\ ;\
      \norm{\uxi}_\pw^{-1}|\uy_\pw\cdot\uxi|_\pw\le e^{-qd}
      \, \ \text{and}\
      \norm{\uy_\pv}_\pv\le 1 \ \text{for each $\pv \in \MK$}
      \big\}\,,
\end{align*}
since $d_\pw=1$.
In order to relate the minima of these convex bodies and to deduce relationships
between the standard four exponents of approximation attached to the triples
$(\Xi,\bQ,\ell)$ and $(\uxi,K,\pw)$, we consider the $\bQ$-linear isomorphism
$T\colon (\bQ^n)^d\to K^n$ given by
\begin{equation*}
\label{eq:T}
 T(\ux_1,\dots,\ux_d) = \alpha_1\ux_1+\cdots+\alpha_d\ux_d
\end{equation*}
for any $\ux_1,\dots,\ux_d\in\bQ^n$.  For each place $\pu$ of $\bQ$
and each place $\pv$ of $K$ above $\pu$, it extends by continuity to
a $\bQ_\pu$-linear map $T_\pv$ from $(\bQ_\pu^n)^{d}$ to $K_\pv^n$
given by
\begin{equation*}
\label{eq:Tv}
 T_\pv(\ux_1,\dots,\ux_d) = \alpha_1\ux_1+\cdots+\alpha_d\ux_d
\end{equation*}
for any $\ux_1,\dots,\ux_d\in\bQ_\pu^n$.  Following Lemma \ref{lemma:T},
this yields a $\bQ_\pu$-linear isomorphism $T_\pu$ from
$\bQ_\pu^{nd}$ to $\prod_{\pv\mid\pu}K_\pv^n$, as well as
a $\bQ_\bA$-linear isomorphism $T_\bA$ from
$\bQ_\bA^{nd}$ to $K_\bA^n$.

\begin{proposition}
\label{proofBC:prop}
There is an id\`ele $\ua\in K^\mult_\bA$ which depends only on $K$, $\pw$,
$\ualpha$ and $n$ such that, for each $q\ge 0$,
\[
 \ua^{-1}\cCXi(dq) \subseteq T_\bA^{-1}(\cCxi(q)) \subseteq \ua\,\cCXi(dq).
\]
\end{proposition}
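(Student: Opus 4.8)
The plan is to apply the $\bQ_\bA$-linear isomorphism $T_\bA$ to the two asserted inclusions. Since $T_\bA\colon\bQ_\bA^{nd}\to K_\bA^n$ is an isomorphism, it transports the $K_\bA$-module structure of $K_\bA^n$ to $\bQ_\bA^{nd}$, and we read the dilation $\ua\,\cCXi(dq)$ of an idèle $\ua\in K_\bA^\mult$ through this action, so that $T_\bA(\ua\,\cCXi(dq))=\ua\,T_\bA(\cCXi(dq))$. Thus it suffices to produce an idèle $\ua=(a_\pv)\in K_\bA^\mult$ depending only on $K$, $\pw$, $\ualpha$, $n$ with
\[
 \ua^{-1}\,T_\bA(\cCXi(dq))\subseteq\cCxi(q)\subseteq\ua\,T_\bA(\cCXi(dq))
 \qquad(q\ge 0).
\]
Writing $T_\bA=\prod_\pu T_\pu$ and $\cCxi(q)=\prod_\pv\cC_{\uxi,\pv}(q)$ and grouping the places $\pv$ of $K$ by the place $\pu$ of $\bQ$ below them, this reduces to finding, for each $\pu\in M(\bQ)$, scalars $a_\pv\in K_\pv^\mult$ with $\pv\mid\pu$, equal to $1$ for $\pu$ outside a finite set, which squeeze $T_\pu(\cC_{\Xi,\pu}(dq))$ between the corresponding dilates of $\prod_{\pv\mid\pu}\cC_{\uxi,\pv}(q)$.

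The computational core is the identity $\ux\cdot\Xi=\uy_\pw\cdot\uxi$, valid for every $\ux\in\bQ_\ell^{nd}$, where $(\uy_\pv)_{\pv\mid\ell}=T_\ell(\ux)$ and $\uy_\pw$ is the component at $\pw$; it is immediate from $\Xi=(\alpha_1\uxi,\dots,\alpha_d\uxi)$ and the definition of $T$, by pulling the scalars $\alpha_j$ across the dot product. Combined with $d_\pw=1$ (so $|\cdot|_\ell$ on $\bQ_\ell$ is $|\cdot|_\pw$ on $K_\pw$) and with $\norm{\Xi}_\ell=\norm{\ualpha}_\pw\norm{\uxi}_\pw$ from \eqref{proofBC:eq:normXi}, this gives, on setting $\Lambda_\ell:=T_\ell(\{\ux\,;\,\norm{\ux}_\ell\le 1\})$,
\[
 T_\ell(\cC_{\Xi,\ell}(dq))=\Lambda_\ell\cap\big\{\uy\,;\,|\uy_\pw\cdot\uxi|_\pw\le\norm{\ualpha}_\pw\norm{\uxi}_\pw e^{-dq}\big\},
\]
while $\prod_{\pv\mid\ell}\cC_{\uxi,\pv}(q)=\big(\prod_{\pv\mid\ell}\cO_\pv^n\big)\cap\big\{\uy\,;\,|\uy_\pw\cdot\uxi|_\pw\le\norm{\uxi}_\pw e^{-dq}\big\}$, since $\pw$ is the only place above $\ell$ at which $\cCxi(q)$ carries an extra constraint.

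For the integral structures I would use that $\ualpha$ spans a full $\bZ$-lattice in $K$ commensurable with $\cO_K$, hence is a $\bZ_p$-basis of $\cO_K\otimes\bZ_p=\prod_{\pv\mid p}\cO_\pv$ for all but finitely many primes $p$; consequently, for every $\pu$, $T_\pu(\{\norm{\cdot}_\pu\le 1\})$ lies between $s_\pu^{-1}$ and $s_\pu$ times $\prod_{\pv\mid\pu}\cO_\pv^n$ for a suitable scalar $s_\pu$ (a power of $p$ if $\pu=p$ is finite, a positive real if $\pu=\infty$), with $s_\pu=1$ off a finite set. When $\pu\ne\ell$ there is no $q$-dependence, $\cC_{\Xi,\pu}(dq)=\{\norm{\cdot}_\pu\le 1\}$ and $\prod_{\pv\mid\pu}\cC_{\uxi,\pv}(q)=\prod_{\pv\mid\pu}\cO_\pv^n$, so taking $|a_\pv|_\pv=s_\pu$ at each $\pv\mid\pu$ settles that block. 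When $\pu=\ell$ I would take $|a_\pv|_\pv=s$ at every $\pv\mid\ell$, with $s$ at least the distortion bound between $\Lambda_\ell$ and $\prod_{\pv\mid\ell}\cO_\pv^n$ and at least $\max\{\norm{\ualpha}_\pw,\norm{\ualpha}_\pw^{-1}\}$: such a dilation moves $\Lambda_\ell$ within $s^{\pm1}\prod_{\pv\mid\ell}\cO_\pv^n$ and multiplies the bound on $|\uy_\pw\cdot\uxi|_\pw$ by $s^{\pm1}$, and the two displayed descriptions then yield both inclusions for all $q\ge 0$ simultaneously. Collecting the components over all $\pu$ gives the idèle $\ua$, which by construction depends only on $K$, $\pw$, $\ualpha$ and $n$. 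The step I expect to demand the most care is the bookkeeping at $\pu=\ell$: checking that one dilation by a single idèle simultaneously repairs the lattice discrepancy $\Lambda_\ell$ versus $\prod_{\pv\mid\ell}\cO_\pv^n$ and the factor $\norm{\ualpha}_\pw$ entering through \eqref{proofBC:eq:normXi}; the rest is routine once this framework is set up.
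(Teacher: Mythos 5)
Your proposal is correct and follows essentially the same route as the paper's proof: reduce to a place-by-place comparison of $T_\pu(\cC_{\Xi,\pu}(dq))$ with $\prod_{\pv\mid\pu}\cC_{\uxi,\pv}(q)$, using the identity $\ux\cdot\Xi=T_\pw(\ux)\cdot\uxi$ together with $\norm{\Xi}_\ell=\norm{\ualpha}_\pw\norm{\uxi}_\pw$ at the distinguished place, the uniform comparability of $T_\pu$ with the local unit balls elsewhere, and the fact that $T_\pu(\bZ_\pu^{nd})=\prod_{\pv\mid\pu}\cO_\pv^n$ at almost all finite places. The only cosmetic difference is that you push everything to the $K_\bA^n$ side via $T_\bA$ and read the statement's dilation by $\ua\in K_\bA^\mult$ through the transported module structure, whereas the paper stays in $\bQ_\bA^{nd}$ and takes $\ua\in\bQ_\bA^\mult$ (embedded diagonally in $K_\bA^\mult$); for such diagonal idèles the two readings coincide, so the content is the same. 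One small notational slip worth fixing: at an archimedean $\pv$ the unit ball of $K_\pv^n$ is the Euclidean ball, not $\cO_\pv^n$ (the polydisc), so write the comparison set as $\{\norm{\uy_\pv}_\pv\le 1\}$ rather than $\cO_\pv^n$; this does not affect the argument since the two are commensurable by a constant depending only on $n$ and $K$.
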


\begin{proof}
Fix a place $\pu$ of $\bQ$.  Since $T_\pu$ is a $\bQ_\pu$-linear isomorphism,
there exists a constant $c_u\ge 1$ such that
\[
 c_u^{-1}\norm{\ux}_\pu \le \max_{\pv\mid\pu}\norm{T_\pv(\ux)}_\pv \le c_\pu\norm{\ux}_\pu
\]
for all $\ux\in\bQ_\pu^{nd}$.  Since $T_\pu(\bZ_\pu^{nd})=\prod_{\pv\mid\pu}\cOv^n$
for all but finitely many $\pu\neq\infty$, we may take $c_\pu=1$ for those $\pu$.
When $\pu=\ell$ and $\ux=(\ux_1,\dots,\ux_d)\in(\bQ_\ell^n)^d$, we also find
\[
\ux\cdot\Xi=\sum_{i=1}^d\alpha_i\ux_i\cdot\uxi=T_\pw(\ux)\cdot\uxi,
\]
thus $|\ux\cdot\Xi|_\ell=|T_\pw(\ux)\cdot\uxi|_\pw$.  Then, using \eqref{proofBC:eq:normXi}
and assuming that $c_\ell\ge \max\{\norm{\ualpha}_\pw,\norm{\ualpha}_\pw^{-1}\}$,
we deduce that
\[
 c_\ell^{-1} \frac{|\ux\cdot\Xi|_\ell}{\norm{\Xi}_\ell}
   \le \frac{|T_\pw(\ux)\cdot\uxi|_\pw}{\norm{\uxi}_\pw}
   \le c_\ell \frac{|\ux\cdot\Xi|_\ell}{\norm{\Xi}_\ell}.
\]

Choose an id\`ele $\ua=(a_\pu) \in\bQ_\bA^\mult$ such that $|a_\pu|_\pu\ge c_\pu$
for each $\pu\in M(\bQ)$.  Then, if $\ux=(\ux_\pu) \in\bQ_\bA^{nd}$ and
$\uy=(\uy_\pv) \in K_\bA^n$ are related by $\uy=T_\bA(\ux)$, the above
estimates yield
\[
 \norm{a_\pu^{-1}\ux_u}_\pu
   \le \max_{\pv\mid\pu}\norm{\uy_\pv}_\pv
   \le \norm{a_\pu\ux_\pu}_\pu
 \et
 \frac{|a_\ell^{-1}\ux_\ell\cdot\Xi|_\ell}{\norm{\Xi}_\ell}
   \le \frac{|\uy_\pw\cdot\uxi|_\pw}{\norm{\uxi}_\pw}
   \le \frac{|a_\ell\ux_\ell\cdot\Xi|_\ell}{\norm{\Xi}_\ell}.
\]
So, if $\uy\in\cCxi(q)$ (resp.\ $\ua\ux\in\cCXi(dq)$) for some $q\ge 0$,
then $\ua^{-1}\ux\in\cCXi(dq)$ (resp.\ $\uy\in\cCxi(q)$).  This means that
$\ua^{-1}T_\bA^{-1}(\cCxi(q)) \subseteq \cCXi(dq)$
(resp.\  $\ua^{-1}\cCXi(dq) \subseteq T_\bA^{-1}(\cCxi(q))$), as needed.
\end{proof}

By Proposition \ref{dilations:prop} and Thunder's principle
(Proposition \ref{prop:Thunder_principle}), the above result yields
the following estimates.

\begin{cor}
\label{proofBC:cor}
For each $i=1,\dots,n$, each $j=1,\dots,d$ and each $q\ge 0$, we have
\begin{equation}
\label{proofBC:cor:eq}
 \lambda_{d(i-1)+j}(\cCXi(dq))
   \asymp \lambda_{i}(\cCxi(q))
\end{equation}
with implicit constants that depend only on $K$, $\ualpha$, $n$ and $\pw$.
\end{cor}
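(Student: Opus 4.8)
The plan is to deduce the corollary formally from the sandwiching in Proposition~\ref{proofBC:prop} and from Thunder's principle (Proposition~\ref{prop:Thunder_principle}), the only delicate point being how to pass from the idèle dilation appearing in Proposition~\ref{proofBC:prop} to an estimate between successive minima.

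First I would fix $q\ge 0$, set $\cC=T_\bA^{-1}(\cCxi(q))$, and recall from the first assertion of Proposition~\ref{prop:Thunder_principle} that $\cC$ is a convex body of $\bQ_\bA^{nd}$. Applying that proposition with $\cK=\cCxi(q)$ then gives
\[
 \lambda_{d(i-1)+j}(\cC)\asymp\lambda_i(\cCxi(q))
 \qquad(1\le i\le n,\ 1\le j\le d),
\]
with implied constants depending only on $K$, $n$ and the auxiliary map $T$, hence only on $K$, $n$ and $\ualpha$. So the whole problem reduces to establishing $\lambda_m(\cC)\asymp\lambda_m(\cCXi(dq))$ for $m=1,\dots,nd$, with implied constants depending only on $K$, $\pw$, $\ualpha$ and $n$.

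For that I would invoke Proposition~\ref{proofBC:prop}, which supplies an idèle $\ua$ --- depending only on $K$, $\pw$, $\ualpha$ and $n$ --- with $\ua^{-1}\cCXi(dq)\subseteq\cC\subseteq\ua\,\cCXi(dq)$. Since an inclusion of adelic convex bodies reverses the ordering of the successive minima, this yields
\[
 \lambda_m\!\big(\ua^{-1}\cCXi(dq)\big)\ \ge\ \lambda_m(\cC)\ \ge\ \lambda_m\!\big(\ua\,\cCXi(dq)\big)
\]
for every $m$. It then remains to check that dilation by the fixed idèle $\ua$ changes each $\lambda_m$ by a factor comparable to $\mua^{\mp1}$. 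This is the one place needing care, since, by the definition in Section~\ref{sec:adelic}, $\lambda_m$ dilates a convex body only at the archimedean places, so this scaling is not an exact identity. I would get around it by passing to the minima $\lambda^\bA_m$ of Definition~\ref{dilations:def} (applied over $\bQ$): Proposition~\ref{dilations:prop} gives $\lambda^\bA_m(\cD)\asymp\lambda_m(\cD)$ for any convex body $\cD$ of $\bQ_\bA^{nd}$, with absolute implied constants, while formula~\eqref{dilations:prop:eq2} gives the exact identities $\lambda^\bA_m\big(\ua^{\pm1}\cCXi(dq)\big)=\mua^{\mp1}\lambda^\bA_m\big(\cCXi(dq)\big)$. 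Combining these, $\lambda_m\big(\ua^{\pm1}\cCXi(dq)\big)\asymp\mua^{\mp1}\lambda_m\big(\cCXi(dq)\big)$ with absolute constants; feeding this into the chain of inequalities above produces $\lambda_m(\cC)\asymp\lambda_m(\cCXi(dq))$ with an implied constant that is a fixed power of $\mua$, hence depends only on $K$, $\pw$, $\ualpha$ and $n$.

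Finally I would chain the two comparisons, $\lambda_{d(i-1)+j}(\cCXi(dq))\asymp\lambda_{d(i-1)+j}(\cC)\asymp\lambda_i(\cCxi(q))$, which is precisely~\eqref{proofBC:cor:eq} with the asserted dependence of the constants. I do not expect a genuine obstacle here: all the substance is already in Proposition~\ref{proofBC:prop} and in Thunder's principle, and what is left is the routine bookkeeping of the idèle-versus-archimedean dilation described above, together with the elementary fact that inclusion reverses successive minima.
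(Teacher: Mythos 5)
Your argument is correct and is essentially the paper's own proof, which is a one-line remark citing Proposition~\ref{dilations:prop} and Thunder's principle applied to Proposition~\ref{proofBC:prop}. You have simply spelled out the bookkeeping: the inclusion of adelic convex bodies reverses successive minima, and the idèle dilation is controlled by passing to the $\lambda^\bA_m$ via Proposition~\ref{dilations:prop} (applied over $\bQ$ in dimension $nd$), exactly as the paper intends.
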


\begin{proof}[\textbf{Proof of Theorem B}]
Fix $i$, $j$ and $q$ as above.  Taking logarithms on both sides of \eqref{proofBC:cor:eq}
and using Lemma \ref{points:lemma2}, we obtain that the absolute value of the difference
\begin{equation}
\label{proofBC:proofB:eq}
 L_{\Xi,d(i-1)+j}(dq) - L_{\uxi,i}(q)
\end{equation}
is bounded above by a constant that depends only on $K$, $\ualpha$, $n$ and $\pw$.
Letting $i'=n+1-i$ and $j'=d+1-j$, Lemma \ref{points:lemmaLc} shows that the
same applies to
\[
 L_{\Xi,d(i-1)+j}(dq) + L^*_{\Xi,d(i'-1)+j'}(dq) - dq
 \et
 L_{\uxi,i}(q) + L^*_{\uxi,i'}(q) - q.
\]
Subtracting from the first number the sum of the second and of
\eqref{proofBC:proofB:eq}, we obtain that
\[
 L^*_{\Xi,d(i'-1)+j'}(dq) - L^*_{\uxi,i'}(q) - (d-1)q
\]
also has absolute value bounded above by such a constant.  As $i'$ runs from $1$
to $n$ with $i$, and as $j'$ runs from $1$ to $d$ with $j$, this proves the two
inequalities of Theorem B.
\end{proof}

\begin{cor}
\label{proofBC:cor:exponents}
Upon writing $\omegahat(\uxi)$ for $\omegahat(\uxi,K,\pw)$,
$\omegahat(\Xi)$ for $\omegahat(\Xi,\bQ,\ell)$, and similarly
for the other exponents, we have
\begin{align*}
 &d\big(\omegahat(\uxi)+1\big) = \omegahat(\Xi) +1,
 &&d\big(\omega(\uxi) +1\big) =\omega(\Xi) +1,\\
 &d\Big(\frac{1}{\lambdahat(\uxi)} +1\Big) = \frac{1}{\lambdahat(\Xi)} +1,
 &&d\Big(\frac{1}{\lambda(\uxi)} +1\Big) = \frac{1}{\lambda(\Xi)} +1.
\end{align*}
\end{cor}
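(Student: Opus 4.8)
The plan is to express each of the eight exponents through a $\liminf$ or a $\limsup$ of the ratios $L_{\cdot,1}(q)/q$ or $L^*_{\cdot,1}(q)/q$ by means of Lemma \ref{spectra:lemma:limL/q}, and then to transport the comparison between $\uxi$ and $\Xi$ using Theorem B in the single case $i=j=1$, where the variable is rescaled by $d$. All four identities then fall out of elementary manipulations of the resulting limit relations.

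First I would apply Lemma \ref{spectra:lemma:limL/q} to $\uxi\in K_\pw^n$ over $(K,\pw)$: it exhibits $1/(\omega(\uxi)+1)$, $1/(\omegahat(\uxi)+1)$, $1/(\lambda(\uxi)+1)$, $1/(\lambdahat(\uxi)+1)$ as, respectively, $\liminf_{q\to\infty}L_{\uxi,1}(q)/q$, $\limsup_{q\to\infty}L_{\uxi,1}(q)/q$, $\liminf_{q\to\infty}L^*_{\uxi,1}(q)/q$, $\limsup_{q\to\infty}L^*_{\uxi,1}(q)/q$. Applying the same lemma with $\bQ$, $\ell$, $nd$ in place of $K$, $\pw$, $n$ gives the analogous four identities for $\Xi\in\bQ_\ell^{nd}$ over $(\bQ,\ell)$, with $\uxi$ and $\omega(\uxi),\dots$ replaced by $\Xi$ and $\omega(\Xi),\dots$. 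These functions are built in each case from the projectively invariant quantities $D_\uxi, D^*_\uxi$ (resp.\ $D_\Xi, D^*_\Xi$), which is the same normalization used in Theorem B, so there is no mismatch.

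Next, Theorem B with $i=j=1$ provides a constant $c''$, independent of $q$, such that $|L_{\Xi,1}(dq)-L_{\uxi,1}(q)|\le c''$ and $|L^*_{\Xi,1}(dq)-L^*_{\uxi,1}(q)-(d-1)q|\le c''$ for all $q\ge 0$. Dividing the first inequality by $dq$ and letting $q\to\infty$ --- the error $c''/(dq)$ vanishes and the substitution $Q=dq$ sweeps out all of $[0,\infty)$, so neither the perturbation nor the rescaling affects the extreme limits --- yields $\liminf_{Q\to\infty}L_{\Xi,1}(Q)/Q=\tfrac1d\liminf_{q\to\infty}L_{\uxi,1}(q)/q$, and likewise for $\limsup$; combined with the identities above this gives $\omega(\Xi)+1=d(\omega(\uxi)+1)$ and $\omegahat(\Xi)+1=d(\omegahat(\uxi)+1)$, i.e.\ the top row of the corollary. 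Dividing the second inequality by $dq$ instead contributes an extra additive term $(d-1)/d$, so writing $u=1/(\lambda(\uxi)+1)$ and $v=1/(\lambda(\Xi)+1)$ one obtains $v=\tfrac ud+\tfrac{d-1}{d}$, that is $1-u=d(1-v)$; using the elementary identity $\tfrac1\mu+1=\tfrac1{1-1/(\mu+1)}$ (valid with the usual conventions when $\mu=\infty$) this rearranges into $1/\lambda(\Xi)+1=1/(1-v)=d/(1-u)=d\bigl(1/\lambda(\uxi)+1\bigr)$, and replacing $\liminf$ by $\limsup$ throughout yields the corresponding identity for $\lambdahat$. Thus all four rows of the corollary are established.

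Since Theorem B is already at hand, this argument is essentially bookkeeping and I do not anticipate a genuine obstacle. The only two points requiring a little care are that a bounded perturbation together with the linear rescaling $Q=dq$ leaves the $\liminf$ and $\limsup$ of the ratios unchanged, and that the $L$- and $L^*$-functions attached to $\Xi$ over $(\bQ,\ell)$ and to $\uxi$ over $(K,\pw)$ are formed with matching projectively invariant normalizations; both are routine.
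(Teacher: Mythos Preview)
Your proposal is correct and follows essentially the same approach as the paper: both invoke Theorem~B at $i=j=1$ to compare $L_{\Xi,1}(dq)$ with $L_{\uxi,1}(q)$ and $L^*_{\Xi,1}(dq)$ with $L^*_{\uxi,1}(q)+(d-1)q$, then feed the resulting limit relations into Lemma~\ref{spectra:lemma:limL/q}. Your write-up is slightly more explicit about the algebra in the $\lambda$-case, but the underlying argument is the same.
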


\begin{proof}
By Theorem B, the ratios $L_{\Xi,1}(dq)/q$ and $L_{\uxi,1}(q)/q$ have the same
limit points as $q$ goes to infinity. In particular, they have the same superior limit
and the same inferior limit.  Applying Lemma \ref{spectra:lemma:limL/q} separately to
$(\Xi,\bQ,\ell)$ and $(\uxi,K,\pw)$ to compute these limits and comparing
the results, we get the first row of equalities.

By Theorem B, the quantities $L^*_{\Xi,1}(dq)/q - d$ and $L^*_{\uxi,1}(q)/q -1$
also have the same limit points as $q$ goes to infinity and the same lemma yields
the second row of equalities.
\end{proof}

As an application, suppose that $n=3$ and that $\uxi\in K_\pw^3$ has linearly
independent coordinates over $K$.  Then, $\uxi$ satisfies Jarn\'{\i}k's identity
\eqref{intro:eq:Jarnik} by Corollary \ref{spectra:cor}.  Using the formulas of the
above corollary, we deduce the identity \eqref{intro:eq:Jarnik:Xi} relating
$\lambdahat(\Xi)$ and $\omegahat(\Xi)$.

\begin{proof}[\textbf{Proof of Theorem C}]
Let $S$ denote the subset of $K_\pw^3$ from Theorem \ref{results:thm:Bel}
of Bel.  Since the supremum of $\lambdahat(\uxi,K,\pw)$ is $1/\gamma$
as $\uxi$ runs through $S$, the formulas of Corollary \ref{proofBC:cor:exponents}
imply that for the corresponding points $\Xi=\ualpha\otimes\uxi$, the supremum
of $\lambdahat(\Xi,\bQ,\ell)$ is $1/(d\gamma^2-1)$.   Since the points $\uxi$
of $S$ satisfy Jarn\'{\i}k's identity \eqref{intro:eq:Jarnik},  the supremum of
$\omegahat(\uxi,K,\pw)$ is $\gamma^2$ as $\uxi$ runs through $S$.
So,  for the corresponding points $\Xi=\ualpha\otimes\uxi$, we find similarly that
the supremum of $\omegahat(\Xi,\bQ,\ell)$ is $d(\gamma^2+1)-1$.
This proves Theorem C because, for $\uxi=(1,\xi,\xi^2)\in S$, the $3d$ coordinates
of $(\ualpha,\xi\ualpha,\xi^2\ualpha)$ form a permutation of those of
$\Xi=(\alpha_1\uxi,\dots,\alpha_d\uxi)$ and so these two points have the same
exponents of approximation.
\end{proof}

As a final remark, suppose that $\uP=(P_1,\dots,P_n)$ is an $n$-system on
$[0,\infty)$ for which $\uL_\uxi-\uP$ is bounded.  Then the difference
$\uL_\Xi-\uR$ is bounded for the function $\uR=(R_1,\dots,R_{nd})$ from
$[0,\infty)$ to $\bR^{nd}$ given by
\[
 R_{d(i-1)+j}(q)=P_i(q/d) \quad (1\le i\le n, \ 1\le j\le d, \ q\ge 0).
\]
If $d>1$, this is not an $nd$-system because its components are
piecewise linear with slopes $0$ and $1/d$.  However, it is a generalized
$nd$-system in the sense of \cite[Definition 4.5]{R2016} and so it can
easily be approximated uniformly by an $nd$-system as explained in
\cite[section 4]{R2016}.

%
%

\end{document}